\newenvironment{closealign}{\par\centering$\displaystyle\begin{aligned}}{\end{aligned}$\par}
\DeclareSymbolFont{bbold}{U}{bbold}{m}{n}
\DeclareSymbolFontAlphabet{\mathbbold}{bbold}
\DeclareRobustCommand{\stirling}{\genfrac\{\}{0pt}{}}
\newtheorem*{theorem*}{Theorem}
\newtheorem{theorem}{Theorem}[section]
\newtheorem{claim}[theorem]{Claim}
\newtheorem{lemma}[theorem]{Lemma}
\newtheorem{corollary}[theorem]{Corollary}
\theoremstyle{definition}
\newtheorem*{definition}{Definition}
\newtheorem*{convention}{Convention}
\newtheorem*{notation}{Notation}
\newtheorem{remark}{Remark}
\numberwithin{remark}{chapter}
\newtheorem*{remark*}{Remark}
\newtheorem*{lemma*}{Lemma}
\newtheorem*{corollary*}{Corollary}
\title{Moments of Representation Numbers}
\author{Naomi Bazlov}
\begin{document}
\maketitle
\begin{abstract}
	A representation number is a function which expresses the number of ways an integer can be written as a sum of elements of chosen sets. One of the oldest number-theoretic results on representation numbers is Fermat's theorem which says that an odd prime can be written as a sum of two squares in exactly $0$ or $2$ ways (if order of summands is important).
	
	In this dissertation, we discuss a selection of methods from modern analytic number theory and apply them to study asymptotics of certain representation numbers. In particular, we work through an argument in the recent paper~\cite{granville2023} by Granville, Sabuncu and Sedunova to obtain upper bounds on higher moments of the number of ways of writing $n$ as the sum of a square and a square of a prime. We then use the method from the paper to obtain new results on moments of representation numbers where ``prime'' is replaced by ``sum of two squares''.
\end{abstract}
\tableofcontents
\section*{Acknowledgments}
I would like to thank Dr Simon Rydin Myerson for supervising me in this project, suggesting a problem to investigate and meeting with me regularly to discuss my ideas and provide help and advice. His guidance has been inspiring and crucial to my research endeavours. I am also thankful to Dr Ofir Gorodetsky and Dr Alisa Sedunova for their advice and suggestions.
I am grateful to my family for their steadfast support.

\chapter{Introduction}
\section{Overview}
This dissertation grew out of my attempt to understand the very recent results
by Andrew Granville, Cihan Sabuncu and Alisa Sedunova on moments of
representation numbers, given in their paper~\cite{granville2023}. 
Part of the dissertation is devoted the moments of $r_1$, the function which counts 
the number of ways to write $n$ as a sum of a square and a square of a prime.
My treatment of these moments involves an array of analytic number theory tools: the Prime Number Theorem, Mertens’ theorems, versions of the above for primes in arithmetic
progressions, sieve theory and a Tauberian theorem. In particular, I deduce asymptotics 
for the first moment of $r_1$ using a new method, which combines~\cite{granville2023} with Stephan Daniel's earlier work~\cite{daniel_2001}.
Further to that, I work through the 
Granville-Sabuncu-Sedunova approach to estimating higher moments of $r_1$,
and then modify their method to attack a new problem about representation numbers $r_S$. 
My work on the moments of $r_S$ contains new results.

For centuries, mathematicians have studied representation numbers -- functions which count the number of ways an integer can be represented in a certain form. A famous example is the Gauss circle problem: 
one wishes to estimate how many integer lattice points are contained within a circle of radius $R$. Carl Gauss was the first to suggest asymptotics for the answer in his 1837 paper~\cite{Gauss_circle}, and subsequently much gradual progress on estimating the error term was made in the 20th century by Sierpinski, van der Corput, Vinogradov, Hardy and others. Note that this problem is about the first moment of the function $r_0(n)$ which counts the number of ways $n$ can be expressed as a sum of two squares.

In this dissertation, my aim is to study the behaviour of functions that
count the number of ways $n$ can be represented as a sum of two squares subject to additional number-theoretic constraints. I am interested
in the asymptotics of moments of these functions, where the k-th moment of an arithmetic function $ f$ is $m_k(x)=\sum_{n\le x}f^k(n)$. An example is the functions $r_0$ , $r_1$ and $r_2$ where $r_i(n)$ is defined as the number of ways to write n as the sum of two
squares with i of them being squares of primes. I work towards and
prove a result from~\cite{granville2023}, which is Theorem~\ref{thm:r1cl upper bound} here:
\begin{theorem*}[Granville, Sabuncu, Sedunova, 2023]
	For any fixed integer $\ell\ge 1$ and integer $k\ll\log\log x =: L$ we have
	\[\sum_{{\substack{n\le x\\\omega^*(n)=k}}}\binom{r_1(n)}{\ell}\ll_\ell\frac{xL^{O_\ell(1)}}{(\log x)^{\ell+1}}\frac{(2^{\ell-1}L)^k}{k!}.\]
\end{theorem*}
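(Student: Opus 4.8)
The plan is to start from the combinatorial meaning of the binomial coefficient: $\binom{r_1(n)}{\ell}$ is the number of unordered $\ell$-element sets of representations $n=s^2+p^2$ with $p$ prime. Under the natural convention that a representation is a pair $(s,p)$ with $s\ge 0$ and $p$ a positive prime, two distinct representations of the same $n$ automatically have distinct primes $p$ (since $s=\sqrt{n-p^2}$ is determined by $p$), so such a set is the same thing as a set of $\ell$ primes $p_1<\dots<p_\ell$ for which each $n-p_i^2$ is a perfect square. Hence
\[
\sum_{\substack{n\le x\\ \omega^*(n)=k}}\binom{r_1(n)}{\ell}
=\#\bigl\{(n;\,p_1<\dots<p_\ell):\ n\le x,\ \omega^*(n)=k,\ p_i\text{ prime},\ n-p_i^2\ \text{a square}\ (1\le i\le\ell)\bigr\},
\]
where $\omega^*$ counts the prime divisors of $n$ congruent to $1\bmod 4$. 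Recovering $n$ from $(s_1,p_1)$, this counts ``base'' representations $(s_1,p_1)$ with $s_1^2+p_1^2=:n\le x$ and $\omega^*(n)=k$, together with $\ell-1$ further primes $p_1<p_2<\dots<p_\ell$ with $n-p_i^2$ a square; the compatibility reads $p_i^2-p_1^2=s_1^2-s_i^2=(s_1-s_i)(s_1+s_i)$, a divisibility/factorisation constraint tying $p_i$ to $s_1$. So I would fix the primes, use these constraints to reduce to a lattice-point sum over $(s_1,p_1)$ with $s_1^2+p_1^2\le x$, weighted by the admissible factorisations of the differences $p_i^2-p_1^2$, and carry along the conditions ``$p_i$ prime'' and ``$\omega^*(s_1^2+p_1^2)=k$''.

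The arithmetic of sums of two squares enters twice. For the count to be non-empty $n$ must be a sum of two squares, and the representations of such an $n$ are parametrised by the divisors of $n$ built from its primes $\equiv 1\bmod 4$ — equivalently by divisors in $\mathbb{Z}[i]$ — so there are $\asymp 2^{\omega^*(n)}=2^k$ of them, up to a divisor-type correction from repeated $1\bmod4$ prime factors and the $2$-part that I expect to absorb into the $L^{O_\ell(1)}$ slack. Discarding the primality conditions entirely already gives the crude bound $\binom{r_1(n)}{\ell}\le\binom{r_0(n)}{\ell}$, and summing $\binom{r_0(n)}{\ell}$ over $n\le x$ with $\omega^*(n)=k$ by a Selberg--Delange/Landau-type estimate produces something of the shape $\frac{x\,L^{O_\ell(1)}}{(\log x)^{c}}\,\frac{(2^{\ell-1}L)^k}{k!}$: the correct power of $x$, the correct dependence on $k$ (the factor $(2^{\ell-1}L)^k/k!$ and the $1/\ell!$-type constant both falling out of this moment of $r_0$, since $r_0(n)^\ell\approx 2^{\ell\omega^*(n)}$), but only a fixed power $c$ of $\log x$ that does not grow with $\ell$. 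The whole content of the theorem is therefore to improve $(\log x)^{c}$ to $(\log x)^{\ell+1}$, i.e.\ to extract essentially one further factor of $\log x$ per primality condition from exactly the information the crude bound threw away.

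To exploit primality I would not keep ``$p_i$ prime'' literally. Fixing a suitable small power $z=x^{\delta}$ with $\delta=\delta(\ell)>0$, I would discard the representations with some $p_i\le z$ — controlled by induction on $\ell$, since dropping such a prime leaves a configuration counted by a lower moment and there are few small primes — and relax the surviving conditions to ``$p_i$ has no prime factor $\le z$''. Applying an upper-bound sieve (the fundamental lemma, or Selberg's sieve) to each of the $\ell$ rough variables saves a factor $\asymp 1/\log z\asymp 1/\log x$ per variable, giving the desired $(\log x)^{-\ell}$, and then re-running the $n$-sum over $\omega^*(n)=k$ on the sieved quantity should land on the stated bound. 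This step needs the sieve dimension and the residue counts of the congruences cut out by ``$n-p_i^2$ a square'' to be uniform, and the sieve's main and error terms to interlock cleanly with the $\omega^*(n)=k$ stratification.

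That interlocking is where I expect the main difficulty to lie. One must carry simultaneously: the sieve weights on the $\ell$ prime variables; the factorisation constraints that having $\ell$ compatible representations forces on the common value $n$ (this is what ties up roughly $2^k$ worth of divisor data); and the constraint $\omega^*(n)=k$ — and one must show that imposing the sieve costs precisely $(\log x)^{-\ell}$ while leaving the factor $(2^{\ell-1}L)^k/k!$ intact, \emph{uniformly} over $k\ll L$, since any loss that is exponential in $k$ beyond this, or any extra power of $L$ depending on $k$, would be fatal. A secondary technical point is making the replacement of primality by roughness genuinely lossless at every such $k$: choosing $\delta(\ell)$, controlling the sieve's uniformity in $k$, and verifying that the discarded small-prime contribution really is dominated by the target bound for each $k$ in the stated range.
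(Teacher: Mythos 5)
Your high-level accounting is right (the crude bound via $\binom{r_0(n)}{\ell}$ does reproduce the factor $(2^{\ell-1}L)^k/k!$ with only one power of $\log x$, and the content of the theorem is to gain roughly one more power of $\log x$ per primality condition via an upper-bound sieve), but the proposal has a genuine gap exactly at the point you flag as "the interlocking": you never supply a parametrization in which the conditions tying the $p_i$ together become sieveable. In your setup the constraints are "$n-p_i^2$ is a perfect square" for a common $n$, and being a perfect square is not a congruence/positive-density condition that Selberg's sieve or the fundamental lemma can process; conversely, if the primes $p_1<\dots<p_\ell$ are fixed outer variables, the surviving set of $(s_1,p_1)$ is cut out by divisor/factorisation conditions on the differences $p_i^2-p_1^2$, and there is no family of rough variables left to sift, so the claimed saving of $\asymp 1/\log x$ "per rough variable" has no ambient sieve problem attached to it. The paper's proof hinges on removing this obstruction structurally: after discarding $n$ with $P(n)\le x^{1/L}$ or $P(n)^2\mid n$ (controlled by moments of $r_0^*$) and replacing $r_1$ by $r_1^*$, one writes $n=mp$ with $p=P(n)$ prime and $p=a^2+b^2$ (unique up to order), and uses the Gaussian-integer composition law to express \emph{every} coprime representation of $n$ as the value of a linear form $\phi\in\mathcal{L}_m$ at $(a,b)$. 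Only then do your $\ell$ primality conditions become "the fixed polynomial $(a^2+b^2)\prod_{\phi\in I}\phi(a,b)$ has no prime factor up to $y=x^{1/10L}$", a genuine two-variable sieve of dimension $\ell+1$ over lattice points with $a^2+b^2\le x/m$, which is what produces $(\log x)^{-(\ell+1)}$ (note the extra $+1$ comes from sieving the condition that $a^2+b^2$ itself is prime, not from the $n$-count). Without this change of variables your plan does not have a step that can be executed.

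Two secondary points would also need repair even granting a sieve setup. First, the factor $(2^{\ell-1}L)^k/k!$ is not something you can just "re-run the $n$-sum" to recover: in the paper it is assembled from the bound $\#\{I\subseteq\mathcal{L}_m:|I|=\ell\}\le 2^{\ell k}$ together with a $k$-uniform upper bound $\rho_{k,\mathcal{N}}(x)\ll \frac{x}{\log x}\frac{(\frac12 L+O(1))^{k-1}}{(k-1)!}$ for integers free of prime factors $\equiv 3\pmod 4$ with $\omega^*=k$ (proved by induction using Mertens' theorem in arithmetic progressions), applied dyadically to the sum of $1/m$ over $m$ with $\omega^*(m)=k-1$; this is where the stated $k$-dependence actually comes from, uniformly in $k\ll L$. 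Second, your proposal to discard representations with some $p_i\le z$ "by induction on $\ell$, since there are few small primes" does not obviously close: the $(\ell-1)$-moment bound is larger than the target by a factor $2^k/\log x$, so you would need the number of small-prime representations of $n$ to be correspondingly small on average, which is essentially another instance of the main estimate; the paper instead handles small values $\phi(a,b)\le y$ by a direct counting argument exploiting that $a^2+b^2>y^{10}$ forces such $\phi$ to have very few preimages. Likewise the divisor-type corrections you hope to absorb into $L^{O_\ell(1)}$ (repeated $1\bmod 4$ prime factors, and the passage from $r_1$ to the coprime count $r_1^*$) require the separate arguments the paper gives for the cases $P(n)^2\mid n$ and $p\mid a$, not just slack in the exponent of $L$.
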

Subsequently, I look to extend the methods involved to a  wider class of representation numbers. In particular, Theorems~\ref{moments of rR'} and~\ref{moments of rR} are new results about the moments of $r_S$, the number of ways to write $n$ as a sum of a square and
the square of two squares.
\section{Structure of this preprint}
In Chapter $2$, I start by introducing some notation used throughout the dissertation, and provide various properties of important arithmetic functions such the prime counting function $\pi$, the von Mangoldt function $\Lambda$ and the divisor function~$\tau$.

The analogue of $\pi$ restricted to an arithmetic progression is also defined in Chapter $2$, but its properties will be introduced later (see Mertens' Theorem for primes in arithmetic progressions in Chapter $6$.) Its behaviour is linked with equidistribution of primes among invertible residue classes modulo $q$, as attested by Dirichlet's theorem. More deeply, this behaviour of primes extends to number fields, reflected in results such as Chebotarev's density theorem; I do not pursue this direction in the dissertation, leaving it as a possible avenue for future work.

In Chapter $3$, I introduce representation numbers and provide a collection of results which can be understood before I introduce heavier machinery such as sieves. I work out the asymptotics of the Gauss circle problem, deducing the main term with the help of Gaussian primes and the non-principal Dirichlet character mod $4$, but do not prove a strong form of the error term, quoting Huxley's work~\cite{huxleyexpsums} instead. I then look at first moments of $r_1$ and $r_2$. Drawing inspiration from proofs in~\cite{daniel_2001} and~\cite{granville2023}, I provide a proof method which works for both functions. My method gives a better error term for the first moment of $r_2$ than the one currently known and stated in the 2024 paper~\cite{Sabuncu_2024}.

I also consider the ``zeroth moment'' of $r_i$ in Chapter $3$; the zeroth moment of a non-negative $f$ is defined as $M(x)=\#\{n\le x: f(n)>0\}$. Landau~\cite{landaugerman} studied the zeroth moment of $r_0$, and Stephan Daniel includes a discussion of zeroth moments in his paper~\cite{daniel_2001}. In recent years, moments of $r_2$ have been studied in depth by Sabuncu, and I conclude the chapter with some results of interest from his 2024 paper~\cite{Sabuncu_2024}.

The representation number that I have found most interesting to study is $r_1$, as this allowed me to explore and combine modern analytic number theory techniques. Chapters $4$ and $5$ build the necessary background in order to prove Theorem~\ref{thm:r1cl upper bound}, which gives asymptotic bounds on higher moments of $r_1$.

Chapter $4$ focuses on sieve theory, a powerful 20th century analytic tool inspired by Eratosthenes' idea from Ancient Greece. Given a set of integers and a collection of primes, one removes all multiples of these primes from the given set. What is a good estimate on the new size of the set? If there are reasons to assume that being divisible by a prime $p$ are (almost) independent events for distinct $p$, sieve theory can answer this and similar questions effectively. This is used to construct a key bounding argument in the proof of Theorem~\ref{thm:r1cl upper bound}. I referred to many sources, such as~\cite{Fordsieves},~\cite{heathbrown2002lecturessieves} and~\cite{richert1976lectures}, to get an understanding of sieve theory and some important results, and provide an introduction to the topic in this chapter.

Chapter $5$ discusses properties of functions similar to $r_1$, which will help us to approximate it. The results I discuss are stated in~\cite{granville2023}, although the proofs are sometimes omitted or incorrect -- I have provided my own proofs.
Chapter $6$ is focused on proving Theorem~\ref{thm:r1cl upper bound}: I follow the method of Granville, Sabuncu and Sedunova to obtain upper bounds on higher moments of $r_1$, filling in the details of the proof and adding corrections where appropriate.

In Chapter $7$, I study the representation numbers $r_{S}$, where $r_{S}(n)$ is the number of ways to write $n$ as the sum of a square and the square of an element of $S$. In particular, I look at an `intermediate' case between $r_{\mathbb{N}}=r_0$ and $r_1$, namely $r_{\mathcal{R}}$, where $\mathcal{R}$ is the set of numbers expressible as a sum of two squares; I also consider $r_{\mathcal{R}'}$, where $\mathcal{R}'$ is the set of numbers expressible as a sum of two coprime squares. I find the behaviour of their first moment in a similar way to that of the first moment of $r_1$, expressing them through the zeroth moments of $r_0$ and $r_0^*$ respectively. The zeroth moment of $r_0$ was introduced briefly in Chapter $3$, and in Chapter $7$, I provide a proof for asymptotics of the zeroth moment of $r_0^*$. The proof is inspired by Landau's methods in~\cite{landaugerman} in combination with a more modern Tauberian theorem relating Dirichlet series to the sums of their coefficients.

Subsequently, I use methods from Theorem~\ref{thm:r1cl upper bound} to obtain an upper bound on the higher moments of $r_{\mathcal{R}}$ and $r_{\mathcal{R}'}$. The number of sums of two squares up to $x$ is $\frac{x}{\sqrt{\log x}}$ compared to the number of primes up to $x$ which is $\frac{x}{\log x}$. This relationship is reflected in the heuristics of the bounds I obtain, where the $\log x$ terms in denominators of bounds on moments of $r_1$ become replaced with $\sqrt{\log x}$ once I switch to considering~$r_{\mathcal{R}}$. This is shown in Theorem~\ref{moments of rR}, which I state presently:
\begin{theorem*}[Theorem~\ref{moments of rR}]
	For any fixed integer $\ell\ge 1$ and integer $k\ll L$ we have
	\[\smash{\sum_{{\substack{n\le x\\\omega^*(n)=k}}}\binom{r_{\mathcal{R}}^*(n)}{\ell}\ll_\ell\frac{xL^{O_\ell(1)}}{(\sqrt{\log x})^{\ell+1}}\frac{(2^{\ell-1}L)^k}{k!}.}\]
\end{theorem*}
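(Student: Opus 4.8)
The plan is to run the Granville--Sabuncu--Sedunova argument behind Theorem~\ref{thm:r1cl upper bound} almost verbatim, with the set of primes replaced throughout by the set $\mathcal{R}$ of sums of two squares, and to keep track of how the analytic inputs it uses --- the count of primes (and of primes in arithmetic progressions) up to $y$, the relevant Mertens-type sums, and the fundamental lemma of sieve theory --- must be swapped for the corresponding statements about $\mathcal{R}$; one cannot simply invoke Theorem~\ref{thm:r1cl upper bound} as a black box, so the proof is really a re-run. First I would open the binomial coefficient. Writing $L=\log\log x$ and recalling that $\binom{r_{\mathcal{R}}^*(n)}{\ell}$ counts unordered $\ell$-subsets of the representations $n=a^2+b^2$ with $b\in\mathcal{R}$ and $\gcd(a,b)=1$, we have
\[
\sum_{\substack{n\le x\\\omega^*(n)=k}}\binom{r_{\mathcal{R}}^*(n)}{\ell}
=\frac{1}{\ell!}\sum_{\substack{n\le x\\\omega^*(n)=k}}\ \sum_{\substack{(a_i,b_i)\ \text{distinct}\\ a_1^2+b_1^2=\cdots=a_\ell^2+b_\ell^2=n\\ b_i\in\mathcal{R},\ \gcd(a_i,b_i)=1}}1 ,
\]
so the moment becomes a count of integers $n\le x$ carrying $\ell$ prescribed, pairwise distinct representations with all second coordinates in $\mathcal{R}$ and with exactly $k$ prime factors $\equiv 1\pmod 4$.

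Second, I would parametrise the representations through $\mathbb{Z}[i]$, as in the proof of Theorem~\ref{thm:r1cl upper bound}: a representation $n=a^2+b^2$ is a factorisation $n=z\bar z$, and when $\omega^*(n)=k$ the (essentially $2^k$) representations are indexed by the ways of splitting the $k$ primes $p\equiv1\pmod 4$ dividing $n$ between $z$ and $\bar z$, up to Gaussian units and conjugation. Fixing one representation and letting the remaining $\ell-1$ range over these splittings yields the factor $2^{(\ell-1)k}$, while the near-Poisson behaviour of $\omega^*$ on integers of size $x$ (Landau's theorem~\cite{landaugerman} and its refinements) yields $L^k/k!$; together these give the combinatorial shape $\tfrac{(2^{\ell-1}L)^k}{k!}$, up to factors absorbed into $L^{O_\ell(1)}$. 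Since this step only sees the ideal-theoretic structure of $n$, it is word-for-word identical to the $r_1$ case, and in particular the admissible range $k\ll L$ is inherited unchanged.

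Third --- and this is the only genuinely new part --- the condition ``$b_i\in\mathcal{R}$'' takes the place of ``$b_i$ prime''. In the proof of Theorem~\ref{thm:r1cl upper bound} each such condition is controlled by a Selberg sieve of dimension $1$, which saves a factor $\asymp 1/\log x$, and $\ell$ representations together with the ambient sum over $a_1$ cost $(\log x)^{\ell+1}$ in total. Here I would instead majorise $\mathbbm{1}_{\mathcal{R}}(b)$ by a half-dimensional Selberg sieve weight for the sums of two squares, as in~\cite{Fordsieves} and~\cite{heathbrown2002lecturessieves}, or equivalently feed in Landau's estimate $\#\{b\le y:b\in\mathcal{R}\}\asymp y/\sqrt{\log y}$ together with its analogues for $b$ in a fixed residue class and with small prime factors prescribed by the $\mathbb{Z}[i]$ parametrisation. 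Each of these inputs saves $\asymp 1/\sqrt{\log x}$ in place of $1/\log x$, so every power of $\log x$ in the denominator of Theorem~\ref{thm:r1cl upper bound} is replaced by the same power of $\sqrt{\log x}$, producing the exponent $\ell+1$ on $\sqrt{\log x}$ claimed above.

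\emph{The main obstacle} will be the half-dimensional sieve, which is more delicate than the linear sieve: sums of two squares are equidistributed in arithmetic progressions only to rather small moduli, so the real work is in showing that the sieve error terms stay negligible \emph{uniformly} in $k$ (throughout $k\ll L$) and in the $O(\ell^2)$ pairwise interactions between the conditions $b_i\in\mathcal{R}$, and that the dimension $\tfrac12$ propagates correctly through the induction on $\ell$ underlying the Granville--Sabuncu--Sedunova bookkeeping. A related structural subtlety is that membership of $\mathcal{R}$ is itself a statement about norms from $\mathbb{Z}[i]$, so the parametrisation acquires a nested Gaussian-integer layer --- we are counting $n=a^2+b^2$ with $b$ in turn a sum of two squares --- and I would need to verify that this extra layer neither disturbs the multiplicativity exploited in the second step nor forces the constant hidden in $L^{O_\ell(1)}$ to blow up. It is these uniformity and bookkeeping checks, rather than any single hard estimate, where I expect the bulk of the effort to lie.
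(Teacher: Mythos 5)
Your overall strategy --- rerun the Granville--Sabuncu--Sedunova argument with the primality conditions on the linear forms replaced by membership of $\mathcal{R}$, with Mertens' theorem for primes $\equiv 3\pmod 4$ supplying a saving of $1/\sqrt{\log x}$ per condition --- is indeed the paper's strategy, and the combinatorial factor $(2^{\ell-1}L)^k/k!$ arises there essentially as you expect (from $\#\{I\subseteq\mathcal{L}_m\colon|I|=\ell\}\le 2^{\ell k}$ together with the count of $m$ with $\omega^*(m)=k-1$ via Lemma~\ref{lemma: numbers with k factors}). However, there is a genuine gap at the one point where this theorem differs from the easier Theorem~\ref{moments of rR'}: you propose to majorise $\mathbbm{1}_{\mathcal{R}}(\phi(a,b))$ by a sieve that sifts out the small primes $\equiv 3\pmod 4$. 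That majorisation is false for $\mathcal{R}$: elements of $\mathcal{R}$ may be divisible by primes $p\equiv 3\pmod 4$ provided these occur to even powers (for instance $9=0^2+3^2\in\mathcal{R}$), so the indicator of $\mathcal{R}$ is not dominated by the indicator of ``no prime factor $\equiv 3\pmod 4$ below $y$''. As written, your plan proves Theorem~\ref{moments of rR'} (about $\mathcal{R}'$), not Theorem~\ref{moments of rR}. The paper's proof addresses exactly this: it modifies the sieve so that for $p\equiv 3\pmod 4$ the event is $\mathcal{A}_p=\{p\mathrel\Vert (a^2+b^2)\prod_{\phi\in I}\phi(a,b)\}$, i.e.\ it works with residue classes modulo $p^2$ rather than $p$, computes the new density $p^4g(p)=\ell\,p(p-1)^2$ for $p\nmid T$ (with $g(p)=0$ when $p\mid T$ or $p\equiv 1\pmod 4$), and redoes the remainder analysis of Claim~\ref{claim: bound sieve errors} with $|R_d|\le 2d^2\sqrt{x/m}$, which still sums to $O\bigl(\frac xm\frac{L^{O_\ell(1)}}{(\log x)^{\ell+1}}\bigr)$ because $y^8=x^{8/10L}$ is negligible. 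Some such modification (sieving by exact or odd-power divisibility) is the only genuinely new ingredient of the theorem, and your proposal does not contain it.

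Two smaller points. First, the obstacle you single out --- equidistribution of sums of two squares in arithmetic progressions --- is not where the work lies: the sieve is applied to the polynomial $(a^2+b^2)\prod_{\phi\in I}\phi(a,b)$ over the box $1\le a,b\le\sqrt{x/m}$, where counting residue pairs modulo $d$ or $d^2$ is elementary, and no distributional input about $\mathcal{R}$ in progressions is needed. Second, your bookkeeping for the exponent (``every power of $\log x$ is replaced by the same power of $\sqrt{\log x}$'') miscounts the origin of the $+1$: in Theorem~\ref{thm:r1cl upper bound} one of the $\ell+1$ logarithms comes from the primality of $P(n)=a^2+b^2$, a condition that is retained, not replaced, here; in the paper the exponent $\ell+1$ on $\sqrt{\log x}$ is extracted from the Mertens-in-progressions evaluation of the product over the sieved primes $\equiv 3\pmod 4$, so in a correct write-up you should track that product explicitly rather than argue by wholesale substitution of $\log x$ by $\sqrt{\log x}$ in the $r_1$ bound.
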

\chapter{Foundations}\vspace{-2ex}
\section{Definitions}
In this section we provide key definitions. The definitions of well-known functions, such as Chebyshev functions and the prime counting function, are as according to convention -- they can be found in many textbooks; in this document we follow~\cite{hardy75}.\vspace{-4ex}
\begin{convention}
	Throughout the dissertation, if the letter $p$ appears as a summation variable, it will always run over primes.
\end{convention}
	\begin{definition}[$O_a,\ll,\asymp,\sim$]
		Let $f,g:\mathbb{R}\to\mathbb{R}$. Then:
	\begin{itemize}
		\item Given a parameter $a\in\mathbb{R}$, we say $f(x,a)=O_a(g(x,a))$ if $\exists c(a)>0$ such that $\forall x>x_0\colon |f(x,a)|\le c(a)|g(x,a)|$.
		\item We say $f(x)\ll g(x)$ to mean $f(x)=O(g(x))$. Given a parameter $a\in\mathbb{R}$, we write $f(x)\ll_a g(x)$ to mean $f(x)=O_a(g(x))$. Dependence on $a$ is implied.
		\item We say $f(x)\asymp g(x)$ to mean $f(x)\ll g(x)\ll f(x	)$.
		\item We say $f(x)\sim g(x)$ to mean $\lim_{x\to\infty}\frac{f(x)}{g(x)}=1$.
	\end{itemize}
\end{definition}
	\begin{definition} (Representation numbers $r_i$) 
		
		The \textbf{representation numbers} $r_0, r_1, r_2:\mathbb{Z}_{\ge 1}\to\mathbb{Z}_{\ge 0}$ are defined as follows:
		\begin{itemize}
		\item  $r_0(n)=\#\{(a,b)\in\mathbb{Z}_{\ge 0}\times\mathbb{Z}_{\ge 1}: a^2+b^2=n\}$
		\item  $r_1(n)=\#\{(a,p)\in\mathbb{Z}_{\ge 0}\times\mathbb{Z}_{\ge 1}: a^2+p^2=n,\, p\text{ is prime}\}$
		\item  $r_2(n)=\#\{(p,q)\in\mathbb{Z}_{\ge 0}\times\mathbb{Z}_{\ge 1}: p^2+q^2=n,\, p,q\text{ are prime}\}$
		\end{itemize}
	In particular, $r_0(1)=1$.
	\end{definition}
	\begin{definition}(Arithmetic functions,~{convolution})
		\begin{itemize}
			\item An \textbf{arithmetic function} is a function $f:\mathbb{N}\to\mathbb{C}$; it is \textbf{multiplicative} if for all coprime $a,b\in\mathbb{N}$ one has $f(ab)=f(a)f(b)$ and \textbf{totally multiplicative} if $f(ab)=f(a)f(b)$ holds for \textit{all} $a,b\in\mathbb{N}$.
			\item The \textbf{convolution} $f*g$ of two arithmetic functions $f$ and $g$ is defined by ${(f*g)(n)=\sum_{d\mid n}f(d)g(\frac nd)}$, where $d$ runs over all divisors of $n$. Note that if $f$ and $g$ are multiplicative arithmetic functions, then $f*g$ is also multiplicative.
		\end{itemize}
	\end{definition}
	\begin{definition}(Mobius function, von Mangoldt function, divisor functions)
		\begin{itemize}
			\item The \textbf{Mobius function} $\mu$ is nonzero on squarefree integers only, with $\mu(n)=(-1)^k$ for 
			$n=p_1\cdot\ldots\cdot p_k$ with $p_i$ distinct primes.
			\item \textbf{von Mangoldt's function} $\Lambda$ is nonzero on prime powers only, with
				$\Lambda(n)=\log p$ if $n=p^k$ for some prime $p$.
		\item The \textbf{prime divisor function} is defined by $\omega(x):=\#\{p\text{ prime}:p\mid x\}$.
		\item The \textbf{divisor counting function} is defined by
		$\displaystyle\tau(n)=\smash{\sum_{d\mid n}1}$.	
	\end{itemize}
	\end{definition}
	\begin{definition} (Chebyshev functions, prime counting function)
		
		Chebyshev's theta and psi function are defined as follows:
		\begin{itemize}
			\item $\displaystyle\theta(x)=\sum_{p\le x}\log p$ (reminder: we sum over primes $p$.)
			\item $\displaystyle\psi(x)=\sum_{p^k\le x}\log p=\sum_{n\le x}\Lambda(n)$
		\end{itemize}
		The \textbf{prime counting function} is defined $\pi(x):=\#\{p\le x: p\text{ prime}\}$. Its analogue in arithmetic progressions is
		\[\pi(x; q, a)= \!\!\!\!\!\sum_{\substack{p\le x,\ p\equiv a\pmod q}}\!\!\!\!\!\!\!\!\!\!1 = \#\{p\le x: p\equiv a \mod q\}.\]
	\end{definition}
	\begin{notation}($L$)
		We write $L$ or $L(x)$ to mean $\log\log x$.
	\end{notation}
	\begin{notation} $P(n)$ denotes the largest prime factor of $n$.
	\end{notation}
\section{Basic Results}
In this section we present some properties of arithmetic functions which will be used in the proofs of specialised results later on. These proofs are standard and can be found in many textbooks, including Hardy's book~\cite{hardy75} which we reference here.

Note that the set of all arithmetic functions is a ring with respect to pointwise addition and Dirichlet's convolution $*$. The arithmetic function
$I(n)=\mathbbm{1}_{\{n=1\}}$, taking value $1$ at $1$ and $0$ elsewhere, is easily seen to be the multiplicative identity of this ring. It is well known that
	$\mu*\mathbbm{1} = I$, or in other words, that $\mu$ is the multiplicative inverse of the constant function $\mathbbm{1}$ in this ring.
From this we obtain Mobius inversion: $g = f*\mathbbm{1}\iff f = \mu*g$. It immediately follows from commutativity and associativity of the unital ring operation $*$ and using $\mu*\mathbbm{1} = I$.
\begin{lemma}\label{lambda}
	$\log n=\sum_{k\mid n}\Lambda(k)$.
\end{lemma}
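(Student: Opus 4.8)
The plan is to prove the identity by unwinding the definition of $\Lambda$ and appealing to the fundamental theorem of arithmetic. The case $n=1$ is immediate, since both sides vanish ($\Lambda(1)=0$ and $\log 1 = 0$), so assume $n>1$ and write $n = p_1^{a_1}\cdots p_r^{a_r}$ with the $p_i$ distinct primes and each $a_i\ge 1$.

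First I would reduce the divisor sum to a sum over prime-power divisors. By definition $\Lambda(k)=0$ unless $k$ is a power of a single prime, so the only divisors of $n$ that contribute are those of the form $p_i^{\,j}$ with $1\le j\le a_i$; here unique factorization is what guarantees that the prime-power divisors of $n$ with base $p_i$ are exactly $p_i, p_i^2, \ldots, p_i^{a_i}$. Since $\Lambda(p_i^{\,j}) = \log p_i$ for every such $j$, this gives
\[
\sum_{k\mid n}\Lambda(k) \;=\; \sum_{i=1}^{r}\sum_{j=1}^{a_i}\Lambda(p_i^{\,j}) \;=\; \sum_{i=1}^{r}\sum_{j=1}^{a_i}\log p_i \;=\; \sum_{i=1}^{r} a_i\log p_i .
\]
Then I would simply observe that the last expression equals $\log\!\bigl(\prod_{i=1}^{r}p_i^{\,a_i}\bigr) = \log n$, which is the claim.

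There is no genuine obstacle here; the only points needing care are the bookkeeping just mentioned (counting prime-power divisors via unique factorization) and remembering that $\Lambda(1)=0$ so that no $j=0$ term intrudes. As an alternative one can phrase the same computation in the convolution language of the preceding paragraph: the identity asserts $\log = \Lambda * \mathbbm{1}$ as arithmetic functions, and one can check directly that both $n\mapsto\log n$ and $n\mapsto(\Lambda*\mathbbm{1})(n)$ are additive on coprime arguments (for the latter, if $\gcd(m,n)=1$ then a divisor of $mn$ factors uniquely as $d_1 d_2$ with $d_1\mid m$, $d_2\mid n$, and a prime power with coprime factorization $d_1 d_2$ forces one of $d_1,d_2$ to be $1$), so it suffices to verify equality on prime powers $n=p^a$, where both sides equal $a\log p$.
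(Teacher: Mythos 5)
Your proof is correct and follows essentially the same route as the paper: reduce the divisor sum to prime-power divisors of $n$, use $\Lambda(p_i^{\,j})=\log p_i$, and sum to get $\sum_i a_i\log p_i=\log n$. The extra remark about verifying $\log=\Lambda*\mathbbm{1}$ on prime powers via additivity is a fine alternative but not needed.
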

\begin{proof}
	Let $n=\prod_{i=1}^{s}p_i^{\alpha_i}$ be the prime factorisation of $n$. The only factors of $n$ which give nonzero contributions to $\sum_{k\mid n}\Lambda(k)$ are pure powers of primes, so \[\smash{\sum_{k\mid n}\Lambda(k) = \sum_{i=1}^s\sum_{j=1}^{\alpha_i}\Lambda(p_i^{j})=\sum_{i=1}^s\sum_{j=1}^{\alpha_i}\log p_i=\sum_{i=1}^s\alpha_i\log p_i=\log\bigl(\prod_{i=1}^{s}p_i^{\alpha_i}\bigr)=\log n.}\qedhere\]
\end{proof}
\begin{lemma}\label{divisor-epsilon}
	For any $\epsilon>0$, we have $\tau(n)\ll_\epsilon n^\epsilon$.
\end{lemma}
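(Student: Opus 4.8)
The plan is to reduce to prime powers via multiplicativity. Since $\tau$ is multiplicative and $n\mapsto n^{\epsilon}$ is totally multiplicative, the quotient $g(n):=\tau(n)/n^{\epsilon}$ is multiplicative; hence if $n=\prod_{i=1}^{s}p_i^{\alpha_i}$ then $g(n)=\prod_{i=1}^{s}g(p_i^{\alpha_i})$, so it suffices to bound each factor $g(p^{\alpha})=(\alpha+1)/p^{\alpha\epsilon}$ and check that the product stays bounded by a constant depending only on $\epsilon$.

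First I would split the prime factors of $n$ into the \emph{large} ones, with $p\ge 2^{1/\epsilon}$, and the \emph{small} ones, with $p<2^{1/\epsilon}$. For a large prime, $p^{\alpha\epsilon}\ge 2^{\alpha}\ge\alpha+1$ for every $\alpha\ge1$ (the inequality $2^{\alpha}\ge\alpha+1$ being a one-line induction), so $g(p^{\alpha})\le1$, and the large primes together contribute at most $1$ to the product. For a small prime I would use only $p\ge2$, giving $g(p^{\alpha})\le(\alpha+1)2^{-\alpha\epsilon}$; the right-hand side, viewed as a function of $\alpha\ge0$, tends to $0$ as $\alpha\to\infty$ and is therefore bounded by some $c_{\epsilon}<\infty$ (one could even locate its maximum by calculus, but finiteness is all that is needed). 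Since there are at most $2^{1/\epsilon}$ small primes, their contribution to the product is at most $C(\epsilon):=c_{\epsilon}^{\lceil 2^{1/\epsilon}\rceil}$.

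Combining the two cases gives $g(n)\le C(\epsilon)$, that is, $\tau(n)\le C(\epsilon)\,n^{\epsilon}$, which is precisely $\tau(n)\ll_{\epsilon}n^{\epsilon}$.

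This argument is entirely elementary, so there is no genuine obstacle; the single point to watch is that $C(\epsilon)$ is finite only because there are finitely many small primes and because $(\alpha+1)2^{-\alpha\epsilon}$ is bounded in $\alpha$ — both features degrade as $\epsilon\to0$, consistently with the implied constant being forced to depend on $\epsilon$.
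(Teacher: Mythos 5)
Your proof is correct and follows essentially the same route as the paper: both split the primes at $2^{1/\epsilon}$, bound each factor $(\alpha+1)/p^{\alpha\epsilon}$ by $1$ for $p\ge 2^{1/\epsilon}$, and bound the finitely many small-prime factors by a constant depending only on $\epsilon$. The only difference is cosmetic: the paper makes the small-prime bound explicit, namely $(\alpha+1)/p^{\alpha\epsilon}\le\exp\bigl(\frac1{\epsilon\log 2}\bigr)$ via $e^x\ge x$, whereas you invoke boundedness of $(\alpha+1)2^{-\alpha\epsilon}$ without naming the constant, which is equally valid here.
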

\begin{proof} Refer to the second proof of Theorem 315 in~\cite{hardy75}.

Let $\epsilon>0$ be arbitary, and let $n=p_1^{\alpha_1}\dots p_k^{\alpha_k}$. We can see that $\tau(n)=\prod_{i=1}^{k}(\alpha_i+1)$. Hence
\begin{equation*}
	\smash[t]{\frac{\tau(n)}{n^\epsilon}=\prod_{i=1}^{k}\frac{\alpha_i+1}{p_i^{\epsilon\alpha_i}}.}
\end{equation*}
Now, for a fixed $p_i$, consider the term $(\alpha_i+1)p_i^{-\epsilon\alpha_i}$. We have two cases, depending on if $p<2^{\frac1\epsilon}$ or not. If $p\ge2^{\frac1\epsilon}$ then $\displaystyle\dfrac{\alpha_i+1}{p_i^{\epsilon\alpha_i}}\le\frac{\alpha_i+1}{2^{\alpha_i}}\le 1$.

In the remaining case, we will make use of the fact that $p_i^{\epsilon\alpha_i}=\exp(\epsilon\alpha_i\log p_i)\ge \epsilon\alpha_i\log p_i\ge \epsilon\alpha_i\log 2$. We know that $e^x\ge x$ by looking at the Taylor series, or derivative, or other methods; in fact, $e^x\ge 1+x$ for $x\ge 0$. Hence
\[\frac{\alpha_i+1}{p_i^{\epsilon\alpha_i}}\le\frac{\alpha_i+1}{\epsilon\alpha_i\log 2}\le 1+\frac1{\epsilon\log 2}\le\exp\Bigl(\frac1{\epsilon\log 2}\Bigr).\]

Thus \begin{equation}\label{tauin}
\smash[t]{\frac{\tau(n)}{n^\epsilon}=\prod_{i=1}^{k}\frac{\alpha_i+1}{p_i^{\epsilon\alpha_i}}\le\prod_{p\le 2^\frac1\epsilon}\exp\Bigl(\frac1{\epsilon\log 2}\Bigr)\le\exp\Bigl(\frac{2^\frac1\epsilon}{\epsilon\log 2}\Bigr)=O(1).}
\end{equation}
Then $\tau(n)=O(n^\epsilon)$, which is equivalent to our original statement.
\end{proof}
\begin{lemma}\label{lemma: tau o(1)}
	$\forall\epsilon>0:\exists n_0$ s.t. $\log(\tau(n))\le\dfrac{(1+\epsilon)\log 2\log n}{\log\log n}$ for any $n\ge n_0$.
\end{lemma}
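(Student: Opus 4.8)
The plan is to estimate $\log\tau(n)=\sum_{p^{a}\| n}\log(a+1)$ directly from the prime factorisation, splitting the sum over the primes dividing $n$ at a threshold $y=y(n)$ that grows like a small power of $\log n$. The tail $p>y$ is handled by an inequality that is sharp at exponent $1$ (this is where the constant $\log 2$ comes from), while the head $p\le y$ is bounded crudely by (number of primes involved) $\times$ (largest possible value of $\log(a+1)$); finally one calibrates $y$ so that the tail produces exactly the claimed main term and the head stays negligible.

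Concretely, I would first record two elementary facts: $\log(a+1)\le a\log 2$ for every integer $a\ge 1$ (equivalently $a+1\le 2^{a}$), and, whenever $p^{a}\| n$, one has $a\le \log n/\log p\le \log n/\log 2$, hence $\log(a+1)\le \log\log n+O(1)$. For the tail, $\sum_{p>y,\,p^{a}\|n}\log(a+1)\le \log 2\sum_{p>y,\,p^{a}\|n}a$, and since each such prime has $\log p>\log y$ while $\sum_{p\mid n}a\log p=\log n$ by Lemma~\ref{lambda}, this is at most $\frac{\log 2\,\log n}{\log y}$. For the head, at most $\pi(y)\le y$ primes contribute, each at most $\log\log n+O(1)$, so this part is $\ll y\log\log n$.

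Now I would choose $y=(\log n)^{1/(1+\epsilon/2)}$, so that $\log y=\frac{\log\log n}{1+\epsilon/2}$ and the tail contributes at most $\frac{(1+\epsilon/2)\log 2\,\log n}{\log\log n}$, which is already of the required shape. Since the exponent $1/(1+\epsilon/2)$ is strictly less than $1$, the head term satisfies $y\log\log n=(\log n)^{1/(1+\epsilon/2)}\log\log n=o\!\left(\frac{\log n}{\log\log n}\right)$. Adding the two estimates yields $\log\tau(n)\le \frac{(1+\epsilon/2)\log 2\,\log n}{\log\log n}+o\!\left(\frac{\log n}{\log\log n}\right)$, and because $(1+\epsilon/2)\log 2<(1+\epsilon)\log 2$ this is $\le \frac{(1+\epsilon)\log 2\,\log n}{\log\log n}$ for all $n$ beyond some $n_{0}=n_{0}(\epsilon)$, as claimed.

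The main obstacle is the calibration of the threshold rather than any individual estimate: a fixed threshold (as in the proof of Lemma~\ref{divisor-epsilon}) or a threshold as large as $\log n$ makes the head term too large, and to pin the constant at exactly $\log 2$ — instead of some larger value — one must let $\log y$ approach $\log\log n$ from below at precisely the right rate, balancing it against the number of small primes. Once $y$ is chosen correctly, both bounds are routine.
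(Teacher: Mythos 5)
Your proof is correct, and it takes a genuinely different (more self-contained) route than the paper. The paper recycles the Rankin-type argument of Lemma~\ref{divisor-epsilon}: it bounds $\tau(n)/n^{\alpha}$ with the variable exponent $\alpha=\frac{(1+\epsilon/2)\log 2}{\log\log n}$, reduces everything to the product over primes $p\le 2^{1/\alpha}$, and then unwinds $\log(\tau(n)/n^{\alpha})$ to recover the stated bound. You instead work directly with $\log\tau(n)=\sum_{p^{a}\|n}\log(a+1)$, splitting at $y=(\log n)^{1/(1+\epsilon/2)}$: the tail is controlled by $\log(a+1)\le a\log 2$ together with $\sum_{p^{a}\|n}a\log p=\log n$ (Lemma~\ref{lambda}), and the head by the trivial count $\pi(y)\le y$ times $\log(a+1)\le\log\log n+O(1)$; all the estimates you invoke are correct, the head is indeed $o\bigl(\frac{\log n}{\log\log n}\bigr)$ since $1/(1+\epsilon/2)<1$, and the final absorption into the $(1+\epsilon)$ constant is legitimate. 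Note that the two arguments are secretly calibrated identically — the paper's prime cutoff $2^{1/\alpha}=(\log n)^{1/(1+\epsilon/2)}$ is exactly your threshold $y$ — but your version makes the source of the constant $\log 2$ (sharpness of $a+1\le 2^{a}$ at $a=1$, i.e.\ squarefree $n$ with many small prime factors) transparent and needs no appeal to the earlier lemma, whereas the paper's version is shorter given that Lemma~\ref{divisor-epsilon} has already been established and its proof structure can be reused verbatim.
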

\begin{proof}
	We improve the bound on $\tau(n)$ using Lemma~\ref{divisor-epsilon}, following the method in~\cite{hardy75}, and filling in the missing steps. We replace $\epsilon$ with the expression $\alpha=\dfrac{(1+\frac12\epsilon)\log 2}{\log\log n}$. In the above proof, we did not use that $\epsilon$ was a constant until the very last step of asserting $\exp\bigl(\frac{2^{1/\epsilon}}{\epsilon\log 2}\bigr)=O(1)$. Hence, as in~\eqref{tauin}, we have $\frac{\tau(n)}{n^\alpha}\le \exp\bigl(\frac{2^{1/\alpha}}{\alpha\log 2}\bigr)$. Note that $\frac1\alpha = \frac{\log\log n}{(1+\frac12\epsilon)\log 2}$, so $2^{\frac1\alpha}=\exp\bigl(\frac{\log\log n}{(1+\frac12\epsilon)}\bigr)=(\log n)^{1/(1+\frac12\epsilon)}$. Thus
	\[\log\Bigl(\frac{\tau(n)}{n^\alpha}\Bigr)\le \frac{2^\frac1\alpha}{\alpha\log 2}=\frac{(\log n)^{1/(1+\frac12\epsilon)}\log\log n}{(1+\frac12\epsilon)(\log 2)^2}.\]
	As $n\to\infty$, we have $\log\log n=o((\log n)^\delta)$ for any $\delta$, i.e. $\log\log n$ is small compared to any power of $\log n$. In fact, this holds for all powers of $\log\log n$, so for large enough $n$ ($n\ge n_0(\epsilon)$, some constant $n_0(\epsilon)$) we can write \[\log\Bigl(\frac{\tau(n)}{n^\alpha}\Bigr)\le\frac{(\log n)^{1/(1+\frac12\epsilon)}\log\log n}{(1+\frac12\epsilon)(\log 2)^2}\le\frac{\epsilon\log 2\log n}{2\log\log n}.\]
	But $\log\bigl(\frac{\tau(n)}{n^\alpha}\bigr)=\log(\tau(n))-\alpha\log n$. Hence \[\log(\tau(n))\le\frac{\frac12\epsilon\log 2\log n}{\log\log n}+\frac{(1+\frac12\epsilon)\log 2\log n}{\log\log n}=\frac{(1+\epsilon)\log 2\log n}{\log\log n}.\qedhere\]
\end{proof}
In particular, either of the above results shows that $\frac{\log(\tau(n))}{\log n}\to 0$ as $n\to\infty$, so $\frac{\log(\tau(n))}{\log n}=o(1)$. Since we can write $\tau(n)=n^{\log(\tau(n))/\log n}$, we have
\begin{equation}\label{eq: tau o(1)}
	\tau(n)=n^{o(1)}.
\end{equation}
We give the following result for our future reference:
\begin{theorem}\label{estimate on omega}
	$\omega(n)\ll\dfrac{\log n}{\log\log n}$.
\end{theorem}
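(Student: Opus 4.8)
The plan is to bound $\omega(n)$ by observing that if $n$ has $k$ distinct prime factors, then $n$ is at least as large as the product of the $k$ smallest primes, and then to estimate that product using Chebyshev-type bounds. Concretely, suppose $\omega(n) = k$ and let $p_1 < p_2 < \cdots < p_k$ be the distinct primes dividing $n$. Since $p_j \ge j+1$ is too weak, the right move is to compare against the actual primes: $n \ge \prod_{p \le p_k} p$, where the product runs over all primes up to $p_k$ (since every prime up to the $k$-th prime divides $n$, at least to the first power). Taking logarithms, $\log n \ge \sum_{p \le p_k} \log p = \theta(p_k)$.

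The key input is a lower bound of the form $\theta(y) \gg y$ for $y$ large, which is the easy (Chebyshev) half of the prime number theorem — in fact even $\pi(y) \gg y/\log y$ combined with $\theta(y) \ge \pi(y) \cdot$ (something) would do, but cleanest is $\theta(p_k) \gg p_k$. On the other hand, the $k$-th prime satisfies $p_k \gg k \log k$ (again Chebyshev-strength, from $\pi(y) \ll y/\log y$), so $\log n \ge \theta(p_k) \gg p_k \gg k \log k$. From $\log n \gg k \log k$ one extracts $k \ll \log n / \log\log n$ by a standard inversion: writing $\log n \ge c\, k \log k$, if $k \ge 3$ then $\log k \ge \tfrac12 \log(k\log k) \ge \tfrac12 \log(\log n / c) \gg \log\log n$, so $k \ll \log n / \log\log n$; the finitely many small $k$ are absorbed into the implied constant. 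This gives $\omega(n) \ll \log n / \log\log n$ as claimed.

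The main obstacle — really the only place where care is needed — is making sure the chain $\log n \gg k\log k \implies k \ll \log n/\log\log n$ is handled with the correct logarithmic inversion, since one is taking $\log$ of both sides and must check that $\log\log n$ (rather than, say, $\log\log\log n$) is the genuine denominator; this works precisely because the dominant factor in $p_k$ is linear in $k$ (up to the $\log k$), not polynomial. An alternative route avoiding any inversion subtlety is to feed in Lemma~\ref{lemma: tau o(1)} directly: since $\tau(n) = \prod (\alpha_i+1) \ge 2^{\omega(n)}$, we get $\omega(n) \log 2 \le \log \tau(n) \le \frac{(1+\epsilon)\log 2 \log n}{\log\log n}$ for $n \ge n_0(\epsilon)$, so $\omega(n) \le (1+\epsilon)\frac{\log n}{\log\log n}$, and the bounded range $n < n_0$ is trivially absorbed. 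This second argument is shorter and reuses machinery already in place, so I would present it as the main proof and mention the Chebyshev-product argument as a remark.
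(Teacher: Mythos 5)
Your main argument --- combining $2^{\omega(n)}\le\tau(n)$ with Lemma~\ref{lemma: tau o(1)} and absorbing the finitely many $n<n_0(\epsilon)$ into the implied constant --- is correct and is exactly the proof the paper gives (the paper just fixes $\epsilon=1$). As a minor aside, in your alternative Chebyshev remark the justification ``every prime up to the $k$-th prime divides $n$'' is not right as stated; the correct comparison is $n\ge p_1\cdots p_k\ge q_1\cdots q_k$ with $q_j$ the $j$-th prime, after which $\log n\ge\theta(q_k)\gg q_k\gg k\log k$ and your inversion go through as written.
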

\begin{proof}
	By exponentiating both sides in Lemma~\ref{lemma: tau o(1)}, we see that $\forall\epsilon>0$ we eventually have $\tau(n)\le 2^{(1+\epsilon)\log n/\log\log n}$. Moreover, by considering the prime factorisation $n=p_1^{\alpha_1}\dots p_k^{\alpha_k}$, where $\forall i\colon\alpha_i\ge 1$, we compare $k=\omega(n)$ and $\prod_{i=1}^k(\alpha_i+1)\ge 2^k$ to get $2^{\omega(n)}\ll\tau(n)$. Hence, picking a particular constant $\epsilon$ (e.g. $\epsilon=1$) we get $2^{\omega(n)}\ll2^{2\log n/\log\log n}\implies \omega(n)\ll\frac{2\log n}{\log\log n}\ll\frac{\log n}{\log\log n}$, as required.
\end{proof}
\section{PNT with error term}
The asymptotic behaviour of the prime counting function is well-known:
$\pi(x)\sim\frac{x}{\log x}$. Given a zero-free region for the Riemann zeta function, we can find an error term for the prime counting function $\pi$. Standard techniques such as Perron's Formula and complex contours are often used in the proof. Perron's Formula relates Chebyshev's function $\psi$ to an integral of $\frac{-\zeta'(s)}{\zeta(s)}\cdot\frac{x^s}{s}$, and we then change the contour of integration using our knowledge of the poles of $\frac{\zeta'}{\zeta}$ and complex analysis. The stronger the known zero-free region, the better the error term; thus, proving the Riemann Hypothesis would greatly improve the error term estimate for $\pi$, as the zero-free region would extend all the way to the line $\Re(s)=\frac12$. Unfortunately, even the strongest known zero-free regions are nowhere near to this.

The following result is equivalent to the Prime Number Theorem with weak error term $O(x\exp(-c\sqrt[10]{\log x}))$.
\begin{lemma*}
	$\psi(x) = x + O(x\exp(-c\sqrt[10]{\log x}))$, for some real $c>0$, and $x\ge 2$.
\end{lemma*}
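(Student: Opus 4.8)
The plan is to run the classical complex-analytic proof of the Prime Number Theorem, being deliberately wasteful at the end so that the generous exponent $1/10$ (rather than the sharper $1/2$ one could actually extract) falls out with room to spare. First I would invoke the truncated Perron formula: for $x\ge 2$ not an integer, $c_0=1+\tfrac{1}{\log x}$ and a height $T$ to be chosen,
\[
\psi(x)=\frac{1}{2\pi i}\int_{c_0-iT}^{c_0+iT}\left(-\frac{\zeta'(s)}{\zeta(s)}\right)\frac{x^s}{s}\,ds+O\!\left(\frac{x(\log x)^2}{T}\right),
\]
where the error comes from bounding $-\zeta'/\zeta=\sum_n\Lambda(n)n^{-s}$ termwise on $\Re s=c_0$ against $\sum_n\Lambda(n)n^{-c_0}\ll\log x$ together with the standard tail estimates for the truncation; an integer $x$ is absorbed at the cost of an extra $O(\log x)$.

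Next I would shift the contour from $\Re s=c_0$ leftward to a broken contour $\Gamma$ hugging the boundary of a classical zero-free region $\sigma\ge 1-\tfrac{c_1}{\log(|t|+2)}$ of $\zeta$, on which one moreover has $\zeta'(s)/\zeta(s)\ll\log^2(|t|+2)$; both statements are the usual consequences of the Hadamard product and Borel--Carath\'eodory arguments, which I would simply cite. The only singularity of $-\tfrac{\zeta'(s)}{\zeta(s)}\tfrac{x^s}{s}$ enclosed is the simple pole of $\zeta'/\zeta$ at $s=1$, with residue $x$ (the pole of $1/s$ at the origin stays outside once $T$ is large), so
\[
\psi(x)=x+\frac{1}{2\pi i}\int_{\Gamma}\left(-\frac{\zeta'(s)}{\zeta(s)}\right)\frac{x^s}{s}\,ds+O\!\left(\frac{x(\log x)^2}{T}\right),
\]
with $\Gamma$ the left vertical segment at $\sigma=1-\tfrac{c_1}{\log(T+2)}$ and the two horizontals at height $\pm T$.

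Finally I would bound the three pieces of $\int_\Gamma$: on the horizontals $|x^s|\ll x$ and $|s|\gg T$, contributing $\ll x(\log T)^2/T$; on the left vertical segment $|x^s|=x\exp(-c_1\log x/\log(T+2))$ while $\int dt/|s|\ll(\log T)^2$, contributing $\ll x(\log T)^2\exp(-c_1\log x/\log(T+2))$. Taking $\log T=(\log x)^{1/2}$ makes every error term $\ll x\exp(-c_2\sqrt{\log x})$ for some $c_2>0$, which is stronger than — hence implies — the claimed $O(x\exp(-c\sqrt[10]{\log x}))$ once the polynomial factors $(\log x)^{O(1)}$ are swallowed by the exponential at the cost of shrinking the constant. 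The real obstacle is not any one estimate but the bookkeeping: securing the zero-free region together with the $\zeta'/\zeta\ll\log^2$ bound on it (the genuinely substantive analytic input) and then balancing $T$ against the width of the region so the exponentially small main term survives the truncation error. Because the target exponent $1/10$ is so lax, I have ample slack and could even get away with a cruder zero-free region if convenient.
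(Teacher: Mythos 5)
Your argument is the standard Perron-formula-plus-contour-shift proof using the classical zero-free region, which is exactly the route the paper sketches (it states this lemma without a detailed proof, describing precisely this method), and your bookkeeping — balancing $T$ via $\log T=\sqrt{\log x}$ and absorbing logarithmic factors into the exponential — is correct and in fact yields the stronger $O(x\exp(-c\sqrt{\log x}))$ bound, which implies the stated one. No gaps; this matches the paper's intended approach.
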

The classical zero-free region for $\zeta$ is $\sigma\ge 1-\frac{c}{\log(|t|+2)}$, giving the following result:
\begin{lemma*}
	$\psi(x) = x + O(x\exp(-c\sqrt{\log x}))$, for some real $c>0$.
\end{lemma*}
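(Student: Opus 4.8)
The plan is to run the classical Perron-formula-plus-contour-shift argument. First I would apply the truncated Perron formula to the Dirichlet series $-\zeta'(s)/\zeta(s) = \sum_{n\ge 1}\Lambda(n)n^{-s}$, obtaining
\[\psi(x) = \frac{1}{2\pi i}\int_{c-iT}^{c+iT}\left(-\frac{\zeta'(s)}{\zeta(s)}\right)\frac{x^s}{s}\,ds + R(x,T),\]
with $c = 1 + 1/\log x$ and $R(x,T)$ the usual truncation error; using $\Lambda(n)\le\log n$ and treating the integers near $x$ separately gives $R(x,T)\ll \frac{x(\log x)^2}{T} + \log x$ when $x$ is, say, half an odd integer, and the general case follows by comparison over a short interval.

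Next I would shift the contour to the vertical line $\Re(s) = 1 - \delta$, where $\delta = \delta(T) \asymp 1/\log T$ is chosen so that the closed rectangle with vertices $1-\delta \pm iT$ and $c \pm iT$ lies inside the classical zero-free region $\sigma \ge 1 - c_0/\log(|t|+2)$. Inside this rectangle the only singularity of the integrand is the simple pole of $-\zeta'/\zeta$ at $s=1$, whose residue is $1$; since $x^s/s$ equals $x$ there, the residue theorem produces the main term $x$, and it remains to estimate the integrals over the left edge and the two horizontal edges of the rectangle.

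The key analytic input, and the step I expect to be the main obstacle, is the bound $\bigl|\zeta'(s)/\zeta(s)\bigr| \ll (\log(|t|+2))^2$ throughout the zero-free region (shrunk by a constant factor) away from a bounded neighbourhood of $s=1$. This is not cheap to produce from scratch: it rests on the growth estimate $\zeta(s) \ll \log(|t|+2)$ near the line $\sigma = 1$, the Borel--Carath\'{e}odory lemma applied to $\log\zeta$ on suitable discs, and the partial-fraction expansion of $\zeta'/\zeta$ in terms of the nearby zeros, so in practice I would either assemble these lemmas carefully or cite a standard text (e.g.\ Montgomery--Vaughan or Titchmarsh). Granted the bound, the horizontal edges contribute $\ll \frac{x(\log T)^2}{T}$ and the left edge contributes $\ll x^{1-\delta}(\log T)^2\log x$.

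Finally I would balance the parameters: choosing $T = \exp\bigl(\sqrt{\log x}\bigr)$ gives $\delta \asymp 1/\sqrt{\log x}$, so $x^{1-\delta} = x\exp(-\sqrt{\log x})$, and one checks that $R(x,T)$, the horizontal contributions $x(\log T)^2/T$, and the powers of $\log$ on the left edge are all absorbed into $x\exp(-c\sqrt{\log x})$ after shrinking $c$. This yields $\psi(x) = x + O(x\exp(-c\sqrt{\log x}))$, as stated.
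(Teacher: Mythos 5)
Your proposal is correct and follows exactly the route the paper indicates (but does not write out): Perron's formula for $-\zeta'(s)/\zeta(s)$, a contour shift into the classical zero-free region $\sigma\ge 1-c_0/\log(|t|+2)$ using the standard bound on $\zeta'/\zeta$ there, and the choice $T=\exp(\sqrt{\log x})$ to balance the error terms. The parameter choices and error estimates you give are the standard ones and yield the stated bound.
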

\begin{corollary*}[Prime Number Theorem with classical error term]\label{lemma: pi = x/logx}
	\[\pi(x) = \dfrac{x}{\log x} + O(x\exp(-c\sqrt{\log x})), \text{ for some real } c>0.\]
\end{corollary*}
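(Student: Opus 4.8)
The plan is to bootstrap the estimate for $\pi$ from the stated estimate $\psi(x) = x + O(x\exp(-c\sqrt{\log x}))$ by two standard reductions, the substantive one being partial summation. First I would pass from $\psi$ to Chebyshev's $\theta$. Since $\psi(x) - \theta(x) = \sum_{k\ge 2}\theta(x^{1/k})$, and the terms vanish once $x^{1/k} < 2$ (that is, for $k > \log_2 x$), the trivial bound $\theta(t)\ll t$ gives $\psi(x)-\theta(x) \ll \sqrt{x}\log x$, which is dwarfed by $x\exp(-c\sqrt{\log x})$ (possibly after decreasing $c$ slightly). Hence $\theta(x) = x + O(x\exp(-c\sqrt{\log x}))$, an error of the same shape.

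Next I would recover $\pi$ from $\theta$ by Abel summation. Writing $\pi(x) = \sum_{p\le x}(\log p)\cdot\frac1{\log p}$ and summing by parts against the weight $1/\log t$ with summatory function $\theta$ on $[2,x]$, one gets
\[\pi(x) = \frac{\theta(x)}{\log x} + \int_2^x \frac{\theta(t)}{t(\log t)^2}\,dt.\]
Substituting $\theta(t) = t + E(t)$ with $E(t)\ll t\exp(-c\sqrt{\log t})$, the principal part is $\frac{x}{\log x} + \int_2^x\frac{dt}{(\log t)^2}$, which an integration by parts identifies with $\mathrm{Li}(x) + O(1)$, where $\mathrm{Li}(x) := \int_2^x dt/\log t = \frac{x}{\log x} + O\!\bigl(\frac{x}{(\log x)^2}\bigr)$.

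The only place demanding care is the contribution of $E$, namely $\frac{E(x)}{\log x} + \int_2^x\frac{E(t)}{t(\log t)^2}\,dt$; the boundary term is clearly $O(x\exp(-c\sqrt{\log x}))$, and for the integral I would split the range at $\sqrt{x}$. On $[2,\sqrt{x}]$ I bound $|E(t)|/t\ll 1$, contributing $\ll\int_2^{\sqrt x}(\log t)^{-2}\,dt\ll\sqrt{x}$; on $[\sqrt{x},x]$ I use $|E(t)|/t\ll\exp(-c\sqrt{\log t})\le\exp(-\tfrac{c}{\sqrt2}\sqrt{\log x})$, so that part is $\ll x\exp(-c'\sqrt{\log x})$. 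Assembling the pieces yields $\pi(x) = \mathrm{Li}(x) + O(x\exp(-c'\sqrt{\log x}))$, hence the corollary. (Strictly, since $\mathrm{Li}(x) - \frac{x}{\log x}\asymp \frac{x}{(\log x)^2}$ is larger than the exponential error, the scrupulously correct statement keeps $\mathrm{Li}(x)$ as the main term — or else weakens the error to $O(x/(\log x)^2)$; I will prove the $\mathrm{Li}$ version and then record the displayed form as its consequence.) There is no genuine obstacle here: the only things to monitor are that each reduction may force the constant $c$ to shrink, and that it is precisely the splitting of the error integral at $\sqrt{x}$ that preserves the exponential saving.
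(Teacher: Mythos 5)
Your argument is correct, and it is exactly the standard deduction (pass from $\psi$ to $\theta$ by stripping prime powers, then Abel summation against $1/\log t$) that the paper implicitly relies on but does not write out: the text simply states the $\psi$-estimate and then the corollary without proof. Your splitting of the error integral at $\sqrt{x}$ is the right way to preserve the exponential saving, and shrinking $c$ at each reduction is harmless.

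One point where you are in fact more careful than the paper: as you note, $\mathrm{Li}(x)-\frac{x}{\log x}\asymp \frac{x}{(\log x)^2}$, which is \emph{larger} than $x\exp(-c\sqrt{\log x})$, so the displayed corollary with main term $\frac{x}{\log x}$ cannot literally carry the exponential error term; the correct statements are $\pi(x)=\mathrm{Li}(x)+O(x\exp(-c\sqrt{\log x}))$ or $\pi(x)=\frac{x}{\log x}+O\bigl(\frac{x}{(\log x)^2}\bigr)$. Your plan of proving the $\mathrm{Li}$ version and then recording the weaker $x/\log x$ form is the right fix; note that the paper's later remark that $\mathrm{Li}(x)-\frac{x}{\log x}$ is ``smaller than the error term in the result'' has the comparison backwards, so your correction should be kept rather than reconciled with that remark.
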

\begin{remark*}
	Although the logarithmic integral $\text{Li}(x)$ is the same as $\frac{x}{\log x}$ modulo the error term we can prove without assuming results such as the Riemann Hypothesis, in practice we can observe that $\pi(x)$ is closer to the behaviour of $\text{Li}(x)$ than $\frac{x}{\log x}$, which motivates its use in the following.
\end{remark*} 
\begin{lemma*}
	$\pi(x) = \mathrm{Li}(x) + O(x\exp(-c\sqrt{\log x}))$, for some real $c>0$, where the logarithmic integral $\mathrm{Li}(x)$ is defined as $\displaystyle \int_2^x\frac{\mathrm{d}t}{\log t}$.
\end{lemma*}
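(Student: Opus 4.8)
The plan is to deduce the $\mathrm{Li}(x)$ form from the Chebyshev estimate $\psi(x) = x + O(x\exp(-c\sqrt{\log x}))$ stated above, via partial summation. Note first that one cannot simply quote the preceding Corollary: since $\mathrm{Li}(x) - \frac{x}{\log x} \asymp \frac{x}{(\log x)^2}$, which is \emph{larger} than $x\exp(-c\sqrt{\log x})$, the statement with $\mathrm{Li}(x)$ is genuinely stronger than the one with $\frac{x}{\log x}$, so it has to be proved from $\psi$ (equivalently $\theta$) directly.

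First I would pass from $\psi$ to $\theta(x) = \sum_{p\le x}\log p$. The difference $\psi(x) - \theta(x) = \sum_{k\ge 2}\sum_{p^k\le x}\log p \ll \sqrt{x}\log x$, and since $\sqrt{x}\log x \ll x\exp(-c\sqrt{\log x})$ for all large $x$, the estimate $\theta(x) = x + O(x\exp(-c\sqrt{\log x}))$ follows as well (I will not track the constant $c$ and will allow it to shrink a finite number of times). Write $\theta(t) = t + E(t)$ with $E(t)\ll t\exp(-c\sqrt{\log t})$.

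Next, partial summation. Since $\pi(x) = \sum_{2\le n\le x}(\log n)^{-1}\bigl(\theta(n)-\theta(n-1)\bigr)$ is a Riemann--Stieltjes integral $\int_{2^-}^{x}(\log t)^{-1}\,d\theta(t)$, integrating by parts gives
\[
\pi(x) = \frac{\theta(x)}{\log x} + \int_2^x \frac{\theta(t)}{t(\log t)^2}\,dt .
\]
Substituting $\theta(t) = t + E(t)$ splits the right-hand side into a main term $\frac{x}{\log x} + \int_2^x \frac{dt}{(\log t)^2}$ and an error $\frac{E(x)}{\log x} + \int_2^x \frac{E(t)}{t(\log t)^2}\,dt$. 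Integration by parts applied to $\mathrm{Li}(x) = \int_2^x \frac{dt}{\log t}$ shows $\int_2^x \frac{dt}{(\log t)^2} = \mathrm{Li}(x) - \frac{x}{\log x} + \frac{2}{\log 2}$, so the main term equals $\mathrm{Li}(x) + \frac{2}{\log 2}$, and the additive constant is absorbed into the error.

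The only nontrivial part will be bounding the error integral $\int_2^x \frac{E(t)}{t(\log t)^2}\,dt$. I would split the range at $t = \sqrt{x}$: on $[2,\sqrt{x}]$ use the trivial bound $|E(t)| \le \theta(t) + t \ll t$, giving a contribution $\ll \int_2^{\sqrt{x}}\frac{dt}{(\log t)^2} \ll \sqrt{x}$; on $[\sqrt{x},x]$ use $|E(t)| \ll t\exp(-c\sqrt{\log t}) \le t\exp\bigl(-\tfrac{c}{\sqrt 2}\sqrt{\log x}\bigr)$, giving a contribution $\ll \exp\bigl(-\tfrac{c}{\sqrt 2}\sqrt{\log x}\bigr)\int_{\sqrt{x}}^{x}\frac{dt}{(\log t)^2} \ll x\exp\bigl(-\tfrac{c}{\sqrt 2}\sqrt{\log x}\bigr)$. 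Together with $\frac{E(x)}{\log x} \ll x\exp(-c\sqrt{\log x})$ and the constant $\frac{2}{\log 2}$, every error contribution is $O\bigl(x\exp(-c'\sqrt{\log x})\bigr)$ for a suitable $c'>0$, which is exactly the claim.
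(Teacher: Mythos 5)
Your proof is correct, but it does not follow the paper's route. The paper treats this lemma as an immediate consequence of the preceding corollary, asserting that $\mathrm{Li}(x)-\frac{x}{\log x}$ is of order $\frac{x}{\log^2 x}$ and therefore ``smaller than the error term in the result''; your opening observation is exactly the point at which that shortcut breaks down, since $x\exp(-c\sqrt{\log x})=o\bigl(x/(\log x)^A\bigr)$ for every fixed $A$, so $\frac{x}{\log^2 x}$ is in fact \emph{larger} than the stated error, and the $\mathrm{Li}$ form cannot be deduced from the $\frac{x}{\log x}$ form (indeed, because $\pi(x)-\frac{x}{\log x}\asymp\frac{x}{\log^2 x}$, the corollary as stated is only valid with the weaker error $O(x/\log^2 x)$). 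Your argument --- passing from $\psi$ to $\theta$ at a cost of $O(\sqrt{x}\log x)$, partial summation of $d\theta(t)$ against $1/\log t$, identifying $\frac{x}{\log x}+\int_2^x\frac{dt}{(\log t)^2}=\mathrm{Li}(x)+O(1)$, and splitting the error integral at $\sqrt{x}$ --- is the standard correct deduction of the classical error term in the $\mathrm{Li}$ normalisation, and all of your estimates (the $\ll\sqrt{x}$ contribution on $[2,\sqrt{x}]$, the $\ll x\exp\bigl(-\tfrac{c}{\sqrt2}\sqrt{\log x}\bigr)$ contribution on $[\sqrt{x},x]$, and the boundary terms) check out. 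What your route buys is a proof that actually yields the stated strength, starting only from the $\psi$ estimate; the paper's route would buy brevity if the comparison of error terms went the right way, but here it does not, so your longer derivation is the one that is needed, and the paper's surrounding statements should really be reordered so that the $\mathrm{Li}$ version is proved first and the $\frac{x}{\log x}$ version (with error $O(x/\log^2 x)$) deduced from it.
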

This can be done as $\text{Li}(x)-\frac{x}{\log x}$ is small, in fact of order $\frac{x}{\log^2 x}$ -- smaller than the error term in the result.

\chapter{Representation numbers}\vspace{-2ex}
In this chapter we provide a collection of results about moments of representation numbers. We will look at the moments of $r_0$ and $r_2$, including zeroth, first and higher moments. We will also see the first and zeroth moments of $r_1$. In later chapters, we will focus more on the behaviour of higher moments of $r_1$.
%
%
%
\section{Moments of $r_0$}
The representation number $r_0$ has been widely studied, for example in relation to the Gauss circle problem which counts the number of lattice points in a circle of given radius; this relates to the first moment of the function. Since $r_0(n)$ is the number of ways to represent $n$ as $x^2+y^2$, its moments can also be studied as part of the topic of quadratic forms. The Gauss circle problem has many generalisations in modern mathematics in areas including number theory and ergodic theory.
\begin{lemma}[First moment of $r_0$]\label{r0asymp}
	$\displaystyle\smash{\sum_{n\le x}r_0(n)\sim\frac{\pi}4x.}$
\end{lemma}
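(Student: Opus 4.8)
The plan is to reduce the sum to a twisted divisor sum. Write $\chi$ for the non-principal Dirichlet character modulo $4$, so that $\chi(n)$ equals $1$, $-1$, or $0$ according as $n\equiv1\pmod4$, $n\equiv3\pmod4$, or $n$ is even, and $\chi$ is completely multiplicative. The key arithmetic input is the classical identity
\[\#\{(a,b)\in\mathbb{Z}^2:a^2+b^2=n\}=4\sum_{d\mid n}\chi(d)\qquad(n\ge1),\]
which I would obtain by factoring $n$ in the ring of Gaussian integers $\mathbb{Z}[i]$: since $\mathbb{Z}[i]$ is a principal ideal domain with unit group $\{\pm1,\pm i\}$, and a rational prime $p$ splits, stays inert, or ramifies in $\mathbb{Z}[i]$ according as $p\equiv1\pmod4$, $p\equiv3\pmod4$, or $p=2$, counting the Gaussian integers of norm $n$ gives exactly $4\sum_{d\mid n}\chi(d)$. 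A short symmetry argument then passes to $r_0$ as defined here: the four quarter-turn rotations $(a,b)\mapsto(-b,a)$ partition $\mathbb{Z}^2\setminus\{(0,0)\}$ into four congruent pieces, exactly one of which is the region $\{a\ge0,\,b\ge1\}$ appearing in the definition of $r_0$; hence $r_0(n)=\tfrac14\#\{(a,b)\in\mathbb{Z}^2:a^2+b^2=n\}=\sum_{d\mid n}\chi(d)=(\mathbbm{1}*\chi)(n)$ for every $n\ge1$.

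Granting $r_0=\mathbbm{1}*\chi$, I would then expand
\[\sum_{n\le x}r_0(n)=\sum_{n\le x}\sum_{d\mid n}\chi(d)=\sum_{d\le x}\chi(d)\lfloor x/d\rfloor=x\sum_{d\le x}\frac{\chi(d)}{d}-\sum_{d\le x}\chi(d)\{x/d\}.\]
For the first term: because $\chi$ has period $4$ with $\sum_{d=1}^{4}\chi(d)=0$, the partial sums of $\chi$ are bounded, so by partial summation $\sum_{d\le x}\chi(d)/d=L(1,\chi)+O(1/x)$, where $L(1,\chi)=1-\tfrac13+\tfrac15-\tfrac17+\cdots=\arctan1=\tfrac{\pi}{4}$ by the Leibniz--Gregory series; thus the first term is $\tfrac{\pi}{4}x+O(1)$. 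For the error term $\sum_{d\le x}\chi(d)\{x/d\}$ I would split at $d=\sqrt x$: the range $d\le\sqrt x$ contributes $O(\sqrt x)$ trivially, while for $\sqrt x<d\le x$ the quantity $x/d$ traverses an interval of length at most $\sqrt x$, so $d\mapsto\{x/d\}$ has total variation $O(\sqrt x)$, and partial summation against the bounded partial sums of $\chi$ gives $O(\sqrt x)$ for that range as well. Altogether $\sum_{n\le x}r_0(n)=\tfrac{\pi}{4}x+O(\sqrt x)$, which is stronger than the claimed asymptotic.

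The main obstacle is the arithmetic identity $\#\{a^2+b^2=n\}=4\sum_{d\mid n}\chi(d)$: everything downstream is routine elementary estimation, but this identity genuinely requires the arithmetic of $\mathbb{Z}[i]$. It can be sidestepped entirely by Gauss's original geometric argument --- attach a unit square to each lattice point of the quarter-disk $\{a\ge0,\,b\ge1,\,a^2+b^2\le x\}$, compare the total area with $\tfrac14\pi x$, and bound the boundary discrepancy by $O(\sqrt x)$ --- which proves the asymptotic directly with no characters at all. I would nonetheless present the character-based route, since it ties in with the Gaussian-prime viewpoint used elsewhere in this chapter, and record the weak error term $O(\sqrt x)$ while quoting Huxley's work for the strong form.
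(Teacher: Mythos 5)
Your argument is essentially the same as the paper's: derive $r_0=\mathbbm{1}*\chi$ from factorisation in $\mathbb{Z}[i]$, swap the order of summation, and evaluate the main term via the Leibniz series $L(1,\chi)=\tfrac{\pi}{4}$. Your treatment of the remainder $\sum_{d\le x}\chi(d)\{x/d\}$ by splitting at $\sqrt{x}$ and using the bounded partial sums of $\chi$ is a correct and slightly more careful route to the $O(\sqrt{x})$ error, which the paper instead obtains afterwards by the geometric unit-square argument leading to~\eqref{r0good}.
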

Note that \[\sum_{n\le x}r_0(n)=\sum_{\substack{m,k\le \sqrt{x}:\\ m^2+k^2\le x\\0\le m,1\le k}}1\asymp\frac14\#\{\text{Lattice points in a circle of radius }\sqrt{x}\}.\] (It is a quarter of the number of nonzero lattice points.)

Here, $r_0(n)=\frac14\#\{\text{Lattice points on a circle of radius }\sqrt{n}\}$ for all $n\ge 1$; we start by finding an expression for $r_0(n)$. We will think of all circles as circles drawn in the complex plane $\mathbb{C}$. The lattice points of $\mathbb{C}$ are $\mathbb{Z}[i]$, the Gaussian integers.
	
Consider prime factorisation in $\mathbb{Z}[i]$. Primes that are $1\pmod 4$ factor into exactly $2$ Gaussian primes. 
Primes that are $3\pmod 4$ are prime in $\mathbb{Z}[i]$ as well. ($2$ is special as it is a square up to associates.)

Let the prime factorisation of $n$ over the integers be $n=p_1^{a_1}p_2^{a_2}\dots p_k^{a_k}$. If $p_j\equiv 1\pmod 4$, then let $A_j$ and $B_j$ be the complex conjugate Gaussian primes whose product is $p_j$; if $p_j\equiv 3\pmod 4$, then $p_j$ is a Gaussian prime; if $p_j = 2$, then $p_j = (1+i)(1-i)$.

Suppose $\alpha + \beta i\in\mathbb{Z}_{\ge 0}[i]$ lies on the circle of radius $\sqrt{n}$. Then $(\alpha + \beta i)(\alpha - \beta i)=n$, so $\alpha + \beta i$ is a product of some Gaussian prime factors of $n$, and $\alpha - \beta i$ is the product of their complex conjugates. So to count the number of lattice points on the circle, we count the number of ways to split the prime factors of $n$ into conjugate pairs.

If $p_j=A_jB_j\equiv 1\pmod 4$, there are $a_j+1$ choices for the power of $A_j$ dividing $\alpha + \beta i$. If $p_j\equiv 3\pmod 4$, and $a_j$ is odd, then there are actually no lattice points; if $a_j$ is even, then $p_j^{a_j/2}$ divides $\alpha + \beta i$. Finally, if $p_j=2$, then $(1+i)^{a_j}$ divides $\alpha + \beta i$.

Now, consider the non-principal Dirichlet character modulo $4$, called $\chi$. It is periodic modulo $4$, with $\chi(2k)=0, \chi(4k\pm 1)=\pm 1$. Then $r_0(n)=\prod_{i=1}^k(\chi(1)+\chi(p_i)+\dots+\chi(p_i^{a_i}))$; any power of $2$ gives a contribution of $1$, powers of primes $3$ mod $4$ give $1$ in the even power case and $0$ else, and if $p_i\equiv 1\pmod 4$ then we get a factor of $a_i+1$ as needed. But $\chi$ is multiplicative, so
\begin{equation}\label{r0eq}
	r_0(n)=\sum_{d\mid n}\chi(d).
\end{equation}
\begin{proof}[Proof of Lemma~\ref{r0asymp}]
Substituting the above, $\sum_{n\le x}r_0(n)=\sum_{n\le x}\sum_{d\mid n}\chi(d)=\sum_{d\le x}\lfloor\frac xd\rfloor\chi(d)\sim x(1-\frac13+\frac15-\frac17+\dots)\sim\frac{\pi}4x$ (by the arctan identity.)
\end{proof}
Note that this method does not give any estimate on the error term, but we can provide a crude estimate. We centre a square of side $1$ around each lattice point in a circle of radius $\sqrt{x}$. These squares do not overlap, and the border of the shape covered by them lies fully inside the annulus $\sqrt{x}-1\le r\le \sqrt{x}+1$. Thus, the error on the number of lattice points in the circle is $O((\sqrt{x}+1)^2-(\sqrt{x}-1)^2)=O(\sqrt{x})$. Hence \vspace{-1ex}
	\begin{equation}\label{r0good}
	\sum_{n\le x}r_0(n)=\frac{\pi}4x+O(\sqrt{x}).\end{equation}
The error of the first moment of $r_0$ can be greatly refined, for example:
\begin{lemma}
$\displaystyle\smash{\sum_{n\le x}r_0(n)=\frac{\pi}4x+O(x^\alpha)}$, some $\alpha<\frac13$.
\end{lemma}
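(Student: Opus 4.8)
\subsection*{Proof proposal}

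The plan is to convert the statement into a lattice-point count and then feed the resulting discrepancy into a strong exponential-sum estimate. Set $A(x)=\#\{(u,v)\in\mathbb Z^2:u^2+v^2\le x\}$. Partitioning the nonzero lattice points into orbits of size four under the quarter-turn $(u,v)\mapsto(-v,u)$, and checking that each orbit meets the region $\{a\ge0,\ b\ge1\}$ in exactly one point, one obtains the exact identity $\sum_{n\le x}r_0(n)=\tfrac14\bigl(A(x)-1\bigr)$. So it suffices to show that the circle discrepancy $P(x):=A(x)-\pi x$ satisfies $P(x)\ll x^{\alpha}$ for some $\alpha<\tfrac13$; note that the boxing argument behind~\eqref{r0good} only delivers $P(x)\ll\sqrt x$, so a genuine input is required.

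To bound $P(x)$ I would use the classical reduction to a sawtooth sum. Counting column by column, $A(x)=\sum_{|m|\le\sqrt x}\bigl(2\lfloor\sqrt{x-m^2}\rfloor+1\bigr)$; writing $\lfloor t\rfloor=t-\tfrac12-((t))$ with $((t)):=t-\lfloor t\rfloor-\tfrac12$, the smooth part recovers $\pi x+O(1)$ by comparison with $\int_{-\sqrt x}^{\sqrt x}2\sqrt{x-t^2}\,\mathrm dt$, leaving $P(x)=-2\sum_{|m|\le\sqrt x}((\sqrt{x-m^2}))+O(1)$. Using the eightfold symmetry of the circle this reduces to a sawtooth sum over an arc of slope at most $1$, i.e. over $|m|\le\sqrt{x/2}$, where $f(m):=\sqrt{x-m^2}$ has $f''(m)\asymp x^{-1/2}$. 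Truncating the Fourier expansion of $((t))$ at height $H$ turns the task into bounding $\sum_m e^{2\pi i h f(m)}$ for $1\le h\le H$. A single application of van der Corput's second-derivative test gives $\sum_m e^{2\pi i h f(m)}\ll h^{1/2}x^{1/4}+h^{-1/2}x^{1/4}$, and after summing over $h$ and balancing $H$ this recovers only the classical bound $P(x)\ll x^{1/3}$. To break the $\tfrac13$ barrier one applies van der Corput's $B$-process (Poisson summation plus stationary phase, equivalently the use of a nontrivial exponent pair) once more, which pushes the exponent to some definite $\alpha<\tfrac13$.

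The main obstacle is that every saving beyond the trivial $O(\sqrt x)$ is an exponential-sum estimate, and obtaining the smallest known exponent requires the full Bombieri--Iwaniec method and Huxley's refinements of it (the large sieve for exponential sums, resonance curves), which are well beyond what can reasonably be developed here. Accordingly, I will carry out the elementary identity $\sum_{n\le x}r_0(n)=\tfrac14(A(x)-1)$ in detail and then invoke Huxley's theorem~\cite{huxleyexpsums}, which gives $P(x)\ll x^{\alpha}$ with $\alpha$ as small as $\tfrac{131}{416}<\tfrac13$; this is comfortably enough to establish the lemma.
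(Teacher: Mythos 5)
Your proposal is correct and follows essentially the same route as the paper: reduce $\sum_{n\le x}r_0(n)$ to (a quarter of) the Gauss circle lattice-point count and then invoke Huxley's estimate from~\cite{huxleyexpsums} for the error term, the exponential-sum discussion being expository rather than a different method. Your explicit orbit argument for the identity $\sum_{n\le x}r_0(n)=\tfrac14(A(x)-1)$ is a slightly more careful rendering of the symmetry step the paper states informally, but the substance of the proof is the same.
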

\begin{proof}
See~\cite{huxleyexpsums}. Huxley proves for various $\beta<\frac23$ that for a closed curve $C$ of area $A$, drawing it with a scale factor of $M$ gives $AM^2+O(M^\beta)$ integer points inside it. In our case, we take a unit circle which has area $\pi$, and enlarge it by a scale factor of $\sqrt{x}$ to get $\pi\cdot x+O(x^{\beta/2})$ integer lattice points inside a circle of radius $\sqrt{x}$.

But now, $\sum_{n\le x}r_0(n)$ is the number of ways to write any integer $n\le x$ as the sum of squares of a positive and non-negative integer, so we are only interested in lattice points in the first quadrant of the circle of radius $\sqrt{x}$, with the convention of counting lattice points of the form $(0,a)$ but not $(b,0)$. Hence we get the result.
\end{proof}
Estimates for $\alpha$ can be obtained by methods in analytic number theory, such as the exponent pairs method. Currently the best known bound of $\alpha\le\frac{131}{416}+\epsilon$ is due to Huxley's third paper on the topic in 2000. One can show that $\alpha<\frac14$ is impossible, and the conjecture that $\alpha=\frac14+\epsilon$ remains open, after an attempted proof by Cappell and Shaneson in 2007 was retracted. 

Higher moments of $r_0$ have been studied through the lens of binary quadratic forms, and good estimates are shown in Blomer and Granville's paper~\cite[Theorem 1]{blo-gran-repno}. Below we state a Lemma showing the asymptotic behaviour of the second moment.
\begin{lemma}[Second moment of $r_0$] There exists a real constant $H$ such that
	\[\smash{\sum_{n\le x}r_0^2(n)=\frac{x\log x}{4}+Hx+O(x^{\frac12+\epsilon}).}\]
\end{lemma}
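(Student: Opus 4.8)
The plan is to use the identity $r_0(n)=\sum_{d\mid n}\chi(d)$ from~\eqref{r0eq}, so that $r_0^2(n)=\sum_{d_1\mid n}\sum_{d_2\mid n}\chi(d_1)\chi(d_2)$, and then sum over $n\le x$. Rather than expanding blindly, I would first compute the Dirichlet series $\sum_{n\ge 1}r_0^2(n)n^{-s}$ and identify its analytic structure. Since $r_0=\mathbbm{1}*\chi$, the function $r_0$ is multiplicative, hence so is $r_0^2$, and one has $\sum_n r_0^2(n)n^{-s}=\prod_p\bigl(\sum_{k\ge 0}r_0(p^k)^2 p^{-ks}\bigr)$. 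Using $r_0(p^k)=k+1$ for $p\equiv 1\pmod 4$, $r_0(p^k)=\tfrac{1+(-1)^k}{2}$ for $p\equiv 3\pmod 4$, and $r_0(2^k)=1$, a direct local computation shows this Euler product equals $\zeta(s)^2 L(s,\chi)^2 / \zeta(2s)$ times a correction Euler product $G(s)$ that converges absolutely for $\Re(s)>\tfrac12$. (The double pole of $\zeta(s)^2$ at $s=1$ is what produces the $x\log x$ main term; $L(1,\chi)=\pi/4$ enters the leading constant.)

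Next I would invoke a standard Dirichlet hyperbola / Perron-type argument, or equivalently write $r_0^2 = \bigl(\tau\cdot(\text{twisted by }\chi)\bigr)$ as a convolution of a ``smooth'' piece (divisor-like, producing the main term) with an error piece whose Dirichlet series converges for $\Re(s)>\tfrac12$. Concretely, factor $\sum r_0^2(n)n^{-s} = D(s)\cdot H(s)$ where $D(s)=\zeta(s)^2L(s,\chi)^2$ corresponds to a coefficient function whose summatory function is $\sum_{n\le x}(\mathbbm{1}*\mathbbm{1}*\chi*\chi)(n) = \tfrac{\pi^2}{16}x\log x + c_1 x + O(x^{1/2+\epsilon})$ by the four-dimensional divisor-type estimate (handled by iterating the hyperbola method, using the classical bound on $\sum_{n\le x}\chi(n)$ which is $O(1)$), and $H(s)=G(s)/\zeta(2s)$ has coefficients $h(n)\ll_\epsilon n^{\epsilon}$ supported with enough decay that $\sum_n |h(n)| n^{-1/2}<\infty$. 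Then $\sum_{n\le x}r_0^2(n)=\sum_{d}h(d)\sum_{m\le x/d}(\text{main coeff})(m)$, and substituting the asymptotic for the inner sum and summing the tails against $\sum h(d)/\sqrt d$ yields the claimed shape $\tfrac{x\log x}{4}+Hx+O(x^{1/2+\epsilon})$, the leading constant collapsing to $\tfrac14$ after the arithmetic with $L(1,\chi)=\pi/4$ and the Euler factors.

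The main obstacle is the error term $O(x^{1/2+\epsilon})$: getting the main term $\tfrac{x\log x}{4}+Hx$ is essentially bookkeeping with the hyperbola method, but controlling the error requires that the ``correction'' Dirichlet series genuinely converges in a half-plane reaching down past $\Re(s)=\tfrac12$, and that the divisor-type sum for $\zeta(s)^2 L(s,\chi)^2$ has a power-saving error of size $x^{1/2+\epsilon}$ — this is the analogue of the classical $O(x^{1/2})$ bound for $\sum_{n\le x}\tau(n)$ pushed to a degree-four product with two $L(s,\chi)$ factors, and needs either a careful multidimensional hyperbola argument or a contour shift using a zero-free region and convexity bounds for $L(s,\chi)$. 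Since the lemma is quoted from Blomer–Granville~\cite{blo-gran-repno}, I would in practice cite their Theorem~1 for the precise error term and present the Dirichlet-series heuristic above only to make the shape of the answer and the constants transparent; a fully self-contained proof of the $x^{1/2+\epsilon}$ error would be the technically demanding part and is not needed for the sequel.
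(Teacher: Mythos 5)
Your proposal is correct in substance and ends in the same place as the paper: the paper's entire proof is the observation that $x^2+y^2$ is a binary quadratic form, so $r_0$ is a special case of the $r_f$ of Blomer--Granville, and Theorem~1 of~\cite{blo-gran-repno} with $\beta=2$ gives the stated asymptotic; you likewise defer to that theorem for the power-saving error. What you add is a Dirichlet-series derivation of the shape of the answer, and that part checks out: since $r_0=\mathbbm{1}*\chi$, the local computation indeed gives $\sum_{n\ge1}r_0^2(n)n^{-s}=\zeta(s)^2L(s,\chi)^2\zeta(2s)^{-1}(1+2^{-s})^{-1}$, and the leading constant is $L(1,\chi)^2\cdot\zeta(2)^{-1}\cdot(1+\tfrac12)^{-1}=\tfrac{\pi^2}{16}\cdot\tfrac{6}{\pi^2}\cdot\tfrac23=\tfrac14$, matching the lemma. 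One caution on the part you flag as the obstacle: the naive iteration you sketch (treating $\zeta(s)^2L(s,\chi)^2$ by convolving $\tau$ against $\chi\tau$ and summing tails) leaks an error of size about $x^{3/4+\epsilon}$ rather than $x^{1/2+\epsilon}$, because the $\tau$-side error $\sum_{m\le\sqrt x}\tau(m)\sqrt{x/m}$ is not small enough; getting $x^{1/2+\epsilon}$ genuinely needs the more careful multidimensional splitting or contour argument you mention (or simply the cited theorem). Since you explicitly fall back on Blomer--Granville for that, your write-up is a legitimate, somewhat more informative version of the paper's one-line citation proof rather than a different proof.
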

\begin{proof}
	As $x^2+y^2$ is a quadratic form, $r_0$ is a special case of $r_f$ as named in the paper~\cite{blo-gran-repno}; we can apply Theorem $1$ with $\beta=2$.
\end{proof}
\section{First moments of representation numbers}
In this section we provide asymptotics for the first moments of $r_1$ and $r_2$. The Prime Number Theorem and its consequences form an important part of the argument, as the distribution of solutions to $a^2+p^2\le x$ or $p^2+q^2\le x$ is closely related to the distribution of primes.

We begin with asymptotics of the first moment of $r_1$. The error term appears to be the best currently known.
We will split the range of summation into two important parts that are estimated in different ways. The method is based on the Prime Number Theorem, however, the error term need not be strong.
\begin{lemma}[The first moment of $r_1$]\label{lemma: first moment of r1}
\[\sum_{n\le x}r_1(n)=\frac{\pi}2\frac{x}{\log x}\left(1+O\Bigl(\frac1{\log x}\Bigr)\right).\]

\end{lemma}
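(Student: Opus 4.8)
The plan is to interpret the first moment geometrically and reduce it to a routine partial‑summation argument against the Prime Number Theorem. Since $r_1(n)$ counts pairs $(a,p)$ with $a\ge 0$, $p$ prime and $a^2+p^2=n$, summing over $n\le x$ and counting for each prime $p\le\sqrt x$ the admissible values $0\le a\le\sqrt{x-p^2}$ gives
\[\sum_{n\le x}r_1(n)=\#\{(a,p):a\ge 0,\ p\text{ prime},\ a^2+p^2\le x\}=\sum_{p\le\sqrt x}\bigl(\lfloor\sqrt{x-p^2}\rfloor+1\bigr)=\sum_{p\le\sqrt x}\sqrt{x-p^2}+O(\pi(\sqrt x)),\]
and $\pi(\sqrt x)\ll\sqrt x$ is negligible against the target error $x/\log^2 x$. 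So the task is to estimate $S:=\sum_{p\le\sqrt x}\sqrt{x-p^2}$.

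\textbf{Key steps.} First, by Abel summation with $g(t)=\sqrt{x-t^2}$, using $g(\sqrt x)=0$,
\[S=-\int_2^{\sqrt x}g'(t)\,\pi(t)\,dt=\int_2^{\sqrt x}\frac{t\,\pi(t)}{\sqrt{x-t^2}}\,dt.\]
Second, insert the PNT in the weak form $\pi(t)=\frac{t}{\log t}+O\!\bigl(\frac{t}{\log^2 t}\bigr)$ for $t\ge 2$ (a consequence of the classical‑error PNT quoted above — no strong error term is needed). This replaces $S$ by the main integral $I:=\int_2^{\sqrt x}\frac{t^2}{\sqrt{x-t^2}\,\log t}\,dt$ together with an error $\ll\int_2^{\sqrt x}\frac{t^2}{\sqrt{x-t^2}\,\log^2 t}\,dt$. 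Third — this is the promised splitting of the range — I cut both integrals at $t=x^{1/4}$: on $2\le t\le x^{1/4}$ one has $\sqrt{x-t^2}\gg\sqrt x$ and $t^2\le\sqrt x$, so this piece contributes only $O(x^{1/4})$, and all the substance lies in $x^{1/4}\le t\le\sqrt x$.

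\textbf{Evaluating the main integral.} On $x^{1/4}\le t\le\sqrt x$ I substitute $t=\sqrt x\sin\theta$, so $\sqrt{x-t^2}=\sqrt x\cos\theta$ and $\frac{t^2}{\sqrt{x-t^2}}\,dt=x\sin^2\theta\,d\theta$, and the reciprocal‑log becomes $\bigl(\tfrac12\log x+\log\sin\theta\bigr)^{-1}$. On this range $\sin\theta\ge x^{-1/4}$, hence $|\log\sin\theta|\le\tfrac14\log x$ and $\tfrac12\log x+\log\sin\theta\ge\tfrac14\log x$, so
\[\frac{1}{\tfrac12\log x+\log\sin\theta}=\frac{2}{\log x}\Bigl(1+O\Bigl(\frac{|\log\sin\theta|}{\log x}\Bigr)\Bigr).\]
Thus $I=\frac{2x}{\log x}\int_{\arcsin(x^{-1/4})}^{\pi/2}\sin^2\theta\,d\theta+O\!\bigl(\tfrac{x}{\log^2 x}\int_0^{\pi/2}\sin^2\theta\,|\log\sin\theta|\,d\theta\bigr)+O(x^{1/4})$, the middle integral converging. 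Since $\int_0^{\pi/2}\sin^2\theta\,d\theta=\tfrac{\pi}{4}$ and the omitted sliver $[0,\arcsin x^{-1/4}]$ contributes $O(x^{-3/4})$, we get $I=\tfrac{\pi}{2}\tfrac{x}{\log x}+O\!\bigl(\tfrac{x}{\log^2 x}\bigr)$; the same substitution shows the PNT‑error integral is also $O(x/\log^2 x)$. Collecting the estimates gives $\sum_{n\le x}r_1(n)=\tfrac{\pi}{2}\tfrac{x}{\log x}+O\!\bigl(\tfrac{x}{\log^2 x}\bigr)$, which is the claim.

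\textbf{Main obstacle.} The delicate point is that $\bigl(\tfrac12\log x+\log\sin\theta\bigr)^{-1}$ is \emph{not} uniformly $\asymp 1/\log x$: near the lower endpoint (i.e. for $t$ close to $2$) the denominator collapses to a bounded constant, so the naive expansion fails there, and one must also manage the integrable singularity of $1/\sqrt{x-t^2}$ at $t=\sqrt x$. The cut at $x^{1/4}$ quarantines the first problem and the trigonometric substitution removes the second; the real work is verifying that the quarantined region genuinely contributes $O(x^{1/4})$ and is not hiding a secondary main term — which it is not, because $\sin^2\theta$ is tiny and the interval short there.
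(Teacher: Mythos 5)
Your proposal is correct, and it reaches the stated main term and error $O(x/\log^2 x)$ by a genuinely different organization of the argument. The paper uses a symmetric, hyperbola-style decomposition: it splits the count of $(a,p)$ with $a^2+p^2\le x$ into the ranges $a\le\sqrt{x/2}$ and $p\le\sqrt{x/2}$ minus the overlap, which keeps every logarithm that appears equal to $\log x$ up to $O(1)$ and keeps all integrals away from the singular endpoint $t=\sqrt{x}$; the price is three pieces, a cancellation of the $\pi(\sqrt{x/2})\sqrt{x/2}$ terms, and two separate integral estimates. You instead sum over all primes $p\le\sqrt{x}$ at once, apply one Abel summation to get $\int_2^{\sqrt x}\pi(t)\,t(x-t^2)^{-1/2}\,dt$, and handle the two potential degeneracies directly: the small-$t$ region (where $\log t$ is not comparable to $\log x$) is quarantined by the cut at $t=x^{1/4}$ and contributes only $O(x^{1/4})$, while the integrable singularity at $t=\sqrt{x}$ is absorbed by the substitution $t=\sqrt{x}\sin\theta$, after which $\int_0^{\pi/2}\sin^2\theta\,d\theta=\pi/4$ produces the constant $\pi/2$ in one stroke. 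Two small points you should make explicit: the Abel summation is applied with $g'(t)=-t/\sqrt{x-t^2}$ unbounded at the endpoint, so one should integrate up to $y'<\sqrt x$ and let $y'\to\sqrt x$, which is harmless because $g(\sqrt x)=0$ and the integral converges; and the boundary term count $\lfloor\sqrt{x-p^2}\rfloor+1$ correctly reflects the convention $a\ge0$, with the total $O(\pi(\sqrt x))$ indeed negligible. With those remarks your argument is complete, and arguably shorter than the paper's, at the cost of not reusing the intermediate identities (such as the split at $\sqrt{x/2}$) that the paper recycles in its treatment of the first moment of $r_2$.
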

\begin{proof} We use ideas from the part of the proof of~\cite[Theorem~2]{granville2023} which deals with the first moment of~$r_1$, and Daniel's proof in~\cite[Section~4]{daniel_2001}.
	We may write
\begin{align}
\sum_{n\le x}r_1(n)&=\sum_{a^2+p^2\le x}1=
\sum_{a\le\sqrt{\frac x2}}\sum_{p\le\sqrt{x-a^2}}1+\sum_{p\le\sqrt{\frac x2}}\sum_{a\le\sqrt{x-p^2}}1-\sum_{a\le\sqrt{\frac x2}}\sum_{p\le\sqrt{\frac x2}}1 \notag\\
&=\sum_{a\le\sqrt{\frac x2}}\pi(\sqrt{x-a^2})+\sum_{p\le\sqrt{\frac x2}}\lfloor\sqrt{x-p^2}\rfloor-\sqrt{x/2}\cdot\pi(\sqrt{x/2}).\label{eq: rewrite r1}
\end{align}
Note that the weak error term in the PNT is still smaller than $O(\frac{x}{\log^2x})$, as $\exp(\sqrt[10]{\log x})$ grows faster than any power of $\log x$. Hence $\pi(t)=\frac t{\log t}(1+O(\frac1{\log t}))$.

We deal with the components in~\eqref{eq: rewrite r1}. For the first one, since we have specified $a\le\sqrt{x/2}$, we have
$\log x\ge\log(x-a^2) \ge\log(x/2)=\log x-\log 2$, so  \[\frac{2\sqrt{x-a^2}}{\log x}\le\frac{2\sqrt{x-a^2}}{\log (x-a^2)}\le\frac{2\sqrt{x-a^2}}{\log x-\log 2}.\] Since
$(\log x-\log 2)^{-1}=\log^{-1} (x)(1+O(\frac{1}{\log x}))$ we get \[\pi(\sqrt{x-a^2})=\frac{2\sqrt{x-a^2}}{\log x}\left(1+O\Bigl(\frac{1}{\log x}\Bigr)\right).\]
We approximate $\sum_{a\le\sqrt{x/2}}\sqrt{x-a^2}=\int_0^{\sqrt{x/2}}\sqrt{x-a^2}\mathrm{d}a + O(\sqrt{x})$. Now, we calculate
\begin{align*}
	&\int_0^{\sqrt{x/2}}\!\!\sqrt{x-a^2}\ \mathrm{d}a + O(\sqrt{x})
	=\sqrt{x}\int_0^{\sqrt{1/2}}\!\!\sqrt{x}\sqrt{1-u^2}\ \mathrm{d}u + O(\sqrt{x})\\
	&=\frac{x}2\left[\arcsin(u)+u\sqrt{1-u^2}\right]_0^{\sqrt{1/2}}+ O(\sqrt{x})
	=\frac{x}2\Bigl(\frac{\pi}4+\frac12\Bigr)+O(\sqrt{x}),
\end{align*}
and thus \begin{align*}
	\sum_{a\le\sqrt{\frac x2}}\pi(\sqrt{x-a^2}) &=\frac2{\log x}\sum_{a\le\sqrt{\frac x2}}\sqrt{x-a^2}\left(1+O\Bigl(\frac{1}{\log x}\Bigr)\right)
	=\frac2{\log x}\cdot\frac{\pi+2}8x\left(1+O\Bigl(\frac{1}{\log x}\Bigr)\right) \\
	&=\smash[b]{\frac{\pi+2}4\frac{x}{\log x}}\bigl(1+O((\log x)^{-1})\bigr).
\end{align*}

We now deal with the second component of~\eqref{eq: rewrite r1}.

By partial summation, letting $a_n=\mathbbm{1}_{n\text{ is prime}}$ and $f(y)=\sqrt{x-y^2}$ we have
\[\smash[b]{\sum_{p\le\sqrt{\frac x2}}}(\sqrt{x-p^2}+O(1))=O(\sqrt{x})+\pi(\sqrt{x/2})\sqrt{x/2}+\smash[t]{\int_1^{\sqrt{x/2}}\pi(t)t(x-t^2)^{-1/2}\mathrm{d}t}.\] The $\pi(\sqrt{x/2})\sqrt{x/2}$ term cancels with the third component of~\eqref{eq: rewrite r1}, and $O(\sqrt{x})$ can be absorbed into the error term of the first component as $O(\sqrt{x})=O(\frac{x}{\log^2x})$. Hence (noting that $\pi(t)=0$ for $t< 2$) we have
\begin{equation}\label{eq: r1 with int}
\sum_{n\le x}r_1(n)=\frac{\pi+2}4\frac{x}{\log x}\left(1+O\Bigl(\frac1{\log x}\Bigr)\right)+\int_2^{\sqrt{x/2}}\pi(t)t(x-t^2)^{-1/2}\mathrm{d}t.\end{equation} It remains to estimate the integral.
We substitute $t=u\sqrt{x}$ to get
\[\int_2^{\sqrt{x/2}}\frac{t}{\log t}\frac{t}{\sqrt{x-t^2}}\mathrm{d}t=\sqrt{x}\int_{2/\sqrt{x}}^{\sqrt{1/2}}\frac{xu^2}{\sqrt{x}\sqrt{1-u^2}}\frac{\mathrm{d}u}{\log(u\sqrt{x})},\]
and use integration by parts, noting that $\int\frac{u^2}{\sqrt{1-u^2}}\mathrm{d}u=\frac12(\arcsin(u)-u\sqrt{1-u^2})+c$ to get
\begin{align}
&\frac{x}2\left[\frac{\arcsin(u)-u\sqrt{1-u^2}}{\log(u\sqrt{x})}\right]_{2/\sqrt{x}}^{\sqrt{1/2}}+\frac{x}2\int_{2/\sqrt{x}}^{\sqrt{1/2}}\frac{(\arcsin(u)-u\sqrt{1-u^2})\mathrm{d}u}{u\log^2(u\sqrt{x})} \notag\\
=&\frac{x(\frac{\pi}4-\frac12)}{2\log(\sqrt{x/2})}+O(\sqrt{x})+\frac{x}{2}\int_{2/\sqrt{x}}^{\sqrt{1/2}}\Bigl(\frac{\arcsin(u)}u-\sqrt{1-u^2}\Bigr)\frac{\mathrm{d}u}{\log^2(u\sqrt{x})}\label{eq: first estimate r1 int}
.\end{align} Now the last integral is written as
\begin{align*}
\int_{2/\sqrt{x}}^{1/\sqrt[4]{x}}\Bigl(\frac{\arcsin(u)}u-\sqrt{1-u^2}\Bigr)\frac{\mathrm{d}u}{\log^2(u\sqrt{x})}+\int_{1/\sqrt[4]{x}}^{\sqrt{1/2}}\Bigl(\frac{\arcsin(u)}u-\sqrt{1-u^2}\Bigr)\frac{\mathrm{d}u}{\log^2(u\sqrt{x})}.
\end{align*} In the first component, we use the Taylor series of $\arcsin$ and $\sqrt{1-x}$ to get $\frac{\arcsin(u)}u-\sqrt{1-u^2}=\frac23u^2+O(u^4)$, and bound $\log^2(u\sqrt{x})$ from below by $\log^22$. In the second, we bound $\frac{\arcsin(u)}u-\sqrt{1-u^2}$ by a constant, and bound $\log^2(u\sqrt{x})$ from below by $\log^2(\sqrt[4]{x})$. Hence
\begin{align}
\int_{2/\sqrt{x}}^{\sqrt{1/2}}\Bigl(\frac{\arcsin(u)}u-\sqrt{1-u^2}\Bigr)\frac{\mathrm{d}u}{\log^2(u\sqrt{x})}
&=O\left(\int_{2/\sqrt{x}}^{1/\sqrt[4]{x}}\frac1{\sqrt{x}}{\mathrm{d}u}\right)+O\left(\int_{1/\sqrt[4]{x}}^{\sqrt{1/2}}\frac{\mathrm{d}u}{\log^2(\sqrt[4]{x})}\right) \notag\\
&=O(x^{-1/4})+O(\log^{-2}(x)).\label{eq: r1 estimate small int}
\end{align}
We recall that $\pi(t)=O(\frac{t}{\log t})$. Hence substituting~\eqref{eq: r1 estimate small int} into~\eqref{eq: first estimate r1 int} gives
\[\smash{\int_2^{\sqrt{x/2}}\pi(t)t(x-t^2)^{-1/2}\mathrm{d}t=\frac{x}{\log x}\Bigl(\frac{\pi}4-\frac12\Bigr)+O\Bigl(\frac{x}{\log^2x}\Bigr).}\]
Substituting this into~\eqref{eq: r1 with int} gives the required main term of order $\frac x{\log x}$ with coefficient $\frac{\pi+2}4+\frac{\pi-2}4=\frac{\pi}2$, and correct error term $O(\frac x{\log^2x})$.
\end{proof}
We will now use a similar method to obtain asymptotics of the first moment of $r_2$. Note that the error term obtained in this method is better than the error term cited in Sabuncu's recent paper~\cite{Sabuncu_2024}, by a factor of $\log\log x$. A proof of the result with weaker error term can be found in~\cite{plaksin}, for example.
\begin{lemma}[The first moment of $r_2$]\label{r2sum}
	\[\sum_{n\le x}r_2(n)=\pi\frac{x}{\log^2 x}\left(1+O\Bigl(\frac1{\log x}\Bigr)\right).\]
\end{lemma}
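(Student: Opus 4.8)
The plan is to mirror the argument just given for the first moment of $r_1$, replacing one of the integer summation variables with a prime summation variable and invoking the Prime Number Theorem a second time. Concretely, I would write
\[
\sum_{n\le x}r_2(n)=\sum_{p^2+q^2\le x}1=2\sum_{p\le\sqrt{x/2}}\ \sum_{q\le\sqrt{x-p^2}}1-\Bigl(\sum_{p\le\sqrt{x/2}}1\Bigr)^2=2\sum_{p\le\sqrt{x/2}}\pi(\sqrt{x-p^2})-\pi(\sqrt{x/2})^2,
\]
splitting the square $\{p,q\le\sqrt{x/2}\}$ off so that in the main double sum the outer variable is confined to $[0,\sqrt{x/2}]$; this is exactly the device used in~\eqref{eq: rewrite r1}. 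As there, on the range $p\le\sqrt{x/2}$ we have $\log(x-p^2)=\log x+O(1)$, hence $\pi(\sqrt{x-p^2})=\frac{2\sqrt{x-p^2}}{\log x}\bigl(1+O(\tfrac1{\log x})\bigr)$ by the classical PNT (the weak error term being $O(x/\log^2 x)$, negligible here).

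Next I would feed this into the outer sum, which becomes $\frac{4}{\log x}\sum_{p\le\sqrt{x/2}}\sqrt{x-p^2}\,(1+O(\tfrac1{\log x}))$. The inner sum over primes is handled by partial summation with $a_n=\mathbbm 1_{n\text{ prime}}$ and $f(t)=\sqrt{x-t^2}$, giving a boundary term $\pi(\sqrt{x/2})\sqrt{x/2}$ — which is $\asymp \frac{x}{\log x}$ and must be carried carefully, together with its partner $-\pi(\sqrt{x/2})^2\asymp \frac{x}{\log^2 x}$ from the correction term — plus the integral $\int_2^{\sqrt{x/2}}\pi(t)\,t\,(x-t^2)^{-1/2}\,\mathrm dt$, which is precisely the integral already evaluated in the proof of Lemma~\ref{lemma: first moment of r1} and equals $\frac{x}{\log x}(\tfrac\pi4-\tfrac12)+O(\tfrac{x}{\log^2 x})$. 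The arcsine computation $\int_0^{\sqrt{1/2}}\sqrt{1-u^2}\,\mathrm du=\tfrac12(\tfrac\pi4+\tfrac12)$ again supplies the value of $\sum_{p\le\sqrt{x/2}}\sqrt{x-p^2}$ up to its own $\frac{x}{\log x}$-size error: one writes $\sum_{p\le\sqrt{x/2}}\sqrt{x-p^2}=\int_2^{\sqrt{x/2}}\sqrt{x-t^2}\,\mathrm d\pi(t)$ and integrates by parts once more, the main term being $\frac{\pi+2}{8}\cdot\frac{x}{\log x}$.

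The delicate point — and the main obstacle — is bookkeeping the cancellation and the error terms at the level $\frac{x}{\log^2 x}$, since here the main term itself is of order $\frac{x}{\log^2 x}$, one logarithm smaller than for $r_1$. In the $r_1$ proof the $\pi(\sqrt{x/2})\sqrt{x/2}$ boundary term cancelled exactly against the third component of~\eqref{eq: rewrite r1}; here the analogous boundary term is $2\pi(\sqrt{x/2})\sqrt{x/2}\asymp\frac{x}{\log x}$ and does \emph{not} have an exact partner, so I must expand it via PNT to precision $O(\tfrac1{\log x})$, i.e. $\pi(\sqrt{x/2})=\frac{2\sqrt{x/2}}{\log x}(1+O(\tfrac1{\log x}))$, track its contribution to the $\frac{x}{\log^2 x}$ coefficient, and check it combines with the contributions of the two integrals (and of $-\pi(\sqrt{x/2})^2=-\frac{2x}{\log^2 x}+O(\tfrac{x}{\log^3 x})$) to produce the claimed coefficient $\pi$. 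Collecting: the $\frac{x}{\log x}$-terms, which would be spurious, must cancel — I expect $\frac{\pi+2}{4}\cdot\frac{x}{\log x}$ from the double sum's leading piece against $-$(something) so that only the second-order terms survive — and then the surviving $\frac{x}{\log^2 x}$ pieces should sum to $\pi\frac{x}{\log^2 x}$, with everything else absorbed into $O(\tfrac{x}{\log^3 x})=\pi\frac{x}{\log^2 x}\cdot O(\tfrac1{\log x})$. The routine arcsine integrals and the substitution $t=u\sqrt x$ are identical to the previous lemma and can be quoted rather than redone.
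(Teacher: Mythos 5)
Your plan is essentially the paper's own proof: the same symmetric decomposition $\sum_{n\le x}r_2(n)=2\sum_{p\le\sqrt{x/2}}\pi(\sqrt{x-p^2})-\pi(\sqrt{x/2})^2$ as in \eqref{eq: rewrite r20}, the PNT expansion on the restricted range, partial summation over primes, and the arcsine integral recycled from Lemma~\ref{lemma: first moment of r1}. The only structural difference is where the logarithm sits: the paper keeps it inside the partial-summation function, taking $f(y)=\frac{2\sqrt{x-y^2}}{\log(x-y^2)}$, so that the boundary term is literally $\pi(\sqrt{x/2})f(\sqrt{x/2})=(\pi(\sqrt{x/2}))^2(1+O(\tfrac1{\log x}))$ and combines with the subtracted square to leave a net $+(\pi(\sqrt{x/2}))^2=\frac{2x}{\log^2x}(1+O(\tfrac1{\log x}))$; you instead factor out $\frac{2}{\log x}$ first and partial-sum $f(t)=\sqrt{x-t^2}$, which lets you quote the integral $\int_2^{\sqrt{x/2}}\pi(t)t(x-t^2)^{-1/2}\,\mathrm{d}t=\frac{x}{\log x}(\tfrac\pi4-\tfrac12)+O(\tfrac{x}{\log^2x})$ verbatim. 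Both routes work to the stated precision.

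Two pieces of your bookkeeping need correcting, though, and one of them would change the final constant if left as written. First, your own accounting gives $\sum_{p\le\sqrt{x/2}}\sqrt{x-p^2}=\pi(\sqrt{x/2})\sqrt{x/2}+\int_2^{\sqrt{x/2}}\pi(t)t(x-t^2)^{-1/2}\,\mathrm{d}t=\frac{x}{\log x}\bigl(1+\tfrac\pi4-\tfrac12\bigr)+O(\tfrac{x}{\log^2x})=\frac{\pi+2}{4}\cdot\frac{x}{\log x}+O(\tfrac{x}{\log^2x})$, not $\frac{\pi+2}{8}\cdot\frac{x}{\log x}$: the prime density near $t\le\sqrt{x/2}$ is $\frac1{\log t}\approx\frac2{\log x}$, which doubles the integer-variable constant $\frac{\pi+2}{8}x$ from the $r_1$ proof. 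With your quoted $\frac{\pi+2}{8}$ the final coefficient would come out as $\frac{\pi-2}{2}$ rather than $\pi$. Second, there are no spurious $\frac{x}{\log x}$-size terms to cancel in this decomposition: both $2\sum_{p\le\sqrt{x/2}}\pi(\sqrt{x-p^2})$ and $\pi(\sqrt{x/2})^2$ are already of order $\frac{x}{\log^2x}$, and the cancellation happens at the level of their coefficients, namely $\frac{4}{\log x}\cdot\frac{\pi+2}{4}\cdot\frac{x}{\log x}-\frac{2x}{\log^2x}=\frac{(\pi+2)x-2x}{\log^2x}=\frac{\pi x}{\log^2x}$, with every discarded piece of size $O(\tfrac{x}{\log^3x})$, which is exactly the relative error claimed. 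With these two fixes your argument closes and agrees with the paper's proof.
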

\begin{proof} We proceed as in Lemma~\ref{lemma: first moment of r1}.
	We may write
	\begin{align}
		\sum_{n\le x}r_2(n)&=\sum_{p^2+q^2\le x}1=
		\sum_{p\le\sqrt{\frac x2}}\sum_{q\le\sqrt{x-p^2}}1+\sum_{q\le\sqrt{\frac x2}}\sum_{p\le\sqrt{x-q^2}}1-\sum_{p\le\sqrt{\frac x2}}\sum_{q\le\sqrt{\frac x2}}1 \notag\\
		&=2\sum_{p\le\sqrt{\frac x2}}\pi(\sqrt{x-p^2})-(\pi(\sqrt{x/2}))^2.\label{eq: rewrite r20}
	\end{align}
As previously, $\pi(t)=\frac t{\log t}(1+O(\frac1{\log t}))$. Since $t\le\sqrt{x/2}$ we may write $\pi(\sqrt{x-t^2})=\frac {2\sqrt{x-t^2}}{\log (x-t^2)}(1+O(\frac1{\log x}))$, so $\sum_{p\le\sqrt{\frac x2}}\pi(\sqrt{x-p^2})=\sum_{p\le\sqrt{\frac x2}}\frac {2\sqrt{x-t^2}}{\log (x-t^2)}(1+O(\frac1{\log x}))$.
	
By partial summation, letting $a_n=\mathbbm{1}_{n\text{ is prime}}$ and $f(y)=\frac {2\sqrt{x-y^2}}{\log (x-y^2)}$ we have
\begin{equation}\label{summation for r2}
\smash[b]{\sum_{p\le\sqrt{\frac x2}}}\pi(\sqrt{x-p^2})=\left((\pi(\sqrt{x/2}))^2-\smash[t]{\int_1^{\sqrt{x/2}}\pi(t)f'(t)\mathrm{d}t}\right)\Bigl(1+O\Bigl(\frac1{\log x}\Bigr)\Bigr).
\end{equation}We calculate $f'(t)$:
\[\frac{\mathrm{d}}{\mathrm{d}t}\Bigl(\frac {2\sqrt{x-t^2}}{\log (x-t^2)}\Bigr)=-2t\frac{\log(x-t^2)-2}{\sqrt{x-t^2}\log^2(x-t^2)}.\] The $(\pi(\sqrt{x/2}))^2$ term thus has overall coefficient $1$ in~\eqref{eq: rewrite r20}.

We use $\pi(t)=0$ for $t\le 2$.
Substitute $y=\sqrt{x-t^2}$, $u\sqrt{x}=y$ to get
\begin{align}
&\int_2^{\sqrt{x/2}}\frac{t}{\log t}\frac{t}{\sqrt{x-t^2}\log(x-t^2)}\mathrm{d}t=\int_{\sqrt{x/2}}^{\sqrt{x-4}}\frac{\sqrt{x-y^2}}{\log(\sqrt{x-y^2})\log(y^2)}\mathrm{d}y \notag\\
=&\int_{\sqrt{x/2}}^{\sqrt{x-4}}\frac{\sqrt{x-y^2}}{\log(x-y^2)\log y}\mathrm{d}y=\sqrt{x}\int_{\sqrt{1/2}}^{\sqrt{1-4/x}}\frac{\sqrt{x}\sqrt{1-u^2}}{\log(x-xu^2)\log(u\sqrt{x})}\mathrm{d}u. \label{substitutions}
\end{align}
Now, note that $\log(x-xu^2)\log(u\sqrt{x})=(\log x+\log(1-u^2))(\log u+\frac12\log x)=\frac12\log^2x+\log x(\log(u\sqrt{1-u^2}))+\log u\log(1-u^2)$. Since $u\sqrt{1-u^2}\le\frac12$ and $\log u\log(1-u^2)=O(1)$ for $u\in[\sqrt{1/2},\sqrt{1-4/x}]$ we have
\begin{align}
\log(x-xu^2)\log(u\sqrt{x})&=\frac12\log^2x+O(\log x)=\frac12\log^2x\Bigl(1+O\Bigl(\frac1{\log x}\Bigr)\Bigr) \notag \\
\implies \frac{\sqrt{x}\sqrt{1-u^2}}{\log(x-xu^2)\log(u\sqrt{x})} &=\frac{\sqrt{x}\sqrt{1-u^2}}{\frac12\log^2x}\Bigl(1+O\Bigl(\frac1{\log x}\Bigr)\Bigr). \label{estimate integrand}
\end{align}

Since $\int\sqrt{1-u^2}\mathrm{d}u=\frac12(\arcsin(u)+u\sqrt{1-u^2})+c$ we get
	\begin{align}
&\sqrt{x}\int_{\sqrt{1/2}}^{\sqrt{1-4/x}}\frac{\sqrt{x}\sqrt{1-u^2}}{\frac12\log^2x}\mathrm{d}u=\frac{2x}{\log^2x}\Bigl[\frac12(\arcsin(u)+u\sqrt{1-u^2})\Bigr]_{\sqrt{1/2}}^{\sqrt{1-4/x}} \notag\\
=&\frac{x}{\log^2x}\left(\arcsin(\sqrt{1-4/x})+O\Bigl(\frac1{\sqrt{x}}\Bigr)-\frac{\pi}4-\frac12\right)=\frac{x}{\log^2x}\left(\frac{\pi}2+O\Bigl(\frac1{\sqrt{x}}\Bigr)-\frac{\pi}4-\frac12\right) \notag\\
=&\frac{x}{\log^2x}\frac{\pi-2}4\Bigl(1+O\Bigl(\frac1{\sqrt{x}}\Bigr)\Bigr)
=\frac{x}{\log^2x}\frac{\pi-2}4\Bigl(1+O\Bigl(\frac1{\log x}\Bigr)\Bigr).\label{correct answer}\end{align}
We now wish to substitute~\eqref{correct answer} into~\eqref{summation for r2}. Indeed, 
\begin{align*}
&-\smash[t]{\int_1^{\sqrt{x/2}}\pi(t)f'(t)\mathrm{d}t} \\
=&-\int_1^{\sqrt{x/2}}\frac{t}{\log t}\Bigl(1+O\Bigl(\frac1{\log t}\Bigr)\Bigr)(-2t)\frac{\log(x-t^2)-2}{\sqrt{x-t^2}\log^2(x-t^2)}\Bigl(1+O\Bigl(\frac1{\log x}\Bigr)\Bigr)\mathrm{d}t\end{align*} which by~\eqref{correct answer} is equal to
\begin{align}
&2\Bigl(1+O\Bigl(\frac1{\log x}\Bigr)\Bigr)\frac{x}{\log^2x}\frac{\pi-2}4\Bigl(1+O\Bigl(\frac1{\log x}\Bigr)\Bigr) \notag\\
&+\int_1^{\sqrt{x/2}}O\Bigl(\frac{t}{\log^2 t}\Bigr)(-2t)\frac{\log(x-t^2)}{\sqrt{x-t^2}\log^2(x-t^2)}\mathrm{d}t \notag\\
&+\int_1^{\sqrt{x/2}}\frac{t}{\log t}\Bigl(1+O\Bigl(\frac1{\log t}\Bigr)\Bigr)(-2t)\frac{-2}{\sqrt{x-t^2}\log^2(x-t^2)}\mathrm{d}t.
\label{key}
\end{align}
The second component of the sum above can be estimated using the same substitutions as in~\eqref{substitutions}. Proceeding as in~\eqref{estimate integrand} we get that the integrand is \[O\left(\frac{\sqrt{x}\sqrt{1-u^2}}{\frac12\log^3x}\Bigl(1+O\Bigl(\frac1{\log x}\Bigr)\Bigr)\right),\] where $u\sqrt{x}=\sqrt{x-t^2}$, and hence this component is $O(\frac{x}{\log^3 x})$.

Similarly, replacing $\frac{t}{\log t}\bigl(1+O(\frac1{\log t})\bigr)$ with $O(\frac{t}{\log t})$ and using the substitution $u\sqrt{x}=\sqrt{x-t^2}$ gives that the integrand in the third component is
\[O\left(\frac{\sqrt{x}\sqrt{1-u^2}}{\frac14\log^3x}\Bigl(1+O\Bigl(\frac1{\log x}\Bigr)\Bigr)\right),\] so this component is also $O(\frac{x}{\log^3 x})$.

We now substitute~\eqref{key} into~\eqref{summation for r2} to get
\[\smash[b]{\sum_{p\le\sqrt{\frac x2}}}\pi(\sqrt{x-p^2})=(\pi(\sqrt{x/2}))^2+O\Bigl(\frac1{\log x}\Bigr)+2\frac{x}{\log^2x}\frac{\pi-2}4\Bigl(1+O\Bigl(\frac1{\log x}\Bigr)\Bigr)^3,\] and substituting this into~\eqref{eq: rewrite r20} gives
\begin{align*}
	\sum_{n\le x}r_2(n)
	&=(\pi(\sqrt{x/2}))^2+4\frac{x}{\log^2x}\frac{\pi-2}4\Bigl(1+O\Bigl(\frac1{\log x}\Bigr)\Bigr)^2 \\
	&=\frac{x/2}{(\frac12\log x)^2}\Bigl(1+O\Bigl(\frac1{\log x}\Bigr)\Bigr)+\frac{x}{\log^2x}({\pi-2})\Bigl(1+O\Bigl(\frac1{\log x}\Bigr)\Bigr) \\
	&=\pi\frac{x}{\log^2x}\Bigl(1+O\Bigl(\frac1{\log x}\Bigr)\Bigr).\qedhere
\end{align*}
\end{proof}

\section{Zeroth moments}
In this section we discuss the zeroth moments of $r_0$, $r_1$ and $r_2$:
\begin{definition}[Zeroth moments of $r_i$]
	We define $M_i(x)=\#\{n\le x:\ r_i(n)\ge 1\}$.
\end{definition}
We begin by quoting a result of Landau about the zeroth moment of $r_0$.
\begin{lemma}\label{lemma: zeroth moment of r0}
	$\displaystyle \smash{M_0(x)=\Bigl(2\prod_{p\equiv 3\mod 4}\bigl(1-\tfrac1{p^2}\bigr)\Bigr)^{-\frac12}\frac x{\sqrt{\log x}}\bigl(1+O((\log x)^{-1})\bigr)}.$
\end{lemma}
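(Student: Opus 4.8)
This is the Landau--Ramanujan theorem, and my plan is the classical Selberg--Delange route: encode ``$n$ is a sum of two squares'' by a multiplicative function $b$, locate the singularity of $\sum b(n)n^{-s}$ at $s=1$, and feed this into a quantitative Tauberian theorem relating $\sum_{n\le x}b(n)$ to that singularity.

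Put $b(n)=\mathbbm{1}_{r_0(n)\ge1}$, so $M_0(x)=\sum_{n\le x}b(n)$. From $r_0(n)=\sum_{d\mid n}\chi(d)$ (equation~\eqref{r0eq}, with $\chi$ the non-principal character mod $4$), evaluating this multiplicative function on prime powers shows $r_0(n)>0$ iff $v_p(n)$ is even for every prime $p\equiv3\mod4$; hence $b$ is multiplicative, and for $\Re s>1$
\[ F(s):=\sum_{n\ge1}\frac{b(n)}{n^s}=\frac1{1-2^{-s}}\prod_{p\equiv1\mod4}\frac1{1-p^{-s}}\prod_{p\equiv3\mod4}\frac1{1-p^{-2s}}. \]
Comparing Euler products with $\zeta(s)$ and $L(s,\chi)=\prod_p(1-\chi(p)p^{-s})^{-1}$ yields the identity $F(s)^2=\zeta(s)L(s,\chi)\bigl((1-2^{-s})\prod_{p\equiv3\mod4}(1-p^{-2s})\bigr)^{-1}$. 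Taking positive square roots on $(1,\infty)$ and continuing analytically,
\[ F(s)=(s-1)^{-1/2}H(s),\qquad H(s):=\bigl((s-1)\zeta(s)\bigr)^{1/2}L(s,\chi)^{1/2}\Bigl((1-2^{-s})\!\!\prod_{p\equiv3\mod4}\!\!(1-p^{-2s})\Bigr)^{-1/2}. \]
Here $(s-1)\zeta(s)$ is analytic and equals $1$ at $s=1$ (the pole is simple, residue $1$); the product over $p\equiv3\mod4$ is analytic and nonzero for $\Re s>\tfrac12$; and $L(s,\chi)$ is entire with $L(1,\chi)=1-\tfrac13+\tfrac15-\cdots=\tfrac\pi4\ne0$ by the arctan identity used in Lemma~\ref{r0asymp}. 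So $H$ is analytic and nonzero in a neighbourhood of $s=1$.

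The analytic input I need for the error term is that $H$ extends analytically, with polynomial growth in $|t|$, throughout a classical zero-free region $\sigma>1-c/\log(|t|+2)$; this follows from the classical zero-free regions of $\zeta$ and of $L(\cdot,\chi)$ --- and since the modulus is $4$ there is no Siegel zero, so the region for $L(\cdot,\chi_4)$ is unconditional and effective. Since $b(n)\ge0$ and $F(s)=(s-1)^{-1/2}H(s)$, a quantitative Tauberian theorem of Selberg--Delange type (equivalently: Perron's formula, then deforming the contour onto a Hankel-type path around the branch point $s=1$ and bounding the remaining pieces over the zero-free region) gives
\[ M_0(x)=\frac{H(1)}{\Gamma(1/2)}\,\frac{x}{\sqrt{\log x}}\,\bigl(1+O((\log x)^{-1})\bigr). \]
With $\Gamma(1/2)=\sqrt\pi$ and $H(1)=(\pi/4)^{1/2}\bigl(\tfrac12\prod_{p\equiv3\mod4}(1-p^{-2})\bigr)^{-1/2}$, the leading constant is $\tfrac12\bigl(\tfrac12\prod_{p\equiv3\mod4}(1-p^{-2})\bigr)^{-1/2}=\bigl(2\prod_{p\equiv3\mod4}(1-p^{-2})\bigr)^{-1/2}$, which is exactly the asserted constant.

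The Euler-product bookkeeping and the value $L(1,\chi)=\tfrac\pi4$ are routine; the main obstacle is the quantitative Tauberian theorem with its explicit $O(1/\log x)$ error, i.e.\ treating the half-integer singularity by a Hankel contour and controlling the integral along the classical zero-free region of $\zeta$ and $L(\cdot,\chi_4)$. If one prefers to cite a ready-made Selberg--Delange or Tauberian theorem as a black box, then all that remains is to write down $F(s)$, check that $H$ is analytic in the required region, and compute $H(1)$.
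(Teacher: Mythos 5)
Your proposal is correct, but it takes a different route from the paper: the paper does not prove this lemma at all, it simply quotes Landau, citing \cite[\S 176]{landaugerman} for the main term and \cite[\S 183]{landaugerman} for the identification of the constant. What you have written is essentially a reconstruction of the Landau--Ramanujan proof in modern Selberg--Delange language, and the details you do carry out are right: the Euler product for $F(s)$, the identity $F(s)^2=\zeta(s)L(s,\chi)\bigl((1-2^{-s})\prod_{p\equiv 3\pmod 4}(1-p^{-2s})\bigr)^{-1}$, the analyticity and nonvanishing of $H$ near $s=1$ (including the observation that $L(s,\chi_4)>0$ on the real segment, so there is no exceptional zero to worry about), and the constant bookkeeping $H(1)/\Gamma(1/2)=\bigl(2\prod_{p\equiv 3\pmod 4}(1-p^{-2})\bigr)^{-1/2}$ all check out. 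Your approach is also close in spirit to what the paper itself does later for $M_0^*$ in Chapter 7, with one important distinction that you correctly identify: the soft Wiener--Ikehara--Delange--Kato theorem used there yields only the main term, whereas the stated $O((\log x)^{-1})$ relative error forces a quantitative treatment (Hankel contour around the half-integer branch point plus the classical zero-free regions of $\zeta$ and $L(\cdot,\chi_4)$, or a ready-made effective Selberg--Delange theorem). That quantitative Tauberian step is the genuine crux and you leave it as a citation rather than proving it, but since the paper's own proof consists entirely of a citation to Landau, your sketch is at least as self-contained, and it has the advantage of making explicit where the constant and the error term come from.
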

\begin{proof}
	In~\cite[\S 176]{landaugerman}, the main term is shown subject to some constant $b$. The value of $b$ is later shown in~\cite[\S 183]{landaugerman} to be the left hand bracketed expression.
\end{proof}

The following Lemma relates the zeroth, first and second moments of each of the $r_i$. It is a version of the Cauchy-Schwarz inequality.
\begin{lemma}\label{Mibound}
	$\displaystyle\smash[b]{\Bigl(\sum_{n\le x}r_i(n)\Bigr)^2\Bigl(\sum_{n\le x}r_i^2(n)\Bigr)^{-1}\le M_i(x)\le\sum_{n\le x}r_i(n)}$.
\end{lemma}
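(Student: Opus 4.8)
The plan is to prove both inequalities by unwinding the definition $M_i(x)=\#\{n\le x:\ r_i(n)\ge 1\}=\sum_{n\le x}\mathbbm{1}_{r_i(n)\ge 1}$ and comparing this indicator-sum against the first and second moments term by term, since $r_i$ is a non-negative integer-valued function. The upper bound $M_i(x)\le\sum_{n\le x}r_i(n)$ is the easy half: for every $n$ with $r_i(n)\ge 1$ we have $r_i(n)\ge 1=\mathbbm{1}_{r_i(n)\ge 1}$, and for $n$ with $r_i(n)=0$ both sides vanish, so $\mathbbm{1}_{r_i(n)\ge1}\le r_i(n)$ pointwise; summing over $n\le x$ gives the claim.

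For the lower bound I would apply the Cauchy–Schwarz inequality to the sum $\sum_{n\le x}r_i(n)$, splitting the summand as $r_i(n)=r_i(n)\cdot\mathbbm{1}_{r_i(n)\ge 1}$ (valid since the two agree whenever either is nonzero). Then
\[
\Bigl(\sum_{n\le x}r_i(n)\Bigr)^2=\Bigl(\sum_{n\le x}r_i(n)\,\mathbbm{1}_{r_i(n)\ge 1}\Bigr)^2\le\Bigl(\sum_{n\le x}r_i^2(n)\Bigr)\Bigl(\sum_{n\le x}\mathbbm{1}_{r_i(n)\ge 1}^2\Bigr)=\Bigl(\sum_{n\le x}r_i^2(n)\Bigr)M_i(x),
\]
using $\mathbbm{1}^2=\mathbbm{1}$. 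Rearranging (and noting $\sum_{n\le x}r_i^2(n)>0$ for $x$ large enough, or trivially if it is $0$ then so is the left side) yields $\bigl(\sum_{n\le x}r_i(n)\bigr)^2\bigl(\sum_{n\le x}r_i^2(n)\bigr)^{-1}\le M_i(x)$.

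There is no real obstacle here — the statement is a standard second-moment (Paley–Zygmund-type) sandwich and both directions are one line each. The only point requiring a word of care is the degenerate case where $\sum_{n\le x}r_i^2(n)=0$, i.e.\ no $n\le x$ has a representation; then $\sum_{n\le x}r_i(n)=0$ as well and the left-hand quotient should be read as $0$, so the inequality holds vacuously. For all $x$ large enough this does not arise, so I would simply remark on it in passing.
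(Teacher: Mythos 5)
Your argument is correct and is essentially the paper's own proof: the same Cauchy--Schwarz application with the split $r_i(n)=r_i(n)\cdot\mathbbm{1}_{\{r_i(n)\ge 1\}}$ for the lower bound, and the same pointwise comparison $\mathbbm{1}_{\{r_i(n)\ge 1\}}\le r_i(n)$ for the upper bound. The remark about the degenerate case $\sum_{n\le x}r_i^2(n)=0$ is a harmless extra precaution not made in the paper.
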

\begin{proof}
	For the left-hand inequality, we need an application of Cauchy-Schwarz: $\left(\sum_{n\le x}a_nb_n\right)^2\le(\sum_{n\le x}a_n^2)(\sum_{n\le x}b_n^2)$. Set $a_n=r_i(n)$ and $b_n=~\mathbbm{1}_{\{r_i(n)>0\}}$; then
\begin{align*}
	\smash[t]{\Bigl(\sum_{n\le x}r_i(n)\Bigr)^2=\Bigl(\sum_{n\le x}a_nb_n\Bigr)^2}
	&\le\Bigl(\sum_{n\le x}a_n^2\Bigr)\Bigl(\sum_{n\le x}b_n^2\Bigr)\\
	&=\smash[b]{\Bigl(\sum_{n\le x}r_i^2(n)\Bigr)\Bigl(\sum_{n\le x}\mathbbm{1}_{\{r_i(n)>0\}}\Bigr) 
	=\Bigl(\sum_{n\le x}r_i^2(n)\Bigr)M_i(x).}
\end{align*}
Multiplying both sides by $\left(\sum_{n\le x}r_i^2(n)\right)^{-1}$ gives the inequality.
The right-hand inequality is satisfied as $M_i(x)=\#\{n\le x:\ r_i(n)\ge 1\}=\sum_{n\le x}\mathbbm{1}_{\{r_i(n)>0\}}\le\sum_{n\le x}r_i(n)\mathbbm{1}_{\{r_i(n)>0\}}=\sum_{n\le x}r_i(n)$.
\end{proof}
\begin{remark}
	Unless $n$ is a perfect square, we must have $r_2(n)=0$ or $r_2(n)\ge 2$. Hence $M_2(x)\le\frac12\sum_{n\le x}r_2(n)+O(x^{\frac12+\varepsilon})$.
\end{remark}
To estimate the zeroth moment of $r_2$, we will make use of estimates on the first and second moment of $r_2$; Lemma~\ref{r2sqsum} which describes the behaviour of the second moment of $r_2$ can be found in the subsequent section.
\begin{corollary}[The zeroth moment of $r_2$]
	$M_2(x)=\dfrac{\pi}2\dfrac{x}{\log^2 x}\left(1+O\Bigl(\frac{L(x)}{\log x}\Bigr)\right)$.
\end{corollary}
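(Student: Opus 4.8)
The plan is to sandwich $M_2(x)$ between an upper bound and a lower bound that agree up to the stated error term. The upper bound is essentially the content of the Remark preceding this corollary: any $n\le x$ with $r_2(n)>0$ that is not of the shape $2p^2$ has $r_2(n)\ge 2$, and the number of exceptional $n=2p^2\le x$ is $\pi(\sqrt{x/2})=O(\sqrt{x}/\log x)$, so $M_2(x)\le\tfrac12\sum_{n\le x}r_2(n)+O(x^{1/2+\varepsilon})$. Since $x^{1/2+\varepsilon}=O(x/\log^3 x)$, feeding in the first moment from Lemma~\ref{r2sum} gives $M_2(x)\le\tfrac{\pi}{2}\tfrac{x}{\log^2 x}\bigl(1+O(1/\log x)\bigr)$.

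For the matching lower bound I would apply the Cauchy--Schwarz inequality of Lemma~\ref{Mibound} with $i=2$, namely $M_2(x)\ge\bigl(\sum_{n\le x}r_2(n)\bigr)^2\bigl(\sum_{n\le x}r_2^2(n)\bigr)^{-1}$, taking the numerator from Lemma~\ref{r2sum} and the denominator from Lemma~\ref{r2sqsum} (proved in the next section), which gives $\sum_{n\le x}r_2^2(n)=2\pi\tfrac{x}{\log^2 x}\bigl(1+O(L/\log x)\bigr)$. The reason this second moment is asymptotically $2\sum_{n\le x}r_2(n)$ is that all but a negligible set of $n$ with $r_2(n)>0$ admit exactly one unordered representation, so that $r_2^2(n)=4=2\,r_2(n)$ there. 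Dividing, the ratio becomes $\dfrac{\pi^2 x^2/\log^4 x}{2\pi x/\log^2 x}\cdot\dfrac{1+O(1/\log x)}{1+O(L/\log x)}=\tfrac{\pi}{2}\tfrac{x}{\log^2 x}\bigl(1+O(L/\log x)\bigr)$.

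Combining the two bounds then finishes the proof: the error $O(1/\log x)$ in the upper bound is absorbed into $O(L/\log x)$ because $L=\log\log x\to\infty$, while the lower bound already carries error $O(L/\log x)$, so $M_2(x)=\tfrac{\pi}{2}\tfrac{x}{\log^2 x}\bigl(1+O(L/\log x)\bigr)$, as claimed.

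The genuine obstacle is not in this corollary at all but in its input Lemma~\ref{r2sqsum}: one must estimate $\sum_{n\le x}r_2^2(n)$, and the delicate part there is bounding the off-diagonal contribution, i.e.\ the number of $n\le x$ possessing two or more essentially distinct representations as a sum of two squares of primes. It is precisely this off-diagonal term that is responsible for the (slightly lossy) factor $L$ appearing in the final error term, rather than the cleaner $1/\log x$ of the first moment. Granted Lemma~\ref{r2sqsum}, the argument above is the short two-sided estimate just outlined.
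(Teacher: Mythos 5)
Your proposal is correct and follows essentially the same route as the paper: the upper bound via the remark that non-exceptional $n$ have $r_2(n)\ge 2$ together with Lemma~\ref{r2sum}, and the lower bound via the Cauchy--Schwarz inequality of Lemma~\ref{Mibound} with Lemma~\ref{r2sqsum} feeding the second moment. Your arithmetic in the lower bound, giving $\frac{\pi}{2}\frac{x}{\log^2 x}$, is in fact the correct value (the paper's displayed lower-bound computation contains a typo writing $2\frac{\pi x}{\log^2 x}$), so nothing further is needed.
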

\begin{proof}
We have $M_2(x)\le\frac12\sum_{n\le x}r_2(n)+O(x^{\frac12+\varepsilon})$. So by Lemma~\ref{r2sum}, we get that $M_2(x)$ is at most the RHS of the above equation.

Using Lemma~\ref{r2sum} and Lemma~\ref{r2sqsum} and noting that $O(\frac{x}{\log^3x})=O\bigl(\frac{xL(x)}{\log^3x}\bigr)$ we have
\begin{align*}
	\smash{\sum_{n\le x}r_2(n)^2=2\sum_{n\le x}r_2(n)+O\Bigl(\frac{x}{\log^3x}\Bigr)}
	&=2\frac{\pi x}{\log^2 x}\Bigl(1+O\Bigl(\frac{L(x)}{\log x}\Bigr)\Bigr).
\end{align*}
From Lemma~\ref{Mibound}, $M_2(x)\ge (\sum_{n\le x}r_2(n))^2(\sum_{n\le x}r_2(n)^2)^{-1}$. Hence
\begin{align*}
	M_2(x)&\ge \left(\frac{\pi x}{\log^2 x}\left(1+O\Bigl(\frac{L(x)}{\log x}\Bigr)\right)\right)^2\left(2\frac{\pi x}{\log^2 x}\left(1+O\Bigl(\frac{L(x)}{\log x}\Bigr)\right)\right)^{-1} \\
	&=2\frac{\pi x}{\log^2 x}\left(1+O\Bigl(\frac{L(x)}{\log x}\Bigr)\right).
\end{align*}
We have equality due to the inequalities holding in both directions. 
\end{proof}
Finally, we can bound the zeroth moment of $r_1$ through Lemma~\ref{Mibound}. Indeed, we substitute our known estimate of the first moment of $r_1$ from Lemma~\ref{lemma: first moment of r1}; an asymptotic formula for the second moment was proved by Daniel in~\cite{daniel_2001} and we state the result in Lemma~\ref{lemma: daniel second moment r1}. We obtain \[\frac{\pi^2}4\frac{x^2}{\log^2 x}\frac{4}{2\pi+9}\frac{\log x}x\left(1+O\Bigl(\frac{(\log\log x)^2}{\log x}\Bigr)\right)\le M_1(x)\le\frac{\pi}2\frac x{\log x}\left(1+O\Bigl(\frac1{\log x}\Bigr)\right), \] where the left hand leading term simplifies to $\frac{\pi^2}{2\pi+9}\frac{x}{\log x}$. The lower and upper bounds have the same order of leading term, and moreover Stephan Daniel~\cite{daniel_2001} and Granville, Sabuncu, Sedunova~\cite{granville2023} show that the upper bound is the most correct estimate. The more recent paper provides a more refined error term in~\cite[Theorem 1]{granville2023}:
\begin{theorem}
	$\displaystyle \smash[b]{M_1(x)=\frac{\pi}2\frac x{\log x}-\frac{x(\log\log x)^{O(1)}}{(\log x)^{1+\delta}}}$, where $\delta=1-\frac{1+\log\log 2}{\log 2}$.
\end{theorem}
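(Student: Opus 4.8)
The plan is to peel the first moment off and reduce the statement to an estimate for the \emph{multiplicity defect}
\[
\Delta(x):=\sum_{n\le x}\bigl(r_1(n)-\mathbbm{1}_{r_1(n)\ge 1}\bigr)=\sum_{n\le x}\bigl(r_1(n)-1\bigr)_+=\sum_{m\ge 2}\#\{n\le x:\ r_1(n)\ge m\}.
\]
Then $M_1(x)=\sum_{n\le x}r_1(n)-\Delta(x)$, and Lemma~\ref{lemma: first moment of r1} gives $\sum_{n\le x}r_1(n)=\tfrac{\pi}{2}\tfrac{x}{\log x}\bigl(1+O(1/\log x)\bigr)$; as $\delta<1$, its error $O(x/\log^2 x)$ is dominated by the target $x(\log\log x)^{O(1)}/(\log x)^{1+\delta}$, so it remains to show $\Delta(x)\asymp x(\log\log x)^{O(1)}/(\log x)^{1+\delta}$.

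Observe first that the binomial-moment bounds of Theorem~\ref{thm:r1cl upper bound} are not the right tool here: by Daniel's second moment (Lemma~\ref{lemma: daniel second moment r1}), $\sum_{n\le x}\binom{r_1(n)}{2}=\tfrac12\sum_{n\le x}r_1(n)\bigl(r_1(n)-1\bigr)\sim\tfrac98\,\tfrac{x}{\log x}$, which is a factor $(\log x)^{\delta}$ too large to bound $\Delta(x)$, so one cannot afford to pass through it. Instead I would exploit the multiplicative structure forced by a second representation. If $n=a^2+p^2=b^2+q^2$ with $(a,p)\ne(b,q)$ and $p,q$ prime, then $a+pi$ and $b+qi$ are two distinct Gaussian integers of norm $n$ lying in the closed first quadrant, neither the conjugate of the other; comparing their factorisations in $\mathbb{Z}[i]$ --- using~\eqref{r0eq} and handling the contributions of $2$ and of primes $\equiv 3\pmod 4$ separately --- shows that $n$ must carry a ``Ford-type'' divisor configuration, roughly: two divisors supported on the prime factors of $n$ that are $\equiv 1\pmod 4$, lying in a ratio $\asymp 1$. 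More generally $r_1(n)\ge m$ forces $m$ such divisors, which is increasingly costly.

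The crux is to count these $n$. For the upper bound I would estimate $\#\{n\le x:\ r_1(n)\ge 2\}$ by decomposing dyadically according to the position $[y,2y]$ of the common divisor and invoking Ford's theorem on $H(x,y,2y)$, the number of $n\le x$ with a divisor in a dyadic interval, whose order of magnitude is precisely $x/\bigl((\log y)^{\delta}(\log\log y)^{3/2}\bigr)$ with the very exponent $\delta=1-\tfrac{1+\log\log 2}{\log 2}$; the extra restrictions (residues mod $4$, a prime leg in each representation) change only the implied constant and the power of $\log\log x$. Running the same analysis for $m$ representations gives $\#\{n\le x:\ r_1(n)\ge m\}\ll x(\log\log x)^{O(1)}/(\log x)^{1+\delta_m}$ with $\delta_m$ strictly increasing and $\delta_2=\delta$, so the tail $\sum_{m\ge 3}$ in $\Delta(x)$ has strictly smaller order and the $m=2$ term dominates up to the flexible $(\log\log x)^{O(1)}$. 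For the matching lower bound I would produce an explicit family --- e.g.\ $n=\ell_1\ell_2 m$ with $\ell_1\equiv\ell_2\equiv 1\pmod 4$ prime, $\log\ell_1\asymp\log\ell_2$, and $m$ a sum of two squares admitting a representation with a prime leg --- verify directly that each such $n$ has $r_1(n)\ge 2$, and count the family from below using the lower-bound half of the Ford-type estimate.

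The main obstacle I expect is this Ford-type count, and in particular pinning the exponent down to exactly $\delta$: this amounts to adapting Ford's recursion and his concentration estimates (for $\omega(n)$ and for the multiset of logarithms of the prime factors of $n$) to the constrained setting where the relevant prime factors lie in the class $1\pmod 4$ and a leg of each representation must be prime rather than arbitrary. A secondary point is making the $m$-dependence of $\delta_m$ explicit enough --- e.g.\ that $\delta_m-\delta$ does not decay too fast --- so that the tail $\sum_{m\ge 3}$ genuinely does not erode the $(\log x)^{-1-\delta}$ saving.
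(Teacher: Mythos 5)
You should note first that the paper does not actually prove this statement: it is quoted directly from~\cite[Theorem 1]{granville2023}, so there is no in-paper argument to match, and any proof has to be assembled from the surrounding machinery. Your reduction $M_1(x)=\sum_{n\le x}r_1(n)-\Delta(x)$ with $\Delta(x)=\sum_{n\le x}(r_1(n)-1)_+$, and the observation that the $O(x/\log^2x)$ error in Lemma~\ref{lemma: first moment of r1} is negligible since $\delta<1$, are both fine. The real problem is your dismissal of Theorem~\ref{thm:r1cl upper bound} as ``not the right tool''. The unrestricted second binomial moment (Lemma~\ref{r1c2}) is indeed a factor $(\log x)^{\delta}$ too large, but Theorem~\ref{thm:r1cl upper bound} is restricted to $\omega^*(n)=k$, and that restriction is exactly what produces $\delta$. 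Since $(r-1)_+\le\min\bigl(r,\binom{r}{2}\bigr)$ for integers $r\ge0$, the $\ell=1$ and $\ell=2$ cases give
\[
\Delta(x)\ \ll\ \sum_{k}\min\Bigl(\frac{xL^{O_1(1)}}{(\log x)^{2}}\frac{L^k}{k!},\ \frac{xL^{O_2(1)}}{(\log x)^{3}}\frac{(2L)^k}{k!}\Bigr)+(\text{tail }k\gg L),
\]
and the two bounds cross where $2^k\asymp\log x$, i.e.\ $k\approx L/\log 2$; Stirling gives $L^{k}/k!=\exp\bigl(k(1+\log\log 2)+O(\log L)\bigr)=(\log x)^{1-\delta}L^{O(1)}$ there, so each side of the crossover contributes $\ll xL^{O(1)}/(\log x)^{1+\delta}$. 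This is the mechanism the paper's toolkit (Theorem~\ref{thm:r1cl upper bound} together with the $k$-decomposition, cf.\ Lemma~\ref{lemma: numbers with k factors}) is built for, it needs no appeal to Ford's theorem for the upper bound on the deficit, and it is the natural reading of where the Erd\H{o}s--Ford--Tenenbaum exponent in~\cite{granville2023} comes from.

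By contrast, the steps you leave open are the entire difficulty of your alternative route. (i) The assertion that the constraints in your Ford-type count (a prime leg in each representation, the relevant divisors supported on primes $\equiv1\pmod 4$) ``change only the implied constant and the power of $\log\log x$'' in $H(x,y,2y)$ is precisely what would have to be proved; Ford's upper and especially lower bounds depend delicately on the anatomy of the prime factors, and requiring a linear form in the parametrisation to take prime values adds a sieve condition on top of that anatomy, so this is not a routine adaptation. (ii) The passage from $\Delta(x)$ to $\#\{n\le x:r_1(n)\ge m\}$ is not controlled: $\Delta$ weights $n$ by $r_1(n)-1$, which on the relevant integers can be as large as $2^{\omega^*(n)}\approx(\log x)^{1/\log 2}$, so even granting your (unproved) claim that $\delta_m$ is strictly increasing, the weighted tail is not obviously harmless, and a Cauchy--Schwarz splice with the second moment loses half the exponent. (iii) The statement asserts a genuine secondary term, so a matching lower bound on $\Delta(x)$ is required; your construction is only sketched, and this is the part of~\cite{granville2023} that the dissertation explicitly does not treat. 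As it stands the proposal is a plausible programme rather than a proof; the min-over-$k$ computation above would close the upper-bound half with results already established in the paper.
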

\section{Second and higher moments of $r_2$}
We finish this chapter with a discussion about moments of $r_2$. For a given $n$, the quantity $r_2^k(n)$ counts the number of $k$-tuples of pairs of primes that sum to $n$, i.e. $r_2^k(n)=\#\{(\textbf{p, q})\in \mathcal{P}^k\times\mathcal{P}^k : \forall i\le k, p_i^2+q_i^2=n\}$. Here, and only in this section:
\begin{notation} $\mathcal{P}$ is the set of all primes.\end{notation}

This set of tuples can be partitioned according to how many of the pairs $p_i,q_i$ are in fact the same pair. We write the following definition:
\begin{definition}[Non-diagonal solutions of level $k$]\
	
	$\mathcal{D}_k(x) := \#\{(\textbf{p, q})\in (\mathcal{P}\cap [0,\sqrt{x}])^{2k} : p_i^2+q_i^2 = p_j^2 + q_j^2 , \{p_i,q_i\} \ne\{p_j,q_j\}, \forall i\ne j\}.$
\end{definition}
The $k$-th moment of $r_2$ can be written as a linear combination of non-diagonal solutions of levels from $1$ to $k$. In particular, $\mathcal{D}_1(x)$, the first moment of $r_2$, appears with coefficient $2^{k-1}$ in the $k$-th moment as for a fixed pair of primes $p,q$ with $p^2+q^2=n$, there are $2^{k-1}$ possible vectors $(\textbf{p, q})\in \mathcal{P}^k\times\mathcal{P}^k$ with $p_1=p$, $q_1=q$ and $\{p_i,q_i\}=\{p,q\}$ for all $i>1$. (We can ignore the case $n=2p^2$, which is rare.)

We already know the asymptotics of the first moment of $r_2$, and Lemma~\ref{r2sqsum} below tells us the behaviour of the second moment of $r_2$. The error term comes from the work of Rieger: his result~\cite[Satz 3]{Rieger1968} is in fact showing that $\mathcal{D}_2(x)\ll\frac{x}{\log^3 x}$. As explained above, $\mathcal{D}_2(x)$ is number of distinct pairs of primes $(p_1,q_1)$, $(p_2,q_2)$ such that $p_1^2+q_1^2=p_2^2+q_2^2\le x$, and $\sum_{n\le x}r_2(n)^2=2\sum_{n\le x}r_2(n)+\mathcal{D}_2(x)$.
\begin{lemma}\label{r2sqsum}
	$\displaystyle\sum_{n\le x}r_2(n)^2=2\sum_{n\le x}r_2(n)+O\left(\frac{x}{\log^3x}\right)$.
\end{lemma}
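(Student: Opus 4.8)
\emph{Proof proposal.} The plan is to read $\sum_{n\le x}r_2(n)^2$ combinatorially, split off a ``diagonal'' contribution equal to $2\sum_{n\le x}r_2(n)$, and identify what remains with $\mathcal{D}_2(x)$, whose bound we import from Rieger. Expanding the square, $\sum_{n\le x}r_2(n)^2$ counts quadruples $(p_1,q_1,p_2,q_2)$ of primes with $p_1^2+q_1^2=p_2^2+q_2^2\le x$. I would partition these quadruples according to whether $\{p_1,q_1\}=\{p_2,q_2\}$ as unordered pairs or not.

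For the diagonal part (equal unordered pairs): fix $n\le x$ and write $r_2(n)=2s_n+t_n$, where $s_n$ is the number of unordered representations $n=p^2+q^2$ with $p\ne q$ both prime and $t_n\in\{0,1\}$ records a representation with $p=q$ (i.e.\ $n=2p^2$). Each such unordered pair $\{p,q\}$ with $p\ne q$ yields $2\cdot 2=4$ quadruples with $\{p_1,q_1\}=\{p_2,q_2\}=\{p,q\}$, while a representation with $p=q$ yields just one; hence the diagonal count for $n$ is $4s_n+t_n=2r_2(n)-t_n$. Since $\sum_{n\le x}t_n=\#\{n\le x:n=2p^2,\ p\text{ prime}\}\le\#\{p:2p^2\le x\}\ll\sqrt x$, the total diagonal contribution is $2\sum_{n\le x}r_2(n)+O(\sqrt x)$, and $O(\sqrt x)$ is comfortably absorbed into the claimed error term $O(x/\log^3 x)$. (This is the sense in which the identity $\sum_{n\le x}r_2(n)^2=2\sum_{n\le x}r_2(n)+\mathcal{D}_2(x)$ quoted above holds up to the negligible $n=2p^2$ terms.)

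The remaining off-diagonal quadruples are exactly those counted by $\mathcal{D}_2(x)$: primes with $p_1^2+q_1^2=p_2^2+q_2^2\le x$ and $\{p_1,q_1\}\ne\{p_2,q_2\}$. This is the substantive input. A naive estimate $r_2(n)\le(\text{number of representations of }n\text{ as a sum of two squares})\ll\tau(n)$ only gives $\sum_{n\le x}r_2(n)^2\ll x(\log x)^3$, far from the truth; the content of $\mathcal{D}_2(x)\ll x/\log^3 x$ is that genuinely distinct representations force all four coordinates to be simultaneously prime, which a sieve exploits to save several powers of $\log x$. I would simply quote Rieger's~\cite[Satz~3]{Rieger1968} for this bound, and then combine the diagonal and off-diagonal contributions to conclude.

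The main obstacle is precisely the off-diagonal estimate $\mathcal{D}_2(x)\ll x/\log^3 x$: this is a genuine analytic result — essentially counting prime points on the quadric $a^2+b^2=c^2+d^2$ via an upper-bound sieve — and I do not reprove it, citing Rieger instead. Everything else is elementary bookkeeping with ordered versus unordered pairs and the trivial count of $n$ of the form $2p^2$.
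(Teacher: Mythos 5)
Your proposal is correct and takes essentially the same route as the paper: both split $\sum_{n\le x}r_2(n)^2$ into a diagonal contribution equal to $2\sum_{n\le x}r_2(n)$ (up to the negligible $n=2p^2$ terms, which you handle explicitly and the paper waves away) plus the off-diagonal count $\mathcal{D}_2(x)$, and then import $\mathcal{D}_2(x)\ll x/\log^3 x$ from Rieger. The only difference is one of emphasis: the paper also works out the change of variables showing that Rieger's Satz 3 --- stated for tuples $(d,t,n,m)$ with $tn\pm dm$ and $tm\pm dn$ prime --- really does bound $\mathcal{D}_2(x)$ by $T(\sqrt{x})$, a translation your citation of Rieger takes for granted.
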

We state Rieger's theorem from~\cite{Rieger1968} and explain why this shows ${\mathcal{D}_2(x)\ll\frac{x}{\log^3 x}}$.
\begin{theorem}\label{theorem: rieger}
	Let $T(y)$ be the number of tuples $(d,t,n,m)\in\mathbb{N}^4$ such that
	\begin{closealign}dn\le y,\ tn\le y,\ m<n,\ d\ne t;\ tn\pm dm,\ tm\pm dn\in\mathcal{P}.\end{closealign}
	
Then $T(y)=O(y^2(\log y)^{-3})$.
\end{theorem}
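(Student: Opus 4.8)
The plan is to recognise $T(y)$ as a sum over the ``modulus'' variables $d,t,n$ of a count of primes in arithmetic progressions, and then to bound that inner count by an upper-bound sieve. First I would fix a quadruple $(d,t,n,m)$ with $d\neq t$, $m<n$, and $dn,tn\le y$, and observe that the four quantities $tn\pm dm$ and $tm\pm dn$ lie in a box of size $O(y)$. For the product $(tn+dm)(tn-dm)(tm+dn)(tm-dn)$ to be counted, each of the four linear forms in $m$ must be prime; thinking of $d,t,n$ as fixed and $m$ as the variable, this is a system of four linear forms $L_1(m),\dots,L_4(m)$ with $m$ ranging over $[0,n)$, and we want all of them simultaneously prime. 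By an upper-bound sieve (Selberg, or Brun, as set up in Chapter~4) applied to the set $\{L_i(m): 0\le m<n\}$, the number of admissible $m$ is
\[
\ll \frac{n}{(\log n)^{4}}\prod_{p\mid \Delta}\Bigl(1-\tfrac1p\Bigr)^{-O(1)},
\]
where $\Delta=\Delta(d,t,n)$ is the product of the discriminant-type quantities $dt(t^2-d^2)$ and the resultants of the pairs of forms; the $\prod_{p\mid\Delta}$ factor is $\ll \tau(\Delta)^{O(1)}\ll (dt n)^{o(1)}$ by~\eqref{eq: tau o(1)}. Strictly one must also check the forms are not identically divisible by a small fixed prime, which fails only on a thin set of $(d,t)$ handled separately; I would absorb these degenerate configurations into an $O(y^{2-\delta})$ error.

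Next I would sum the sieve bound over $d,t,n$. Since $dn\le y$ and $tn\le y$, for a fixed $n$ the variables $d$ and $t$ each range over $[1,y/n]$, contributing a factor $(y/n)^2$; summing the resulting $\sum_{n\le y} (y/n)^2 \cdot n (\log n)^{-4} (\cdot)^{o(1)}$ — note the $n$ in the numerator from the sieve main term — gives $y^2(\log y)^{-4}\sum_{n\le y} n^{-1+o(1)}$. The tail sum $\sum_{n\le y}n^{-1+o(1)}$ is $\ll (\log y)^{1+o(1)}$, which is not quite enough to reach $(\log y)^{-3}$; to recover the extra logarithm I would instead keep one of the four primality conditions ``in reserve'' when summing over $n$, i.e. note that $tn+dm$ ranges over an interval of length $\asymp y$ as the parameters vary and use the Brun–Titchmarsh bound $\pi(x;q,a)\ll x/(\varphi(q)\log(x/q))$ to gain a saving of $\log y$ over the trivial count in one variable. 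The cleanest route is dyadic: split $d,t,n$ into dyadic ranges $D,T',N$ with $DN,T'N\ll y$, apply the sieve in $m$ to get the three-fold-log saving, and apply Brun–Titchmarsh in $n$ for the fourth form to get the fourth; summing the $O((\log y))$ dyadic boxes then yields $T(y)\ll y^2(\log y)^{-3}$ after accounting for the $(\log y)^{O(1)}$ from the number of boxes versus the power saving — more precisely one arranges the sieve dimension to be $4$ so the bare bound is already $(\log y)^{-4}$ per box and the loss from summation is only $(\log y)^{1}$.

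The main obstacle I expect is bookkeeping the sieve's dependence on the ``bad primes'' uniformly in $d,t,n$: the singular series / level-of-distribution constants in the Selberg sieve blow up along primes dividing $dt(t^2-d^2)$ and the pairwise resultants of the forms $L_i$, and one must show these contribute only a factor $(dtn)^{o(1)}$ on average rather than pointwise (a pointwise $\tau(\Delta)^{O(1)}$ bound is fine by~\eqref{eq: tau o(1)} but needs care when $\Delta$ shares factors with the modulus $n$ over which we sieve). A secondary technical point is handling the degenerate cases where two of the four linear forms coincide or where one is identically zero or a unit — these occur only when $d=\pm t$ or $m$, $n$ satisfy a fixed linear relation, a set of size $O(y^{3/2})$, comfortably inside the error term. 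Once these are dispatched, the summation is routine and gives Rieger's bound $T(y)=O(y^2(\log y)^{-3})$, hence $\mathcal{D}_2(x)\ll x(\log x)^{-3}$ and Lemma~\ref{r2sqsum}.
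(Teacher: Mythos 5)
First, note that the dissertation does not actually prove this statement: Theorem~\ref{theorem: rieger} is quoted from Rieger (Satz 3 of~\cite{Rieger1968}) and its proof is explicitly omitted; only the deduction $\mathcal{D}_2(x)\ll x(\log x)^{-3}$ is carried out. So your proposal is measured against the statement itself, and it has a genuine structural gap: you extract all four logarithms from a sieve in the single variable $m$, which ranges over an interval of length $n$. An upper-bound sieve over a range of length $n$ can only save powers of $\log n$ (the sifting level is forced to be a small power of $n$), whereas the primes being detected have size $\asymp y$ and the target requires savings in $\log y$. For small $n$ your bound therefore gives nothing at all: already the trivial count of tuples with $n\le(\log y)^{10}$ is $\sum_{n\le(\log y)^{10}}(y/n)^2\cdot n\asymp y^{2}\log\log y$, far above $y^{2}(\log y)^{-3}$, and neither the dyadic decomposition nor a single application of Brun--Titchmarsh (which gains one logarithm) can repair a regime in which four logarithms are missing. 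A workable argument must draw its logarithmic savings from variables whose range is a positive power of $y$: for instance, when $n\le y^{1/2}$ treat $tn\pm dm,\ tm\pm dn$ as four linear forms in the long variables $(d,t)\in[1,y/n]^{2}$ and apply a dimension-four sieve over that box, getting $\ll (y/n)^{2}(\log y)^{-4}$ for each fixed $(n,m)$, and only when $n>y^{1/2}$ sieve in $m$, where now $\log n\asymp\log y$; each regime then contributes $\ll y^{2}(\log y)^{-4}\sum 1/n\ll y^{2}(\log y)^{-3}$. Without some such case split (or a genuinely multivariable sieve) your decomposition cannot reach the stated bound.

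There are also two bookkeeping errors that would sink the estimate even where the $m$-sieve does apply. Passing from $\sum_{n\le y}(y/n)^{2}\,n(\log n)^{-4}(dtn)^{o(1)}$ to $y^{2}(\log y)^{-4}\sum_{n\le y}n^{-1+o(1)}$ silently replaces $(\log n)^{-4}$ by $(\log y)^{-4}$ (exactly the small-$n$ problem above), and $\sum_{n\le y}n^{-1+o(1)}$ is of size $y^{o(1)}$, not $(\log y)^{1+o(1)}$, so a pointwise factor $(dtn)^{o(1)}$ would wipe out every logarithm you are trying to save. Fortunately that factor is a large overestimate of the bad-prime product: $\prod_{p\mid\Delta}(1-1/p)^{-O(1)}=(\Delta/\varphi(\Delta))^{O(1)}\ll(\log\log y)^{O(1)}$, which is harmless pointwise, whereas invoking $\tau(\Delta)^{O(1)}$ via~\eqref{eq: tau o(1)} is both unnecessary and fatal to the count. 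With the singular series controlled in this way, the sieve variables chosen according to the size of $n$, and the degenerate configurations ($d=t$, proportional forms, common factors with the modulus) removed as you suggest, the argument can be completed; as written, however, the proposal does not establish $T(y)=O(y^{2}(\log y)^{-3})$.
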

\begin{proof}[Proof that ${\mathcal{D}_2(x)\ll\frac{x}{\log^3 x}}$ assuming Theorem~\ref{theorem: rieger}]
	Let $x>0$ and $y=\sqrt{x}$. Suppose we have two distinct pairs of primes $(p_1,q_1)$ and $(p_2,q_2)$ such that $p_1^2+q_1^2=p_2^2+q_2^2=n$, some $n\le x$. Note that it is impossible for just one of the $p_i$, $q_i$ to be $2$, so we can assume that all $p_i$ and $q_i$ are odd. Thus, $c_1=\frac12(p_2-p_1), c_2=\frac12(q_2-q_1)\in\mathbb{Z}$. Let $d=\gcd(c_1,c_2)$ and let the integers $m,n$ be defined by $m=\frac{c_1}d$ and $n=\frac{c_2}d$.
	
	Note that by definition of the $p_i$ and $q_i$, and factorising difference of two squares, we can write $c_1(p_2+p_1)+c_2(q_2+q_1)=0$. But $p_1+c_1=\frac12(p_2+p_1)$ and $q_1+c_2=\frac12(q_2+q_1)$, by definition of the $c_i$. Hence $c_1(p_1+c_1)+c_2(q_1+c_2)=0$. Dividing by $d$ gives $m(p_1+c_1)+n(q_1+c_2)=0$. In particular, $m(p_1+c_1)$ is a multiple of $n$. But $m,n$ are coprime by definition, so $n\mid(p_1+c_1)$.
	
	Let $t=\frac{p_1+c_1}n\in\mathbb{Z}$. Then $tn=p_1+c_1$, so $p_1=tn-dm$. Also, $p_2=p_1+2c_1=tn+c_1=tn+dm$.
	
	Now $tmn+n(q_1+c_2)=0$, so $q_1+c_2=tm$ and hence $q_1=tm-c_2=tm-dn$. Finally $q_2=q_1+2c_2=tm+dn$.
	
	By swapping some signs and ordering of the prime pairs we can guarantee $d,t,n,m$ are as claimed in Theorem~\ref{theorem: rieger}; since $q_1,q_2\le\sqrt{x}$ we have $dn=|c_2|\le\sqrt{x}=y$. Similarly $tn=\frac12(p_2+p_1)\le\sqrt{x}=y$. 
	
	Hence $\mathcal{D}_2(x)\le T(y)=O(y^2(\log y)^{-3})=O(x(\frac12\log x)^{-3})\ll\frac{x}{\log^3 x}$.
\end{proof}
We omit the proof of Theorem~\ref{theorem: rieger}; it relies mainly on sieve methods. We will exhibit other proofs based on the sieve approach in Chapters $6$ and $7$.

Now we turn to third and higher moments of $r_2$. Blomer and Br{\"u}dern showed in~\cite{blombr} that $\mathcal{D}_3(x)\ll x(\log\log x)^6(\log x)^{-3}$; this bound was more recently improved by Sabuncu in his paper~\cite{Sabuncu_2024}. Sabuncu also gave a general formula for upper bounds on moments of $r_2$, as well as lower bounds conditional on a conjecture about the size of a particular set. The conjecture would rely on a generalisation of the Green-Tao theorem; however, any weaker lower bound for this set that can be obtained would also give lower bounds on higher moments of $r_2$.

We define some additional useful notation below:

\begin{definition}[$R_2$, Stirling numbers, $\mathcal{S}_\ell$]\ 
\begin{itemize}
	\item $R_2(n):=\#\{(p,q)\in\mathcal{P}\times\mathcal{P}:p<q,\ p^2+q^2=n\}$. Since order matters when counting the pairs of primes for $r_2$, we have $r_2(n)=2R_2(n)+\mathbbm{1}_{n\in2\mathcal{P}^2}$ ($R_2$ skips the possible case of $n$ being double a prime square.)
	\item $\stirling{k}{\ell}$ is a Stirling number of the second kind, and is the number of ways to partition $k$ distinct objects into $\ell$ nonempty sets.
	\item $\displaystyle\mathcal{S}_\ell:=\ell!\sum_{n\le x}\binom{R_2(n)}{\ell}$.
\end{itemize}
\end{definition}

We can express moments of $r_2$ in terms of $R_2$ as follows:
\begin{align}
\sum_{n\le x}r_2^k(n)&=\sum_{n\le x}(2R_2(n)+\mathbbm{1}_{n\in2\mathcal{P}^2})^k=2^k\sum_{n\le x}\left(R_2^k(n)+O(R_2^{k-1}(n)\mathbbm{1}_{n\in2\mathcal{P}^2})\right)\notag\\
&=2^k\sum_{n\le x}R_2^k(n)+O\Bigl(\!\!\!\sum_{p\le\sqrt{x}/2}R_2^{k-1}(2p^2)\Bigr)
=2^k\smash[b]{\sum_{n\le x}}R_2^k(n)+O\Bigl(\frac{\sqrt{x}}{\log x}\Bigr). \label{R2k}
\end{align}
The last equality is achieved as $R_2(2p^2)\le r_0(2p^2)\le 4$, so the sum in the error term is bounded by $4\#\{p\le\frac{\sqrt{x}}2\}\ll\frac{\sqrt{x}}{\log x}$, using the Prime Number Theorem.
\begin{lemma}\label{fallfact}
	$\displaystyle P(x,k):=\smash[b]{\sum_{\ell=0}^k\stirling{k}{\ell}x(x-1)\dots(x-\ell+1)}=x^k$.
\end{lemma}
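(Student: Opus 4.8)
The identity in Lemma~\ref{fallfact} is the classical expansion of an ordinary power in the basis of falling factorials, and I would prove it by a counting argument followed by a polynomial-identity principle. Fix a nonnegative integer $k$ and, for the moment, a \emph{positive integer} $x$. The quantity $x^k$ counts the functions $f\colon[k]\to[x]$, where $[m]=\{1,\dots,m\}$. Partition this set of functions according to the size $\ell$ of the image $f([k])$; since $[k]$ is nonempty exactly when $k\ge 1$, the image size $\ell$ ranges over $0\le\ell\le k$. To build a function whose image has size exactly $\ell$, first choose the partition of the domain $[k]$ into the $\ell$ nonempty fibres $f^{-1}(y)$, $y\in f([k])$: there are $\stirling{k}{\ell}$ such partitions by definition of the Stirling numbers of the second kind. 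Then choose an injection sending these $\ell$ blocks to distinct elements of $[x]$: there are $x(x-1)\cdots(x-\ell+1)$ ways. These choices are independent and determine $f$ uniquely, so summing over $\ell$ gives
\[
x^k=\sum_{\ell=0}^{k}\stirling{k}{\ell}\,x(x-1)\cdots(x-\ell+1)=P(x,k).
\]

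To upgrade this from an identity of integers (for each $x\in\mathbb{Z}_{\ge 1}$) to an identity of polynomials in the real variable $x$, observe that both $x^k$ and $P(x,k)$ are polynomials of degree at most $k$ in $x$, and they agree at the infinitely many points $x=1,2,3,\dots$; hence they are equal as polynomials, which is the assertion of the lemma.

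As an alternative, purely algebraic, route I could argue by induction on $k$. The base case $k=0$ reads $1=\stirling{0}{0}\cdot(\text{empty product})=1$. For the inductive step, write $(x)_\ell:=x(x-1)\cdots(x-\ell+1)$ and use the two elementary facts $x\,(x)_\ell=(x)_{\ell+1}+\ell\,(x)_\ell$ (from $x=(x-\ell)+\ell$) and the recurrence $\stirling{k+1}{\ell}=\ell\stirling{k}{\ell}+\stirling{k}{\ell-1}$. Multiplying the inductive hypothesis by $x$,
\[
x^{k+1}=\sum_{\ell}\stirling{k}{\ell}\bigl((x)_{\ell+1}+\ell\,(x)_\ell\bigr)=\sum_{\ell}\Bigl(\stirling{k}{\ell-1}+\ell\stirling{k}{\ell}\Bigr)(x)_\ell=\sum_{\ell}\stirling{k+1}{\ell}(x)_\ell,
\]
after reindexing the first sum. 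The only point requiring a little care here is the bookkeeping of the summation limits, i.e.\ checking that the boundary terms $\ell=0$ and $\ell=k+1$ match up, which works because $\stirling{k}{-1}=\stirling{k}{k+1}=0$ and $(x)_0=1$.

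The proof has no real obstacle; the single subtlety worth stating explicitly is the passage from "valid for all positive integers $x$" to "valid as a polynomial identity" in the combinatorial approach (or, equivalently, the index-shift bookkeeping in the inductive approach). I would present the combinatorial proof as the main one, since it also explains the appearance of the Stirling numbers conceptually, which is exactly how $P(x,k)$ will be used when expanding moments of $r_2$.
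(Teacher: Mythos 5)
Your main argument — counting functions $\{1,\dots,k\}\to\{1,\dots,x\}$ by image size to get the identity for positive integer $x$, then extending to all real $x$ by a polynomial-identity/interpolation argument — is exactly the proof given in the paper, and it is correct. The inductive alternative you sketch is also fine but is extra; no comparison is needed since your primary route coincides with the paper's.
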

\begin{proof}
	We begin by stating an equivalent definition for $\stirling{k}{\ell}$. Note that $\stirling{k}{\ell}\cdot\ell!$ is the number of ways to partition $k$ distinct objects into $\ell$ labelled sets, or equivalently, the number of surjections $\{1,\dots,k\}\to\{1,\dots,\ell\}$.
	
	Suppose $x$ is a given integer. Then $\stirling{k}{\ell}x(x-1)\dots(x-\ell+1)=\ell!\stirling{k}{\ell}\binom{x}{\ell}$, which is the number of functions $\{1,\dots,k\}\to\{1,\dots,x\}$ whose image contains exactly $\ell$ elements. Hence $P(x,k)$ is the total number of functions $\{1,\dots,k\}\to\{1,\dots,x\}$, as the image of such a function has size between $0$ and $k$. We have $x$ choices for each element of  $\{1,\dots,k\}$, so $P(x,k)=x^k$ for all integer $x$. By polynomial interpolation of degree $k$ polynomials we in fact get equality for all $x\in\mathbb{R}$.
\end{proof}
By Lemma~\ref{fallfact}, the main term of the right hand side of~\eqref{R2k} can be rewritten as
\begin{equation}\label{eq: R2k to Sl}
\!\!\!\smash{\sum_{n\le x}R_2^k(n)=\sum_{n\le x}\sum_{\ell=0}^k\stirling{k}{\ell}R_2(n)(R_2(n)-1)\dots(R_2(n)-\ell+1)=\sum_{\ell=1}^k\stirling{k}{\ell}\mathcal{S}_\ell.}
\end{equation}
(We may start the sum at $1$ as $\stirling{k}{0}$, the number of ways to partition $k$ objects into $0$ nonempty subsets, is $0$.)

Thus, to find an estimate for the $k$-th moment of $r_2$, we seek bounds for the $S_\ell$.
\begin{theorem}\cite[Theorem 1.1]{Sabuncu_2024}\label{Sk}
	\begin{align*}
		\mathcal{S}_2,\mathcal{S}_3&\smash[t]{\ll x\frac{(\log\log x)^2}{\log^3x}, \
		\mathcal{S}_4\ll x\frac{\log\log\log x}{\log x}},\\
		\mathcal{S}_k&\ll_k \smash[b]{x(\log x)^{2^{k-1}-2k-1}, k\ge 5.}
	\end{align*}
\end{theorem}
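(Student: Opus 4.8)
\emph{Strategy.} The plan is to prove all three bounds by a single parametrise-then-sieve argument, specialising the constants at the end. The exponent $2^{\ell-1}-2\ell-1$ already indicates the shape: $2^{\ell-1}-1$ is the power of $\log x$ in the divisor moment $\sum_{n\le x}r_0(n)^\ell\asymp x(\log x)^{2^{\ell-1}-1}$ of the ordinary sum-of-two-squares function (for $\ell=2$ this is the $x\log x$ main term of the second moment of $r_0$ recorded above; see also~\cite{blo-gran-repno}), while $-2\ell$ is the saving expected once all $2\ell$ coordinates of an $\ell$-tuple of representations are forced to be prime. To begin, note that $\mathcal{S}_\ell=\ell!\sum_{n\le x}\binom{R_2(n)}{\ell}$ is exactly the number of ordered $\ell$-tuples $\bigl((p_1,q_1),\dots,(p_\ell,q_\ell)\bigr)$ of distinct pairs of primes with $p_i<q_i$ and $p_1^2+q_1^2=\cdots=p_\ell^2+q_\ell^2\le x$, which up to a bounded factor is $\mathcal{D}_\ell(x)$; so we must count integers $n\le x$ with $\ell$ distinct representations as a sum of two prime squares, weighted by the number of such tuples.

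\emph{Parametrisation.} For $\ell=2$ this is Rieger's substitution from Theorem~\ref{theorem: rieger}: the relation $p_1^2+q_1^2=p_2^2+q_2^2$ is encoded by integers $(d,t,n,m)$ of size $\ll\sqrt x$ with the four primes $p_i,q_i$ realised as the linear forms $tn\pm dm$ and $tm\pm dn$. For general $\ell$ the natural device is the ring $\mathbb{Z}[i]$: as the $p_i,q_i$ are distinct odd primes, the Gaussian integers $z_i=p_i+iq_i$ all have norm $n$ and are primitive up to a factor of $1+i$, so each ratio $z_i/z_1$ is a norm-one Gaussian rational $u_i\overline{b_i}/b_i$ with $b_i\mid z_1$ in $\mathbb{Z}[i]$ and $u_i$ a unit. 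Thus an $\ell$-tuple is determined by the single Gaussian integer $z_1$ (equivalently, by the two primes $p_1,q_1$) together with $\ell-1$ Gaussian divisors $b_i$ of $z_1$; writing $z_1$ through its split Gaussian prime factors and peeling off the $b_i$ turns the count into a sum over $O_\ell(1)$ rational-integer parameters of size $\ll\sqrt x$, subject to coprimality and divisibility constraints, in which each of the $2\ell$ quantities $p_i,q_i$ is a linear form. Summing over the auxiliary divisor variables is exactly what reproduces the $(\log x)^{2^{\ell-1}-1}$ factor above, as an $\ell$-th divisor moment.

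\emph{Sieving and conclusion.} With the configuration written through linear forms $L_1,\dots,L_{2\ell}$ in the remaining free variables, apply a standard multidimensional upper-bound sieve (Selberg's sieve, or a Halberstam--Richert type bound for the number of lattice points at which several linear forms are simultaneously prime). Each $L_j$ has size $\asymp\sqrt x$, so forcing $L_j$ to be prime costs a factor $\ll 1/\log x$, and the sieve returns $\mathcal{S}_\ell\ll_\ell\mathfrak{S}_\ell\,x(\log x)^{2^{\ell-1}-2\ell-1}$, where $\mathfrak{S}_\ell$ is a singular-series sum over the local densities of the forms and over the divisor variables. It then remains to bound $\mathfrak{S}_\ell$: for $\ell\ge5$ even a crude estimate suffices, since the exponent $2^{\ell-1}-2\ell-1$ is then positive, whereas for $\ell=2,3,4$ a more careful treatment of these divisor sums produces the stated $(\log\log x)^2$ and $\log\log\log x$ corrections; for $\ell=2$ one may instead simply quote Rieger's sharper bound $\mathcal{D}_2(x)\ll x/\log^3 x$.

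\emph{The main obstacle.} The crux is the parametrisation together with the size of the sieve dimension. On one hand, for every $\ell$ one must verify that the configuration is genuinely captured by only $O_\ell(1)$ integer parameters in which all $2\ell$ prime quantities are \emph{linear}; the Gaussian-integer bookkeeping---ramification at $1+i$, units and associates, and repeated split-prime factors of $z_1$---is where most of the work lies and where hidden difficulties are most likely. On the other hand, the sieve has dimension $2\ell$, so the forms $L_j$ must admit a level of distribution comfortably exceeding a fixed power of $\ell$, and the error terms and the singular series must be controlled uniformly in the auxiliary divisor variables; it is this uniformity, rather than any isolated device, that is hard. Finally, one genuinely needs the primality of all $\ell$ representations at once: the trivial bound $R_2(n)\le r_0(n)$ discards the primality of the remaining pairs and is far too lossy to reach the stated exponent.
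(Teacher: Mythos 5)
You should first note that the paper contains no proof of this statement at all: Theorem~\ref{Sk} is quoted verbatim from Sabuncu~\cite{Sabuncu_2024}, and the text only remarks that its proof ``relies on sieve methods'' structurally similar to Theorem~\ref{thm:r1cl upper bound}. So there is no in-paper argument to match yours against; the comparison can only be with the general method the paper does develop (the Rieger parametrisation for $\mathcal{D}_2$ and the Gaussian-divisor-plus-Selberg-sieve machinery of Chapter~6), and your outline is indeed in that family of ideas.

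As a proof, however, your proposal has genuine gaps, and they sit exactly where the theorem's content lies. First, the parametrisation step is asserted rather than established: after writing $z_i=u_i\overline{b_i}z_1/b_i$ and ``peeling off'' the divisors $b_i$, the quantities $p_i,q_i$ are not linear forms in $O_\ell(1)$ free integer parameters --- already in Rieger's case they are the bilinear expressions $tn\pm dm$, $tm\pm dn$ in four variables, and for general $\ell$ one gets multilinear expressions whose variable count depends on the divisor structure of $z_1$. To sieve you must fix all but two variables, apply an upper-bound sieve of dimension $2\ell$ in the remaining ones, and then sum the main and error terms over the fixed (divisor) variables; the required uniformity of the level of distribution and of the singular series in those variables is precisely the hard technical core, and your sketch only names it as an obstacle. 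Second, the bounds that distinguish this theorem from a routine heuristic --- the $(\log\log x)^2$ factor for $\mathcal{S}_2,\mathcal{S}_3$ and the $\log\log\log x$ factor for $\mathcal{S}_4$, i.e.\ the cases where the generic exponent $2^{\ell-1}-2\ell-1$ does not directly apply --- are attributed to ``a more careful treatment of these divisor sums'' without any computation; even for $\ell\ge 5$, the claim that ``a crude estimate suffices'' for $\mathfrak{S}_\ell$ needs an actual bound on the local densities summed over the divisor configurations, since an unquantified singular series could swallow the logarithmic saving. In short: the strategy is the right one and consistent with the literature, but what you have written is a plan whose decisive estimates are deferred, not a proof of the stated inequalities.
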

Lower bounds for the $S_k$ would correspondingly give lower bounds for $k$-th moments of $r_2$. Sabuncu conjectures such bounds in his paper~\cite{Sabuncu_2024}. However, the results stated in Theorem~\ref{Sk} above do not rely on any conjecture, and lead to the following:
\begin{corollary}[Higher moments of $r_2$]
	\begin{align*}
	\smash[t]{\sum_{n\le x}r_2^3(n)}&=\smash[t]{\frac{4\pi x}{\log^2x}+O\Bigl(x\frac{(\log\log x)^2}{\log^3x}\Bigr)} \\
		\sum_{n\le x}r_2^4(n)&\ll x(\log\log\log x)(\log x)^{-1} \\
		\smash[b]{\sum_{n\le x}r_2^k(n)}&\ll_k x(\log x)^{2^{k-1}-2k-1}, k\ge 5.
	\end{align*}
\end{corollary}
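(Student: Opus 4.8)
The plan is to feed Theorem~\ref{Sk} into the identities \eqref{R2k} and \eqref{eq: R2k to Sl}, after which everything reduces to bookkeeping the Stirling numbers $\stirling{k}{\ell}$ and identifying the dominant term. First I supply the one missing input, the asymptotic for $\mathcal{S}_1=\sum_{n\le x}R_2(n)$: from $r_2(n)=2R_2(n)+\mathbbm{1}_{n\in 2\mathcal{P}^2}$ and $\#\{p:2p^2\le x\}\ll\sqrt{x}/\log x$ we get $\mathcal{S}_1=\tfrac12\sum_{n\le x}r_2(n)+O(\sqrt{x}/\log x)$, so Lemma~\ref{r2sum} yields $\mathcal{S}_1=\frac{\pi x}{2\log^2 x}\bigl(1+O(1/\log x)\bigr)$.

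For the third moment, \eqref{R2k} and \eqref{eq: R2k to Sl} with $\stirling{3}{1}=1$, $\stirling{3}{2}=3$, $\stirling{3}{3}=1$ give $\sum_{n\le x}r_2^3(n)=8(\mathcal{S}_1+3\mathcal{S}_2+\mathcal{S}_3)+O(\sqrt{x}/\log x)$. By Theorem~\ref{Sk} the quantity $3\mathcal{S}_2+\mathcal{S}_3$ is $\ll x(\log\log x)^2/\log^3 x$, which also swallows both the $O(x/\log^3 x)$ error inside $\mathcal{S}_1$ and the $O(\sqrt{x}/\log x)$ term; hence $\sum_{n\le x}r_2^3(n)=8\cdot\frac{\pi x}{2\log^2 x}+O\bigl(x(\log\log x)^2/\log^3 x\bigr)=\frac{4\pi x}{\log^2 x}+O\bigl(x(\log\log x)^2/\log^3 x\bigr)$. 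For the fourth moment, the same route with $\stirling{4}{1}=1$, $\stirling{4}{2}=7$, $\stirling{4}{3}=6$, $\stirling{4}{4}=1$ gives $\sum_{n\le x}r_2^4(n)=16(\mathcal{S}_1+7\mathcal{S}_2+6\mathcal{S}_3+\mathcal{S}_4)+O(\sqrt{x}/\log x)$; here $\mathcal{S}_1\ll x/\log^2 x$ and $\mathcal{S}_2,\mathcal{S}_3\ll x(\log\log x)^2/\log^3 x$ are all negligible against $\mathcal{S}_4\ll x\log\log\log x/\log x$, so $\sum_{n\le x}r_2^4(n)\ll x(\log\log\log x)/\log x$.

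For $k\ge 5$, write $\sum_{n\le x}r_2^k(n)=2^k\sum_{\ell=1}^k\stirling{k}{\ell}\mathcal{S}_\ell+O(\sqrt{x}/\log x)$; since $k$ is fixed, $2^k$ and the $\stirling{k}{\ell}$ are absorbed into $\ll_k$. The terms $\ell\in\{1,2,3,4\}$ contribute $\ll x\log\log\log x/\log x$, while for $5\le\ell\le k$ Theorem~\ref{Sk} gives $\mathcal{S}_\ell\ll_\ell x(\log x)^{2^{\ell-1}-2\ell-1}$. The exponent $2^{\ell-1}-2\ell-1$ is strictly increasing for $\ell\ge 3$ (its forward difference is $2^{\ell-1}-2>0$), so it is maximised at $\ell=k$, where $2^{k-1}-2k-1\ge 5>0$; thus $x(\log x)^{2^{k-1}-2k-1}$ dominates every other term and $\sum_{n\le x}r_2^k(n)\ll_k x(\log x)^{2^{k-1}-2k-1}$.

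There is no deep obstacle here — the substantive work is already packaged in Theorem~\ref{Sk} — but the point that needs care, in each of the three regimes, is checking that the advertised term really is the largest: that $\mathcal{S}_1$ alone furnishes the exact main term when $k=3$ with every remaining contribution (including the error from \eqref{R2k}) confined to $O(x(\log\log x)^2/\log^3 x)$, and that the monotonicity of $\ell\mapsto 2^{\ell-1}-2\ell-1$ forces $\ell=k$ to beat both the smaller $\ell\ge 5$ and the $\ell\le 4$ pieces when $k\ge 5$.
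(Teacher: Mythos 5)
Your proposal is correct and follows essentially the same route as the paper: substitute the bounds of Theorem~\ref{Sk} into~\eqref{R2k} and~\eqref{eq: R2k to Sl}, with $\mathcal{S}_1$ (evaluated via $r_2(n)=2R_2(n)+\mathbbm{1}_{n\in 2\mathcal{P}^2}$ and Lemma~\ref{r2sum}) giving the main term for $k=3$ and $\mathcal{S}_k$ dominating for $k\ge 4$. Your explicit Stirling-number bookkeeping and the monotonicity check of $\ell\mapsto 2^{\ell-1}-2\ell-1$ merely spell out what the paper asserts when it says the $\mathcal{S}_k$ bound is the main contribution.
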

\begin{proof}
	We state upper bounds for various $\mathcal{S}_k$ in Theorem~\ref{Sk}. Note that for $k\ge 4$, the corresponding upper bound for $\mathcal{S}_k$ is the main contribution in the upper bound obtained for $\sum_{\ell=1}^k\stirling{k}{\ell}\mathcal{S}_\ell$. Hence from~\eqref{eq: R2k to Sl} we immediately get the upper bounds for $\sum_{n\le x}r_2^k(n),k\ge 4$.
	
	For the third moment, we use~\eqref{R2k} to get
	\begin{align*}
\smash[t]{\sum_{n\le x}r_2^3(n)}&=\smash[t]{2^3\sum_{n\le x}R_2^3(n)+O\Bigl(\frac{\sqrt{x}}{\log x}\Bigr)=8\mathcal{S}_1+O\Bigl(x\frac{(\log\log x)^2}{\log^3x}\Bigr)+O\Bigl(\frac{\sqrt{x}}{\log x}\Bigr)}\\
	&=8\sum_{n\le x}R_2(n)+O\Bigl(x\frac{(\log\log x)^2}{\log^3x}\Bigr)
	=4\sum_{n\le x}r_2(n)+O\Bigl(x\frac{(\log\log x)^2}{\log^3x}\Bigr)\\
	&=\frac{4\pi x}{\log^2x}+O\Bigl(x\frac{(\log\log x)^2}{\log^3x}\Bigr).\qedhere
	\end{align*}\end{proof}
The proof of Theorem~\ref{Sk} relies on sieve methods, which we will learn about more in the next chapter. Indeed, the proof structure is similar to that of Theorem~\ref{thm:r1cl upper bound}, and Sabuncu remarks in~\cite{Sabuncu_2024} that one obtains analogously to his methods that $\sum_{n\le x}r_1^k(n)\ll_k x(\log x)^{2^{k-1}-k-1}$ for $k\ge 5.$ We will see later that this is consistent with our results about moments of $r_1$.

We will now look at Sabuncu's conjecture. We provide a definition from~\cite{Sabuncu_2024}:
\begin{definition}
	Let $k\in\mathbb{Z}_{\ge 2}$ and $\{u_1,v_1,\dots,u_k,v_k\}\subseteq\mathbb{Z}$ with $u_i^2+v_i^2=u_j^2+v_j^2$ for all $i,j\le k$ such that $(u_i,v_i)\ne(u_j,v_j)$ for $i\ne j$. Let $\textbf{u}=(u_1,\dots,u_k)$ and $\textbf{v}=(v_1,\dots,v_k)$. We call $(\textbf{u, v})$ an \textbf{admissible representation} if
\[\forall p\in\mathcal{P},\exists a_p\in\mathbb{F}_p: (a_p^2+1)\prod_{i=1}^k(u_ia_p-v_i)(v_ia_p+u_i)\equiv 0\pmod p.\]
\end{definition}
For a set $\{u_1,v_1,\dots,u_k,v_k\}\subseteq\mathbb{Z}$ as above, define $m:=u_1^2+v_1^2$. Then $m=u_i^2+v_i^2$ by definition of the numbers $u_i$, $v_i$. Let $\mathcal{L}_m$ be the set of linear forms (linear functions $\mathbb{R}^2\to\mathbb{R}$) as follows: $\mathcal{L}_m=\{u_ix+v_iy,v_ix-u_iy\colon 1\le i\le k\}$. We may now state the conjecture.
\begin{claim}[Conjecture,~\cite{Sabuncu_2024}]
Let $k\in\mathbb{Z}_{\ge 2}$ and $\{u_1,v_1,\dots,u_k,v_k\}\subseteq\mathbb{Z}$ as above. If $(\textbf{u, v})$ is an admissible representation, then
\begin{align*}
\#\{(r,s)\in\mathbb{N}^2:r^2+s^2\le x\text{ is prime}, 0<u_ir-v_is&<v_ir+u_is,\forall i\le k\forall\phi\in\mathcal{L}_m,\phi(r,s)\in\mathcal{P}\} \\
&\gg_k \mathfrak{S}(u_i,\dots,v_k)\frac{x}{(\log x)^{2k+1}}
\end{align*}
where
\[\mathfrak{S}(u_i,\dots,v_k)=\prod_p\Bigl(1-\frac{\nu_p(u_1,\dots,v_k)}{p^2}\Bigr)\Bigl(1-\frac1p\Bigr)^{2k+1}\]
and \[\nu_p(u_1,\dots,v_k)=\#\{(r,s)\in\mathbb{F}_p^2:(r^2+s^2)\prod_{\phi\in\mathcal{L}_m}\phi(r,s)\equiv 0\pmod p\}.\]
\end{claim}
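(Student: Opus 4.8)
The statement is posed as a \emph{conjecture}, so in place of a proof I describe how the predicted main term is arrived at heuristically and what an actual proof would demand.

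\textbf{Where the main term comes from.} Up to the ordering constraint $0 < u_i r - v_i s < v_i r + u_i s$, the quantity being counted is the number of lattice points $(r,s)$ in a region of area $\asymp x$ at which the quadratic form $Q(r,s) = r^2 + s^2$ and the $2k$ linear forms in $\mathcal{L}_m$ are \emph{simultaneously} prime. A Hardy--Littlewood / Bateman--Horn heuristic models the divisibility events ``$p \mid Q(r,s)\prod_{\phi \in \mathcal{L}_m}\phi(r,s)$'' as essentially independent across primes $p$, with probability $\nu_p/p^2$ for a uniform $(r,s) \in \mathbb{F}_p^2$, against the probability $\asymp 1/p$ for a ``random'' integer of comparable size; assembling the resulting local corrections with the standard convergence factors produces the singular series $\mathfrak{S}$ as in the statement. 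Each form required to be prime costs one factor of $\log x$ in density: the quadratic $Q$ contributes one such factor, consistent with the fact that $\#\{(r,s) : r^2+s^2 \le x,\ r^2+s^2 \text{ prime}\} \asymp x/\log x$ (the lattice-point form of Landau's theorem), and each of the $2k$ linear forms contributes a further factor, for a total of $(\log x)^{2k+1}$ in the denominator. This yields the prediction $\mathfrak{S}\, x/(\log x)^{2k+1}$, and the admissibility hypothesis (existence of the residues $a_p$) is precisely the statement that $\mathfrak{S} \ne 0$, i.e.\ that no single prime forces the count to vanish.

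\textbf{Towards a proof.} The conjecture asks only for a \emph{lower bound} of the expected order. Any route to it must supply a lower-bound sieve for the simultaneous primality of the $2k+1$ forms $Q, \phi_1, \dots, \phi_{2k}$ that does not concede a power of $\log x$. As the surrounding discussion indicates, this would follow from a sufficiently strong quantitative generalisation of the Green--Tao theorem applied to the system $\{Q\} \cup \mathcal{L}_m$, or from an effective version of Schinzel's Hypothesis~H after parametrising the two-square form; one can also imagine feeding these forms into the Maynard--Tao sieve, but that currently delivers only the existence of some prime point, not a count of the conjectured magnitude. A realistic intermediate target is a weaker bound $\gg_k x/(\log x)^{C_k}$ with $C_k > 2k+1$, obtainable from a $\beta$-sieve or a Selberg lower-bound sieve; even such a bound would already feed back (with a correspondingly weaker exponent) into lower bounds for $\mathcal{S}_k$ and hence for the higher moments of $r_2$.

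\textbf{The main obstacle.} The decisive difficulty is the \emph{parity problem}: a pure sieve cannot distinguish integers with an even number of prime factors from those with an odd number, and so cannot by itself certify that the forms take prime (rather than merely almost-prime) values, nor deliver a lower bound of the true order of magnitude for a simultaneous-primality count. Overcoming it requires genuinely non-sieve input --- Gowers-norm and equidistribution estimates of Green--Tao--Ziegler type --- and a real complication here is that $Q$ is quadratic rather than linear, so the system $\{Q\} \cup \mathcal{L}_m$ lies outside the linear-forms framework for which such estimates are presently available; bridging that gap is where a complete proof currently stops.
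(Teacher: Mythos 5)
Your treatment is appropriate: this statement is an open conjecture of Sabuncu, and the paper likewise offers no proof, only the remarks that it amounts to a good lower bound on a sieving set, that it would rest on a generalisation of the Green--Tao theorem, and that any weaker lower bound would still yield lower bounds on higher moments of $r_2$. Your heuristic derivation of the singular series and the $(\log x)^{2k+1}$ loss, together with your identification of the parity obstruction and the fallback to weaker exponents, is consistent with that discussion and claims nothing the paper does not.
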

As we will see later, this conjecture is the equivalent of a good bound on a sieving set. Assuming that it holds, Sabuncu in particular proves in~\cite{Sabuncu_2024} that $\smash[b]{\sum_{n\le x}r_2^k(n)}\asymp_k x(\log x)^{2^{k-1}-2k-1}$ for $k\ge 5$, and under a similar conjecture one could show that
\begin{theorem} $\sum_{n\le x}r_1^k(n)\asymp_k x(\log x)^{2^{k-1}-k-1}$ for $k\ge 5.$\label{sabuncur1}\end{theorem}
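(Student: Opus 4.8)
The claim follows by pinching $\sum_{n\le x}r_1^k(n)$ between an unconditional upper bound and a conditional lower bound, both of order $x(\log x)^{2^{k-1}-k-1}$. As in the treatment of $r_2$, I would first apply Lemma~\ref{fallfact} (with $x$ there taken to be $r_1(n)$) to write $\sum_{n\le x}r_1^k(n)=\sum_{\ell=1}^k\stirling{k}{\ell}\,\ell!\sum_{n\le x}\binom{r_1(n)}{\ell}$, so that everything reduces to the correlation sums $\sum_{n\le x}\binom{r_1(n)}{\ell}$; note that $\ell!\sum_{n\le x}\binom{r_1(n)}{\ell}=\mathcal{D}^{(1)}_\ell(x)$, the $r_1$-counterpart of $\mathcal{D}_\ell(x)$, counting ordered $\ell$-tuples of pairwise distinct pairs $(a_i,p_i)$ with all $p_i$ prime and $a_1^2+p_1^2=\dots=a_\ell^2+p_\ell^2\le x$. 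For the upper bound I would split $\sum_{n\le x}\binom{r_1(n)}{\ell}$ over $k'=\omega^*(n)$: for $k'\ll L$ the summand is controlled by Theorem~\ref{thm:r1cl upper bound}, while the contribution of $k'\gg_\ell L$ is negligible (a routine divisor-moment tail estimate, the range of $k'$ being anyway capped at $\ll\log n/\log\log n$ by Theorem~\ref{estimate on omega}). Summing and using $\sum_{k'}(2^{\ell-1}L)^{k'}/k'!\le e^{2^{\ell-1}L}=(\log x)^{2^{\ell-1}}$ yields $\sum_{n\le x}\binom{r_1(n)}{\ell}\ll_\ell x(\log x)^{2^{\ell-1}-\ell-1}L^{O_\ell(1)}$; since $2^{\ell-1}-\ell-1$ is strictly increasing for $\ell\ge2$, the $\ell=k$ term dominates and $\sum_{n\le x}r_1^k(n)\ll_k x(\log x)^{2^{k-1}-k-1}L^{O_k(1)}$. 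The stray $L^{O_k(1)}$ is removed for $k\ge5$ by running Sabuncu's sieve argument directly on $r_1$ in place of quoting Theorem~\ref{thm:r1cl upper bound}, which (as he remarks before Theorem~\ref{Sk}) produces no $\log\log x$ loss in that range.

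For the lower bound it suffices to show $\mathcal{D}^{(1)}_k(x)\gg_k x(\log x)^{2^{k-1}-k-1}$, since $\sum_{n\le x}r_1^k(n)\ge k!\sum_{n\le x}\binom{r_1(n)}{k}=\mathcal{D}^{(1)}_k(x)$. The order $x(\log x)^{2^{k-1}-k-1}$ is the one expected for $\mathcal{D}^{(1)}_k(x)$, matching Sabuncu's off-diagonal count $\mathcal{D}_k(x)\asymp x(\log x)^{2^{k-1}-2k-1}$ for $r_2$ up to a factor $(\log x)^{k}$, which reflects the $k$ conditions $q_i\in\mathcal{P}$ that vanish on passing from $r_2$ to $r_1$. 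To turn this heuristic into a theorem, I would parametrise a $k$-tuple of distinct representations of a common $n$ by a Gaussian integer: with $z_j=a_j+ip_j$, $|z_j|^2=n$, and $g=\gcd(z_1,\dots,z_k)$ in $\mathbb{Z}[i]$, one has $z_j=gw_j$ where $(w_1,\dots,w_k)$ is a primitive configuration of common norm $m\mid n$, and each $p_j=\operatorname{Im}(gw_j)$ becomes a linear form in $(\operatorname{Re}g,\operatorname{Im}g)$ with coefficients read off from $w_j$. For a fixed primitive configuration, ``all $p_j$ prime'' is exactly a system of $k$ linear forms in two variables taking simultaneous prime values; discarding the thin set of configurations carrying a local obstruction, the $r_1$-version of the Conjecture above --- primality imposed only on the $k$ forms attached to the ``square-of-prime'' coordinates, rather than on all $2k$ forms as for $r_2$ --- supplies, for each admissible configuration, a count of the expected order for the admissible $g$ with $|g|^2\le x/m$. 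Summing over all primitive $k$-configurations of all norms $m$, exactly as Sabuncu does for $r_2$, then gives $\mathcal{D}^{(1)}_k(x)\gg_k x(\log x)^{2^{k-1}-k-1}$, and combining with the upper bound gives the asserted $\asymp_k$.

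The hard part is the lower bound, and within it the \emph{uniformity} demanded of the conjectural input. Sabuncu's Conjecture is a pointwise statement about one fixed configuration, whereas assembling $\mathcal{D}^{(1)}_k(x)$ forces one to sum the contributions of infinitely many primitive configurations, with norm $m$ running essentially up to a power of $x$; consequently the implied constant in $\gg_k$ must not degrade as the coefficients of the forms $p_j=\operatorname{Im}(gw_j)$ grow. That is a genuinely uniform prime-tuples statement for systems of linear forms, of Green--Tao--Ziegler strength, and furnishing it --- or even arranging the admissibility condition so that a positive-density family of configurations has singular series bounded below --- is the crux where the conjectural hypothesis does its work. By contrast the upper bound is essentially bookkeeping on top of Theorem~\ref{thm:r1cl upper bound}, the only delicate point being the removal of the $(\log\log x)^{O_k(1)}$ factors, which is where the restriction $k\ge5$ is used.
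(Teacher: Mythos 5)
Your proposal matches the paper's intent exactly: the paper offers no detailed proof of Theorem~\ref{sabuncur1}, stating only that it follows by analogy with Sabuncu's methods under an $r_1$-analogue of his conjecture, which is precisely your structure --- an unconditional upper bound via the Stirling-number decomposition and the sieve machinery (Theorem~\ref{thm:r1cl upper bound}, with Sabuncu's remark supplying the clean bound without $\log\log x$ losses for $k\ge 5$), together with a conditional lower bound furnished by the linear-forms conjecture. Your emphasis that the conditional lower bound (and its uniformity over configurations) is the genuinely hard part is consistent with how the paper and~\cite{Sabuncu_2024} present the result.
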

\chapter{Sieves}\label{chapter: sieves}\vspace{-2ex}
In this chapter we will explore the basics of sieve theory and look at some particular examples of sieves that are used in analytic methods. Powerful results such as the fundamental lemma of sieve theory will be useful to us later on to provide upper bounds on certain sums when estimating asymptotics of representation numbers. For references, see~\cite{Fordsieves},~\cite{richert1976lectures} and~\cite{heathbrown2002lecturessieves}.

A general sieve problem typically requires the following ingredients:
\begin{itemize}
	\item A finite probability space $(\mathcal{A},\mathcal{F},\mathbb{P})$, whose size we can estimate.
	\item A collection of primes, $\mathfrak{p}$, with respect to which we construct the sieve.
	\item For each prime $p\in\mathfrak{p}$, a collection of residue classes $\mathfrak{r}(p)\subseteq\{0,1,\dots,p-1\}$ modulo $p$.
	\item A real number $z$.
\end{itemize}
A sieve problem usually deals with a multiset or sequence of natural numbers; our task is to sift out (remove) those that fall into one of our chosen residue classes modulo an element of $\mathfrak{p}_{\le z}$, and estimate how many remain. Depending on the number of residue classes chosen modulo each prime $p\in\mathfrak{p}$, we may consider the sieve to be \textbf{small} (if \textbf{few} residue classes are chosen) or \textbf{large} (if \textbf{many} residue classes are chosen).

\section{Definitions}
We will need to introduce some additional definitions and notation to use in this chapter. Throughout the chapter, we will consider $\mathbb{P}$ to be the uniform probability distribution -- each element of $\mathcal{A}$ has probability $\dfrac1{|\mathcal{A}|}$ of occurring. The probability space will be discrete, i.e. $\mathcal{F}$ is the power set of $\mathcal{A}$.
\begin{definition}(Sieve notation)
	For $(\mathcal{A},\mathcal{F},\mathbb{P})$ a finite probability space, $\mathfrak{p}$ a (possibly infinite) set of prime numbers and $z\in\mathbb{R}$:
	\begin{itemize}
	\item We denote by $X$ an estimate of $|\mathcal{A}|$, and let $R:=|\mathcal{A}|-X$ (the remainder).
	\item We define events $\mathcal{A}_p,\ \forall p\in\mathfrak{p}$ (often based on a congruence condition mod $p$).
	\item For each squarefree $d\in\mathbb{N}$ that is a product of primes in $\mathfrak{p}$, let $\mathcal{A}_d$ be the event  $\displaystyle\mathcal{A}_d=\bigcap_{p\mid d}\mathcal{A}_p$. In particular, we adopt the convention $\mathcal{A}_1=\mathcal{A}$.
	\item $\mathcal{P}(z) = \displaystyle\prod_{\substack{p\le z\\ p\in\mathfrak{p}}}p$ is the product of all primes in $\mathfrak{p}$ that are at most $z$.
	\item $S(\mathcal{A},\mathfrak{p},z)=|\mathcal{A}|\cdot\mathbb{P}(\bigcap_{p\mid\mathcal{P}(z)}\overline{\mathcal{A}_p})=\#\{a\in\mathcal{A}:\forall p\mid\mathcal{P}(z),\ a\not\in\mathcal{A}_p\}$.
	\end{itemize}
\end{definition}
\section{The small, or Selberg, sieve}
In this section, we work with a special case of a multivariate polynomial small sieve; let $F\colon\mathbb{Z}^k\to\mathbb{Z}$,  $\mathcal{A}\subseteq\mathbb{Z}^k$ a finite subset, and define $\mathcal{A}_p=\{\textbf{a}=(a_1,\dots,a_k)\in\mathcal{A}:F(\textbf{a})\equiv 0\pmod p\}$. Indeed, setting $S=\{F(\textbf{a}):a\in\mathcal{A}\}$, our sieve problem is sift out all multiples of $p$ for any $p\in\mathfrak{p}_{\le z}$ from $S$.

Recall the function $I(n)=\begin{cases}
	1, n=1\\
	0, n>1
\end{cases}$. Now, given $\textbf{a}\in\mathcal{A}$, $I(\gcd(F(\textbf{a}),\mathcal{P}(z)))$ is an indicator function which determines whether $\textbf{a}$ is counted towards the sum $S(\mathcal{A},\mathfrak{p},z)$. i.e. $\displaystyle S(\mathcal{A},\mathfrak{p},z)=\sum_{\textbf{a}\in\mathcal{A}}I(\gcd(F(\textbf{a}),\mathcal{P}(z)))$. Now, since $\mu*\mathbbm{1}=I$ we have $\displaystyle S(\mathcal{A},\mathfrak{p},z)=\sum_{\textbf{a}\in\mathcal{A}}(\mu*\mathbbm{1})(\gcd(F(\textbf{a}),\mathcal{P}(z)))=\sum_{\textbf{a}\in\mathcal{A}}\sum_{d\mid (F(\textbf{a}),\mathcal{P}(z))}\mu(d)$. 
We can reverse the order of summation to get
\begin{equation}\label{S(A,p,z) equality}
	S(\mathcal{A},\mathfrak{p},z)=\sum_{d\mid \mathcal{P}(z)}\sum_{\substack{\textbf{a}\in\mathcal{A}\\d\mid F(\textbf{a})}}\mu(d)=\sum_{d\mid \mathcal{P}(z)}\mu(d)\sum_{\substack{\textbf{a}\in\mathcal{A}\\d\mid F(\textbf{a})}}1=\sum_{d\mid \mathcal{P}(z)}\mu(d)|\mathcal{A}_d|.
\end{equation}
Thus in order to estimate $S(\mathcal{A},\mathfrak{p},z)$, we estimate $|\mathcal{A}_d|$ for squarefree $d\in\mathbb{N}$.
%
\begin{definition}
	We define a multiplicative function $g$ as follows:
\begin{itemize}
	\item $\forall n>1: g(p^n)=0$, for any prime $p$.
	\item $g(p)=0$ for any prime $p\not\in\mathfrak{p}$.
	\item For a prime $p\in\mathfrak{p}$, $0\le g(p)< 1$ is chosen so that $g(p)X$ is close to $|\mathcal{A}_p|$.
\end{itemize}
\end{definition}

Due to the first condition, $g$ is a function that is $0$ outside of squarefrees, and by multiplicativity, for squarefree $d$ we have $g(d)=\prod_{p\mid d}g(p)$.

We additionally define $R_d:=|\mathcal{A}_d|-g(d)X$; from the definition above we can see that $R_p$ is small for primes $p\in\mathfrak{p}$. Now, we can rewrite~\eqref{S(A,p,z) equality} and obtain
\begin{equation}\label{S(A,p,z) inequality}
	\smash{S(\mathcal{A},\mathfrak{p},z)=X\sum_{d\mid \mathcal{P}(z)}\mu(d)g(d)+\sum_{d\mid \mathcal{P}(z)}\mu(d)R_d\le X\sum_{d\mid \mathcal{P}(z)}\mu(d)g(d)+\sum_{d\mid \mathcal{P}(z)}|R_d|.}
\end{equation}
Note that it is very difficult to control the error terms $R_d$, especially as we are summing for $d$ up to $\mathcal{P}(z)$, which is large. One method to deal with this is to try and get rid of $R_d$ for $d$ past a certain point. We start with the following result.
\begin{lemma}\label{lemma: S(a,p,z) mu+}
	Let $\mu^+$ be an arithmetic function so that for all $\textbf{a}\in\mathcal{A}$,
\[\sum_{d\mid\gcd(F(\textbf{a}),\mathcal{P}(z))}\mu^+(d)\ge
\begin{cases}
1,&\gcd(F(\textbf{a}),\mathcal{P}(z))=1 \\
0,&\gcd(F(\textbf{a}),\mathcal{P}(z))>1
\end{cases}.\]
Then $\displaystyle	S(\mathcal{A},\mathfrak{p},z)\le X\sum_{d\mid \mathcal{P}(z)}\mu^+(d)g(d)+\sum_{d\mid \mathcal{P}(z)}|\mu^+(d)R_d|$.
\end{lemma}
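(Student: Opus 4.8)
The plan is to reprise the Möbius-type argument used to derive~\eqref{S(A,p,z) equality}, but with $\mu^+$ playing the role of $\mu$ and an inequality replacing the exact identity $\mu*\mathbbm{1}=I$. Recall that
\[S(\mathcal{A},\mathfrak{p},z)=\sum_{\textbf{a}\in\mathcal{A}}I(\gcd(F(\textbf{a}),\mathcal{P}(z)))=\#\{\textbf{a}\in\mathcal{A}:\gcd(F(\textbf{a}),\mathcal{P}(z))=1\}.\]
For each $\textbf{a}\in\mathcal{A}$ with $\gcd(F(\textbf{a}),\mathcal{P}(z))=1$ the hypothesis gives $\sum_{d\mid\gcd(F(\textbf{a}),\mathcal{P}(z))}\mu^+(d)\ge 1$, i.e.\ such an $\textbf{a}$ contributes at least its true weight of $1$; for each $\textbf{a}$ with $\gcd(F(\textbf{a}),\mathcal{P}(z))>1$ the same sum is $\ge 0$, so these $\textbf{a}$ only ever add a nonnegative amount. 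Summing over all $\textbf{a}\in\mathcal{A}$ therefore yields the upper bound
\[S(\mathcal{A},\mathfrak{p},z)\le\sum_{\textbf{a}\in\mathcal{A}}\ \sum_{d\mid\gcd(F(\textbf{a}),\mathcal{P}(z))}\mu^+(d).\]

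Next I would swap the order of summation exactly as in the passage leading to~\eqref{S(A,p,z) equality}: the condition $d\mid\gcd(F(\textbf{a}),\mathcal{P}(z))$ is equivalent to ``$d\mid\mathcal{P}(z)$ and $d\mid F(\textbf{a})$'', so
\[\sum_{\textbf{a}\in\mathcal{A}}\ \sum_{d\mid\gcd(F(\textbf{a}),\mathcal{P}(z))}\mu^+(d)=\sum_{d\mid\mathcal{P}(z)}\mu^+(d)\sum_{\substack{\textbf{a}\in\mathcal{A}\\ d\mid F(\textbf{a})}}1=\sum_{d\mid\mathcal{P}(z)}\mu^+(d)|\mathcal{A}_d|.\]
Then I substitute the definition $|\mathcal{A}_d|=g(d)X+R_d$ and split the sum, obtaining
\[S(\mathcal{A},\mathfrak{p},z)\le X\sum_{d\mid\mathcal{P}(z)}\mu^+(d)g(d)+\sum_{d\mid\mathcal{P}(z)}\mu^+(d)R_d.\]
Finally, bounding each term of the second sum by its absolute value, $\mu^+(d)R_d\le|\mu^+(d)R_d|$, gives the claimed inequality.

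This argument is essentially bookkeeping, so there is no serious obstacle; the only point requiring a moment's care is the very first inequality, where one must check that the defining property of $\mu^+$ is applied with the correct case distinction (the ``$\ge 1$'' case covers exactly the $\textbf{a}$ that $S(\mathcal{A},\mathfrak{p},z)$ is meant to count, and the ``$\ge 0$'' case guarantees the remaining $\textbf{a}$ do not subtract anything). Everything else — the interchange of summation and the insertion of $R_d$ — is identical in form to the computation already carried out for the ordinary Möbius function in~\eqref{S(A,p,z) equality} and~\eqref{S(A,p,z) inequality}.
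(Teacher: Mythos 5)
Your argument is correct and is exactly the intended one: the paper omits an explicit proof of this lemma, but the computation it implies is the same $\mu^+$-for-$\mu$ substitution in the derivation of~\eqref{S(A,p,z) equality}--\eqref{S(A,p,z) inequality} that you carry out, using the hypothesis to upper-bound the indicator $I(\gcd(F(\textbf{a}),\mathcal{P}(z)))$, swapping the order of summation, and inserting $|\mathcal{A}_d|=g(d)X+R_d$. No gaps.
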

We now find a suitable function $\mu^+$. Let $\lambda_d\in\mathbb{R}$ be an arbitrary sequence of real numbers, such that $\lambda_1=1$, and set $\displaystyle\mu^+(d)=\sum_{d=[d_1,d_2]}\lambda_{d_1}\lambda_{d_2}$, where $[d_1,d_2]$ denotes the lowest common multiple of $d_1$ and $d_2$. We check $\mu^+$ satisfies the condition in Lemma~\ref{lemma: S(a,p,z) mu+}: If for some $\textbf{a}\in\mathcal{A}$ we have $\gcd(F(\textbf{a}),\mathcal{P}(z))=1$, then $\sum_{d\mid\gcd(F(\textbf{a}),\mathcal{P}(z))}\mu^+(d)=\lambda_1^2=1\ge 1$, and if $\gcd(F(\textbf{a}),\mathcal{P}(z))>1$, then $\sum_{d\mid\gcd(F(\textbf{a}),\mathcal{P}(z))}\mu^+(d)=\Bigl(\sum_{d\mid\gcd(F(\textbf{a}),\mathcal{P}(z))}\lambda_d\Bigr)^2\ge 0$. Hence by Lemma~\ref{lemma: S(a,p,z) mu+} we get
\begin{equation}\label{S(A,p,z) other inequality}
	S(\mathcal{A},\mathfrak{p},z)\le X\sum_{d\mid \mathcal{P}(z)}\sum_{d=[d_1,d_2]}\lambda_{d_1}\lambda_{d_2}g(d)+\sum_{d\mid \mathcal{P}(z)}\sum_{d=[d_1,d_2]}|\lambda_{d_1}\lambda_{d_2}R_d|.
\end{equation}
We can now restrict the maximum $d$ for which $R_d$ is considered by taking $\lambda_d=0$ for $d\ge\xi$, some $\xi$; then $\mu^+(d)=0$ for $d\ge\xi^2$, as $d,d'<\xi\implies [d,d']\le d_1d_2<\xi^2$ so for $d\ge\xi^2$ we have $\displaystyle \sum_{d=[d_1,d_2]}\lambda_{d_1}\lambda_{d_2}=\sum_{d=[d_1,d_2]}0=0$. To improve the bound on $S(\mathcal{A},\mathfrak{p},z)$ as much as possible, we now optimise the values of $\lambda_d$ for $d\le\xi$.

Let $\displaystyle G:=\sum_{d\mid \mathcal{P}(z)}\sum_{d=[d_1,d_2]}\lambda_{d_1}\lambda_{d_2}g(d)=\sum\!\!\!\!\sum_{[d_1,d_2]\mid \mathcal{P}(z)}\mu^+([d_1,d_2])g([d_1,d_2])$. 

The sieve problem now becomes an optimisation problem. We do not provide the reasoning for finding the optimal choice here, but it can be found in Heath-Brown's lecture notes~\cite{heathbrown2002lecturessieves}, for example. Note that the convention followed in this text is to use a `weight' function denoted $\omega$; our function $g(p)$ corresponds to $\frac{\omega(p)}p$ in~\cite{heathbrown2002lecturessieves}.
\begin{definition}\
\begin{itemize}
	\item Let $h$ be defined by $\frac1{h(n)}=\sum_{d\mid n}\mu(\frac nd)g(d)^{-1}$. It is multiplicative as $\mu$, $g^{-1}$ are, and $\frac1h$ is a Dirichlet convolution. In particular, $\frac1{h(p)}=-1+\frac1{g(p)}=\frac{-g(p)+1}{g(p)}$, so $h(p)=\frac{g(p)}{1-g(p)}=g(p)(1-g(p))^{-1}$. Thus $h(d)=g(d)\prod_{p\mid d}(1-g(p))^{-1}$ by multiplicativity of $h$ and $g$.
	\item $G_d(\xi,z):=\sum_{dl\mid\mathcal{P}(z),l<\xi}h(l)$. We adopt the convention $G_1(\xi,z)=G(\xi,z)$, so
	$G(\xi,z):=\sum_{l\mid\mathcal{P}(z),l<\xi}h(l)$.
\end{itemize}
\end{definition}
Heath-Brown shows in~\cite{heathbrown2002lecturessieves} that the bound is optimised when
\[\smash{\lambda_d=\mu(d)\prod_{p\mid d}(1-g(p))^{-1}\frac{G_d(\frac\xi d,z)}{G(\xi,z)}.}\]
Substituting into~\eqref{S(A,p,z) other inequality} we get \[\smash{\sum_{d\mid \mathcal{P}(z)}\sum_{d=[d_1,d_2]}\lambda_{d_1}\lambda_{d_2}g(d) = \dfrac1{G(\xi,z)}.}\] Hence we obtain
\begin{lemma}[Fundamental lemma of sieve theory]\label{lemma: orig sieve lemma}
\[\smash{S(\mathcal{A},\mathfrak{p},z)\le\frac{X}{G(\xi,z)}+\sum_{d\le \xi^2,d\mid\mathcal{P}(z)}3^{\omega(d)}|R_d|.}\]
\end{lemma}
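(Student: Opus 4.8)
The plan is to start from inequality~\eqref{S(A,p,z) other inequality}, which already bounds $S(\mathcal{A},\mathfrak{p},z)$ by $X\sum_{d\mid\mathcal{P}(z)}\sum_{d=[d_1,d_2]}\lambda_{d_1}\lambda_{d_2}g(d)$ plus the error sum $\sum_{d\mid\mathcal{P}(z)}\sum_{d=[d_1,d_2]}|\lambda_{d_1}\lambda_{d_2}R_d|$, and to insert the optimised weights $\lambda_d=\mu(d)\prod_{p\mid d}(1-g(p))^{-1}G_d(\xi/d,z)/G(\xi,z)$. For the first sum I would quote the identity recorded just before the statement, $\sum_{d\mid\mathcal{P}(z)}\sum_{d=[d_1,d_2]}\lambda_{d_1}\lambda_{d_2}g(d)=G(\xi,z)^{-1}$, which is the outcome of the Selberg optimisation carried out in~\cite{heathbrown2002lecturessieves}; if a self-contained derivation were wanted, one would use $g(n)^{-1}=\sum_{e\mid n}h(e)^{-1}$ to diagonalise the quadratic form as $\sum_e h(e)^{-1}u_e^2$ with $u_e=\sum_{e\mid d}\lambda_d g(d)$ subject to the single linear constraint $\sum_e\mu(e)u_e=1$, then minimise it via Cauchy--Schwarz. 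This immediately produces the main term $X/G(\xi,z)$.

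All remaining effort goes into the error sum, and the one substantive point is that $|\lambda_d|\le 1$ for every $d$. Since $h(d)/g(d)=\prod_{p\mid d}(1-g(p))^{-1}$, we have $|\lambda_d|=(h(d)/g(d))\,G_d(\xi/d,z)/G(\xi,z)$, so it is enough to show $G(\xi,z)\ge (h(d)/g(d))\,G_d(\xi/d,z)$. I would prove this by writing every $l\mid\mathcal{P}(z)$ with $l<\xi$ uniquely as $l=ab$ with $a=\gcd(l,d)\mid d$ and $(b,d)=1$; since $\mathcal{P}(z)$ is squarefree the factorisation is unique and $h(l)=h(a)h(b)$, and whenever $a\mid d$ and $b<\xi/d$ one has $ab\le db<\xi$ automatically. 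Restricting $G(\xi,z)$ to such $l$ and using $h\ge 0$ gives $G(\xi,z)\ge \bigl(\sum_{a\mid d}h(a)\bigr)\,G_d(\xi/d,z)$, and $\sum_{a\mid d}h(a)=\prod_{p\mid d}(1+h(p))=\prod_{p\mid d}(1-g(p))^{-1}=h(d)/g(d)$ by multiplicativity of $h$ together with $1+h(p)=(1-g(p))^{-1}$. Hence $|\lambda_d|\le 1$.

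With $|\lambda_{d_1}\lambda_{d_2}|\le 1$ in hand, I would bound the error sum termwise. Because $\lambda_d=0$ for $d\ge\xi$, a pair $(d_1,d_2)$ contributes only when $d_1,d_2<\xi$, forcing $d=[d_1,d_2]\le d_1 d_2<\xi^2$ --- the truncation already observed above the statement. For a fixed squarefree $d\mid\mathcal{P}(z)$ the number of ordered pairs $(d_1,d_2)$ with $[d_1,d_2]=d$ equals $3^{\omega(d)}$, since for each prime $p\mid d$ the valuation pair $(v_p(d_1),v_p(d_2))$ must be one of $(1,0),(0,1),(1,1)$. Therefore
\[\sum_{d\mid\mathcal{P}(z)}\sum_{d=[d_1,d_2]}|\lambda_{d_1}\lambda_{d_2}R_d|\le\sum_{\substack{d\le\xi^2\\ d\mid\mathcal{P}(z)}}|R_d|\sum_{d=[d_1,d_2]}1=\sum_{\substack{d\le\xi^2\\ d\mid\mathcal{P}(z)}}3^{\omega(d)}|R_d|,\]
and combining with the main term $X/G(\xi,z)$ gives the assertion.

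I expect the inequality $|\lambda_d|\le 1$ to be the main obstacle: it is the only step that uses a genuine idea (the multiplicative splitting $l=ab$ and the resulting estimate $G(\xi,z)\ge (h(d)/g(d))G_d(\xi/d,z)$), whereas the main-term identity can be quoted and the rest is elementary divisor counting.
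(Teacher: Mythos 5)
Your proposal is correct and follows essentially the same route as the paper: the Selberg upper-bound construction from~\eqref{S(A,p,z) other inequality} with the optimised weights, whose main-term identity the paper likewise quotes from Heath-Brown. The only difference is that you fill in the details the paper leaves implicit --- the bound $|\lambda_d|\le 1$ via $G(\xi,z)\ge (h(d)/g(d))G_d(\xi/d,z)$ and the count of $3^{\omega(d)}$ ordered pairs with $[d_1,d_2]=d$ --- and both of these steps are carried out correctly.
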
 Here, recall $\omega$ is the function counting the number of distinct prime factors of $n$.

Halberstam and Richert showed in~\cite[Lemma 5.4]{halberstam_richert} that $G(z,z)=\frac1{e^{\gamma\kappa}\Gamma(\kappa+1)}\prod_{p<z}(1-g(p))^{-1}(1+O(\tfrac1{\log z}))$. So setting $\xi=z$, we get

\begin{theorem}\label{lemma: fundamental lemma of sieves}
$\displaystyle S(\mathcal{A},\mathfrak{p},z)\le e^{\gamma\kappa}\Gamma(\kappa+1)\prod_{p<z}(1-g(p))(1+O(\tfrac1{\log z}))+\sum_{d\le z^2,d\mid\mathcal{P}(z)}\!\!\!\!\!\!3^{\omega(d)}|R_d|.$
\end{theorem}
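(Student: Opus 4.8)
The plan is to read this off essentially immediately from the Fundamental lemma of sieve theory (Lemma~\ref{lemma: orig sieve lemma}) by taking the free cutoff parameter to be $\xi=z$ and then substituting the asymptotic evaluation of $G(z,z)$ quoted from Halberstam and Richert~\cite[Lemma 5.4]{halberstam_richert}. First I would specialise Lemma~\ref{lemma: orig sieve lemma} to $\xi=z$; its remainder sum $\sum_{d\le\xi^2,\,d\mid\mathcal{P}(z)}3^{\omega(d)}|R_d|$ then becomes verbatim the sum $\sum_{d\le z^2,\,d\mid\mathcal{P}(z)}3^{\omega(d)}|R_d|$ appearing in the statement, so no work at all is needed for the error term.

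The only substantive step is rewriting the main term $X/G(z,z)$. Halberstam--Richert supply
\[
G(z,z)=\frac{1}{e^{\gamma\kappa}\Gamma(\kappa+1)}\prod_{p<z}\bigl(1-g(p)\bigr)^{-1}\Bigl(1+O\bigl(\tfrac1{\log z}\bigr)\Bigr),
\]
and I would simply take reciprocals. The one point deserving a word of justification is that $\bigl(1+O(1/\log z)\bigr)^{-1}=1+O(1/\log z)$: this holds once $z$ is large enough that the $O$-term is, say, at most $1/2$ in absolute value, while for the finitely many smaller $z$ the asserted inequality is trivially true after inflating the implied constant. Hence
\[
\frac{X}{G(z,z)}=Xe^{\gamma\kappa}\Gamma(\kappa+1)\prod_{p<z}\bigl(1-g(p)\bigr)\Bigl(1+O\bigl(\tfrac1{\log z}\bigr)\Bigr),
\]
which is the desired main term; combining it with the remainder sum from the previous paragraph gives the theorem.

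The only genuine obstacle is the external input itself: one must have the Halberstam--Richert evaluation of $G(z,z)$ available with its error term uniform in $z$ over the relevant range and valid for the sieve dimension $\kappa$ in play. Once that is granted, everything downstream is a one-line manipulation \textemdash{} in particular, no fresh optimisation of the weights $\lambda_d$ is required, since the optimal choice was already baked into the proof of Lemma~\ref{lemma: orig sieve lemma}.
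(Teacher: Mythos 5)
Your proposal is correct and follows essentially the same route as the paper: specialise Lemma~\ref{lemma: orig sieve lemma} to $\xi=z$ and substitute the Halberstam--Richert evaluation of $G(z,z)$, with the reciprocal of $1+O(1/\log z)$ handled as you describe. (Your version even makes the factor $X$ in the main term explicit, which the paper's statement omits, evidently as a typo.)
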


Note that Lemma~\ref{lemma: orig sieve lemma} and Lemma~\ref{lemma: fundamental lemma of sieves} are more general than the polynomial sieve case we discussed earlier; as long as a suitable function $g(d)$ is defined in a sieving problem, we can use these results. In particular, in Chapter $7$ we use a slightly modified polynomial sieve where the events $\mathcal{A}_p$ are dependent on divisibility by $p^2$, not just $p$.
\chapter{Common divisors of sums of squares}\vspace{-2ex}
In this chapter we will explore some modified representation numbers, as defined below; applying properties of these functions in combination with methods such as sieves will help us prove results about higher moments of $r_1$.
\begin{definition}
	\ 
	
	\begin{itemize}
		\item $r_0^*(n):=\#\{a\ge 0,b>0:n=a^2+b^2\text{ and } \gcd(a,b)=1\}$
		\item $r_1^*(n):=\#\{a\ge 0,p>0:n=a^2+p^2\text{ and } \gcd(a,p)=1, p\text{ prime}\}$
		\item $r_2^*(n):=\#\{p,q>0:n=p^2+q^2,\ p,q\text{ prime, }p\ne q\}$
	\end{itemize}
\end{definition}
\section{The function $r_0^*$}
Granville, Sabuncu and Sedunova's paper~\cite{granville2023} uses auxiliary results about $r_0^*$ in their proof for an upper bound on expressions of form $\sum\binom{r_1(n)}{\ell}$. We provide full proofs of some of these results below, which were not given in the paper.
\begin{lemma}[Properties of $r_0^*$]\label{r0*}
	$r_0^*(2)=1$, $r_0^*(2^k)=0$ for $k\ge 2$, $r_0^*(p^k)=1+(\frac{-1}p)$ for odd primes $p$, where $(\frac{\cdot}p)$ is the Legendre symbol modulo $p$.
\end{lemma}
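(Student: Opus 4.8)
The plan is to work inside the ring of Gaussian integers $\mathbb{Z}[i]$, using the standard bijection between representations $n=a^2+b^2$ and elements $z=a+bi$ of norm $N(z)=n$, together with unique factorization. The crucial observation is that such a representation is \emph{primitive}, i.e.\ $\gcd(a,b)=1$, precisely when $a+bi$ is divisible by no rational prime: if a rational prime $\ell$ divides $a+bi$ in $\mathbb{Z}[i]$ then comparing real and imaginary parts gives $\ell\mid a$ and $\ell\mid b$, and the converse is clear. So in every case I reduce to counting, among the Gaussian integers of norm $n$ that are divisible by no rational prime, how many give a pair $(a,b)$ with $a\ge 0$ and $b>0$.

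The cases $n=2$, $n=2^k$ ($k\ge 2$) and $n=p^k$ with $p\equiv 3\pmod 4$ I would dispatch by elementary arguments. For $n=2$ the only candidate with $a\ge 0$, $b>0$ is $(1,1)$, which is coprime, so $r_0^*(2)=1$. For $k\ge 2$ we have $a^2+b^2\equiv 0\pmod 4$, and since a square is $0$ or $1$ modulo $4$ this forces $a,b$ both even, contradicting $\gcd(a,b)=1$; hence $r_0^*(2^k)=0$. For $p\equiv 3\pmod 4$: if $p\mid a$ then $p\mid b^2$, so $p\mid b$, contradicting coprimality; thus $p\nmid a$, and $a^2+b^2\equiv 0\pmod p$ makes $-1$ a quadratic residue mod $p$, impossible — so $r_0^*(p^k)=0=1+\left(\tfrac{-1}{p}\right)$.

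The substantive case is $p\equiv 1\pmod 4$, where $\left(\tfrac{-1}{p}\right)=1$ and I must show $r_0^*(p^k)=2$. Write $p=\pi\bar\pi$ with $\pi$ a Gaussian prime; since $p\equiv 1\pmod 4$, $\pi$ and $\bar\pi$ are non-associate. By unique factorization every Gaussian integer of norm $p^k$ is $u\,\pi^j\bar\pi^{k-j}$ with $u$ a unit and $0\le j\le k$, and such an element is divisible by the rational prime $p=\pi\bar\pi$ unless $j=0$ or $j=k$; for $j\in\{0,k\}$ it is divisible by no rational prime at all. So the primitive representations are exactly the associates of $\pi^k$ and of $\bar\pi^k$, i.e.\ the $4+4=8$ signed ordered pairs which, writing $\pi^k=a+bi$, are $(\pm a,\pm b)$ and $(\pm b,\pm a)$. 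I would then check $a,b\ne 0$ (otherwise $\pi^k$ is a rational integer times a unit, forcing $\bar\pi\mid\pi^k$, impossible) and $a\ne\pm b$ (otherwise $p^k=2a^2$, impossible for odd $p$); hence these eight pairs are distinct, none lies on a coordinate axis, and exactly two of them, $(|a|,|b|)$ and $(|b|,|a|)$, satisfy $a\ge 0$, $b>0$. Therefore $r_0^*(p^k)=2=1+\left(\tfrac{-1}{p}\right)$.

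The main obstacle is the bookkeeping in this last case: translating ``primitive'' into the clean divisibility statement about Gaussian integers, and then isolating exactly which of the eight associates lie in the region $\{a\ge 0,\ b>0\}$. The delicate points are the two non-degeneracy facts ($a,b\ne 0$ and $a\ne\pm b$), which is where one genuinely uses that $\pi$ and $\bar\pi$ are distinct primes and that $p$ is odd; everything else is routine.
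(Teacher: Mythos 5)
Your proof is correct. You and the paper both work in the Gaussian integers, and your three elementary cases ($n=2$, $n=2^k$ with $k\ge 2$, and $p\equiv 3\pmod 4$) are essentially the paper's. The difference is in the case $p\equiv 1\pmod 4$: the paper leans on its earlier computation $r_0(p^k)=k+1$ and then removes imprimitive representations -- arguing $r_0^*(p)=2$ directly, $r_0^*(p^2)=r_0(p^2)-1$, and for $k\ge 2$ asserting $r_0^*(p^k)=r_0(p^k)-r_0(p^{k-2})$ -- whereas you characterize primitivity intrinsically ($a+bi$ divisible by no rational prime, equivalent to $\gcd(a,b)=1$), note that among the elements $u\pi^j\bar\pi^{k-j}$ of norm $p^k$ the primitive ones are exactly those with $j\in\{0,k\}$, and then count which of the eight associates of $\pi^k$ and $\bar\pi^k$ lie in the half-quadrant $a\ge 0$, $b>0$, using the non-degeneracy facts $a,b\ne 0$ and $a\ne\pm b$. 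Your route is uniform in $k$, avoids the appeal to $r_0(p^k)=k+1$, and in effect supplies a proof of the subtraction identity that the paper uses without justification (imprimitive representations of $p^k$ being exactly $p$ times representations of $p^{k-2}$); the paper's route is shorter given that $r_0$ on prime powers is already known there. The only cosmetic point in your write-up is in the $p\equiv 3\pmod 4$ case, where after ruling out $p\mid a$ you should say explicitly that $b a^{-1}$ (rather than $a b^{-1}$) exhibits $-1$ as a square modulo $p$, or note symmetrically that $p\nmid b$; this is a one-line fix, not a gap.
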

\begin{proof}
	\begin{itemize}
		\item $r_0^*(2)=1$ as the only possible representation is $2=1^2+1^2$.
		\item $r_0^*(2^k)=0$ for $k\ge 2$ as any multiple of $4$ written as a sum of two squares must be written as a sum of two even squares, which are then not coprime.
		\item If $p\equiv 3\pmod 4$ then only even powers of $p$ have a representation as a sum of two squares, and in any such representation, both squares have to be multiples of $p$. Also, $-1$ is not a square modulo $p$. Thus we indeed get $r_0^*(p^k)=0=1-1=1+\bigl(\frac{-1}p\bigr)$.
		\item If $p\equiv 1\pmod 4$ then, as shown in the proof of Lemma~\ref{r0asymp}, we have $r_0(p^k)=k+1$. So in particular, $r_0(p)=2$. Hence $r_0^*(p)=2$, because any possible common factor of squares whose sum is $p$ would be a factor of $p$, but $p<p^2$. Similarly, as $p^2 = 0 + p^2$ is the only representation of $p^2$ where both squares are multiples of $p^2$ (and the second one is positive), we get $r_0^*(p^2)=3-1=2$. In general, for powers $k\ge 2$, $-1$ is a square modulo $p$ so we have \\ \hspace*{\fill}%
$r_0^*(p^k)=r_0(p^k)-r_0(p^{k-2})=2=1+1=1+\bigl(\tfrac{-1}p\bigr).$\hspace*{\fill}\qedhere
	\end{itemize}
\end{proof}
\begin{lemma}\label{r0*mult}$r_0^*$ is a multiplicative function.
\end{lemma}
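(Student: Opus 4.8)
The plan is to show that $r_0^*$ is multiplicative by exhibiting it as a counting function on coprime-square representations and using the Chinese Remainder Theorem together with the Gaussian integer structure already developed in the proof of Lemma~\ref{r0asymp}. Suppose $m$ and $n$ are coprime positive integers; I want a bijection between primitive representations of $mn$ as a sum of two squares and pairs consisting of a primitive representation of $m$ and a primitive representation of $n$. Working in $\mathbb{Z}[i]$, a primitive representation $mn = a^2+b^2$ with $\gcd(a,b)=1$ corresponds to a factorisation $mn = (a+bi)(a-bi)$ where $a+bi$ and $a-bi$ share no common Gaussian prime factor (primitivity is exactly the condition that $\gcd(a+bi, a-bi)$ is a unit, since a common rational-prime factor of $a$ and $b$ would divide both, and a common Gaussian prime $\pi$ with $\bar\pi$ also dividing would force $\pi\bar\pi = p \mid \gcd(a,b)$ — here one must handle the prime $2 = -i(1+i)^2$ carefully, noting $1+i$ and its conjugate $1-i$ are associates).

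First I would reduce to prime powers: since every squarefree-supported multiplicative claim follows from behaviour on coprime factors, it suffices to check that the count for $mn$ factors as the product of counts for $m$ and $n$ when $\gcd(m,n)=1$, and by induction on the number of prime factors this reduces to the case $m = p^a$, $n$ a product of the remaining prime powers. Then I would split a primitive representation of $mn$ according to which Gaussian primes lie above $p$: because $\gcd(m,n)=1$, the Gaussian prime factors of $mn$ partition into those dividing $m$ and those dividing $n$, and primitivity means that for each rational prime $q \equiv 1 \pmod 4$ dividing (say) $m$, exactly one of the two conjugate Gaussian primes above $q$ divides $a+bi$ — the choices for the $m$-part and the $n$-part are independent. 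For primes $q \equiv 3 \pmod 4$ a primitive representation forces $q \nmid mn$ (else $q$ divides both $a$ and $b$), consistent with $r_0^*(q^k) = 0$ for $k \geq 1$ from Lemma~\ref{r0*}; and for $q = 2$ one checks the count $r_0^*(2) = 1$, $r_0^*(2^k)=0$ for $k\ge 2$ multiplies correctly.

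Concretely, I would define the map sending a primitive $a+bi$ with $(a+bi)(a-bi) = mn$ to the pair $(\alpha, \beta)$ where $\alpha$ is the product of the Gaussian prime factors of $a+bi$ lying above primes dividing $m$ (times an appropriate unit to land in $\mathbb{Z}_{\ge 0}\times\mathbb{Z}_{\ge 1}$), and $\beta$ the product of those above primes dividing $n$; up to units $\alpha\bar\alpha = m$ and $\beta\bar\beta = n$, and $\alpha$, $\beta$ are primitive. Conversely, given primitive $\alpha$, $\beta$, the product $\alpha\beta$ gives a primitive representation of $mn$ because the supports of $\alpha\bar\alpha$ and $\beta\bar\beta$ are disjoint, so no cancellation or common factor arises. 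The bookkeeping of units and the sign/ordering conventions in the definition of $r_0^*$ (namely $a \ge 0$, $b > 0$) is the fiddly part, since each representation is counted once rather than up to the eightfold symmetry of $\mathbb{Z}[i]^\times$ acting with conjugation.

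The main obstacle I expect is handling the prime $2$ and making the unit/symmetry bookkeeping airtight: unlike odd primes, $2$ ramifies, so $1+i$ is an associate of its conjugate, and one must verify that this does not break the independence of the $m$-part and $n$-part or introduce an overcount. An alternative, cleaner route — which I would fall back on if the bijection gets unwieldy — is to use the formula $r_0(n) = \sum_{d\mid n}\chi(d)$ from~\eqref{r0eq} together with an explicit Möbius-type identity expressing $r_0^*$ in terms of $r_0$: namely $r_0^*(n) = \sum_{d^2 \mid n} \mu(d) r_0(n/d^2)$, which one proves by the standard primitivity sieve, and then multiplicativity of $r_0^*$ follows immediately from multiplicativity of $r_0$, of $\mu$, and of $d \mapsto \mathbbm{1}_{d^2\mid n}$, since a Dirichlet-type convolution of multiplicative functions is multiplicative. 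This second approach sidesteps the Gaussian-integer case analysis entirely and is probably what I would actually write up.
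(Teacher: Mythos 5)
Your proposal is correct, but the route you say you would actually write up is genuinely different from the paper's. The paper proves the lemma by the Gaussian-integer argument that your first sketch describes: it first notes $r_0=\chi*\mathbbm{1}$ is multiplicative via~\eqref{r0eq}, then proves concretely (the $ac-bd$, $ad+bc$ computation) that the product of two primitive representations of coprime integers $n_1,n_2$ is again primitive, and concludes as in Lemma~\ref{r0asymp} that every primitive representation of $n$ arises from combining representations of its prime-power factors. So if you carried out your bijection, the step you wave at (``no cancellation or common factor arises'') is exactly the coprimality check the paper does explicitly, and your worries about units and the prime $2$ are handled there implicitly by that elementary computation rather than by tracking $\mathbb{Z}[i]^\times$. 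Your fallback route is different and also valid: the gcd-grouping identity $r_0(n)=\sum_{d^2\mid n}r_0^*(n/d^2)$ holds with the paper's conventions ($a\ge 0$, $b\ge 1$, and $\gcd(0,b)=b$), inverts to $r_0^*(n)=\sum_{d^2\mid n}\mu(d)\,r_0(n/d^2)$, and this exhibits $r_0^*$ as the Dirichlet convolution of the multiplicative $r_0$ with the multiplicative function supported on squares taking value $\mu(d)$ at $d^2$, so multiplicativity is immediate. What each buys: your M\"obius route is shorter and avoids all Gaussian-integer and unit bookkeeping; the paper's route yields more than the bare multiplicativity statement, namely the explicit description that primitive representations of a product $mp$ are obtained by multiplying primitive representations of $m$ and $p$, and it is precisely this structural fact (not just multiplicativity) that is reused later in the proof of Theorem~\ref{thm:r1cl upper bound} when the forms $au_i+bv_i$, $|bu_i-av_i|$ are introduced. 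If you take the convolution route, you would still need to supply that structural statement separately before the sieve argument.
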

\begin{proof}
	Firstly, $r_0$ is multiplicative: Recall from equation~\eqref{r0eq} that $r_0(n)=\sum_{d\mid n}\chi(d)$. Since the non-principal Dirichlet character $\chi$ in the proof of Lemma~\ref{r0asymp} is multiplicative, and so is the function $\mathbbold{1}$ which takes the value $1$ everywhere, we have that their convolution $r_0$ is multiplicative too.
	
	



We start by checking that if $\gcd(a,b)=\gcd(c,d)=1$ and $a^2+b^2 = n_1$, $c^2+d^2=n_2$ with $\gcd(n_1,n_2)=1$ then $(a+bi)(c+di)$ has coprime real and imaginary part.

We have that $\Re((a+bi)(c+di)) = ac-bd$ and $\Im((a+bi)(c+di)) = ad+bc$. Suppose $p$ is a common prime factor of $ac-bd$ and $ad+bc$. Then in particular, $p$ divides $c(ac-bd)=ac^2-bcd$ and $d(ad+bc)=ad^2+bcd$ so
\begin{equation}\label{pdiv1}
	p\mid a(c^2+d^2)=an_2.
\end{equation} Similarly
\begin{equation}\label{pdiv2}
	p\mid abc-b^2d, p\mid a^2d + abc\implies p\mid d(a^2+b^2)=dn_1.
\end{equation}
Suppose $p\mid a$. Then, since $p\mid ac-bd$ we get $p\mid bd$. But $a,b$ are coprime, hence $p\mid d$. Since $c,d$ are coprime, $p$ does not divide $c$. But then $ad+bc$ cannot be a multiple of $p$, which is a contradiction.

Similarly, $p$ is not a factor of $d$. Hence~\eqref{pdiv1} $\implies p\mid n_2$, and~\eqref{pdiv2} $\implies p\mid n_1$. But $\gcd(n_1,n_2)=1$, so this is a contradiction.

Thus $ac-bd$ and $ad+bc$ have no common prime factors, so they are coprime.

We now conclude that $r_0^*$ is multiplicative using the same ideas as in the proof of Lemma~\ref{r0asymp}: $r_0^*(n)$ is the number of points  $\alpha + \beta i\in\mathbb{Z}_{\ge 0}[i]$ on the circle of radius $\sqrt{n}$ such that $\gcd(\alpha,\beta)=1$, and the way to count these points arises from considering the prime factorisation of $n$. All representations of $n$ as a sum of two coprime squares arise from products of Gaussian prime factors of $n$, so using the above we get $r_0^*(n)=\prod_{i=1}^{k}r_0^*(p^{a_i})$ where $n=\prod_{i=1}^{k}p^{a_i}$. Thus $r_0^*$ is indeed multiplicative.
\end{proof}
	

Lemma~\ref{r0*mult} and Lemma~\ref{r0*} together determine the value of the function $r_0^*$ at any integer. In particular, we see that $r_0^*(n)=0$ if $4\mid n$ or $p\mid n$ for some $p\equiv 3\pmod 4$; otherwise $r_0^*(n)=2^j$, where $j$ is the number of distinct odd (congruent to $1$ mod $4$) prime factors.

The following result is also from~\cite{granville2023}.
 I have provided a detailed proof below.
\begin{lemma}\label{r0*bound}
	Let $z=x^{1/L}$, and recall $P(n)$ is the largest prime factor of $n$. Then for any integer $m\ge 1$,
\[\sum_{{\substack{n\le x\\ P(n)\le z\text{ or }(P(n)^2)\mid n}}}r_0^*(n)^m\ll_m\frac{x}{(\log x)^{1000}}.\]
\end{lemma}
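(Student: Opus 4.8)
The plan is to split the sum into the two sub-sums corresponding to the two conditions $P(n)\le z$ and $P(n)^2\mid n$, and estimate each separately, exploiting the fact that $r_0^*$ is multiplicative with $r_0^*(n)\le\tau(n)$ (from Lemma~\ref{r0*mult} and Lemma~\ref{r0*}, since $r_0^*(n)\in\{0,2^j\}$ where $j\le\omega(n)$, so in fact $r_0^*(n)\le 2^{\omega(n)}\le\tau(n)$). In particular $r_0^*(n)^m\le\tau(n)^m$, and by Lemma~\ref{divisor-epsilon} (or~\eqref{eq: tau o(1)}) we have $\tau(n)^m\ll_{m,\epsilon}n^\epsilon$ for any $\epsilon>0$; equivalently, $r_0^*(n)^m=n^{o(1)}$. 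This $n^{o(1)}$ slack is what makes the target bound $x/(\log x)^{1000}$ plausible: both sub-sums should be $\ll x(\log x)^{-A}$ for arbitrarily large $A$, and we just need $A=1000$.

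First I would handle the smooth part $\sum_{n\le x,\,P(n)\le z}r_0^*(n)^m$. Bounding $r_0^*(n)^m\ll_{m,\epsilon}n^\epsilon\ll x^\epsilon$ (say with $\epsilon=1/(100\log\log x)$ so $x^\epsilon=(\log x)^{1/100}$), it suffices to show that the count of $z$-smooth numbers up to $x$, namely $\Psi(x,z)$ with $z=x^{1/L}$ and $L=\log\log x$, is $\ll x(\log x)^{-A}$ for every fixed $A$. This is a standard consequence of the Dickman--de Bruijn estimate: with $u=\log x/\log z=L=\log\log x\to\infty$ we have $\Psi(x,z)\le x\rho(u)+(\text{error})\ll x\,u^{-u(1+o(1))}$, and $u^{-u}=(\log\log x)^{-\log\log x}$ decays faster than any power of $\log x$. (If one prefers to avoid quoting Dickman, a Rankin-type bound $\Psi(x,z)\le x\sum_{P(n)\le z}n^{-\sigma}x^{-(1-\sigma)}\le x^{1-\sigma}\prod_{p\le z}(1-p^{-\sigma})^{-1}$ with a suitable choice of $\sigma<1$ already gives a saving of $\exp(-cu\log u)$, which again beats $(\log x)^{1000}$.) Multiplying the two estimates, the smooth part is $\ll x(\log x)^{1/100}\cdot(\log\log x)^{-(1+o(1))\log\log x}\ll x(\log x)^{-1000}$.

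Next I would handle the square-divisible part $\sum_{n\le x,\,P(n)^2\mid n}r_0^*(n)^m$. Write $n=p^2 m'$ where $p=P(n)$; then every prime factor of $m'$ is $\le p$, and crucially $r_0^*(n)=0$ unless $p\equiv 1\pmod 4$ (if $p\equiv 3\pmod 4$ then $p\mid n$ forces $r_0^*(n)=0$ by Lemma~\ref{r0*}, and $p=2$ with $4\mid n$ also gives $r_0^*(n)=0$). So the sum is over $p\equiv 1\pmod 4$ and $m'\le x/p^2$, with $r_0^*(n)^m\ll_{m,\epsilon}n^\epsilon\le x^\epsilon$. Summing trivially over $m'$ gives $\ll x^\epsilon\sum_{p}x/p^2\ll x^{1+\epsilon}\sum_p p^{-2}\ll x^{1+\epsilon}$, which is not small enough on its own. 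To extract the saving I would instead not forget that $p=P(n)$ is the \emph{largest} prime factor: splitting the range of $p$ dyadically, if $p\in(y,2y]$ then $m'\le x/p^2\le x/y^2$ and $m'$ is $2y$-smooth, so this piece is $\ll x^\epsilon\cdot(\#\{p\sim y\})\cdot\Psi(x/y^2,2y)$. When $y$ is small (say $y\le x^{1/L}$) the factor $\Psi(x/y^2,2y)$ is again super-polynomially small by the smooth-number bound above; when $y$ is large ($y> x^{1/L}$), then $x/p^2\le x/y^2< x^{1-2/L}$ so $m'$ ranges over a short interval and $\sum_{p>x^{1/L}}x/p^2\ll x\cdot x^{-1/L}=x(\log x)^{-1/(\text{?})}$ — here I would be more careful: $x^{-1/L}=\exp(-\log x/\log\log x)$ which beats every power of $\log x$. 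Combining the two regimes, the square-divisible part is also $\ll x(\log x)^{-1000}$.

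The main obstacle is getting the quantitative smooth-number input in the right form and making sure the $n^{o(1)}$ loss from $r_0^*(n)^m\ll\tau(n)^m$ is genuinely absorbed by the super-polynomial saving; the two savings we rely on — $\Psi(x,x^{1/L})\ll x\exp(-(1+o(1))L\log L)$ and $x^{-1/L}=\exp(-\log x/\log\log x)$ — both comfortably dominate any fixed power of $\log x$ and any $x^{o(1)}$ factor, so once the decomposition $n=P(n)^2m'$ with $m'$ forced to be $P(n)$-smooth is in place, the rest is bookkeeping. A secondary point to get right is the parity/residue observation that $r_0^*$ vanishes when $4\mid n$ or when a prime $\equiv 3\pmod 4$ divides $n$; this is what lets us drop the awkward cases in the square-divisible sum, though strictly it is not needed for the bound since the trivial $\tau$-bound already suffices there.
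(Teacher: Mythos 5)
There is a genuine quantitative gap, and it sits exactly at the step you dismiss as bookkeeping: the claim that the loss from $r_0^*(n)^m\le\tau(n)^m=n^{o(1)}$ is absorbed by the super-polynomial savings. First, the arithmetic is wrong: with $\epsilon=1/(100\log\log x)$ one has $x^{\epsilon}=\exp\bigl(\log x/(100\log\log x)\bigr)$, which is not $(\log x)^{1/100}$ — it grows faster than every fixed power of $\log x$. This cannot be fixed by shrinking $\epsilon$, because the uniform bound for $n\le x$ really is only $\tau(n)^m\le\exp\bigl(O_m(\log x/\log\log x)\bigr)=x^{O_m(1/L)}$ (Lemma~\ref{lemma: tau o(1)}), and it is essentially attained by relevant $n$: a product of the first $k$ primes $\equiv 1\pmod 4$ is $z$-smooth, lies in the support of $r_0^*$, and has $r_0^*(n)=2^{k}$ with $k\asymp\log n/\log\log n$. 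Consequently the max-times-count estimates do not close. In the smooth part the saving is $\Psi(x,x^{1/L})\ll x\exp\bigl(-(1+o(1))L\log L\bigr)=x\exp\bigl(-(1+o(1))\log\log x\cdot\log\log\log x\bigr)$, which is dwarfed by the loss $\exp\bigl(c_m\log x/\log\log x\bigr)$: the product exceeds $x$, i.e.\ is worse than the trivial bound. In the part with $P(n)^2\mid n$ and $P(n)>x^{1/L}$, the saving $x^{-1/L}$ is likewise beaten for every $m\ge 2$, since there $r_0^*(n)^m$ can be as large as $x^{(m\log 2+o(1))/L}$ and $m\log 2>1$.

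What is missing is an average, rather than pointwise, treatment of $r_0^*(n)^m$, and this is precisely how the paper's proof proceeds: it keeps the weight inside a Rankin-type sum $\sum_{P(n)\le z}r_0^*(n)^m(x/n)^\sigma$ with $\sigma=1-\log(L\log L)/\log z$, and uses multiplicativity of $r_0^*$ together with $r_0^*(p)\le 2$ to bound this by the Euler product $x^\sigma\exp\bigl(2^m\sum_{p\le z}p^{-\sigma}\bigr)$; since $\sum_{p\le z}p^{-\sigma}\ll L$, the weight costs only $\exp(O_m(L))=(\log x)^{O_m(1)}$ on average, which the Rankin factor $x^{\sigma-1}=(L\log L)^{-L}$ comfortably absorbs. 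For $P(n)=\ell>z$ with $\ell^2\mid n$ the paper writes $n=\ell^2k$ with $k$ being $\ell$-smooth, uses $r_0^*(\ell^2k)=r_0^*(\ell^2)r_0^*(k)\le 2r_0^*(k)$, applies the same Euler-product bound (with an exponent $\sigma_\ell$) to the $k$-sum, and only then exploits $\sum_{\ell>z}\ell^{-2}\ll x^{-1/L}$. Your decomposition and your two sources of saving are the right ones, but to make the argument work you must replace the worst-case pointwise bound by such an Euler-product/Rankin weight, or alternatively by Cauchy--Schwarz against a divisor-moment bound such as $\sum_{n\le x}\tau(n)^{2m}\ll_m x(\log x)^{2^{2m}-1}$, either of which costs only $(\log x)^{O_m(1)}$ and is then genuinely absorbed.
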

\begin{proof}
We split into two cases, according to the conditions on $P(n)$.

\textbf{Case 1}: First, suppose $P(n)\le z$.
Let  $\sigma=1-\frac{\log(L\log L)}{\log z}$, where we recall $L=\log\log x$. Note that by definition of $z$ the denominator is equal to $\frac1L\log x$. Since $L\log L\ll\log x$ we have $\log(L\log L)\ll L$, so $L\log(L\log L)\ll L^2$. Hence $L\log(L\log L)<\frac12\log x$ for large enough $x$. Thus we can assume $\sigma\in(\frac12,1)$.
 
Since $n\le x$, we can bound the LHS by another sum:
\begin{equation}\label{xnsig}
	\sum_{{\substack{n\le x\\ P(n)\le z}}}r_0^*(n)^m\le \sum_{{\substack{n\le x\\ P(n)\le z}}}r_0^*(n)^m\Bigl(\frac{x}{n}\Bigr)^\sigma
	\le \sum_{P(n)\le z}r_0^*(n)^m\Bigl(\frac{x}{n}\Bigr)^\sigma .
\end{equation}

Recall that $r_0^*$ is multiplicative. The 
right hand side of~\eqref{xnsig} is the Dirichlet series $D(\sigma)=x^{\sigma}\sum_{n\ge 1}\frac{r_0^*(n)^m\mathbbm{1}\{P(n)\le z\}}{n^\sigma}$. Since all prime factors of $n$ are bounded by $z$, we can write an Euler product
\[\smash{\sum_{P(n)\le z}r_0^*(n)^m\Bigl(\frac{x}{n}\Bigr)^\sigma=x^\sigma\prod_{p\le z}\left(1+\frac{r_0^*(p)^m}{p^\sigma}+\frac{r_0^*(p^2)^m}{p^{2\sigma}}+\dots\right).}\]
We know from Lemma~\ref{r0*} that $r_0^*(p)\le 2$ for any prime $p$; using the Taylor expansion of the exponential function, the above is thus bounded by $x^\sigma\exp(\sum_{p\le z}\frac{2^m}{p^\sigma})$.
We use Abel summation to obtain a bound for $x^\sigma\exp(\sum_{p\le z}\frac{1}{p^\sigma})$; the original expression is $x^\sigma\exp(\sum_{p\le z}\frac{1}{p^\sigma})^{O_m(1)}$.

Let $a_n\in\mathbb{C}$ be $1$ for prime $n$ and $0$ else, and define $f(x)=\frac1{x^a}$, for $a>0$. Then $f'(x)=-ax^{-1-a}$. Thus
\begin{equation}\label{partsum}
\smash{\sum_{p\le u}\frac1{p^a}=\sum_{n\le u}
a_nf(n)=\pi(u)\cdot\frac1{u^a}+a\int_1^u\pi(t)t^{-1-a}\mathrm{d}t.}
\end{equation}
Substitute $u=z$ into the above. By the Prime Number Theorem, $\pi(z)=\frac{z}{\log z}+O\bigl(\frac{z}{(\log z)^2}\bigr)$, so $\pi(z)\cdot\frac1{z^a}=\frac{z^{1-a}}{\log z}+O\bigl(\frac{z^{1-a}}{(\log z)^2}\bigr)$. We can also apply this estimate to the integrand above to get
\[a\int_1^z\pi(t)t^{-1-a}\mathrm{d}t=a\int_1^z\frac{t^{-a}}{\log t}+O\Bigl(\frac{t^{-a}}{(\log t)^2}\Bigr)\mathrm{d}t=a\frac{z^{1-a}}{(1-a)\log z}+O\Bigl(\frac{z^{1-a}}{(\log z)^2}\Bigr).\]
Hence
\begin{equation}\label{abelp^}
\!\!\!\smash{\sum_{p\le z}\frac1{p^a}= \frac{z^{1-a}}{\log z}+a\frac{z^{1-a}}{(1-a)\log z}+O\Bigl(\frac{z^{1-a}}{(\log z)^2}\Bigr) = \frac{z^{1-a}}{(1-a)\log z}+O\Bigl(\frac{z^{1-a}}{(\log z)^2}\Bigr).}
\end{equation}
Substitute $\sigma$ to get $\sum_{p\le z}\frac1{p^\sigma}\ll\frac{z^{1-\sigma}}{(1-\sigma)\log z}$.
Now, $\sigma = 1-\frac{\log(L\log L)}{\log z}$ so $1-\sigma = \frac{\log(L\log L)}{\log z}$. Hence $\frac{z^{1-\sigma}}{(1-\sigma)\log z}=\frac{z^{1-\sigma}}{\log(L\log L)}$. But $z^{1-\sigma}=z^{\frac{\log(L\log L)}{\log z}}$ and $z^{\frac1{\log z}}=e$ so $z^{1-\sigma}=\exp(\log(L\log L))=L\log L$. Thus $\frac{z^{1-\sigma}}{(1-\sigma)\log z}=L\frac{\log L}{\log(L\log L)}\ll L$.

Finally, note that $x^\sigma=\dfrac{x}{x^\frac{\log(L\log L)}{\log z}}$ and $\log z = \frac1L\log x$ so $x^\frac1{\log z}=\exp(L)$. Hence 
\[x^\sigma = \frac x{\exp(L\log(L\log L))} = \frac x{(L\log L)^L}.\]
We conclude:
\[\smash{x^\sigma\prod_{p\le z}\left(1+\frac{r_0^*(p)^m}{p^\sigma}+\dots\right)\ll \frac x{(L\log L)^L}\exp\Bigl(\sum_{p\le z}\frac{2^m}{p^\sigma}\Bigr)\ll \frac {x\exp(L)^{O_m(1)}}{(L\log L)^L}\ll_m \frac x{L^L}.}\]

This is indeed $O(x(\log x)^{-1000})$.

\textbf{Case 2:}
Now, suppose $P(n)>z$ and $P(n)^2\mid n$. In particular, $P(n)\le\sqrt{x}$. Considering possible values of $P(n)=:l$, we can write
\begin{equation}\label{case2}
\smash[t]{\sum_{{\substack{n\le x\\ P(n)> z,P(n)^2\mid n}}}r_0^*(n)^m =
	\sum_{{\substack{l\le \sqrt{x}\\ l> z\\l\text{ prime}}}}
	\sum_{{\substack{n\le x\\P(n)=l\\l^2\mid n}}}r_0^*(n)^m =
	 \sum_{{\substack{l\le \sqrt{x}\\ l> z\\l\text{ prime}}}}
	\sum_{{\substack{P(k)<l\\l^2k\le x}}}r_0^*(l^2k)^m.}
\end{equation}
The last equality holds by definition of $P(n)$. Also, $l$ is prime and $k$ is a product of primes strictly smaller than $l$, so $(k,l)=1$. As $r_0^*$ is multiplicative,~\eqref{case2} gives
\begin{equation*}
	\sum_{{\substack{n\le x\\ P(n)> z,P(n)^2\mid n}}}r_0^*(n)^m =
	\sum_{{\substack{l\le \sqrt{x}\\ l> z\\l\text{ prime}}}}r_0^*(l^2)^m
	\sum_{{\substack{P(k)<l\\k\le x/l^2}}}r_0^*(k)^m\ll
	\sum_{{\substack{z<l\le \sqrt{x}\\l\text{ prime}}}}
	\sum_{{\substack{P(k)<l\\k\le x/l^2}}}r_0^*(k)^m\Bigl(\frac{x}{l^2k}\Bigr)^{\sigma_l}.
\end{equation*}
The last relation is because $r_0^*(l^2)\le 2$, so we can ignore the coefficients $r_0^*(l^2)^m$ as they are bounded by the constant $2^m$; $\sigma_l$ are some positive numbers, and $n=l^2k\le x$.

Similarly to case $1$, for any given $l$, the corresponding summand in the final expression above can be written as 
\[\smash[t]{\Bigl(\frac{x}{l^2}\Bigr)^{\sigma_l}\prod_{p< l}\Bigl(1+\frac{r_0^*(p)^m}{p^{\sigma_l}}+\dots\Bigr)\le\Bigl(\frac{x}{l^2}\Bigr)^{\sigma_l}\exp\Bigl(\sum_{p< l}\frac{2^m}{p^{\sigma_l}}\Bigr).}\]
We use~\eqref{abelp^}, replacing $z$ with $l$, to obtain $\displaystyle\sum_{p< l}\frac{2^m}{p^{\sigma_l}}\ll 2^m \dfrac{l^{1-\sigma_l}}{(1-\sigma_l)\log l}\ll_m \dfrac{l^{1-\sigma_l}}{(1-\sigma_l)\log l}$.

Let $\tau$ be some positive constant, and set $\sigma_l = 1- \frac{\log\tau}{\log l}$, so $1-\sigma_l = \frac{\log\tau}{\log l}$; thus we can rewrite the above as \[\smash{\sum_{p< l}\frac{2^m}{p^{\sigma_l}}\ll_m \dfrac{l^{1-\sigma_l}}{\log\tau}=\dfrac{l^{\log\tau/\log l}}{\log\tau}=\dfrac{\tau}{\log\tau},}\text{ which is constant.}\] So $\exp\Bigl(\sum_{p\le l}\frac{2^m}{p^{\sigma_l}}\Bigr)\ll_m 1$.

For the total sum we get 
\begin{align*}
	\sum_{{\substack{n\le x\\ P(n)> z,P(n)^2\mid n}}}\!\!\!\!\!\!\!\! r_0^*(n)^m
	&\ll	\smash[t]{\sum_{{\substack{z<l\le \sqrt{x}\\l\text{ prime}}}}\Bigl(\frac{x}{l^2}\Bigr)^{\sigma_l}
	=	\sum_{{\substack{z<l\le \sqrt{x}\\l\text{ prime}}}}\frac{x}{l^2}\left(\frac{l^2}{x}\right)^{\frac{\log\tau}{\log l}}=	\sum_{{\substack{z<l\le \sqrt{x}\\l\text{ prime}}}}\frac{\tau^2}{l^2}\cdot x^{1-\frac{\log\tau}{\log l}}}\\
	&\ll\smash[b]{\sum_{{\substack{z<l\le \sqrt{x}\\l\text{ prime}}}}\frac{x^{1-\frac{\log\tau}{\log l}}}{l^2}.}
\end{align*}
Thus, to prove the stated bound we need $\displaystyle \sum_{{\substack{z<l\le \sqrt{x}\\l\text{ prime}}}}\frac{x^{-\frac{\log\tau}{\log l}}}{l^2}\ll\frac1{(\log x)^{1000}}$. We again use the method of partial summation to do this: as previously, let $a_l\in\mathbb{C}$ be $1$ for prime $l$ and $0$ else. 
We now set $f(l)=\dfrac{x^{1-\frac{\log\tau}{\log l}}}{l^2}$. Then, since $l\ge z=x^{1/L}\Rightarrow \frac{\log x}{(\log l)^2}=O(1)$, we have \[\smash[t]{f'(l)=\dfrac{x^{-\frac{\log\tau}{\log l}}}{l^3}\Bigl(-2+\frac{(\log\tau)(\log x)}{(\log l)^2}\Bigr)\ll\dfrac{x^{1-\frac{\log\tau}{\log l}}}{l^3}.}\] Thus
\[\smash{\sum_{{\substack{z<l\le \sqrt{x}\\l\text{ prime}}}}\frac{x^{-\frac{\log\tau}{\log l}}}{l^2}=\pi(\sqrt{x})\cdot\frac{x^{-\frac{\log\tau}{\log \sqrt{x}}}}{(\sqrt{x})^2}-\pi(z)\cdot\frac{x^{-\frac{\log\tau}{\log z}}}{z^2}-\int_{z}^{\sqrt{x}}\pi(t)f'(t)dt.}\]

We calculate that 
\begin{align*}
\int_{z}^{\sqrt{x}}\pi(t)f'(t)dt&\ll\int_{z}^{\sqrt{x}}\pi(\sqrt{x})x^{-\frac{\log\tau}{\log t}}\cdot\frac{dt}{t^3}\ll\pi(\sqrt{x})x^{-\frac{\log\tau}{\log x}}\int_{z}^{\sqrt{x}}\frac{dt}{t^3}\\
&	\ll \frac{x^{\frac12-\frac{\log\tau}{\log x}}}{\log x}\cdot\frac1x\ll\frac1{(\log x)^{1000}}.
\end{align*}

Now, note that $\pi(\sqrt{x})\cdot\frac{x^{-\frac{\log\tau}{\log \sqrt{x}}}}{(\sqrt{x})^2}\ll\frac{\sqrt{x}}{\log x}\cdot x^{-1-\frac{2\log\tau}{\log x}}\ll\frac1{\sqrt{x}\log x}\ll\frac1{(\log x)^{1000}}$. Also, $\pi(z)\cdot\frac{x^{-\frac{\log\tau}{\log z}}}{z^2}\ll\frac1{z(\log z)^2}x^{-\frac{L\log\tau}{\log x}}\ll\frac{L^2}{x^{1/L}(\log x)^2}\ll\frac1{(\log x)^{1000}}$. So we are done.
%
\end{proof}
\section{The function $r_1^*$}
We also briefly touch on $r_1^*$, providing another result from Granville, Sabuncu and Sedunova's paper~\cite{granville2023} that will be useful later on. The lemma below shows that the majority of solutions to $n=a^2+p^2$, where $p$ is prime, satisfy $p\nmid a$. The proof follows the method in~\cite{granville2023}.
\begin{lemma}\label{lemma1}
	The number of pairs $(a,p)$ such that $a^2+p^2\le x$, $p\mid a$ is
	\[\smash{\sum_{n\le x}(r_1(n)-r_1^*(n))=O(\sqrt{x}\log\log x).}\]
\end{lemma}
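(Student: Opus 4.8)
The plan is to reinterpret $\sum_{n\le x}(r_1(n)-r_1^*(n))$ as a lattice-point count and then sum over primes. By the definitions of $r_1$ and $r_1^*$, this difference equals the number of pairs $(a,p)$ with $a\ge 0$, $p$ prime, $a^2+p^2\le x$ and $\gcd(a,p)>1$; since $p$ is prime, $\gcd(a,p)>1$ is the same as $p\mid a$, and this correctly includes the case $a=0$, which occurs precisely when $n=p^2$.

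First I would fix the prime $p$. Any solution forces $p^2\le a^2+p^2\le x$, so $p\le\sqrt x$. For such a $p$, the admissible values of $a$ are exactly the non-negative multiples of $p$ with $a\le\sqrt{x-p^2}$, and there are $\lfloor\sqrt{x-p^2}/p\rfloor+1\le \sqrt x/p+1$ of them. Summing over primes $p\le\sqrt x$ therefore gives
\[\sum_{n\le x}(r_1(n)-r_1^*(n))\le \sqrt x\sum_{p\le\sqrt x}\frac1p+\pi(\sqrt x).\]
Then I would bound the two terms separately: the corollary to the Prime Number Theorem recorded above gives $\pi(\sqrt x)\ll\sqrt x/\log x=O(\sqrt x)$, while $\sum_{p\le\sqrt x}1/p=\log\log x+O(1)\ll L$. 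Combining, $\sum_{n\le x}(r_1(n)-r_1^*(n))\ll\sqrt x\,L+\sqrt x\ll\sqrt x\log\log x$, as required.

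There is no serious obstacle here; the only points needing attention are keeping track of the $a=0$ term, which over all $p$ contributes only $\pi(\sqrt x)=O(\sqrt x)$, and having the estimate $\sum_{p\le\sqrt x}1/p\ll\log\log x$ to hand. This is Mertens' second theorem, but if one prefers not to quote it directly it follows by Abel summation from the PNT error term, exactly as in the partial-summation step~\eqref{partsum} used in the proof of Lemma~\ref{r0*bound}: one gets $\sum_{p\le y}1/p=\pi(y)/y+\int_2^y\pi(t)t^{-2}\,\mathrm dt=\log\log y+O(1)$, and then specialises to $y=\sqrt x$.
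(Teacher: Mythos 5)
Your proposal is correct and follows essentially the same route as the paper: fix the prime $p\le\sqrt x$, count the multiples of $p$ up to $\sqrt{x-p^2}$ to get a bound of the form $\sqrt x\sum_{p\le\sqrt x}1/p$ (plus the harmless $a=0$ contribution, which the paper absorbs without comment), and then bound the prime-reciprocal sum by $\log\log x$ via Mertens/partial summation. The only cosmetic difference is that you track the $+1$ per prime explicitly as $\pi(\sqrt x)=O(\sqrt x)$, which is a slight tidying of the same argument.
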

\begin{proof}
	Since $p\mid a$, we can write $a=pm$, for some integer $m$; moreover, $a^2+p^2=p^2(m^2+1)\le x$ so $m^2+1\le\frac{x}{p^2}\Rightarrow m\le\frac{\sqrt{x}}{p}$.
Also, for any possible value of $m$, the possible choices of $p$ are restricted to primes up to $\sqrt{x}$ as $p^2\le\frac{x}{m^2+1}\le x\Rightarrow p\le\sqrt{x}$. Thus the total number of pairs $(a,p)$ satisfying the conditions is $\displaystyle\sum_{p\le\sqrt{x}}\#\{m:p^2(m^2+1)\le x\}\le\sum_{p\le\sqrt{x}}\frac{\sqrt{x}}{p}=\sqrt{x}\sum_{p\le\sqrt{x}}\frac1{p}$.

We use partial summation to estimate the sum, and apply~\eqref{partsum} with $a=1$ to get
\begin{align*}\smash{\sum_{p\le \sqrt{x}}\frac1p} &= \pi(\sqrt{x})\cdot\frac1{\sqrt{x}}+\smash[t]{\int_1^{\sqrt{x}}\pi(t)t^{-2}\mathrm{d}t=\frac2{\log x}+\int_1^{\sqrt{x}}\frac1{t\log t}\mathrm{d}t}\\
&=O(1)+\bigl[\log\log t\bigr]_1^{\sqrt{x}}
=O(\log\log x).\end{align*}
Hence 	$\sum_{n\le x}(r_1(n)-r_1^*(n))=\sqrt{x}\cdot O(\log\log x)=O(\sqrt{x}\log\log x)$, as required.
\end{proof}
\chapter{Moments of $r_1$}\label{chapter: moments of r1}\vspace{-2ex}
In this chapter, we look at important results from~\cite{granville2023} and~\cite{daniel_2001} which focus on asymptotics of the second and higher moments of $r_1$.
\section{Sums of reciprocals}
In order to facilitate our proof of various results in Chapter~\ref{chapter: moments of r1}, we first provide the following well-known results, which will be used for bounding estimates in the proof of some subsequent Lemmata and Theorem~\ref{thm:r1cl upper bound}.
\begin{lemma}[Mertens' Second Theorem]\label{lemma: Mertens}
	There exists $M\in\mathbb{R}$ such that $\forall x\ge 2$: \[\smash{\sum_{p\le x}\frac1p=\log\log x+M+O\Bigl(\frac1{\log x}\Bigr).}\]
\end{lemma}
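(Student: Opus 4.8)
The plan is to prove Mertens' Second Theorem by starting from the (closely related, and slightly easier) estimate $\sum_{p\le x}\frac{\log p}{p}=\log x+O(1)$, then passing to $\sum_{p\le x}\frac1p$ by partial summation. First I would establish the $\frac{\log p}{p}$ estimate: using Lemma~\ref{lambda}, we have $\log n = \sum_{d\mid n}\Lambda(d)$, so summing over $n\le x$ gives $\sum_{n\le x}\log n = \sum_{d\le x}\Lambda(d)\lfloor x/d\rfloor$. The left-hand side is $x\log x - x + O(\log x)$ by comparison with $\int_1^x \log t\,\mathrm dt$ (or Stirling), and on the right we write $\lfloor x/d\rfloor = x/d + O(1)$, so the main term is $x\sum_{d\le x}\frac{\Lambda(d)}{d}$ and the error is $O\bigl(\sum_{d\le x}\Lambda(d)\bigr) = O(\psi(x)) = O(x)$ by Chebyshev's bound. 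Dividing by $x$ yields $\sum_{d\le x}\frac{\Lambda(d)}{d} = \log x + O(1)$. The contribution of proper prime powers $p^k$, $k\ge 2$, to $\sum_{d\le x}\frac{\Lambda(d)}{d}$ is $\sum_p \log p \sum_{k\ge 2}p^{-k} = \sum_p \frac{\log p}{p(p-1)} = O(1)$, so we get $\sum_{p\le x}\frac{\log p}{p} = \log x + O(1)$.

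Next I would deduce the theorem by partial summation (Abel summation). Writing $b(t) := \sum_{p\le t}\frac{\log p}{p} = \log t + E(t)$ with $E(t) = O(1)$, and noting $\sum_{p\le x}\frac1p = \sum_{p\le x}\frac{\log p}{p}\cdot\frac1{\log p}$, Abel summation with the weight $\frac1{\log t}$ gives
\[
\sum_{p\le x}\frac1p = \frac{b(x)}{\log x} + \int_2^x \frac{b(t)}{t(\log t)^2}\,\mathrm dt.
\]
Substituting $b(t) = \log t + E(t)$: the first term is $1 + \frac{E(x)}{\log x} = 1 + O\bigl(\frac1{\log x}\bigr)$; the integral splits as $\int_2^x \frac{\mathrm dt}{t\log t} + \int_2^x \frac{E(t)}{t(\log t)^2}\,\mathrm dt = \log\log x - \log\log 2 + \int_2^\infty \frac{E(t)}{t(\log t)^2}\,\mathrm dt + O\bigl(\frac1{\log x}\bigr)$, where the improper integral converges absolutely since $E(t) = O(1)$ and $\int_2^\infty \frac{\mathrm dt}{t(\log t)^2} < \infty$, and the tail $\int_x^\infty$ is $O\bigl(\frac1{\log x}\bigr)$. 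Collecting the constants into a single $M \in \mathbb R$ gives $\sum_{p\le x}\frac1p = \log\log x + M + O\bigl(\frac1{\log x}\bigr)$, as claimed; for $x\in[2,\infty)$ below any fixed threshold the statement holds trivially by adjusting $M$ and the implied constant.

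The main obstacle, and the only place where anything nontrivial is used, is obtaining the $\frac{\log p}{p}$ estimate with a genuine error term $O(1)$ — in particular needing Chebyshev's upper bound $\psi(x) = O(x)$ rather than the full Prime Number Theorem. (Everything else is bookkeeping with Abel summation and convergent integrals.) Since the excerpt provides the Prime Number Theorem with classical error term, I could alternatively bypass Chebyshev entirely and feed $\psi(x) = x + O(x\exp(-c\sqrt{\log x}))$ directly into the argument; but the Chebyshev route is cleaner and self-contained, so I would present that. I would remark that this is a standard textbook result and cite~\cite{hardy75}.
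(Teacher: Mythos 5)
Your proof is correct. Note, though, that the paper does not prove this lemma at all: it is stated in the ``Sums of reciprocals'' section as a well-known result to be quoted (the paper only proves, partially, the companion Theorem~\ref{theorem: mertens APs} for primes in arithmetic progressions, taking Lemma~\ref{lemma: Mertens} as an input). Your argument is the standard textbook one: first $\sum_{d\le x}\Lambda(d)/d=\log x+O(1)$ via $\log n=\sum_{d\mid n}\Lambda(d)$ (the paper's Lemma~\ref{lambda}), $\sum_{n\le x}\log n=x\log x-x+O(\log x)$ and Chebyshev's bound $\psi(x)\ll x$; then stripping the higher prime powers using the convergence of $\sum_p\frac{\log p}{p(p-1)}$; then Abel summation with weight $1/\log t$, with the constant $M$ assembled from $1-\log\log 2+\int_2^\infty E(t)\,t^{-1}(\log t)^{-2}\,\mathrm dt$ and the tail integral giving the $O(1/\log x)$ error. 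All of these ingredients are consistent with the paper's toolkit (the paper itself invokes Chebyshev's estimate $\psi(x)\asymp x$ and the constant $\sum_p\frac{\log p}{p(p-1)}$ in its proof of Theorem~\ref{theorem: mertens APs}), and there is no circularity since Chebyshev's bound is elementary. One cosmetic point: for $x$ in a bounded range $[2,x_0]$ you do not need to ``adjust $M$'' --- $M$ is fixed by the asymptotic argument, and only the implied constant in the $O(1/\log x)$ term needs enlarging, since everything in play is bounded there. Your remark that one could instead feed in the quoted PNT with classical error term is also fine, but unnecessary; the Chebyshev route you chose is the cleaner, self-contained one.
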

\begin{lemma}\label{lemma: sum of 1n}
	$\displaystyle\sum_{n\le x}\frac1n = \log x + \gamma + O(\tfrac1x)$, where $\gamma$ is the Euler-Mascheroni constant.
\end{lemma}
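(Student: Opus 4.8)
The plan is to compare the sum directly with the integral $\int_1^x \mathrm{d}t/t = \log x$ via partial summation, keeping careful track of the constant. First I would apply Abel's summation formula with coefficients $a_n = 1$ (so $A(t) = \sum_{n\le t}1 = \lfloor t\rfloor$) and test function $f(t) = 1/t$, $f'(t) = -1/t^2$, to obtain
\[\sum_{n\le x}\frac1n = \frac{\lfloor x\rfloor}{x} + \int_1^x \frac{\lfloor t\rfloor}{t^2}\,\mathrm{d}t.\]
Writing $\lfloor t\rfloor = t - \{t\}$ in the integral separates off the main term $\int_1^x \mathrm{d}t/t = \log x$ and leaves the fractional-part integral $-\int_1^x \{t\}/t^2\,\mathrm{d}t$ to be controlled.

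Next I would deal with the two remaining pieces. The boundary term satisfies $\lfloor x\rfloor/x = 1 - \{x\}/x = 1 + O(1/x)$. For the fractional-part integral, I would extend the range to infinity: since $0 \le \{t\} < 1$, the integral $\int_1^\infty \{t\}/t^2\,\mathrm{d}t$ converges (it is bounded by $\int_1^\infty \mathrm{d}t/t^2 = 1$), and the discarded tail is $\int_x^\infty \{t\}/t^2\,\mathrm{d}t \le \int_x^\infty \mathrm{d}t/t^2 = 1/x = O(1/x)$. Collecting everything gives
\[\sum_{n\le x}\frac1n = \log x + C + O\!\left(\tfrac1x\right), \qquad C := 1 - \int_1^\infty \frac{\{t\}}{t^2}\,\mathrm{d}t.\]

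The only point requiring care is identifying $C$ with the Euler--Mascheroni constant $\gamma$. I would do this by specialising the formula just obtained to $x = N$ along the positive integers: since the error term is $O(1/N) \to 0$, this yields $H_N - \log N \to C$ as $N \to \infty$, which is precisely the classical definition $\gamma = \lim_{N\to\infty}\bigl(\sum_{n\le N}\tfrac1n - \log N\bigr)$, so $C = \gamma$. There is no genuine obstacle here — the statement is entirely standard — so the only thing to watch is presenting the partial summation cleanly and not conflating the constant $C$ with an unrelated normalisation; an equally valid route is to define $\gamma$ by the convergent series $\sum_{n\ge1}\bigl(\tfrac1n - \log\tfrac{n+1}{n}\bigr)$, observe $0 \le \tfrac1n - \log\tfrac{n+1}{n} \le \tfrac1{n(n+1)}$, and read off the rate from the tail bound $\sum_{n>N}\tfrac1{n(n+1)} = \tfrac1{N+1}$.
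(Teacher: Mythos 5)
Your argument is correct and complete: the Abel summation with $a_n=1$, $f(t)=1/t$, the split $\lfloor t\rfloor = t-\{t\}$, the extension of the fractional-part integral to infinity with tail bound $1/x$, and the identification of the constant with $\gamma$ via the limit $H_N-\log N\to\gamma$ are all sound. The paper itself gives no proof of this lemma, quoting it as a well-known result, so there is nothing to compare against; your write-up is the standard partial-summation proof and would serve as a valid proof of the statement.
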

We will also require a theorem about sums of reciprocals of primes in arithmetic progressions, a stronger result than Lemma~\ref{lemma: Mertens}, proved by Mertens in his paper ``Ein Beitrag zur analytischen Zahlentheorie''. I provided a partial proof of the theorem in the special case that it is applied in, although the general case is true.
\begin{theorem}[Mertens' Theorem for Primes in Arithmetic Progressions]\label{theorem: mertens APs}
	Let $q\in\mathbb{N}$, and $a$ an integer coprime to $q$.
	There exists $M(q,a)\in\mathbb{R}$ a constant dependent only on $q$ and $a$ such that $\forall x\ge 2$: \[\sum_{{\substack{p\le x\\ p\equiv a\pmod q}}}\frac1p=\frac1{\phi(q)}\cdot \log\log x+M(q,a)+O\Bigl(\frac1{\log x}\Bigr).\]
\end{theorem}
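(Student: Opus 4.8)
The plan is to deduce Mertens' Theorem for primes in arithmetic progressions from the Prime Number Theorem for arithmetic progressions, in exactly the same way that Lemma~\ref{lemma: Mertens} follows from the ordinary PNT, via partial summation. First I would recall the special case of the Prime Number Theorem in arithmetic progressions that will be used: for $q$ fixed and $\gcd(a,q)=1$ one has $\pi(x;q,a)=\frac{1}{\phi(q)}\mathrm{Li}(x)+O_q\bigl(x\exp(-c\sqrt{\log x})\bigr)$, or at least the weaker form $\pi(x;q,a)=\frac{1}{\phi(q)}\frac{x}{\log x}+O_q\bigl(\frac{x}{(\log x)^2}\bigr)$, which is the Siegel--Walfisz theorem for a fixed modulus (this is the point where an external input is needed, since the excerpt only states the PNT for $\pi(x)$ itself; I would invoke Siegel--Walfisz as the analogue ``version of the above for primes in arithmetic progressions'' promised in Chapter~2).

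Next I would set up partial summation with $a_n=\mathbbm{1}_{n\text{ prime},\,n\equiv a\ (q)}$ and $f(t)=1/t$, so that
\begin{equation*}
\sum_{\substack{p\le x\\ p\equiv a\ (q)}}\frac1p=\frac{\pi(x;q,a)}{x}+\int_2^x\frac{\pi(t;q,a)}{t^2}\,\mathrm{d}t.
\end{equation*}
The boundary term $\pi(x;q,a)/x=O(1/\log x)$ is absorbed into the error. For the integral I substitute the asymptotic $\pi(t;q,a)=\frac{1}{\phi(q)}\frac{t}{\log t}+E(t)$ with $E(t)\ll_q t/(\log t)^2$. The main contribution is $\frac{1}{\phi(q)}\int_2^x\frac{\mathrm{d}t}{t\log t}=\frac{1}{\phi(q)}\bigl(\log\log x-\log\log 2\bigr)$, which supplies the leading term $\frac{1}{\phi(q)}\log\log x$. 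The integral of the error, $\int_2^x E(t)t^{-2}\,\mathrm{d}t\ll_q\int_2^x\frac{\mathrm{d}t}{t(\log t)^2}$, converges as $x\to\infty$, so it equals a constant plus $O(1/\log x)$ (the tail from $x$ to $\infty$ being $O(1/\log x)$). Collecting the convergent pieces — the constant $-\frac{1}{\phi(q)}\log\log 2$, the limiting value of the error integral, and $\lim_{x\to\infty}\pi(x;q,a)/x=0$ — into a single constant $M(q,a)$ depending only on $q$ and $a$ gives the claimed formula, with remainder $O(1/\log x)$.

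The main obstacle is not the partial summation, which is routine, but justifying the required input: one genuinely needs an effective PNT in arithmetic progressions with an error term of shape $o(t/\log t)$ uniformly enough to integrate, and for this a zero-free region for Dirichlet $L$-functions (or the Siegel--Walfisz theorem, with its ineffective constant when $q$ is allowed to grow) is unavoidable. Since the statement here fixes $q$, the ineffectivity is harmless and the classical zero-free region for $L(s,\chi)$ with $\chi$ a character mod $q$ suffices; I would remark that this is why the excerpt only claims a partial proof ``in the special case that it is applied in,'' and point to the fact that in the applications (Mertens-type sums arising in the sieve estimates of Chapter~6) $q$ is indeed bounded, so no uniformity in $q$ is needed.
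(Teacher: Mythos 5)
Your proposal is correct, but it takes a genuinely different route from the paper. The paper does not use the PNT in arithmetic progressions at all: it proves only the case $q=4$ (explicitly conceding the general case), by an elementary Mertens-style argument. There, one first shows that $\sum_p \chi(p)/p$ converges for the non-principal character $\chi$ mod $4$ — using $\log n=\sum_{k\mid n}\Lambda(k)$, the conditional convergence of $L(1,\chi)$ and $L'(1,\chi)$, the nonvanishing $L(1,\chi)\neq 0$, Chebyshev's estimate $\psi(x)\asymp x$, and Dirichlet's convergence test — and then combines this with Mertens' Second Theorem (Lemma~\ref{lemma: Mertens}) to split $\log\log x$ evenly between the classes $1$ and $3$ mod $4$. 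Your route instead imports the PNT in arithmetic progressions (for fixed $q$ the classical de la Vallée Poussin form suffices; Siegel--Walfisz is only needed for uniformity in $q$, which the statement does not require) and runs partial summation with $f(t)=1/t$, which is routine and correctly executed: the boundary term is $O(1/\log x)$, the main integral gives $\frac{1}{\phi(q)}\log\log x$ plus a constant, and the error integral converges with tail $O(1/\log x)$. The trade-off: your argument needs the much deeper input of a zero-free region for Dirichlet $L$-functions, but in exchange it proves the theorem for every fixed modulus $q$ and genuinely delivers the stated shape ``constant $+\;O(1/\log x)$''; the paper's argument is elementary (no zero-free region), but is restricted to $q=4$ and, as written, only controls the discrepancy between the two residue classes to within $O(1)$ (it establishes convergence of $\sum_p\chi(p)/p$ but not a rate), ending with the rather loose assignment $M(q,a)=M/2+O(1)$. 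So your proof is, if anything, stronger than the one in the paper, at the cost of a heavier external ingredient.
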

\begin{proof}
	We only give a proof for $q=4$, although the general case is true.
	
	\textbf{Step 1:} We show that the series $\sum\frac{\chi(p)}p$ converges, where $\chi$ is the non-principal Dirichlet character modulo $4$ -- there is only one, with $\chi(2j)=0, \chi(4j\pm 1)=\pm 1$. Note that $\displaystyle L(1,\chi)=\sum_{n=1}^{\infty}\frac{\chi(n)}n=\sum_{j=0}^{\infty}\frac{(-1)^j}{2j+1}=1-\frac13+\frac15-\frac17+\dots$ converges conditionally, by the alternating series convergence test. The sum is nonzero and positive, by considering pairs of consecutive nonzero terms.
	
		We now consider the truncated sums of the series $\displaystyle L'(1,\chi)=\sum_{n=1}^{\infty}\frac{\chi(n)\log n}n=0-\frac{\log 3}3+\frac{\log 5}5-\frac{\log 7}7+\dots$, which is also conditionally convergent by the alternating series convergence test. Recall by Lemma~\ref{lambda} that $\log n=\sum_{k\mid n}\Lambda(k)$, so \[\smash{\frac{\chi(n)\log n}n=\frac1n\sum_{k\mid n}\chi(n)\Lambda(k)=\sum_{k\mid n}\frac{\chi(n/k)\chi(k)\Lambda(k)}n.}\] Here we used the total multiplicativity of $\chi$. Hence, we get
	\begin{align}
	\sum_{n=1}^N\frac{\chi(n)\log n}n &=\sum_{n=1}^N\sum_{{\substack{k\mid n\\km=n}}}\frac{\chi(m)\chi(k)\Lambda(k)}n
	=\sum_{{\substack{k,m\\km\le N}}}\frac{\chi(m)\chi(k)\Lambda(k)}{km}\notag \\
	&=\sum_{k\le N}\frac{\chi(k)\Lambda(k)}{k}\sum_{m\le N/k}\frac{\chi(m)}{m}\notag \\
	&=\sum_{k\le N}\frac{\chi(k)\Lambda(k)}{k}\Bigl(L(1,\chi)+O\bigl(\tfrac kN\bigr)\Bigr).\label{eq:partial sum chi(n)log(n) over n}
	\end{align}
	The last equality comes from evaluating the tail of $ L(1,\chi)=\sum_{n\ge 1}\frac{\chi(n)}n$. Note that $\displaystyle \sum_{k\le N}\frac{\chi(k)\Lambda(k)}{k}O\bigl(\tfrac kN\bigr)=O\Bigl(\tfrac1N\sum_{k\le N}|\chi(k)\Lambda(k)|\Bigr)=O\Bigl(\tfrac1N\sum_{k\le N}\Lambda(k)\Bigr)=O\bigl(\tfrac{\psi(N)}N\bigr)=O(1)$, by definition of the Chebyshev function $\psi$ and its asymptotic behaviour, $\psi(x)\asymp x$ (this is known as Chebyshev's estimate). We let $C>0$ be a constant so that $\displaystyle -C\le\sum_{k\le N}\frac{\chi(k)\Lambda(k)}{k}O\bigl(\tfrac kN\bigr)\le C$ for all $N\ge N_0$, some $N_0$. Substituting into~\eqref{eq:partial sum chi(n)log(n) over n} gives \[\forall N\ge N_0\colon\smash[t]{\sum_{k\le N}\frac{\chi(k)\Lambda(k)}{k}\cdot L(1,\chi)-C\le\sum_{n=1}^N\frac{\chi(n)\log n}n\le\sum_{k\le N}\frac{\chi(k)\Lambda(k)}{k}\cdot L(1,\chi)+C.}\]
	Now, $L'(1,\chi)$ is a finite constant, so all the partial sums $\sum_{n=1}^N\frac{\chi(n)\log n}n$ are uniformly bounded. We thus get $\displaystyle \forall N\ge N_1\colon-C'\le \sum_{k\le N}\frac{\chi(k)\Lambda(k)}{k}\cdot L(1,\chi)\le C'$, for $N_1$ and $C'$ appropriately chosen integer and positive real constants. $L(1,\chi)\ne 0$ is a constant so for $N\ge N_1$ and $C''=\frac{C'}{L(1,\chi)}$ we have
	\begin{align*}
	-C''\le\sum_{k\le N}\frac{\chi(k)\Lambda(k)}{k}&=\sum_{p\le N}\frac{\chi(p)\Lambda(p)}{p}+\sum_{{\substack{q\le N\\q=p^e,e\ge 2}}}\frac{\chi(q)\Lambda(q)}{q}\\
	&=\sum_{p\le N}\frac{\chi(p)\log p}{p}+\sum_{{\substack{p^e\le N\\e\ge 2}}}\frac{\chi(p^e)\log p}{p^e}\le C''.
	\end{align*}
	Since $\displaystyle\sum_{{\substack{p^e\le N\\e\ge 2}}}\left|\frac{\chi(p^e)\log p}{p^e}\right|\le \sum_{{\substack{p^e\\e\ge 2}}}\frac{\log p}{p^e}=\sum_{p\text{ prime}}\frac{\log p}{p(p-1)}:=D$, where $D$ is a finite positive constant, we get from the above that $-D'\le \sum_{p\le N}\frac{\chi(p)\log p}{p}\le D'$ for all $N\ge N_1$, some finite $D'>0$.
	We now apply the Dirichlet convergence test: Let $a_n$ be the sequence $\frac1{\log n}$ of real numbers converging to $0$ monotonically, and \vspace{-1ex} 
	\[b_n:=
	\begin{cases}
		\frac{\chi(p)\log p}{p}, n=p \text{ is prime} \\
		0 \text{ else}
	\end{cases},\]\vspace{-1ex} so that $|\sum_{n=1}^Nb_n|\le D'$ for all $N\ge N_1$. Then the series $\displaystyle\sum_{n\ge 1} a_nb_n=\sum_{p\text{ prime}}\!\frac{\chi(p)}p$ converges.
	
	\textbf{Step 2:} We now deduce the result. Since $\sum_{p\text{ prime}}\frac{\chi(p)}p$ converges, in particular we have $\sum_{p\le x}\frac{\chi(p)}p=O(1)$ for all $x$. Thus
	\begin{equation}\label{eq: p 1 mod 4 v 3 mod 4}
		\sum_{{\substack{p\le x\\ p\equiv 1\!\!\!\pmod 4}}}\!\!\!\!\!\!\frac1p-\sum_{{\substack{p\le x\\ p\equiv 3\!\!\!\pmod 4}}}\!\!\!\!\!\!\frac1p=O(1)\implies
		 \sum_{{\substack{p\le x\\ p\equiv 1\!\!\!\pmod 4}}}\!\!\!\!\!\!\frac1p=\sum_{{\substack{p\le x\\ p\equiv 3\!\!\!\pmod 4}}}\!\!\!\!\!\!\frac1p+O(1).\end{equation}
	Mertens' Second Theorem, Lemma~\ref{lemma: Mertens}, gives $\sum_{p\le x}\frac1p=\log\log x+M+O(\frac1{\log x})$. Substituting~\eqref{eq: p 1 mod 4 v 3 mod 4} into this, we get $\displaystyle\!\!\!\sum_{{\substack{p\le x\\ p\equiv 1\pmod 4}}}\frac1p=\frac12\log\log x+ M/2+O\Bigl(\frac1{\log x}\Bigr)+O(1)$, and the same for primes that are $3$ mod $4$. Let $M(q,a)=M/2+O(1)$ to get the required result.
	\end{proof}
\begin{remark}
	A similar proof with extra steps works for other $q$. It is true in general that for non-principal Dirichlet characters $\chi$, the Dirichlet L-function $L(s,\chi):=\sum_{n=1}^{\infty}\frac{\chi(n)}{n^s}$ and $L'(s,\chi)=\frac{\mathrm{d}}{\mathrm{d}s}L(s,\chi)$ converge for $\Re(s)>0$, in particular $s=1$, and $L(1,\chi)\ne 0$. We consider linear combinations of the non-principal Dirichlet characters to get the required result.
\end{remark}
%
\section{Integers with a fixed number of prime factors}
We include this section here as it will be needed in the proof of Theorem~\ref{thm:r1cl upper bound}.

We would like to obtain an estimate on $\rho_k(x):=\#\{n\le x\colon \omega^*(n)=k\}$. Here, $\omega^*(n)$ is the number of odd prime factors of $n$. We provide some necessary notation below. \vspace{-2ex}
\begin{notation}\
	\begin{itemize}
		\item $\mathcal{N}:=\{n\in\mathbb{N}\colon 4\nmid n,\text{ and } p\nmid n\text{ for all primes }p\equiv 3\pmod 4\}$.
		\item $\mathcal{N}_k(x):=\{n\le x\colon n\in\mathcal{N}, \omega^*(n)=k\}$
		\item $\rho_{k,\mathcal{N}}(x):=\#\{n\le x\colon n\in\mathcal{N}, \omega^*(n)=k\}$
	\end{itemize}
\end{notation}
We first prove a preliminary result about bounding a sum which will be very useful to us in Lemma~\ref{lemma: numbers with k factors}.
\begin{claim}\label{claim: bound for inductive step}
	$\displaystyle\!\!\!\!\!\!\!\!\!\!\sum_{{\substack{q=p^e,\text{ a prime power}\\q\le\sqrt{x}\\ p\equiv 1\pmod 4}}}\frac{x}{q\log(x/q)}\le\frac{x}{\log x}(\tfrac12\log\log(x)+\gamma_2)$, some constant $\gamma_2$.\end{claim}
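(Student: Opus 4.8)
The plan is to estimate the sum
\[
\Sigma:=\sum_{\substack{q=p^e\\ q\le\sqrt{x}\\ p\equiv 1\pmod 4}}\frac{x}{q\log(x/q)}
\]
by separating the contribution of the primes ($e=1$) from that of the proper prime powers ($e\ge 2$), and then handling the prime part via partial summation using Mertens' Theorem for primes in arithmetic progressions (Theorem~\ref{theorem: mertens APs}) with $q=4$, $a=1$. First I would observe that since every $q$ in the sum satisfies $q\le\sqrt{x}$, we have $\log(x/q)\ge\tfrac12\log x$, which already gives the crude bound $\frac{x}{q\log(x/q)}\le\frac{2x}{q\log x}$; but this loses a factor of $2$ in front of the main term, so instead I would keep $\log(x/q)$ and treat it more carefully through partial summation so that the coefficient $\tfrac12$ in front of $\log\log x$ comes out correctly.

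For the prime-power tail, note that
\[
\sum_{\substack{q=p^e,\ e\ge 2\\ q\le\sqrt x}}\frac{x}{q\log(x/q)}
\le \frac{2x}{\log x}\sum_{\substack{p,\ e\ge 2}}\frac1{p^e}
= \frac{2x}{\log x}\sum_{p}\frac{1}{p(p-1)},
\]
and the final series converges to an absolute constant; so this entire contribution is $O\!\left(\frac{x}{\log x}\right)$, which can be absorbed into the constant $\gamma_2$. This is the easy part.

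The main term comes from the primes. Writing $S(t):=\sum_{p\le t,\ p\equiv 1(4)}\frac1p=\tfrac12\log\log t+M(4,1)+O(1/\log t)$ by Theorem~\ref{theorem: mertens APs}, I would apply Abel/partial summation to
\[
\sum_{\substack{p\le\sqrt x\\ p\equiv 1(4)}}\frac{x}{p\log(x/p)}
= \frac{x}{\log(x/\sqrt x)}\,S(\sqrt x)
-\int_{2}^{\sqrt x} S(t)\,\frac{\mathrm d}{\mathrm dt}\!\left(\frac{x}{\log(x/t)}\right)\mathrm dt,
\]
where $\frac{\mathrm d}{\mathrm dt}\frac{x}{\log(x/t)}=\frac{x}{t\,\log^2(x/t)}>0$. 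The boundary term is $\frac{2x}{\log x}\big(\tfrac12\log\log x+M(4,1)+O(1/\log x)\big)=\frac{x}{\log x}\log\log x+O\!\left(\frac{x}{\log x}\right)$, which \emph{overshoots} the claimed bound, so the (negative) integral term must cancel exactly half of the leading term. Substituting $S(t)=\tfrac12\log\log t+M(4,1)+O(1/\log t)$ and using the substitution $u=\log(x/t)$ (so the integral becomes an elementary integral in $u$ over $[\tfrac12\log x,\log(x/2)]$, with $\log\log t=\log(\log x-u)$), one checks that
\[
\int_{2}^{\sqrt x}\Big(\tfrac12\log\log t\Big)\frac{x}{t\log^2(x/t)}\,\mathrm dt
=\tfrac12\cdot\frac{x}{\log x}\log\log x + O\!\left(\frac{x\log\log x}{\log x}\cdot\frac{1}{\log x}\right)\cdot O(1),
\]
more precisely that its leading term is $\tfrac12\cdot\frac{2x}{\log x}\cdot\tfrac12\log\log x=\tfrac12\cdot\frac{x}{\log x}\log\log x$ — wait, one must be careful: the dominant contribution to the integral comes from $t$ near $\sqrt x$ (small $u$), where $\log\log t\approx\log\log x$, and $\int_{1/2\log x}^{\log x} u^{-2}\,\mathrm du\approx\frac{2}{\log x}$, giving integral $\approx\tfrac12\log\log x\cdot x\cdot\frac{2}{\log x}=\frac{x\log\log x}{\log x}$. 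Thus boundary term minus integral $\approx \frac{x\log\log x}{\log x}-\frac{x\log\log x}{\log x}$, and the genuine main term is the \emph{next} order term, which is $O\!\left(\frac{x}{\log x}\right)$ — but that would make the bound far better than claimed. So in fact the crude estimate $\log(x/q)\ge\tfrac12\log x$ applied directly, giving $\Sigma\le\frac{2x}{\log x}\sum_{p\le\sqrt x,\ p\equiv1(4)}\frac1p+O(\frac{x}{\log x})\le\frac{2x}{\log x}(\tfrac12\log\log\sqrt x+M(4,1)+O(1))=\frac{x}{\log x}\log\log x+O(\frac{x}{\log x})$, already suffices once we absorb the lower-order terms and the discrepancy between $\log\log\sqrt x$ and $\log\log x$ (which is $O(1/\log x)$) into $\gamma_2$.

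Therefore the cleanest route, which is what I would actually write, is: bound $\log(x/q)\ge\tfrac12\log x$ for $q\le\sqrt x$ to get $\Sigma\le\frac{2x}{\log x}\sum_{q=p^e\le\sqrt x,\ p\equiv1(4)}\frac1q$; split off prime powers $e\ge2$ contributing $\le\frac{2x}{\log x}\sum_p\frac{1}{p(p-1)}=O(x/\log x)$; apply Theorem~\ref{theorem: mertens APs} to the primes to get $\sum_{p\le\sqrt x,\ p\equiv1(4)}\frac1p=\tfrac12\log\log\sqrt x+M(4,1)+O(1/\log x)=\tfrac12\log\log x+M(4,1)+O(1/\log\log x)$ (using $\log\log\sqrt x=\log(\log x-\log 2)=\log\log x+O(1/\log x)$), so that $\frac{2x}{\log x}\cdot\tfrac12\log\log x=\frac{x}{\log x}\log\log x$ exactly matches the $\tfrac12\log\log x$ coefficient after the factor $\frac{x}{\log x}$ is pulled out — wait, I need the coefficient of $\log\log x$ inside the parenthesis to be $\tfrac12$, i.e. I want $\Sigma\le\frac{x}{\log x}(\tfrac12\log\log x+\gamma_2)$, whereas the crude bound gives $\frac{x}{\log x}(\log\log x + O(1))$, which is too big by a factor of $2$ in the main term. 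So the crude bound does \emph{not} suffice, and I do need the partial summation to extract the cancellation. The main obstacle is precisely this: carrying out the partial summation carefully enough to see that the boundary term $\frac{2x}{\log x}\cdot\tfrac12\log\log x$ is reduced to $\frac{x}{\log x}\cdot\tfrac12\log\log x$ by subtracting the integral, i.e. tracking constants through the $u$-substitution and verifying the leading terms combine as claimed; the prime-power tail and all $O$-terms are routine and get swept into $\gamma_2$.
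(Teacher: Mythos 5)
Your overall strategy (split off the prime powers $e\ge 2$, then treat the primes by partial summation against Mertens' theorem in arithmetic progressions with modulus $4$) is viable and genuinely different from the paper's proof, but as written the decisive computation is never correctly carried out, and the one attempt at it contains an error. In the partial-summation step the subtracted integral is, after the substitution $u=\log(x/t)$, governed by $\int_{\frac12\log x}^{\log(x/2)}u^{-2}\,\mathrm{d}u=\frac{2}{\log x}-\frac{1}{\log(x/2)}=\frac{1}{\log x}\bigl(1+O(\frac1{\log x})\bigr)$, not $\approx\frac{2}{\log x}$ as you wrote: you evaluated the antiderivative only at the lower endpoint. With the correct value the integral is $\frac12\log\log x\cdot\frac{x}{\log x}+O\bigl(\frac{x}{\log x}\bigr)$ (one can justify replacing $\log\log t=\log(\log x-u)$ by $\log\log x$ on the bulk of the range, since $\int_{\frac12\log x}^{\log x}\frac{|\log(1-u/\log x)|}{u^2}\,\mathrm{d}u=O(1/\log x)$), so the boundary term $\frac{x\log\log x}{\log x}+O(\frac{x}{\log x})$ is reduced to exactly the claimed $\frac12\frac{x\log\log x}{\log x}+O(\frac{x}{\log x})$. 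Your erroneous value $\frac{2}{\log x}$ instead led you to conclude that the main terms cancel completely and the sum is $O(x/\log x)$; that conclusion is false (the prime part of the sum is genuinely of size $\sim\frac12\frac{x\log\log x}{\log x}$, since for the bulk of primes $p$ one has $\log(x/p)\sim\log x$ and $\sum_{p\le\sqrt x,\,p\equiv1(4)}1/p\sim\frac12\log\log x$), and it sent you oscillating back to the crude bound $\log(x/q)\ge\frac12\log x$, which you correctly recognize gives coefficient $1$ rather than $\frac12$ and so does not suffice. Your proposal then ends by declaring the verification of the cancellation to be "the main obstacle" — that is, the step that constitutes the proof is explicitly left undone, and the only attempt at it is wrong. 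This is a genuine gap, though a repairable one.

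For comparison, the paper avoids partial summation altogether: it normalizes by $\frac{x}{\log x}$, disposes of the $e\ge2$ prime powers exactly as you do, and then splits the primes into the ranges $(x^{1/(n+1)},x^{1/n}]$ for $2\le n\le\lfloor\log x\rfloor$, using $\frac{\log x}{\log(x/p)}\le\frac{n}{n-1}$ on each range and Mertens' theorem for the progression $1\pmod 4$ to evaluate the range as $\frac12\log\frac{n+1}{n}+O(\frac1{\log x})$; summing $\frac{n}{2(n-1)}\log(1+\frac1n)\le\frac1{2(n-1)}$ produces the harmonic sum $\frac12\sum_j\frac1j\le\frac12\log\log x+O(1)$ and hence the constant $\frac12$ directly, without any cancellation between a boundary term and an integral. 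If you complete your Abel-summation computation with the corrected integral (and check that the error terms $O(1/\log t)$ in Mertens contribute only $O(x/\log x)$, absorbable into $\gamma_2$), your route gives the same bound; but as submitted the argument does not establish the claim.
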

\begin{proof}
	We prove that $\displaystyle\sum_{{\substack{q=p^e,\text{ a prime power}\\q\le \sqrt{x}\\ p\equiv 1\pmod 4}}}\frac{\log x}{q\log(x/q)}\le\tfrac12\log\log(x)+\gamma_2$. (Multiply by $\frac{x}{\log x}$ to get the required bound for the claim.) 
	
	
	Note that the contribution of prime powers $q=p^e$ where $e\ge 2$ is small. Indeed, $\displaystyle\sum_{n=2}^{\infty}\sum_{p<x}\frac1{p^n}=\sum_{p<x}\sum_{n=2}^\infty\frac1{p^n}=\sum_{p<x}\frac1{p^2}\frac1{1-\frac1p}=\sum_{p<x}\frac1{p(p-1)}<1$. Since $q\le\sqrt{x}$, we have $\log x\le 2\log(x/q)$, so
	\[\smash[t]{\sum_{{\substack{q=p^e,\text{ a prime power}\\q\le \sqrt{x},\ e\ge 2\\ p\equiv 1\pmod 4}}}\frac{\log x}{q\log(x/q)}\le \sum_{{\substack{q=p^e,\text{ a prime power}\\q\le \sqrt{x}\\e\ge 2}}}\frac{\log x}{q\log(x/q)}\le 2\sum_{{\substack{q=p^e,\text{ a prime power}\\q\le \sqrt{x}\\e\ge 2}}}\frac1q<2.}\]
Thus, \[\smash[t]{\sum_{{\substack{q=p^e,\ q\le \sqrt{x}\\ p\equiv 1\pmod 4}}}\frac{\log x}{q\log(x/q)}\le 2+\sum_{{\substack{\\p\le \sqrt{x}\\ p\equiv 1\pmod 4}}}\frac{\log x}{p\log(x/p)}.}\]
	
\vspace{-1ex} Now, for $p\le x^{1/n}$ we have $\log(x/p)\ge\log(x^{1-1/n})=\frac{n-1}n\log x$, so $\dfrac{\log x}{\log(x/p)}\le\dfrac{n}{n-1}$. We thus obtain the bound \vspace{-1ex}
	\begin{align}
		&\sum_{{\substack{p\le x^{1/n}\\ x^{1/(n+1)}\le p\\ p\equiv 1\pmod 4}}}\!\!\!\!\frac{\log x}{p\log(x/p)} \le
		\frac{n}{n-1}\!\!\!\!\sum_{{\substack{p\le x^{1/n}\\x^{1/(n+1)}\le p\\ p\equiv 1\pmod 4}}}\!\!\!\!\frac{1}{p}\\
		&=
		\frac{n}{n-1}\cdot\frac12\bigl(\log\log(x^{\frac1n})-\log\log(x^{\frac1{n+1}})+O(\tfrac1{\log x})\bigr) \notag\\
		&=\frac{n}{n-1}\cdot\frac12\Bigl(\log\bigl(\tfrac1n\log x\bigr)-\log\bigl(\tfrac1{n+1}\log x\bigr)+O(\tfrac1{\log x})\Bigr) \notag\\
		&=\frac{n}{n-1}\cdot\frac12\log\Bigl(\frac{\tfrac1n\log x}{\tfrac1{n+1}\log x}\Bigr)+O(\tfrac1{\log x}) = \frac{n}{n-1}\cdot\frac12\log\Bigl(\frac{n+1}n\Bigr)+O(\tfrac1{\log x}).\label{p in n to n+1}
	\end{align} Here we used Theorem~\ref{theorem: mertens APs} to sum $\frac1p$ for primes $p$ congruent to $1$ modulo $4$ in the relevant interval. In this case we have $q=4$, so $\phi(q)=2$, and $a=1$.
	Now \vspace{-1ex} 
	\begin{equation}\label{total sum over p}
		\sum_{{\substack{p\le \sqrt{x}\\ p\equiv 1\pmod 4}}}\frac{\log x}{p\log(x/p)} = O(\tfrac1{\log x})+\sum_{n=2}^{\lfloor \log x\rfloor} \sum_{{\substack{x^{1/(n+1)}< p\\ p\le x^{1/n}\\ p\equiv 1\pmod 4}}}\frac{\log x}{p\log(x/p)},
	\end{equation} as we split the range between $e=x^{1/\log x}$ and $\sqrt{x}=x^{1/2}$ into intervals $[x^{1/(n+1)}, x^{1/n}]$ and $\frac{\log x}{2\log(x/2)}=O(\tfrac1{\log x})$. (2 is the only prime which falls outside of our intervals.)
	
	We substitute~\eqref{p in n to n+1} into~\eqref{total sum over p} to get 
	\begin{align*}\label{total sum}
		\smash{\sum_{{\substack{p\le \sqrt{x}\\ p\equiv 1\pmod 4}}}\frac{\log x}{p\log(x/p)}} &\le O(\tfrac1{\log x})+\sum_{n=2}^{\lfloor \log x\rfloor} \frac{n}{2(n-1)}\log\Bigl(\frac{n+1}n\Bigr)+O(1) \\
		&=\sum_{n=2}^{\lfloor \log x\rfloor} \frac{n}{2(n-1)}\log\Bigl(1+\frac1n\Bigr)+O(1) \\
		&\le\sum_{n=2}^{\lfloor \log x\rfloor} \frac{n}{2(n-1)}\cdot\frac1n+O(1)
		=\frac12\sum_{j=1}^{\lfloor \log x\rfloor-1} \frac{1}{j}+O(1) \\
		& \le \tfrac12\log\log x + \tfrac12\gamma + O(1).
	\end{align*}
In the last line we applied Lemma~\ref{lemma: sum of 1n}. The $O(1)$ term represents a constant, so $\displaystyle\sum_{{\substack{q=p^e,\text{ a prime power}\\q\le \sqrt{x},\ p\equiv 1\pmod 4}}}\frac{\log x}{q\log(x/q)}\le 2+\tfrac12\log\log x+\tfrac12\gamma+c$, for some constant $c$.
Setting $\gamma_2\ge 2+c+\frac{\gamma}2$ gives $\displaystyle\sum_{{\substack{q=p^e,\text{ a prime power}\\q\le\sqrt{x},\ p\equiv 1\pmod 4}}}\frac{x}{q\log(x/q)}\le\frac{x}{\log x}(\tfrac12\log\log(x)+\gamma_2)$.
\end{proof}

Now we obtain an upper bound on the function $\rho_{k,\mathcal{N}}(x)$.
\begin{lemma}\label{lemma: numbers with k factors}
$\displaystyle\exists\gamma_1,\gamma_2\colon\rho_{k,\mathcal{N}}(x)\le\gamma_1\frac{x}{\log x}\frac{(\frac12L+\gamma_2)^{k-1}}{(k-1)!},\ \forall x\ge 1.$
\end{lemma}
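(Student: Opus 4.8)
The plan is to prove the bound by induction on $k\ge 1$, with Claim~\ref{claim: bound for inductive step} as the engine of the inductive step. For the base case $k=1$: if $n\in\mathcal N$ and $\omega^*(n)=1$ then $n=2^ap^e$ with $a\in\{0,1\}$, $e\ge 1$ and $p\equiv 1\pmod 4$, so $\rho_{1,\mathcal N}(x)$ is at most twice the number of prime powers up to $x$, which is $\ll x/\log x$ by the Prime Number Theorem; hence the bound holds for $k=1$ once $\gamma_1$ is taken large enough, with $\gamma_2$ the (positive) constant furnished by Claim~\ref{claim: bound for inductive step}. (For $x$ below an absolute constant the statement is trivial, since $\rho_{k,\mathcal N}(x)=0$ for $k\ge 1$ and small $x$, so we may assume $x$ large throughout.)

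For the inductive step, suppose $k\ge 2$ and the bound holds for $k-1$. The decisive observation is that a counted $n$ has at least $k-1$ of its odd prime-power components below $\sqrt x$: writing $n=2^ap_1^{e_1}\cdots p_k^{e_k}$, if two of the $p_i^{e_i}$ both exceeded $\sqrt x$ then their product would exceed $x\ge n$, a contradiction, so at most one does; and since $n\in\mathcal N$ every $p_i\equiv 1\pmod 4$. Peeling off any such component $q=p_i^{e_i}\le\sqrt x$ leaves $m=n/q\in\mathcal N$ with $\omega^*(m)=k-1$, $m\le x/q$ and $p_i\nmid m$; distinct components of distinct $n$ yield distinct pairs $(m,q)$, so $n$ is counted at least $k-1$ times in the double sum below. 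Dropping the coprimality condition on $m$ then gives
\[
(k-1)\,\rho_{k,\mathcal N}(x)\ \le\ \sum_{\substack{q=p^e\le\sqrt x\\ p\equiv 1\pmod 4}}\#\{m\le x/q:\ m\in\mathcal N,\ \omega^*(m)=k-1,\ p\nmid m\}\ \le\ \sum_{\substack{q=p^e\le\sqrt x\\ p\equiv 1\pmod 4}}\rho_{k-1,\mathcal N}(x/q).
\]

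Now apply the inductive hypothesis to each summand. Since $q\le\sqrt x$ we have $x/q\ge\sqrt x$, so $\log\log(x/q)\le\log\log x=L$ (and terms with $x/q$ too small for $\log\log$ to be meaningful contribute $0$, as $\rho_{k-1,\mathcal N}$ vanishes there), giving $\rho_{k-1,\mathcal N}(x/q)\le\gamma_1\frac{x}{q\log(x/q)}\frac{(\frac12L+\gamma_2)^{k-2}}{(k-2)!}$. Pulling the $q$-independent factor out and invoking Claim~\ref{claim: bound for inductive step} to bound $\sum_{q=p^e\le\sqrt x,\ p\equiv 1(4)}\frac{x}{q\log(x/q)}\le\frac{x}{\log x}(\frac12L+\gamma_2)$, we obtain
\[
(k-1)\,\rho_{k,\mathcal N}(x)\ \le\ \gamma_1\frac{(\tfrac12L+\gamma_2)^{k-2}}{(k-2)!}\cdot\frac{x}{\log x}\Bigl(\tfrac12L+\gamma_2\Bigr)\ =\ \gamma_1\frac{x}{\log x}\frac{(\tfrac12L+\gamma_2)^{k-1}}{(k-2)!}.
\]
Dividing by $k-1$ reproduces exactly the asserted bound, with the same $\gamma_1,\gamma_2$, and closes the induction.

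The main obstacle is getting the factorial $(k-1)!$ together with the correct power $(\tfrac12L+\gamma_2)^{k-1}$: a naive "peel off one odd prime" argument only gives $(k-2)!$ in the denominator and thus loses a factor of $k$. The device above—at most one odd prime-power divisor of $n\le x$ can exceed $\sqrt x$, hence at least $k-1$ lie below $\sqrt x$—is precisely what supplies the missing factor $k-1$ while keeping the range of summation at $q\le\sqrt x$, which is the form in which Claim~\ref{claim: bound for inductive step} is available. A secondary, routine point is to check that the inductive hypothesis is only used where $\log\log(x/q)$ makes sense; this is automatic for $x$ beyond an absolute constant because $q\le\sqrt x$, and the finitely many remaining small $x$ are absorbed into the choice of $\gamma_1$.
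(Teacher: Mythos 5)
Your proposal is correct and follows essentially the same route as the paper: induction on $k$, a base case via prime powers and the Prime Number Theorem, and an inductive step that counts each $n$ once for each of its $k-1$ odd prime-power components below $\sqrt{x}$ (supplying the factor $k-1$) before invoking Claim~\ref{claim: bound for inductive step} with $\log\log(x/q)\le L$. The only differences are cosmetic, e.g.\ your explicit handling of the factor $2^a$ and of small $x$, so nothing further is needed.
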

\begin{proof} (See~\cite[\S 13.4]{granvillebook})
We will prove the following statement by induction on $k$: ``There exist $\gamma_1,\gamma_2>0$ such that for all $x\ge 1$ and  $k\in\mathbb{N}\colon\rho_{k,\mathcal{N}}(x)\le\gamma_1\frac{x}{\log x}\frac{(\frac12L+\gamma_2)^{k-1}}{(k-1)!}$.''
	
\textbf{Base case:} $k=1$. Now, $\rho_{1,\mathcal{N}}(x)$ is the number of prime powers that are at most $x$. Note that for a fixed integer $j$, the number of primes $p$ such that $p^j\le x$ is $\pi(x^{1/j})$, where $\pi$ is the prime counting function. Hence
\[\rho_{1,\mathcal{N}}(x)=\sum_{j\ge 1}\pi(x^{\frac1j})=\sum_{j\ge 1}^{\frac{\log x}{\log 2}}\pi(x^{\frac1j})\le\sum_{j= 1}^{\frac{\log x}{\log 2}}\frac{x^{\frac1j}}{\frac1j\log x}\le\frac{x}{\log x}+\sum_{j=2}^{\frac{\log x}{\log 2}}\frac{x^{\frac12}}{\frac12\log x}.\] The last inequality holds as $\pi$ is an increasing function.
Now, the number of $2\le j\le\frac{\log x}{\log 2}$ is bounded by $\log x$, so $\rho_{1,\mathcal{N}}(x)\le\frac x{\log x}+\log x\cdot\frac{x^{\frac12}}{\frac12\log x}=\frac x{\log x}+2\sqrt{x}\le 2\frac x{\log x}$, for $x>c$ (where $c$ is a constant independent of $x$). For $1\le x\le c$, the function $f(x)=1+2\frac{\sqrt{x}\log x}x$ is continuous and hence bounded, so $\exists\gamma\colon \frac x{\log x}+2\sqrt{x}\le\gamma \frac x{\log x}$, for all $x\ge 1$.

Thus the base case is proved; any $\gamma_1\ge\gamma$ works, so some $\gamma_1$ exists. We do not obtain any restriction on $\gamma_2$, so we pick a $\gamma_2$ satisfying the conditions of Claim~\ref{claim: bound for inductive step}.

\textbf{Inductive step:} Suppose that the statement holds for $k-1$; we now prove for $k$. Note that $\rho_{k,\mathcal{N}}(x)$ counts the number of integers $n$ with prime factorisation $n=2^{e_0}p_1^{e_1}\dots p_k^{e_k}$, where $p_i$ are primes that are $1\pmod 4$, and $e_0\in\{0,1\}$ by definition of $\mathcal{N}$. Without loss of generality, we may suppose that $p_1^{e_1}<\dots<p_k^{e_k}$. For each odd prime $p_j$, we can write $n=p_j^{e_j}m_j$, where $m_j\in\mathcal{N}$ and $\omega^*(m_j)=k-1$, and $p_j^{e_j}\mathrel\Vert n$ (i.e. $p_j^{e_j}\mid n$ and $p_j^{e_j+1}\nmid n$). Hence, given a prime power $p_j^{e_j}$,
\begin{equation}\label{eq: count mj}
\#\{n\le x\colon n\in\mathcal{N}_k(x),p_j^{e_j}\mathrel\Vert n\}=\rho_{k-1,\mathcal{N}}\Bigl(\dfrac{x}{p_j^{e_j}}\Bigr).
\end{equation}
For $j<k$, we further note that $p_j^{e_j}<p_k^{e_k}$ and $p_j^{e_j}p_k^{e_k}\le n$, so $(p_j^{e_j})^2<n\implies p_j^{e_j}<\sqrt{n}$. Since $n\le x$, for all $j=1,\dots,k-1$ we have $p_j^{e_j}\le \sqrt{x}$. We will use this to get an upper bound for $\rho_{k,\mathcal{N}}(x)$ using~\eqref{eq: count mj}. Note that $(k-1)\rho_{k,\mathcal{N}}(x)$ can be interpreted as multiple counting every $n\in\mathcal{N}_k(x)$ from the point of view of each of its odd prime power factors except the largest one (corresponding to $p_k^{e_k}$ above); each of those odd prime powers is at most $\sqrt{x}$, as shown. Thus
\[(k-1)\rho_{k,\mathcal{N}}(x)\le \sum_{{\substack{q=p^e,\text{ a prime power}\\q\le\sqrt{x}\\ p\equiv 1\pmod 4}}}\rho_{k-1,\mathcal{N}}\Bigl(\dfrac{x}{q}\Bigr).\]
We now apply the inductive hypothesis to each term in the summation to get
\begin{align*}
(k-1)\rho_{k,\mathcal{N}}(x) &\le
\sum_{{\substack{q=p^e,\text{ a prime power}\\q\le \sqrt{x}\\ p\equiv 1\pmod 4}}}\gamma_1\frac{x/q}{\log(x/q)}\frac{(\frac12\log\log(x/q)+\gamma_2)^{k-2}}{(k-2)!} \\
&\le\sum_{{\substack{q=p^e,\text{ a prime power}\\q\le \sqrt{x}\\ p\equiv 1\pmod 4}}}\gamma_1\frac{x}{q\log(x/q)}\ \cdot\ \frac{(\frac12\log\log(x)+\gamma_2)^{k-2}}{(k-2)!}\\
&\le \gamma_1\frac{(\frac12\log\log(x)+\gamma_2)^{k-1}}{(k-2)!},\text{ by Claim~\ref{claim: bound for inductive step}}\\
\implies \rho_{k,\mathcal{N}}(x) &\le\frac1{k-1}\cdot\gamma_1\frac{(\frac12\log\log(x)+\gamma_2)^{k-1}}{(k-2)!}=\gamma_1\frac{(\frac12L+\gamma_2)^{k-1}}{(k-1)!}.
\end{align*}
This completes the inductive step, and hence we have finished the proof.
\end{proof}

\section{Upper bounds on moments of $r_1$}
In this section we prove the following theorem:
\begin{theorem}[Granville, Sabuncu, Sedunova, 2024]\label{thm:r1cl upper bound}
	For any fixed integer $\ell\ge 1$ and integer $k\ll\log\log x =: L$ we have
	\[\sum_{{\substack{n\le x\\\omega^*(n)=k}}}\binom{r_1(n)}{\ell}\ll_\ell\frac{xL^{O_\ell(1)}}{(\log x)^{\ell+1}}\frac{(2^{\ell-1}L)^k}{k!}.\]
\end{theorem}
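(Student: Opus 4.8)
The plan is to follow the Granville--Sabuncu--Sedunova strategy: reduce to the primitive representation function $r_1^*$, strip off the largest prime factor of $n$, reformulate the count over Gaussian integers, and close with the fundamental lemma of the sieve.

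\textbf{Step 1 (reduction to $r_1^*$ and to $\mathcal N$).} For integers $a\ge b\ge 0$ one has $\binom a\ell\le\binom b\ell+(a-b)\,a^{\ell-1}$ (the difference telescopes to $\sum_{j=b}^{a-1}\binom j{\ell-1}$), so
\[\sum_{\substack{n\le x\\\omega^*(n)=k}}\binom{r_1(n)}{\ell}\le\sum_{\substack{n\le x\\\omega^*(n)=k}}\binom{r_1^*(n)}{\ell}+x^{o(1)}\sum_{n\le x}\bigl(r_1(n)-r_1^*(n)\bigr),\]
using $r_1(n)\le\tau(n)=x^{o(1)}$ by~\eqref{eq: tau o(1)}. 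By Lemma~\ref{lemma1} the error is $x^{1/2+o(1)}$, which is dwarfed by the target for every $k\ll L$ (the right-hand side of the theorem is $\ge xL^{O_\ell(1)}(\log x)^{-\ell-1}$, since $\tfrac{(2^{\ell-1}L)^k}{k!}\ge1$ on the whole range $0\le k\ll L$). Because $r_1^*(n)\le r_0^*(n)$, Lemmas~\ref{r0*} and~\ref{r0*mult} let us restrict to $n\in\mathcal N$, and then $r_0^*(n)=2^k$ whenever $\omega^*(n)=k$.

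\textbf{Step 2 (isolate the largest prime).} Put $z=x^{1/L}$. For $n$ with $P(n)\le z$ or $P(n)^2\mid n$ we bound $\binom{r_1^*(n)}{\ell}\le r_0^*(n)^\ell$ and invoke Lemma~\ref{r0*bound}, so their total contribution is $O_\ell\bigl(x(\log x)^{-1000}\bigr)$, again negligible against the target (for fixed $\ell$ the exponent $1000$ exceeds $\ell+1$). For the remaining $n$ write $n=Pm$ where $P=P(n)>z$ is prime, $P\parallel n$, and every prime of $m$ is smaller than $P$; then $P\equiv1\pmod4$, $m\in\mathcal N$, $\omega^*(m)=k-1$, and $m\le x/z$.

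\textbf{Step 3 (Gaussian reformulation).} In $\mathbb Z[i]$ write $P=\Pi\bar\Pi$. The $2^k$ primitive representations of $n=Pm$ (those realizing $r_0^*(n)=2^k$, taken up to units) correspond bijectively to a choice of $\Pi$ or $\bar\Pi$ together with a primitive representation of $m$, and $r_1^*(n)$ counts those whose normalized imaginary part is prime. Expanding $\binom{r_1^*(n)}{\ell}$ over $\ell$-subsets $T$ and summing over $P$ \emph{first}: for fixed $m$ and fixed $T$ the $\ell$ primality conditions say that $\ell$ fixed primitive integral linear forms $L_1(e,f),\dots,L_\ell(e,f)$ --- whose coefficients are read off from the representations of $m$ used in $T$ and the coordinates of $\Pi$ --- are all prime, where $P=e^2+f^2$. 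After discarding the negligible exceptional cases (a representation with $p=2$, or $m$ a perfect square, each forcing $\ll x^{1/2+o(1)}$ admissible $n$) we obtain
\[\sum_{\substack{n=Pm\le x\\\omega^*(n)=k}}\binom{r_1^*(n)}{\ell}\ \le\sum_{\substack{m\le x/z,\ m\in\mathcal N\\\omega^*(m)=k-1}}\ \sum_{|T|=\ell}\ \#\bigl\{(e,f):\,e^2+f^2\le x/m,\ e^2+f^2,\,L_1(e,f),\dots,L_\ell(e,f)\text{ all prime}\bigr\}.\]

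\textbf{Step 4 (the sieve and the bookkeeping --- the hard part).} Bound the inner cardinality by Theorem~\ref{lemma: fundamental lemma of sieves} applied to $F=(e^2+f^2)L_1\cdots L_\ell$ on the disc of radius $\sqrt{x/m}$, bounding the "prime'' indicator above by the "free of primes $\le\xi$'' indicator. Since only primes $\equiv1\pmod4$ divide $e^2+f^2$ while each $L_i$ is a generic linear form, the sieve dimension is $\kappa=\ell+1$ (the norm form contributes only $1$), so the main term is $\ll_\ell\mathfrak S_{m,T}\,\dfrac{x/m}{(\log(x/m))^{\ell+1}}$; as $R_d\ll\tau(d)\sqrt{x/m}$ by lattice-point counting, a level $\xi=(x/m)^{1/4-\varepsilon}$ makes the remainder negligible. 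Using $\log(x/m)\ge\log z=\tfrac1L\log x$, summing over the $\le\binom{2^k}{\ell}\le 2^{k\ell}/\ell!$ subsets $T$, and finally summing over $m$ with
\[\sum_{\substack{m\le x/z,\ m\in\mathcal N\\\omega^*(m)=k-1}}\frac{\mathfrak S_m}{m}\ \ll_\ell\ \frac{\bigl(\tfrac12L+O(1)\bigr)^{k-1}}{(k-1)!}\]
(Mertens for primes $\equiv1\pmod4$, Theorem~\ref{theorem: mertens APs}, with $\mathfrak S_m$ bounded on average; this also follows from Lemma~\ref{lemma: numbers with k factors}), one gets a bound $\ll_\ell\dfrac{xL^{\ell+1}}{\ell!(\log x)^{\ell+1}}\cdot 2^{k\ell}\cdot\dfrac{(\tfrac12L)^{k-1}}{(k-1)!}$. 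Since $2^{k\ell}\cdot2^{-(k-1)}=2\cdot2^{(\ell-1)k}$, $\tfrac{L^{k-1}}{(k-1)!}=\tfrac kL\cdot\tfrac{L^k}{k!}$ and $k\ll L$, this equals $\ll_\ell\dfrac{xL^{O_\ell(1)}}{(\log x)^{\ell+1}}\cdot\dfrac{(2^{\ell-1}L)^k}{k!}$, as claimed. The genuine work lies entirely in this step: establishing the level of distribution and controlling $\sum_{d}3^{\omega(d)}|R_d|$ uniformly in the (growing) linear forms; bounding the singular series $\mathfrak S_{m,T}$ uniformly and showing that the degenerate configurations --- forms that become proportional or share fixed prime divisors --- contribute a lower-order amount; and tracking the three competing quantities ($2^{k\ell}$ from the subsets, $(\tfrac12L)^{k-1}/(k-1)!$ from Mertens, and the factor $L^{\ell+1}$ coming from letting $m$ range up to $x/z$) so that the final powers of $2$, of $L$, and of $\log x$ all land correctly.
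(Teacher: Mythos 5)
Your outline follows the paper's route step for step: reduce to $r_1^*$ (as in Claim~\ref{claim: approx r1 by r1*}), discard $n$ with $P(n)\le x^{1/L}$ or $P(n)^2\mid n$ via Lemma~\ref{r0*bound}, factor $n=Pm$ with $P=P(n)$, expand $\binom{r_1^*(n)}{\ell}$ over $\ell$-subsets of the linear forms attached to the primitive representations of $m$, bound the inner count by the fundamental lemma applied to $F=(e^2+f^2)\prod_i L_i(e,f)$, and finish with the Mertens-type sum over $m$ and the $2^{k\ell}$ count of subsets; your final bookkeeping, including $\sum_{m}\frac1m\ll\bigl(\tfrac12L+O(1)\bigr)^{k-1}/(k-1)!$ (which is equivalent, up to an absorbed factor $k\ll L$, to the paper's dyadic use of Lemma~\ref{lemma: numbers with k factors}), is correct.

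There is, however, one step that fails as written: the primality of the $L_i(e,f)$ and of $e^2+f^2$ is \emph{not} majorized by the condition that $F(e,f)$ is free of prime factors $\le\xi$, because a factor may be a prime that is itself $\le\xi$, in which case the point is removed by the sieve rather than counted. This is not an empty worry: the difference forms $v_ie-u_if$ can be arbitrarily small even when $e^2+f^2>x^{1/L}$, and with your large level $\xi=(x/m)^{1/4-\varepsilon}$ even the prime $p=e^2+f^2$ drops below the level whenever $p\le(x/m)^{1/4}$. So the sifted set does not contain all pairs you need to count, and the inequality opening Step~4 is unjustified; the exceptional cases you do discard ($p=2$, $m$ a square) are unrelated to this. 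The paper handles exactly this issue by sieving only up to $y=x^{1/10L}$ and devoting Claim~\ref{claim: phi small prime} to the pairs for which some $\phi\in I$ takes a prime value $\le y$: a spacing argument (two solutions of $\phi(a,b)=q$ with $a^2+b^2>x^{1/L}$ force the corresponding primes to differ by $\gg x^{1/2L}$) shows their contribution is $\ll\frac xm\frac{L^{O_\ell(1)}}{(\log x)^{\ell+1}}$. You must either import that claim or replace it by a direct lattice-point count of solutions $\phi(e,f)=q$, $q\le\xi$ prime, $e^2+f^2\le x/m$, which gives $O\bigl((x/m)^{3/4+o(1)}\bigr)$ and is acceptable since $x/m>x^{1/L}$; with that supplement your argument closes. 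A minor further caveat: your assertion that $(2^{\ell-1}L)^k/k!\ge1$ on the whole range $k\ll L$ only holds when the implied constant is at most about $e\,2^{\ell-1}$, so the absorption of the $x^{1/2+o(1)}$ and $x(\log x)^{-1000}$ errors should be stated for $k\le CL$ with $C=C(\ell)$, exactly as the paper implicitly does.
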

We provide a more detailed proof than in~\cite{granville2023}, breaking the process into several steps: in particular, we provide a sieve theory argument which starts at Claim~\ref{claim: sieve theory bound on summand} below.

We start by approximating the LHS by a function of $r_1^*$.
\begin{claim}\label{claim: approx r1 by r1*}
	\[\smash{\sum_{{\substack{n\le x\\\omega^*(n)=k}}}\left(\binom{r_1(n)}{\ell}-\binom{r_1^*(n)}{\ell}\right)\ll_\ell x^{\frac12+o(1)}.}\]
\end{claim}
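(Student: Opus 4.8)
The plan is to compare the two binomial coefficients term by term using the elementary identity
\[
\binom{a}{\ell}-\binom{b}{\ell}=\sum_{j=0}^{\ell-1}\binom{b}{j}\binom{a-b}{\ell-j}\quad\text{when }a\ge b\ge 0,
\]
applied with $a=r_1(n)$, $b=r_1^*(n)$. Since $r_1^*(n)\le r_1(n)$ always holds (every coprime representation is a representation), the difference $r_1(n)-r_1^*(n)$ counts precisely those pairs $(a,p)$ with $a^2+p^2=n$ and $p\mid a$. The key point is that this difference is very small on average: by Lemma~\ref{lemma1}, $\sum_{n\le x}(r_1(n)-r_1^*(n))=O(\sqrt{x}\log\log x)$, so for most $n$ it is zero, and when it is positive it is still controlled by the trivial bound $r_1(n)\ll \tau(n)=n^{o(1)}$ (from~\eqref{eq: tau o(1)}, since $r_1(n)\le r_0(n)\le\tau(n)$).

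**Key steps.** First I would drop the constraint $\omega^*(n)=k$, since it only shrinks the sum and the target bound is uniform — we are proving an upper bound valid for the full sum $\sum_{n\le x}\bigl(\binom{r_1(n)}{\ell}-\binom{r_1^*(n)}{\ell}\bigr)$. Second, on the set where $r_1(n)=r_1^*(n)$ the summand vanishes, so only $n$ with $r_1(n)>r_1^*(n)$ contribute; call this set $\mathcal{E}$. Third, for $n\in\mathcal{E}$ I bound the summand crudely: using $\binom{r_1(n)}{\ell}-\binom{r_1^*(n)}{\ell}\le\binom{r_1(n)}{\ell}\ll_\ell r_1(n)^\ell\ll_\ell \tau(n)^\ell=n^{o(1)}=x^{o(1)}$. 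Fourth, I count $\#\mathcal{E}$: since $n\in\mathcal{E}$ forces the existence of at least one pair $(a,p)$ with $p\mid a$ and $a^2+p^2=n\le x$, Lemma~\ref{lemma1} gives $\#\mathcal{E}\le\sum_{n\le x}(r_1(n)-r_1^*(n))\ll\sqrt{x}\log\log x=x^{1/2+o(1)}$. Combining, the whole sum is $\ll_\ell x^{o(1)}\cdot x^{1/2+o(1)}=x^{1/2+o(1)}$, as claimed. Absorbing $\log\log x$ and the $\tau$-factor into $x^{o(1)}$ is legitimate because both are $x^{o(1)}$.

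**Main obstacle.** There is no serious obstacle here; the statement is essentially a packaging of Lemma~\ref{lemma1}. The only point requiring a little care is making sure the pointwise bound on the summand is genuinely $x^{o(1)}$ and not, say, a fixed power of $x$: one must use that $\ell$ is a fixed constant, so $r_1(n)^\ell\ll_\ell\tau(n)^\ell$ is still $n^{o(1)}$ (a fixed power of an $n^{o(1)}$ quantity), and then note $n\le x$. A secondary bookkeeping point is that $\binom{r_1(n)}{\ell}-\binom{r_1^*(n)}{\ell}$ is indeed nonnegative (so that there is no cancellation to worry about in the full sum), which follows from monotonicity of $x\mapsto\binom{x}{\ell}$ on $\mathbb{Z}_{\ge 0}$ together with $r_1^*(n)\le r_1(n)$. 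Once these are in place the estimate is immediate.
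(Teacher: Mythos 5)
Your proposal is correct and is essentially the paper's argument: both reduce the claim to Lemma~\ref{lemma1} combined with the crude bound $r_1(n)\le r_0(n)\le\tau(n)=n^{o(1)}$, the only cosmetic difference being that you count the exceptional set where $r_1(n)>r_1^*(n)$ while the paper uses the pointwise bound $\binom{r_1(n)}{\ell}-\binom{r_1^*(n)}{\ell}\ll_\ell (r_1(n)-r_1^*(n))\,r_1(n)^{\ell-1}$ before summing. Both routes give $x^{1/2+o(1)}$ in the same way, so no further comment is needed.
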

\begin{proof}
$\displaystyle\sum_{{\substack{n\le x\\\omega^*(n)=k}}}\binom{r_1(n)}{\ell}-\sum_{{\substack{n\le x\\\omega^*(n)=k}}}\binom{r_1^*(n)}{\ell}=\sum_{{\substack{n\le x\\\omega^*(n)=k}}}\left(\binom{r_1(n)}{\ell}-\binom{r_1^*(n)}{\ell}\right)$. Now, since $\ell$ is a constant, so is $\ell!$; hence $\binom{r_1(n)}{\ell}\ll_\ell r_1(n)(r_1(n)-1)\dots (r_1(n)-\ell+1)\ll_\ell r_1(n)^\ell$. Thus we can write \[\sum_{{\substack{n\le x\\\omega^*(n)=k}}}\left(\binom{r_1(n)}{\ell}-\binom{r_1^*(n)}{\ell}\right)\ll_\ell\sum_{{\substack{n\le x\\\omega^*(n)=k}}}(r_1(n)-r_1^*(n))r_1(n)^{\ell-1}.\]
We will bound $r_1(n)$ rather crudely to achieve an acceptable estimate. Firstly, note that $r_1(n)\le r_0(n)$. Now, $r_0$ is a multiplicative function, and $r_0(p^k)\le k+1=\tau(p^k)$ for any prime $p$. Since the divisor counting function $\tau$ is also multiplicative, we get from~\eqref{eq: tau o(1)} that $r_1(n)\le\tau(n)\ll n^{o(1)}$. So for all $n\le x$ we can write $r_1(n)^{\ell-1}\ll_\ell x^{o(1)}$.

We apply Lemma~\ref{lemma1} to bound the sum of $r_1(n)-r_1^*(n)$; since $\log\log x\ll x^{o(1)}$, we get
\[\sum_{{\substack{n\le x\\\omega^*(n)=k}}}\left(\binom{r_1(n)}{\ell}-\binom{r_1^*(n)}{\ell}\right)\ll_\ell x^{\frac12+o(1)}.\qedhere\]
\end{proof}
Hence, to prove the theorem, we may now replace $r_1$ with $r_1^*$ in the statement, as we have shown the error accumulated through doing so is small. We next restrict the sum by introducing conditions on $P(n)$, the largest prime factor of $n$.
\begin{claim}\label{claim: bound for some P(n)}
	$\displaystyle\sum_{{\substack{n\le x\\\omega^*(n)=k \\ P(n)\le z\text{ or }P(n)^2\mid n}}}\binom{r_1(n)}{\ell}\ll_\ell \frac{xL^{O_\ell(1)}}{(\log x)^{\ell+1}}\frac{(2^{\ell-1}L)^k}{k!}.$
\end{claim}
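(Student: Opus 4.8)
The plan is to throw away the constraint $\omega^*(n)=k$ and bound the larger sum. Since $\binom{r_1(n)}{\ell}\ge0$ for every $n$, it suffices to show
\[
\sum_{{\substack{n\le x\\ P(n)\le z\text{ or }P(n)^2\mid n}}}\binom{r_1(n)}{\ell}\ \ll_\ell\ \frac{x}{(\log x)^{B}}\qquad\text{for every fixed }B ,
\]
and then to note that the right-hand side of the Claim is never smaller than a fixed negative power of $\log x$ times $x$: writing $k\le C_0L$ for the constant $C_0$ implicit in $k\ll L$, Stirling gives $\frac{(2^{\ell-1}L)^k}{k!}\gg_\ell(\log x)^{-c_\ell}$ for all such $k$, so the target is $\gg_\ell x(\log x)^{-(\ell+1+c_\ell)}$ and it is enough to take $B=\ell+2+c_\ell$. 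To reach a situation covered by the method of Lemma~\ref{r0*bound}, I would replace $\binom{r_1(n)}{\ell}$ by $r_0(n)^\ell$ using $\binom{r_1(n)}{\ell}\le r_1(n)^\ell\le r_0(n)^\ell$ (every representation $n=a^2+p^2$ is in particular a sum of two squares), which reduces the task to bounding $\sum r_0(n)^\ell$ over the two ranges $P(n)\le z$ and $\{P(n)>z,\ P(n)^2\mid n\}$ — the same dichotomy used in the proof of Lemma~\ref{r0*bound}.

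For the range $P(n)\le z$ I would run Rankin's trick exactly as in Case~1 of that proof, with $\sigma=1-\frac{\log(L\log L)}{\log z}\in(\tfrac12,1)$ (note $\sigma\to1$). Multiplicativity of $r_0$ gives $\sum_{P(n)\le z}r_0(n)^\ell\le x^\sigma\prod_{p\le z}\bigl(1+\sum_{a\ge1}r_0(p^a)^\ell p^{-a\sigma}\bigr)$. Since $r_0(p)\le2$, $r_0(p^a)\le a+1$, and $\sum_{a\ge2}(a+1)^\ell p^{-a\sigma}\ll_\ell p^{-2\sigma}$ (because $\sigma$ is bounded away from $\tfrac12$), each Euler factor is $\le1+2^\ell p^{-\sigma}+O_\ell(p^{-2\sigma})\le\exp(2^\ell p^{-\sigma}+O_\ell(p^{-2\sigma}))$. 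Using $\sum_{p\le z}p^{-\sigma}\ll L$ (computed in the proof of Lemma~\ref{r0*bound}) and $\sum_p p^{-2\sigma}\ll\zeta(2\sigma)\ll1$ (valid since $\sigma\to1$), the product is $\exp(O_\ell(L))=(\log x)^{O_\ell(1)}$; combined with $x^\sigma=x(L\log L)^{-L}$ this range contributes $x(\log x)^{O_\ell(1)}(L\log L)^{-L}$, which is $\ll x(\log x)^{-B}$ for every fixed $B$ because $(L\log L)^L$ outgrows every fixed power of $\log x$.

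For the range $P(n)>z$, $P(n)^2\mid n$ I would mimic Case~2 of that proof: write $n=l^ak$ with $l=P(n)$ prime, $z<l\le\sqrt{x}$, $a\ge2$, $P(k)<l$ (so $\gcd(k,l)=1$), whence $r_0(n)^\ell\le(a+1)^\ell r_0(k)^\ell$. Applying Rankin with exponent $\sigma_l=1-\frac{\log\tau}{\log l}$ (the constant $\tau$ from Lemma~\ref{r0*bound}) to the sum over $k$ bounds $\sum_{P(k)<l}r_0(k)^\ell(x/(l^ak))^{\sigma_l}$ by $(x/l^a)^{\sigma_l}$ times an Euler product that is $O_{\ell,\tau}(1)$ — since $\sum_{p<l}p^{-\sigma_l}\ll\tau/\log\tau$ is bounded and $\sigma_l\to1$ — and summing $(a+1)^\ell l^{-a\sigma_l}$ over $a\ge2$ contributes $O_\ell(l^{-2\sigma_l})$; hence this range is $\ll_{\ell,\tau}x\sum_{z<l\le\sqrt{x}}l^{-2}x^{-\log\tau/\log l}$, which the partial-summation estimate ending the proof of Lemma~\ref{r0*bound} shows is $\ll x(\log x)^{-B}$ for every fixed $B$.

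Adding the two ranges and taking $B=\ell+2+c_\ell$ completes the argument. The one place needing real care, beyond quoting Lemma~\ref{r0*bound} almost verbatim, is that its Euler-factor coefficients $r_0^*(p^a)^m\le2^m$ are replaced here by $r_0(p^a)^\ell=(a+1)^\ell$, which is unbounded in $a$; I expect this to be the main (albeit mild) obstacle, and it is resolved by observing that Rankin's trick forces $\sigma$ and each $\sigma_l$ to be close to $1$, so the prime powers $p^a$ with $a\ge2$ contribute only $O_\ell(\sum_p p^{-2\sigma})=O_\ell(1)$ to the exponent of the Euler product. The secondary subtlety, handled in the first paragraph, is that the factor $\frac{(2^{\ell-1}L)^k}{k!}$ decays with $k$ but costs at most a fixed power of $\log x$, so the super-polynomially small bound obtained here stays negligible against the target.
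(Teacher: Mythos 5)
Your proposal is correct, but it follows a genuinely different (if parallel) route from the paper's. The paper never touches the unstarred $r_0$: it quotes Lemma~\ref{r0*bound} for $r_0^*$ as a black box (with the arbitrary exponent $1000$ replaced by $\ell+2$), uses $r_1^*\le r_0^*$ to bound $\binom{r_1^*(n)}{\ell}\le\binom{r_0^*(n)}{\ell}$, and then transfers from $r_1^*$ back to $r_1$ via Claim~\ref{claim: approx r1 by r1*}, picking up an extra $x^{1/2+o(1)}$. You instead dominate $\binom{r_1(n)}{\ell}$ by $r_0(n)^\ell$ and re-run the Rankin/Euler-product argument of Lemma~\ref{r0*bound} for $r_0$ itself, which forces you to handle the unbounded prime-power values $r_0(p^a)=a+1$; your key observation — that $\sigma$ and each $\sigma_l$ tend to $1$, so the $a\ge2$ terms contribute only $O_\ell\bigl(\sum_p p^{-2\sigma}\bigr)=O_\ell(1)$ to the exponent — resolves this correctly, and your decomposition $n=l^ak$ with $a\ge 2$, $P(k)<l$ in the second range is in fact slightly more careful than the paper's writing of that case. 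What the paper's route buys is that no new Euler-product computation is needed, at the price of the auxiliary comparison between $r_1$ and $r_1^*$; what your route buys is independence from Claim~\ref{claim: approx r1 by r1*} and the starred functions altogether, plus a more explicit treatment of a step the paper glosses over: you quantify (via $B=\ell+2+c_\ell$, with $c_\ell$ depending on $\ell$ and the implied constant in $k\ll L$) exactly how large an exponent is needed so that the super-polynomial saving beats the factor $(2^{\ell-1}L)^k/k!$, whereas the paper's fixed choice $\ell+2$ is justified only by its remark that the exponent in Lemma~\ref{r0*bound} can be taken arbitrarily large.
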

\begin{proof}
Suppose $P(n)\le x^{1/L}$ or $P(n)^2\mid n$.
Lemma~\ref{r0*bound} bounds $m$-th moments of $r_0^*$ if $P(n)\le x^{1/L}=:z$ or $P(n)^2\mid n$, where  $L=\log\log x$. Note that the exponent of $1000$ was chosen arbitrarily in the proof of this Lemma, and indeed could be replaced with any constant, so we shall replace it with $\ell+2$ for our usage here. Since the bound in Lemma~\ref{r0*bound} holds for all powers $m$, we can also derive an asymptotic bound for any expression of form $\sum_{n\le x}f(r_0^*(n))$, for any polynomial $f$. In particular, a binomial coefficient is a polynomial. As $\forall n\colon r_1^*(n)\le r_0^*(n)$, we have $\binom{r_1^*(n)}{\ell}\le\binom{r_0^*(n)}{\ell}$, and so \vspace{-1ex}
\begin{align*}
&\sum_{{\substack{n\le x\\ P(n)\le z\text{ or }P(n)^2\mid n}}}\binom{r_1^*(n)}{\ell}\ll_\ell\frac{x}{(\log x)^{\ell+2}} 
\\
\implies \!\!\!&\smash{\sum_{{\substack{n\le x\\\omega^*(n)=k \\ P(n)\le z\text{ or }P(n)^2\mid n}}}\binom{r_1(n)}{\ell}\ll_\ell\frac{x}{(\log x)^{\ell+2}}+ x^{\frac12+o(1)}\ll_\ell\frac{xL^{O_\ell(1)}}{(\log x)^{\ell+1}}\frac{(2^{\ell-1}L)^k}{k!}.}\qedhere
\end{align*}
\end{proof}

From now on we may assume $P(n)>x^{1/L}$ and $P(n)^2\nmid n$. Let $p=P(n)$. We can write $n=mp$, with $p>x^{1/L}$ and $P(m)<p$; $P(m)\le p$ by definition of the largest prime factor of $n$, and the inequality is strict as $P(n)^2\nmid n$. Also, $n\le x$ so $m\le x^{1-1/L}$.
We may also assume that $n$ is such that $r_1^*(n)>0$ (as we get no contribution to the sum we are estimating from $n$ such that $r_1^*(n)=0$). In particular, $r_0^*(n)>0$, so $n$ has no prime factors that are $3$ modulo $4$.

Let $a,b$ be coprime such that $p=a^2+b^2$; the choice of $a,b$ is unique up to order. Let all pairs of two positive coprime squares whose sum is $m$ be denoted $u_i,v_i$. We have $1\le i\le r_0^*(m)$ by definition of $r_0^*$. By multiplicativity of $r_0^*$, all representations of $n$ as a sum of two positive coprime squares arise from representations of its factors, i.e. $n=(au_i+bv_i)^2+|bu_i-av_i|^2=(au_i+bv_i)^2+(bu_i-av_i)^2$ for $1\le i\le r_0^*(m)$ (in some order) -- this was explained in Lemma~\ref{r0*mult}. Out of these, we count representations where one of the squares is the square of a prime to get $r_1^*(n)=\#\{i:au_i+bv_i\text{ is prime}\}+\#\{i:|bu_i-av_i|\text{ is prime}\}$.

We now rewrite our original sum as a double sum over $m$ and over $p$. Summing over $n=mp\le x$ with $p=P(n)>x^{1/L}$ is equivalent to summing over $m\le x^{1-1/L}$, and $x^{1/L}<p\le x/m$, as $n=mp\le x$. Further, since $p$ is coprime to $m$, we get that $\omega^*(n)=k\iff \omega^*(m)=k-1$, as $m$ contains all prime factors of $n$ except $p$. Finally, as mentioned earlier, we may assume that $r_0^*(p),r_0^*(m)>0$, so $p$ and $m$ are both representable as a sum of two coprime squares. Thus we get \vspace{-1ex}
\[\smash[t]{\sum_{{\substack{n\le x\\ \omega^*(n)=k\\P(n)>x^{1/L}}}} \binom{r_1^*(n)}{\ell}=\sum_{{\substack{m\le x^{1-1/L}\\ \omega^*(m)=k-1\\m\in\mathcal{Q}}}}\sum_{{\substack{x^{1/L}<p\le x/m\\p>P(m)\\p\text{ prime, }p\in\mathcal{Q}}}} \binom{r_1^*(mp)}{\ell},}\]
where $\mathcal{Q}=\{q\in\mathbb{N}\colon\exists u,v \mid u^2+v^2=q,\ u,v\text{ coprime}\}$. By our earlier notation we have $m=u_i^2+v_i^2$ for some $i$.

Recall the set $\mathcal{L}_m$ of linear forms from Chapter 3. Here we have $r_0^*(m)$ different pairs $(u_i,v_i)$ so we define $\mathcal{L}_m=\{u_ix+v_iy,v_ix-u_iy\colon 1\le i\le r_0^*(m)\}$.

Recall that $r_1^*(mp)=\#\{i:au_i+bv_i\text{ is prime}\}+\#\{i:|bu_i-av_i|\text{ is prime}\}=\#\{\phi\in\mathcal{L}_m\colon\phi(a,b)\text{ is prime}\}$. Let $\mathcal{L}_{m,a,b}=\{\phi\in\mathcal{L}_m\colon\phi(a,b)\text{ is prime}\}$. Hence $\displaystyle\binom{r_1^*(mp)}{\ell}=\sum_{{\substack{I\subseteq\mathcal{L}_m\\|I|=\ell}}}\mathbbm{1}(\forall\phi\in I\colon\phi(a,b)\text{ is prime})=\sum_{{\substack{I\subseteq\mathcal{L}_m\\|I|=\ell}}}\mathbbm{1}(I\subseteq\mathcal{L}_{m,a,b})=\sum_{{\substack{I\subseteq\mathcal{L}_{m,a,b}\\|I|=\ell}}}1$, where $\mathbbm{1}$ is the indicator function.
Thus we can write
\begin{align}
\sum_{{\substack{m\le x^{1-1/L}\\ \omega^*(m)=k-1\\m\in\mathcal{Q}}}}
&\sum_{{\substack{x^{1/L}<p\le x/m\\p>P(m)\\p\text{ prime, }p\in\mathcal{Q}}}} \binom{r_1^*(mp)}{\ell}
= \sum_{{\substack{m\le x^{1-1/L}\notag\\ \omega^*(m)=k-1\\m\in\mathcal{Q}}}}\sum_{{\substack{x^{1/L}<p\le x/m\\p>P(m)\text{ prime}\\p\in\mathcal{Q},p=a^2+b^2,a<b}}} \sum_{{\substack{I\subseteq\mathcal{L}_{m,a,b}\\|I|=\ell}}}1\\
&= \sum_{{\substack{m\le x^{1-1/L}\\ \omega^*(m)=k-1\\m\in\mathcal{Q}}}}\sum_{{\substack{I\subseteq\mathcal{L}_{m}\\|I|=\ell}}}\sum_{{\substack{x^{1/L}<p\le x/m\\p>P(m)\text{ prime}\\p\in\mathcal{Q},p=a^2+b^2,a<b}}} \mathbbm{1}(I\subseteq\mathcal{L}_{m,a,b}).\label{3sum}
\end{align}
We continue by bounding 
	\begin{equation}\label{eq:sum to bound}
\sum_{{\substack{x^{1/L}<p\le x/m\\p>P(m)\text{ prime}\\p\in\mathcal{Q},p=a^2+b^2,a<b}}} \mathbbm{1}(I\subseteq\mathcal{L}_{m,a,b}) =
	\sum_{{\substack{x^{1/L}<p\le x/m\\p>P(m)\text{ prime}\\p\in\mathcal{Q},p=a^2+b^2,a<b}}} \mathbbm{1}(\phi(a,b)\text{ prime for all }\phi\in I),
\end{equation}
where $I\subseteq\mathcal{L}_m$ is fixed. Let $y=x^{1/10L}$. Then $p=a^2+b^2$ is coprime to $\prod_{q\le y\text{ prime}} q$, as $p>x^{1/L}>y$ and $p$ is prime. Also, given $\phi\in I$, we have $\phi(a,b)\le y$ or $\phi(a,b)>y$. We consider the related cases separately.

\begin{claim}\label{claim: phi small prime} \[\sum_{{\substack{x^{1/L}<p\le x/m\\p>P(m)\text{ prime}\\p\in\mathcal{Q},p=a^2+b^2,a<b}}} \mathbbm{1}(\forall\phi\in I\colon\phi(a,b)\text{ prime},\ \exists\phi\in I\colon\phi(a,b)\le y)\ll\dfrac{x}{m}\cdot\dfrac{L^{O_\ell(1)}}{(\log x)^{\ell+1}}.\]\end{claim}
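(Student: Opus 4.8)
The plan is to discard every primality condition except where one is genuinely needed, reducing the estimate to a routine lattice-point count in a thin strip; since the target has roughly a factor $\sqrt{x}$ of slack, no delicate sieving will be required. As a prime $p=a^2+b^2$ with $1\le a<b$ determines $(a,b)$ uniquely, I would first bound the left-hand side by
\[
\sum_{\phi_0\in I}\#\bigl\{(a,b):\,1\le a<b,\ x^{1/L}<a^2+b^2\le x/m,\ \phi_0(a,b)\le y\bigr\},
\]
so that, as $|I|=\ell=O_\ell(1)$, it suffices to bound each summand by $\tfrac{x}{m}\cdot\tfrac{L^{O_\ell(1)}}{(\log x)^{\ell+1}}$. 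I would dispose of the case $m=1$ (which forces $k=1$) separately: there $\phi_0(a,b)\in\{a,b\}$, so $\phi_0(a,b)\le y$ confines one of $a,b$ to $[1,y]$ and the other to $[1,\sqrt{x}]$, yielding a count $\ll y\sqrt{x}=x^{1/2+o(1)}$, comfortably within the bound. So assume $m\ge 2$; then every pair $(u_i,v_i)$ with $u_i^2+v_i^2=m$ appearing in $\mathcal{L}_m$ has $u_i,v_i\ge 1$ and $\gcd(u_i,v_i)=1$.

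Next I would split on the type of $\phi_0$. If $\phi_0$ is a symmetric form $u_ix+v_iy$, then $\phi_0(a,b)=u_ia+v_ib\ge a+b\ge b>\sqrt{(a^2+b^2)/2}>\sqrt{x^{1/L}/2}$, while the constraint demands $\phi_0(a,b)\le y=x^{1/(10L)}$; since $\sqrt{x^{1/L}/2}>x^{1/(10L)}$ for $x$ large, no admissible $(a,b)$ exists and this term contributes nothing. If instead $\phi_0$ is an antisymmetric form $v_ix-u_iy$, the requirement $0<\phi_0(a,b)\le y$ places $(a,b)$ in the strip $\{|v_ia-u_ib|\le y\}$, which I intersect with the quarter-disk of radius $\sqrt{x/m}$. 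For each integer $c$ with $|c|\le y$, the lattice points on the line $v_ia-u_ib=c$ are spaced $\sqrt{u_i^2+v_i^2}=\sqrt{m}$ apart — this is exactly where $\gcd(u_i,v_i)=1$ enters — and the line meets the disk in a chord of length $\le 2\sqrt{x/m}$, hence carries at most $2\sqrt{x}/m+1$ lattice points; summing over the $\le 2y+1$ admissible values of $c$ gives a total $\ll y\sqrt{x}/m+y$.

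Finally I would check that $y\sqrt{x}/m+y\ll \tfrac{x}{m}\cdot\tfrac{L^{O_\ell(1)}}{(\log x)^{\ell+1}}$: the first term reduces to $y(\log x)^{\ell+1}\ll\sqrt{x}$, which holds since $y=x^{1/(10L)}=x^{o(1)}$, and the second follows from $x/m>x^{1/L}$ (valid because $m<x^{1-1/L}$) together with $y(\log x)^{\ell+1}=x^{1/(10L)}(\log x)^{\ell+1}\ll x^{1/L}$, the latter because $x^{9/(10L)}=\exp\bigl(\tfrac{9\log x}{10\log\log x}\bigr)$ outgrows every fixed power of $\log x$. The hard part is not really any single estimate — the argument is deliberately wasteful — but rather getting the bookkeeping right: the symmetric/antisymmetric case split (the symmetric branch being empty precisely because $y$ is a small power of $x$ while $p>x^{1/L}$, which I expect to be the point most worth stating carefully), the lattice-geometry count that hinges on $\gcd(u_i,v_i)=1$, and the small boundary case $m=1$.
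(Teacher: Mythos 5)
Your proposal is correct, and while it shares the paper's first move -- ruling out the ``symmetric'' forms $u_ix+v_iy$ because any admissible $(a,b)$ has $\max(a,b)>\tfrac1{\sqrt2}x^{1/2L}>y$ -- the main counting step is genuinely different. The paper keeps the primality of the small value: it fixes a prime $q\le y$, shows that two solutions of $\phi(a,b)=\phi(a',b')=q$ force the corresponding primes $p=a^2+b^2$, $p'=(a')^2+(b')^2$ to differ by more than $\tfrac1{\sqrt2}x^{1/2L}$ (again via $u_i\mid a-a'$, $v_i\mid b-b'$ from coprimality), so each prime value $q$ is attained by at most $\sqrt2\,(x/m)x^{-1/2L}$ pairs, and then multiplies by $\pi(y)$. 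You instead discard all primality and count lattice points directly: on each line $v_ia-u_ib=c$, $|c|\le y$, consecutive integer solutions differ by $(u_i,v_i)$, of length $\sqrt m$, and the chord inside the disk of radius $\sqrt{x/m}$ has length at most $2\sqrt{x/m}$, giving $\ll \sqrt x/m+1$ points per line and $\ll y\sqrt x/m+y$ in total, which comfortably beats the target since $y=x^{1/10L}$ and $x/m\ge x^{1/L}$ dominate any fixed power of $\log x$. Your per-line bound is in fact sharper than the paper's, and the purely geometric argument is more elementary; the paper's version keeps slightly more structure (only $\pi(y)$ values of $q$), which it doesn't actually need. You also treat the degenerate case $m=1$ (where $u_1=0$ and the ``$u_ia\ge a$'' step would fail) separately, a boundary case the paper's argument silently assumes away by taking $u_i,v_i\ge1$; that is a point in your write-up's favour rather than a gap.
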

\begin{proof}
Suppose for a particular $\phi\in I$, we can pick some $a,b$ such that $x^{1/L}<a^2+b^2\le\frac{x}{m}$ and $\phi(a,b)\le y$. There are two possibilities for $\phi$: either $\exists u_i,v_i$ such that $\phi = u_ix+v_iy$, or $\exists u_i,v_i$ such that $\phi = v_ix-u_iy$. In the first scenario, since $a^2+b^2>x^{1/L}=y^{10}$, we must have either $a^2$ or $b^2$ be at least $\frac12y^{10}$; suppose without loss of generality that this is $a^2$. Then $a>\frac1{\sqrt{2}}y^5>y$, so $\phi(a,b)=u_ia+v_ib\ge u_ia\ge a>y$. Hence, $\phi = v_ix-u_iy$ for some $i$.

We now check the maximum possible number of $a,b$ satisfying the required conditions such that $\phi(a,b)\le y$. Suppose $\phi(a,b)=\phi(a',b')=q$ for some $a,a',b,b'$ with $x^{1/L}<a^2+b^2,(a')^2+(b')^2\le\frac{x}{m}$, where $q$ is a prime smaller than $y$. We can rearrange the equation to get $v_i(a-a')=u_i(b-b')$. Now, since $u_i$ and $v_i$ are coprime by definition, we get $u_i\mid (a-a')$ and $v_i\mid (b-b')$. Let $c,d\in\mathbb{Z}$ be such that $a=a'+cu_i,b=b'+dv_i$. Additionally, $u_i$ and $v_i$ are both positive, hence from $v_i(a-a')=u_i(b-b')$ we get that $a-a'$ and $b-b'$ are the same sign; thus without loss of generality $c,d\in\mathbb{N}$.

Suppose $a^2+b^2=p$ and $(a')^2+(b')^2=p'$. Then $b^2-(b')^2+a^2-(a')^2=p-p'$; factorising the differences of two squares gives $p-p'=dv_i(b+b')+cu_i(a+a')$. As before, without loss of generality $a>\frac1{\sqrt{2}}x^{1/2L}$, and $a',c,u_i,d,v_i,b+b'\ge 1$, so $p-p'>\frac1{\sqrt{2}}x^{1/2L}$.

Since $p,p'\le\frac{x}{m}$, we get that the number of pairs $(a,b)$ such that $\phi(a,b)=q$ is bounded above by $\frac{x}{m}\cdot\frac{\sqrt{2}}{x^{1/2L}}$. Since $q\le y$ is any prime, we get
\[\sum_{{\substack{x^{1/L}<p\le x/m\\p>P(m)\text{ prime}\\p\in\mathcal{Q},p=a^2+b^2,a<b}}} \mathbbm{1}(\forall\phi\in I\colon\phi(a,b)\text{ prime},\ \exists\phi\in I\colon\phi(a,b)\le y)\le\frac{x}{m}\cdot\frac{\sqrt{2}}{x^{1/2L}}\cdot\pi(y).\]
Substituting $y=x^{1/10L}$ gives $\pi(y)\sim\frac{10Lx^{1/10L}}{\log x}$. Thus, it now remains to prove that
\begin{align*}
	\dfrac{x}{m}\cdot\dfrac{\sqrt{2}}{x^{1/2L}}\cdot\dfrac{10Lx^{1/10L}}{\log x}
	&\ll\dfrac{x}{m}\cdot\dfrac{L^{O_\ell(1)}}{(\log x)^{\ell+1}} \\
	\iff \dfrac{10\sqrt{2}L}{x^{4/10L}}
	&\ll\dfrac{L^{O_\ell(1)}}{(\log x)^{\ell}}
	\iff \frac1{x^{2/5L}} \ll \dfrac{L^{O_\ell(1)}}{(\log x)^{\ell}} \\
	\iff \dfrac{(\log x)^{\ell}}{L^{O_\ell(1)}} &\ll x^{2/5L}.
\end{align*}
This holds as $2/5L$ is positive, so $x^{2/5L}$ grows faster than any power of $\log x$ or $L$.
\end{proof}
We now seek to obtain a bound for the remaining part of~\eqref{eq:sum to bound}. Note that primes larger than $y$ are in particular coprime to all primes smaller than $y$, thus
\[
\sum_{{\substack{x^{1/L}<p\le x/m\\p>P(m)\text{ prime}\\p\in\mathcal{Q},p=a^2+b^2,a<b}}} \mathbbm{1}(\forall\phi\in I\colon\phi(a,b)\text{ prime},\ \phi(a,b)> y)
\le\sum_{{\substack{a^2+b^2\le x/m\\\gcd(a^2+b^2,\prod_{q\le y}q)=1\\\prod_{\phi\in I}\gcd(\phi(a,b),\prod_{q\le y}q)=1}}}\!\!\!\!\!\!\!\!\!\!\!\!\!\!\!\! 1\ =:\mathcal{C}(x).
\]
\begin{claim}\label{claim: sieve theory bound on summand}
\	$\displaystyle\mathcal{C}(x)
	\ll_\ell
	\frac x m\prod_{\ell+2<p\le y}\left(1-\frac{\ell+1+(\frac{-1}{p})}{p}\right)\prod_{p\mid T}\Bigl(1-\frac1p\Bigr)^{O(\ell)}+E(x)$, \newline
	where $T=m\prod_{i<j}(u_iv_j-v_iu_j)(u_iu_j+v_iv_j)$ and $E(x):=O\Bigl(\frac xm\frac{L^{O_\ell(1)}}{(\log x)^{\ell+1}}\Bigr)$.
\end{claim}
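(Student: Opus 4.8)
The plan is to recognise $\mathcal{C}(x)$ as a sifted cardinality and bound it with the fundamental lemma of sieve theory, Theorem~\ref{lemma: fundamental lemma of sieves}, applied to the polynomial $F(a,b):=(a^2+b^2)\prod_{\phi\in I}\phi(a,b)$ of degree $\ell+2$. First I would set up the sieve: take $\mathcal{A}$ to be the lattice points $(a,b)$ with $a^2+b^2\le x/m$ in the relevant sector $0\le a<b$, let $\mathfrak{p}$ be the primes exceeding $\ell+2$, put $z=y=x^{1/10L}$, and let $\mathcal{A}_p=\{(a,b)\in\mathcal{A}:p\mid F(a,b)\}$. Since $\prod_{q\le y}q$ is squarefree, the two coprimality conditions in the definition of $\mathcal{C}(x)$ say precisely that $\gcd(F(a,b),\mathcal{P}(y))=1$, so $\mathcal{C}(x)\le S(\mathcal{A},\mathfrak{p},y)$; restricting $\mathfrak{p}$ to $p>\ell+2$ only enlarges the sifted set and guarantees the weight $g(p)=\nu_p/p^2$ satisfies $g(p)<1$ throughout (here $\nu_d$ is the number of solutions of $F\equiv0\pmod d$ in $(\mathbb{Z}/d)^2$).

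Next I would assemble the inputs to Theorem~\ref{lemma: fundamental lemma of sieves}. The Chinese Remainder Theorem makes $g$ multiplicative, and counting $\mathcal{A}$ in residue classes modulo $d$ against its area $\asymp x/m$ gives $|\mathcal{A}_d|=g(d)X+R_d$ with $X\asymp x/m$ (the Gauss-circle estimate~\eqref{r0good}) and, crudely bounding $\nu_p\le(\ell+2)p$ -- each $\phi\equiv0$ is a line and $a^2+b^2\equiv0$ has $\le 2p$ solutions -- hence $\nu_d\le(\ell+2)^{\omega(d)}d$, the remainder is $R_d=O\bigl((\ell+2)^{\omega(d)}\sqrt{x/m}\bigr)$ for $d\le y^2$. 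For the sieve dimension I would use that $1+\bigl(\tfrac{-1}{p}\bigr)$ equals $2$ on primes $\equiv1\pmod4$ and $0$ on those $\equiv3\pmod4$, so Mertens' theorem (Lemma~\ref{lemma: Mertens}) together with its analogue for primes $\equiv1\pmod4$ (Theorem~\ref{theorem: mertens APs}) gives $\sum_{p\le z}g(p)\log p=(\ell+1)\log z+O(1)$; thus $\kappa=\ell+1$ and $e^{\gamma\kappa}\Gamma(\kappa+1)=O_\ell(1)$.

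Feeding this into the fundamental lemma, the remainder contributes $\sum_{d\le y^2}3^{\omega(d)}|R_d|\ll\sqrt{x/m}\,x^{1/5L}(\log x)^{O_\ell(1)}\ll\tfrac xm\,x^{-3/10L}(\log x)^{O_\ell(1)}$, which -- since $\sqrt{x/m}\ge x^{1/2L}$, $y^2=x^{1/5L}$, and $\tfrac{\log x}{L}\to\infty$ -- is $O\bigl(\tfrac xm(\log x)^{-\ell-1}\bigr)$ and is absorbed into $E(x)$ together with the $O(1/\log y)=O(L/\log x)$ factor in the main term; the main term itself is $\ll_\ell X\prod_{\ell+2<p\le y}(1-g(p))$. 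Then I would compare this product with the one claimed. For $p>\ell+2$ not dividing $T=m\prod_{i<j}(u_iv_j-v_iu_j)(u_iu_j+v_iv_j)$ the forms in $\mathcal{L}_m$ are pairwise non-proportional mod $p$ -- their resultants are exactly the factors $u_iv_j-v_iu_j$ and $u_iu_j+v_iv_j$, while $u_ix+v_iy$ being proportional to $v_ix-u_iy$ means $p\mid u_i^2+v_i^2=m$ -- and no form lies inside the degenerate conic $a^2+b^2\equiv0$ (which again forces $p\mid m$); an inclusion--exclusion over the $O_\ell(1)$ pairwise and higher intersections, each of size $O(1)$, then gives $\nu_p=\bigl(\ell+1+(\tfrac{-1}{p})\bigr)p+O_\ell(1)$, hence $1-g(p)=\bigl(1-\tfrac{\ell+1+(\frac{-1}{p})}{p}\bigr)\bigl(1+O_\ell(p^{-2})\bigr)$. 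For $p>\ell+2$ with $p\mid T$ I would keep the crude bound $1-g(p)\le1$ and note $\bigl(1-\tfrac{\ell+1+(\frac{-1}{p})}{p}\bigr)^{-1}\le\bigl(1-\tfrac{\ell+2}{p}\bigr)^{-1}\ll_\ell\bigl(1-\tfrac1p\bigr)^{-O(\ell)}$. Multiplying over all $\ell+2<p\le y$ and using $\prod_p(1+O_\ell(p^{-2}))=O_\ell(1)$ then produces the two asserted factors $\prod_{\ell+2<p\le y}\bigl(1-\tfrac{\ell+1+(\frac{-1}{p})}{p}\bigr)$ and $\prod_{p\mid T}\bigl(1-\tfrac1p\bigr)^{O(\ell)}$, and with $X\asymp x/m$ this completes the bound.

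The step I expect to be the main obstacle is this last one: verifying that $T$ genuinely detects every prime at which the local system degenerates -- so that the only way to lose the estimate $\nu_p=(\ell+1+(\tfrac{-1}{p}))p+O_\ell(1)$ is two forms of $\mathcal{L}_m$ coinciding mod $p$ or a form collapsing into $a^2+b^2\equiv0$ -- and checking that the resulting over-count really is of the stated shape $\prod_{p\mid T}(1-1/p)^{O(\ell)}$ rather than something larger. By comparison the remainder bookkeeping is routine, the only point to check being that $y$ is a small enough power of $x$ for $y^2$ to be negligible against $\sqrt{x/m}$.
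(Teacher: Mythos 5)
Your proposal is correct and follows essentially the same route as the paper: the same multivariate polynomial sieve with $F(a,b)=(a^2+b^2)\prod_{\phi\in I}\phi(a,b)$, the weight $g(p)=\nu_p/p^2$ analysed via the $p\mid T$ versus $p\nmid T$ dichotomy (with $p\mid m$ feeding into $p\mid T$), the bound from Theorem~\ref{lemma: fundamental lemma of sieves}, and the remainder sum $\sum_{d\le y^2}3^{\omega(d)}|R_d|\ll\sqrt{x/m}\,x^{O(1/L)}$ absorbed into $E(x)$. The only cosmetic deviation is your choice of $\mathcal{A}$ as the sector $0\le a<b$: since $\mathcal{C}(x)$ as defined runs over all pairs with $a^2+b^2\le x/m$, you should sieve the full region (the paper takes the square $1\le a,b\le\sqrt{x/m}$), which changes nothing else in the argument.
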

\begin{proof}
We use sieve theory methods from Chapter~\ref{chapter: sieves}. Indeed, define $(\mathcal{A},\mathcal{F},\mathbb{P})$ to be the discrete uniform probability space with \[\mathcal{A}:=\Bigl\{(a,b): 1\le a,b\le\sqrt{\tfrac xm}\Bigr\};\] we consider the bivariate polynomial sieve problem. Note that a good approximation to $|\mathcal{A}|$ is $X:=\dfrac xm$, as $|\mathcal{A}|=\left(\Bigl\lfloor\sqrt{\frac xm}\Bigr\rfloor \right)^2$. In fact, $|\mathcal{A}|-X=O\Bigl(\sqrt{\frac xm}\Bigr)$.

We set up the sieve problem. Let $F(a,b):=(a^2+b^2)\prod_{\phi\in I}\phi(a,b)$, and set $S=\{(a^2+b^2)\prod_{\phi\in I}\phi(a,b): 1\le a,b\le\sqrt{\tfrac xm}\}$, following previous notation. We wish to sift out elements of $S$ which are divisible by primes smaller than $y$, so $\mathfrak{p}$ is the set of all primes and $z:=y=x^{1/10L}$.

We want to find an appropriate `weight' function which was referred to as $g$ in \S~\ref{chapter: sieves}: for all squarefree $d\mid \mathcal{P}(y)$, we define $g(d)$ so that $|\mathcal{A}_d|$ is approximated by $g(d)X$.

\begin{definition}[The function $g(d)$ for the sieve]
\[g(d)=\dfrac1{d^2}\#\{(a^*,b^*)\in\mathbb{Z}_d\times\mathbb{Z}_d: ((a^*)^2+(b^*)^2)\smash{\prod_{\phi\in I}}\phi(a^*,b^*)\equiv 0\pmod d\}.\]
\end{definition}
By the Chinese Remainder Theorem, $g$ is multiplicative.
\begin{remark}\label{R1}
If $\lfloor\sqrt{\frac xm}\rfloor$ were a multiple of $d$, then equal numbers of elements of $\mathcal{A}$ would fall into each residue class modulo $d$, and hence $\mathbb{P}(\mathcal{A}_d)=g(d)$ in this case. We will later come back to the error arising from $d\nmid\lfloor\sqrt{\frac xm}\rfloor$.
\end{remark}

Now, $\mathbb{P}(\mathcal{A}_p)=\mathbb{P}\Bigl(p\mid (a^2+b^2)\prod_{\phi\in I}\phi(a,b)\Bigr)=\mathbb{P}\Bigl( p\mid (a^2+b^2)\cup\bigcup\limits_{\phi\in I}(p\mid \phi(a,b))\Bigr)$. 


\paragraph{The formula for $g(p)$} We define $g(p)$ by approximating these probabilities. We first calculate the cardinality of $\{(a^*,b^*)\in\mathbb{Z}_p\times\mathbb{Z}_p: \phi_0(a^*,b^*)\equiv 0\pmod p\}$ for a fixed $\phi_0=rx+sy\in I$. If $s$ is invertible modulo $p$, then this quantity is $p$, as for every $a^*\pmod p$ there is a unique $b^*\equiv -rs^{-1}a^*\pmod p$. If $p\mid s$, then the quantity is still $p$ because the set consists of pairs $(0,b^*)$ where $b^*$ is arbitrary. Hence  $\#\{(a^*,b^*)\in\mathbb{Z}_p\times\mathbb{Z}_p: \phi_0(a^*,b^*)\equiv 0\pmod p\}=p$.

Next we calculate the cardinality of $\{(a^*,b^*)\in\mathbb{Z}_p\times\mathbb{Z}_p: (a^*)^2+(b^*)^2\equiv 0\pmod p\}$. If $p\equiv 3\pmod 4$, we require $p\mid a^*$ and $p\mid b^*$, as $-1$ is not a quadratic residue modulo $p$, so this quantity is 1. If $p\equiv 1\pmod 4$, then $-1$ is a quadratic residue modulo $p$ -- so for any given $a^*\not\equiv 0\pmod p$, we have two options modulo $p$ for $b^*$ with $(a^*)^2+(b^*)^2\equiv 0\pmod p$. If $a^*\equiv 0\pmod p$ then $b^*\equiv 0\pmod p$ is the only option. Hence in this case the quantity is $2p-1$. We can combine the two cases, writing $\#\{(a^*,b^*)\in\mathbb{Z}_p\times\mathbb{Z}_p: (a^*)^2+(b^*)^2\equiv 0\pmod p\}=p+(p-1)\bigl(\frac{-1}p\bigr)$.


To calculate $g(p)$ we now need to find the total number of elements of $\mathbb{Z}_p\times\mathbb{Z}_p$ which were counted above in the different cases. First we calculate the size of \[\{(a^*,b^*)\in(\mathbb{Z}_p)^2: \smash{\prod_{\phi\in I}}\phi(a^*,b^*)\equiv 0\!\!\!\!\!\pmod p\}=\smash{\bigcup_{\phi\in I}}\{(a^*,b^*)\in(\mathbb{Z}_p)^2: p\mid\phi(a^*,b^*)\}.\]
Indeed, consider $\phi_1=r_1x+s_1y$, $\phi_2=r_2x+s_2y\in I$. Let $M=\begin{bmatrix}
	r_1 & r_2 \\
	s_1 & s_2
\end{bmatrix}$. If $p\mid\det(M)$, then this means that modulo $p$, the linear forms $\phi_1$ and $\phi_2$ are proportional, so the congruence classes $(a^*,b^*)$ such that $p\mid\phi_i(a^*,b^*)$ coincide. If $p\nmid\det(M)$, then these forms are linearly independent modulo $p$, so the only congruence class in the intersection of the two cases is $(0,0)$. Therefore, $\#\{(a^*,b^*)\in(\mathbb{Z}_p)^2: \prod_{\phi\in I}\phi(a^*,b^*)\equiv 0\pmod p\}=1+(p-1)\ell_p$, with $\ell_p$ being the number of linear forms $\phi\in I$ pairwise independent modulo $p$.


\begin{remark} $\det(M)$ can be of 3 possible forms:
\begin{itemize}[leftmargin=0.4cm]
	\item If $r_1=u_i,s_1=v_i$ and $r_2=v_i,s_2=-u_i$, or vice versa, then $\det(M)=u_i^2+v_i^2=m$.
	\item If $r_1=u_i,s_1=v_i$ and $r_2=u_j,s_2=v_j$, or vice versa, then $\det(M)=u_iv_j-v_iu_j$.
	\item If $r_1=u_i,s_1=v_i$ and $r_2=v_j,s_2=-u_j$, or vice versa, then $\det(M)=u_iu_j+v_iv_j$.
\end{itemize}

Hence $p\mid\det(M)\implies p\mid T$.\end{remark}

We finally calculate the cardinality of \[\{(a^*,b^*)\in(\mathbb{Z}_p)^2: p\mid(a^*)^2+(b^*)^2\}\cup\smash{\bigcup_{\phi\in I}}\{(a^*,b^*)\in(\mathbb{Z}_p)^2: p\mid\phi(a^*,b^*)\}.\]Take some fixed $\phi_0=rx+sy\in I$.
Suppose $p\mid ar+bs$, and $p\nmid a$ (so $p\nmid b$). Note that if $p\mid s$, then $ar\equiv 0\pmod p\implies p\mid r$, as $p\nmid a$. But then $\gcd(r,s)>1$, which contradicts the definition of $I$. So we may assume $r,s$ are invertible modulo $p$. Thus $ar+bs\equiv 0\pmod p\implies b\equiv -rs^{-1}a\pmod p$, so
\begin{align*}
	p\mid a^2+b^2 &\iff a^2\equiv -b^2\equiv-(rs^{-1})^2a^2\pmod p \\
	&\iff r^2(s^{-1})^2\equiv -1\pmod p \iff p\mid r^2+s^2=m
	\text{ (by definition of $I$).}
\end{align*}
So given $p\mid\phi_0(a,b)$ and $p\nmid a,b$, we have $p\mid a^2+b^2\iff p\mid m$. Hence, if $p\mid m$ then $\bigcup_{\phi\in I}\{(a^*,b^*)\in(\mathbb{Z}_p)^2: p\mid\phi(a^*,b^*)\}\subseteq\{(a^*,b^*)\in(\mathbb{Z}_p)^2: p\mid(a^*)^2+(b^*)^2\}$, which is true whether $p\nmid a,b$ or not. If $p\nmid m$, then the intersection of the events $\bigcup_{\phi\in I}\{(a^*,b^*)\in(\mathbb{Z}_p)^2: p\mid\phi(a^*,b^*)\}$ and $\{(a^*,b^*)\in(\mathbb{Z}_p)^2: p\mid(a^*)^2+(b^*)^2\}$ is $(0,0)$, and so the cardinality of their union is the sum of cardinalities minus 1.

\begin{remark*}
These arguments break down if $p$ is very small, in fact when $p\le\ell$. We will deal with this case separately.
\end{remark*}

Combining the above arguments, we arrive at
\begin{lemma*}[Formula for $p^2g(p)$] Let $p>\ell$ be prime.
	
\begin{itemize}
	\item If $p\nmid T$ (in particular $p\nmid m$) then $p^2g(p)=1+(p-1)\Bigl(\ell+1+\bigl(\frac{-1}p\bigr)\Bigr)$.
	\item If $p\mid T$ and $p\nmid m$ then $p^2g(p)=1+(p-1)\Bigl(\ell_p+1+\bigl(\frac{-1}p\bigr)\Bigr)$, where $\ell_p$ is the number of linear forms $\phi\in I$ pairwise independent modulo $p$ ($\ell_p\le\ell$).
	\item If $p\mid m$ then $p^2g(p)=1+(p-1)\Bigl(1+\bigl(\frac{-1}p\bigr)\Bigr)$. Note that in this case, $\bigl(\frac{-1}p\bigr)=1$, as $m$ is a sum of two coprime squares and has no prime factors congruent to $3$ modulo $4$. Define $\ell_p:=0$ in this case.
\end{itemize}
\end{lemma*}

%

At this point we will take a slight detour. The next step after setting up the sieve problem is to apply a result to bound $\mathcal{S}(\mathcal{A},\mathfrak{p},z)$. We use Theorem~\ref{lemma: fundamental lemma of sieves} derived from the fundamental lemma of sieve theory given in Chapter~\ref{chapter: sieves}, which in particular bounds the total error term $E(x)$. In order to compute this estimate, we first bound the error terms $R_d$ arising in the sieve problem, where $R_d=|\mathcal{A}_d|-g(d)X$. For example, for $d=1$, since $|\mathcal{A}|$ is actually $\Bigl(\lfloor\sqrt{\frac xm}\rfloor\Bigr)^2$ ($\mathcal{A}$ is indexed by our choice of positive integers $a$ and $b$ each at most $\sqrt{\frac xm}$), we get $R_1=R=|\mathcal{A}|-X=\Bigl(\lfloor\sqrt{\frac xm}\rfloor\Bigr)^2-\frac xm = O(\sqrt{\frac xm})$.

To apply Theorem~\ref{lemma: fundamental lemma of sieves}, we estimate $\sum_{d\le y^2}3^{\omega(d)}|R_d|$, where $\omega(d)$ is the number of distinct prime factors of $d$.

\begin{claim}\label{claim: bound sieve errors}
$\displaystyle\smash{\sum_{d\le y^2}3^{\omega(d)}|R_d|=O\Bigl(\frac xm\frac{L^{O_\ell(1)}}{(\log x)^{\ell+1}}\Bigr)}=E(x)$.
\end{claim}
\begin{proof}
As noted in Remark~\ref{R1}, we start by accounting for the error arising from $d\nmid\lfloor\sqrt{\frac xm}\rfloor$. Let $\displaystyle D:=d\left\lfloor\Bigl\lfloor\sqrt{\tfrac xm}\Bigr\rfloor/d\right\rfloor$ be the largest integer multiple of $d$ that is at most $\sqrt{\frac xm}$. If we considered the set $\mathcal{A}':=\bigl\{(a,b): 1\le a,b\le D\bigr\}$ instead of our original $\mathcal{A}$, we would get $|\mathcal{A}'_d|=g(d)D^2$, so we can rewrite $R_d=|\mathcal{A}_d|-|\mathcal{A}'_d|+g(d)D^2-g(d)X=|\mathcal{A}_d|-|\mathcal{A}'_d|+g(d)(D^2-X)$.

Indeed, $0\le |\mathcal{A}_d|-|\mathcal{A}'_d|=|\mathcal{A}_d\setminus\mathcal{A}'_d|\le|\mathcal{A}\setminus\mathcal{A}'|=X-D^2$, and $0\le g(d)\le 1$, so $R_d=\epsilon (X-D^2)$ for some $\epsilon\in[-1,1]$. 
Now, $0\le \sqrt{\frac xm}-D\le d$ so $0\le X-D^2\le 2d\sqrt{\frac xm}$. Hence $|R_d|\le 2d\sqrt{\frac xm}$.

Now we estimate $\sum_{d\le y^2}3^{\omega(d)}|R_d|$. In fact, $\omega(d)\le \log_3(d)+1$, as each prime factor of $d$ apart from $2$ is at least $3$.  Using our estimate of $|R_d|$ above gives a total error of $\sum_{d\le y^2}3d\cdot 2d\sqrt{\frac xm}=6\sqrt{\frac xm}\sum_{d\le y^2}d^2\ll \sqrt{\frac xm}(y^2)^3=\sqrt{\frac xm}x^{6/10L}\ll\frac xm\frac{L^{O_\ell(1)}}{(\log x)^{\ell+1}}$. This holds as $L$ is eventually large, and so $x^{6/10L}$ is of smaller order than $\sqrt{x}\frac{L^{O_\ell(1)}}{(\log x)^{\ell+1}}$.
\end{proof}
We now continue the proof of Claim~\ref{claim: sieve theory bound on summand}.
Theorem~\ref{lemma: fundamental lemma of sieves} gives \[\sum_{{\substack{a^2+b^2\le x/m\\\gcd(a^2+b^2,\prod_{q\le y}q)=1\\\prod_{\phi\in I}\gcd(\phi(a,b),\prod_{q\le y}q)=1}}}\!\!\!\!\!\!\!\! 1\ \le\ XG(y,y)+O\Bigl(\frac xm\frac{L^{O_\ell(1)}}{(\log x)^{\ell+1}}\Bigr),\]
where $G(z,z)=\frac1{e^{\gamma\kappa}\Gamma(\kappa+1)}\prod_{p<z}(1-g(p))^{-1}(1+O(\tfrac1{\log z}))$. Since $\kappa$ is the sifting dimension and is at most $\ell+2$, we have $\frac1{e^{\gamma\kappa}\Gamma(\kappa+1)}=O_\ell(1)$. Now, $\displaystyle\prod_{p<y}(1-g(p))^{-1}=O_\ell(1)\!\!\!\!\prod_{\substack{\ell+2<p<y\\ p\, \nmid\, T}}
\left(1-\frac{1+(p-1)(\ell+1+(\frac{-1}p))}{p^2}\right)^{-1}\!\!\!\!
\prod_{\substack{\ell+2<p<y\\ p\, \mid\, T}}
\left(1-\frac{1+(p-1)(\ell_p+1+(\frac{-1}p))}{p^2}\right)^{-1}$, where $\prod_{p\le\ell+2}(1-g(p))^{-1}=O_\ell(1)$ as well.

Note that $\forall p\mid T\colon\ell-\ell_p\ll\ell$. We have $\displaystyle1-\frac{c}{p}\asymp\left(1-\frac{1}{p}\right)^{c}$ for constants $c<p$, so $\displaystyle\left(1-\frac{1+(p-1)(\ell+1+(\frac{-1}p))}{p^2}\right)\left(1-\frac{1+(p-1)(\ell_p+1+(\frac{-1}p))}{p^2}\right)^{-1}\ll\left(1-\frac{1}{p}\right)^{O(\ell)}$.

Hence $\displaystyle\prod_{p<y}(1-g(p))^{-1}\ll_\ell\prod_{\ell+2<p<y}
\left(1-\frac{1+(p-1)(\ell+1+(\frac{-1}p))}{p^2}\right)^{-1}\prod_{p\mid T}\Bigl(1-\frac1p\Bigr)^{O(\ell)}$.

Finally, we have $\Bigl(1-\frac{1+(p-1)(\ell+1+(\frac{-1}p))}{p^2}\Bigr)-\Bigl(1-\frac{\ell+1+(\frac{-1}p)}{p}\Bigr)=\frac{\ell+(\frac{-1}p)}{p^2}$; also, $\prod_{p}(1-\frac1{p^s})=\sum_{n\in\mathbb{N}}\frac1{n^s}=O(1)$ for all $s>1$ so $\prod_{p}(1-\frac{\ell+(\frac{-1}p)}{p^2})\ll O_\ell(1)\prod_{p\ge(\ell+2)^2}(1-\frac{1}{p^{3/2}})\sum_{n\in\mathbb{N}}\frac1{n^{3/2}}=O_\ell(1)\ll_\ell1$. We now use $1-\frac{c}{p^2}\asymp\left(1-\frac{1}{p^2}\right)^{c}$ to finally get
\[\smash[b]{\prod_{p<y}(1-g(p))^{-1}\ll_\ell\prod_{\ell+2<p<y}
\left(1-\frac{\ell+1+(\frac{-1}p)}{p}\right)^{-1}\prod_{p\mid T}\Bigl(1-\frac1p\Bigr)^{O(\ell)}},\] and substituting $X=\dfrac xm$ gives the necessary bound on $\mathcal{C}(x)$.
\end{proof}
We next estimate $\displaystyle \frac x m\prod_{\ell+2<p\le y}\left(1-\frac{\ell+1+(\frac{-1}{p})}{p}\right)\prod_{p\mid T}\Bigl(1-\frac1p\Bigr)^{O(\ell)}$; we wish to bound it asymptotically by $\dfrac xm\dfrac{L^{O_\ell(1)}}{(\log x)^{\ell+1}}$, so we bound the products over primes by $\dfrac{L^{O_\ell(1)}}{(\log x)^{\ell+1}}$.
\begin{claim}\label{bound primes dividing T}
	$\displaystyle\smash{\prod_{p\mid T}\Bigl(1-\frac1p\Bigr)^{O(\ell)}}\ll_\ell L^{O(1)}$.
\end{claim}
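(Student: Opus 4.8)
The plan is to bound $\prod_{p\mid T}(1-1/p)^{-O(\ell)}$ from above; since each factor $(1-1/p)^{-1}>1$, this is the interesting direction, and it amounts to showing that $T$ does not have too many distinct prime factors, nor too many small ones. The key observation is that $\omega(T)$ is controlled: $\log T \ll \log(mx)$ since $m\le x$ and each factor $u_iv_j-v_iu_j$, $u_iu_j+v_iv_j$ is $O(m)$ in absolute value (as $u_i,v_i\le\sqrt m$), and there are only $\binom{r_0^*(m)}{2}\ll_\ell 1$ such factors because $|I|=\ell$ forces at most $O(\ell)$ relevant pairs. Hence $\log T\ll_\ell\log x$, so by Theorem~\ref{estimate on omega} we get $\omega(T)\ll_\ell\frac{\log x}{\log\log x}$. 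That is still far too weak on its own: a bound of $(1-1/p)^{-O(\ell)}$ over $\omega(T)\ll\log x/L$ primes could be as large as a power of $\log x$.

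The remedy is the standard estimate $\prod_{p\mid N}(1-1/p)^{-1}\ll\log\log N$, valid for all $N\ge 3$ (this is essentially Mertens: the product over the $\omega(N)$ distinct prime factors of $N$ is maximised, for a given number of factors, by taking the smallest primes, and $\prod_{p\le y}(1-1/p)^{-1}\asymp\log y$ by Mertens' third theorem, with $\omega(N)\ll\log N/\log\log N$ giving $y\asymp\log N$). Applying this with $N=T$ and $\log T\ll_\ell\log x$ yields
\[
\prod_{p\mid T}\Bigl(1-\frac1p\Bigr)^{-1}\ll\log\log T\ll_\ell\log\log x = L,
\]
and raising to the $O(\ell)$-th power gives $\prod_{p\mid T}(1-1/p)^{-O(\ell)}\ll_\ell L^{O(1)}$, which is exactly the claim (the constant in the exponent of $L$ depends only on $\ell$, which is fixed).

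I expect the main obstacle to be the bookkeeping that makes $\log T\ll_\ell\log x$ rigorous: one must check that $T$ is genuinely a product of $O_\ell(1)$ integers each of size $\ll m^{O(1)}$, which requires recalling that $u_i,v_i$ are the coprime-square-decomposition parameters of $m$ (so $u_i,v_i\le\sqrt m$) and that the product $\prod_{i<j}$ ranges over pairs drawn from the at most $|I|=\ell$ linear forms in $I$, hence over $O(\ell^2)$ pairs — not over all $r_0^*(m)$ decompositions of $m$, which could be large. Once $T\ll_\ell m^{O(\ell)}\le x^{O_\ell(1)}$ is in hand, the rest is the routine Mertens-type inequality above, and one should be slightly careful that in edge cases $T$ might be small or a perfect power, but since $T\ge 1$ and the product is over distinct primes the bound $\prod_{p\mid T}(1-1/p)^{-1}\ll\log\log(T+3)\ll_\ell L$ still applies.
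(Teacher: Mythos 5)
Your overall strategy is the same as the paper's: control the prime factors of $T$ and invoke the Mertens-type bound $\prod_{p\mid N}(1-1/p)^{-1}\ll\log\log N$, then check that $\log\log T\ll L^{O(1)}$. But the step you yourself single out as the main obstacle — the size of $T$ — is resolved incorrectly. In the paper, $T=m\prod_{i<j}(u_iv_j-v_iu_j)(u_iu_j+v_iv_j)$ with $(u_i,v_i)$ running over \emph{all} $r_0^*(m)$ representations of $m$ as a sum of two coprime squares, not just over the indices occurring in the chosen set $I$; this is deliberate, so that one fixed $T$ (and hence one bound) works uniformly for every $\ell$-element subset $I\subseteq\mathcal{L}_m$ in the triple sum \eqref{3sum}. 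Consequently the number of factors is $\binom{r_0^*(m)}{2}$, and since $m\in\mathcal{Q}$ with $\omega^*(m)=k-1$ gives $r_0^*(m)=2^{k-1}$ and $k$ may be as large as a constant times $L$, this count is a power of $\log x$, not $\ll_\ell 1$. So your assertions $\binom{r_0^*(m)}{2}\ll_\ell 1$ and $\log T\ll_\ell\log x$ are not justified (and are false in general for the paper's $T$).

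The gap is repairable, and the repair is exactly the paper's computation: each determinant satisfies $|u_iv_j-v_iu_j|,\,u_iu_j+v_iv_j\le\sqrt{(u_i^2+v_i^2)(u_j^2+v_j^2)}=m$ by Cauchy–Schwarz, so $T\le m^{\binom{r_0^*(m)}{2}+1}\ll m^{(\log x)^{O(1)}}$, whence $\log\log T\ll L$ even though $\log T$ is far larger than $\log x$. Since your argument only ever uses $T$ through $\log\log T$ (via $\prod_{p\mid T}(1-1/p)^{-1}\ll\log\log T$, which you justify correctly, and which is also the content of the paper's $\omega(T)\ll\log T/\log\log T$ plus Mertens step), the conclusion survives once this bound on $T$ is substituted; your reading of the exponent $O(\ell)$ as effectively negative (the product being a correction factor $\ge 1$) is also the intended one. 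But as written, the proposal rests on a misidentification of which pairs enter $T$, which is precisely the nontrivial bookkeeping the claim exists to handle.
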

\begin{proof}
By Theorem~\ref{estimate on omega} we have $\#\{p\mid T\}=\omega(T)\ll\frac{\log T}{\log\log T}$. Let $U:=\frac{\log T}{\log\log T}$. By the Taylor series for the $\exp$ function, 	\[\smash{\prod_{p\mid T}\left(1-\frac{1}{p}\right)^{O(\ell)} \!\!\ll
\prod_{p\le U}\left(1-\frac{1}{p}\right)^{O(\ell)} \!\!\ll \exp\Bigl(-\sum_{p\le U}\frac1p\Bigr).}\] Using Mertens' Second Theorem to estimate $\sum_{p\le U}\frac1p$ we get
\begin{align*}
	\prod_{p\mid T}\left(1-\frac{1}{p}\right)^{O(\ell)} \!\!\ll
	\exp\left(-\log\log U-M+O\Bigl(\frac1{\log U}\Bigr)\right)
	&\ll\frac1{\log(\frac{\log T}{\log\log T})} \ll\frac1{\log\log T}.
\end{align*}
So, to prove the necessary bound, we show that $T\ll\exp(\exp (L^{O(1)}))$, recalling that $\displaystyle T=m\prod_{i<j}(u_iv_j-v_iu_j)(u_iu_j+v_iv_j)$. For any given pair $(i,j)$, we have $\displaystyle u_iu_j+v_iv_j\le\sqrt{(u_i^2+v_i^2)(u_j^2+v_j^2)}=\sqrt{m^2}=m$ by the Cauchy-Schwarz inequality. Similarly, $u_iv_j-v_iu_j\le u_iv_j+v_iu_j\le\sqrt{(u_i^2+v_i^2)(v_j^2+u_j^2)}=m$. Since there are $\binom{r_0^*(m)}2$ possible pairs $(i,j)$ with $i<j$, we obtain $T\ll m^{\binom{r_0^*(m)}2+1}$.

Lemma~\ref{r0*} gives $r_0^*(2)=1$, $\forall j\ge 2\colon r_0^*(2^j)=0$ and $r_0^*(p^j)=2$. With Lemma~\ref{r0*mult}, we get $r_0^*(m)=2^{\omega^*(m)}$, noting that $4\nmid m$ since $m\in\mathcal{Q}$, as in the leftmost summation of~\eqref{3sum}. Additionally, in~\eqref{3sum} we have $\omega^*(m)=k-1$, so $r_0^*(m)=2^{k-1}$.

By definition, $k\ll\log\log x$. So $r_0^*(m)\ll\log x$, and thus $T\ll m^{(\log x)^2}$. Hence we get $\log\log T\ll\log((\log x)^2\log m)=2L+\log\log m$. Moreover $m\le x^{1-1/L}\le x$, so $\log\log m\le L$. Hence $\log\log T\ll 3L\ll L^{O(1)}$, as required.
\end{proof}

Next, we consider $\displaystyle\prod_{\ell+2<p\le y}\Bigl(1-\tfrac{\ell+1+(\frac{-1}{p})}{p}\Bigr)$. Now, $(1-\frac{\ell+1+(\frac{-1}{p})}{p})\asymp(1-\frac1p)^{\ell+1}\Bigl(1-\frac{(\frac{-1}{p})}{p}\Bigr)$, by considering the binomial expansion of $(1-\frac1p)^{\ell+1}$.

Let $\chi_4$ be the non-principal Dirichlet character modulo $4$. Then $\chi_4(p)=(\frac{-1}{p})$ for $p>\ell+2$. We have $L(1,\chi_4)<\infty$ where $L(s,\chi)$ is the Dirichlet $L$-function, as discussed in Theorem~\ref{theorem: mertens APs}; by considering the Euler product form of $L(1,\chi_4)$ we get $L(1,\chi_4)\asymp\prod_{2<p\le y}\left(1-\frac{(\frac{-1}{p})}{p}\right)$. Hence \[\smash{\prod_{\ell+2<p\le y}\left(1-\frac{\ell+1+(\frac{-1}{p})}{p}\right)\asymp\prod_{\ell+2<p\le y}\left(1-\frac{1}{p}\right)^{\ell+1}L(1,\chi_4)^{-1}.}\]

Now, $\prod_{\ell+2<p\le y}(1-\frac{1}{p})\asymp\prod_{p\le y}(1-\frac{1}{p})$. Note that $\prod_{p\le y}\left(1-\frac{1}{p}\right)\le\exp\left(-\sum_{p\le y}\frac1p\right)=\exp\left(-\log\log y-M+O\Bigl(\frac1{\log y}\Bigr)\right)\ll\frac1{\log y}$, by Mertens' second theorem. Since $\log y=\log(x^{1/10L})=\frac1{10L}\log x$, we get

$\displaystyle\prod_{\ell+2<p\le y}\left(1-\frac{1}{p}\right)^{\ell+1}L(1,\chi_4)^{-1}\ll\frac1{(\log y)^{\ell+1}}=\frac{(10L)^{\ell+1}}{(\log x)^{\ell+1}}\asymp\frac{L^{O_\ell(1)}}{(\log x)^{\ell+1}}$.

This now gives us the required bound: 
\begin{equation}\label{bound products}
\prod_{\ell+2<p\le y}\left(1-\frac{\ell+1+(\frac{-1}{p})}{p}\right)\prod_{p\mid R}\Bigl(1-\frac1p\Bigr)^{O(\ell)}\ll L^{O(1)}\cdot\frac{L^{O_\ell(1)}}{(\log x)^{\ell+1}}=\frac{L^{O_\ell(1)}}{(\log x)^{\ell+1}}.
\end{equation}
We can now finish the proof of the main theorem of this section.
\begin{proof}[Proof of Theorem~\ref{thm:r1cl upper bound}]
From~\eqref{3sum} we have 
\[\sum_{{\substack{n\le x\\ \omega^*(n)=k\\P(n)>x^{1/L}}}} \binom{r_1^*(n)}{\ell}=\sum_{{\substack{m\le x^{1-1/L}\\ \omega^*(m)=k-1\\m\in\mathcal{Q}}}}\sum_{{\substack{I\subseteq\mathcal{L}_{m}\\|I|=\ell}}}\sum_{{\substack{x^{1/L}<p\le x/m\\p>P(m)\text{ prime}\\p\in\mathcal{Q},p=a^2+b^2,a<b}}}\!\!\!\!\!\!\!\!\!\!\!\! \mathbbm{1}(\forall\phi\in I:\phi(a,b)\text{ prime}).\]
Using Claim~\ref{claim: phi small prime} and~\ref{claim: sieve theory bound on summand} to estimate the right-most summand and bounding it by Claim~\ref{bound primes dividing T} and~\eqref{bound products} we get
\begin{align*}
	\smash{\sum_{{\substack{n\le x\\ \omega^*(n)=k\\P(n)>x^{1/L}}}} \binom{r_1^*(n)}{\ell}}
&	\ll_\ell 	\sum_{{\substack{m\le x^{1-1/L}\\ \omega^*(m)=k-1,\ m\in\mathcal{Q}}}}\sum_{{\substack{I\subseteq\mathcal{L}_{m}\\|I|=\ell}}}
	\frac{x}m\cdot \frac{L^{O_\ell(1)}}{(\log x)^{\ell+1}}\\
	&=\frac{xL^{O_\ell(1)}}{(\log x)^{\ell+1}}\sum_{{\substack{m\le x^{1-1/L}\\ \omega^*(m)=k-1,\ m\in\mathcal{Q}}}}\!\!\!\!\frac1m\cdot\#\{I\subseteq\mathcal{L}_{m}\colon |I|=\ell\}.
\end{align*}
For given $m$, $\#\mathcal{L}_{m}=2r_0^*(m)$. Moreover, we know $\omega^*(m)=k-1$. By Lemma~\ref{r0*} and Lemma~\ref{r0*mult} we thus have $\#\mathcal{L}_{m}\le 2\cdot2^{k-1}$, and so $\#\{I\subseteq\mathcal{L}_{m}\colon |I|=\ell\}\le 2^{\ell k}$. Thus 
\begin{equation}
	\sum_{{\substack{n\le x\\ \omega^*(n)=k\\P(n)>x^{1/L}}}} \binom{r_1^*(n)}{\ell}
	\ll_\ell \frac{2^{\ell k}xL^{O_\ell(1)}}{(\log x)^{\ell+1}}
	\sum_{{\substack{m\le x^{1-1/L}\\ \omega^*(m)=k-1\\m\in\mathcal{Q}}}}\frac1m. \label{eq: harmonic bound}
\end{equation}
Our final step is to bound the sum on the RHS above, which we do by subdividing the range of the sum into intervals of form $(M,2M]$, $M=2^j$ for $1\le j\le J$ with $J:=\lceil\log_2 (x^{1-1/L})\rceil=\lceil(1-\frac1L)\frac{\log x}{\log 2}\rceil$.

By Lemma~\ref{lemma: numbers with k factors}, we can estimate $\rho_{k-1}(2M)-\rho_{k-1}(M)$, which is the number of $m$ in the interval $(M,2M]$ such that $\omega^*(m)=k-1$. Now $\displaystyle \sum_{{\substack{M<m\le 2M\\ \omega^*(m)=k-1\\m\in\mathcal{Q}}}}\frac1m
\le\frac1M\cdot(\rho_{k-1}(2M)-\rho_{k-1}(M))
\le\frac1M\cdot\rho_{k-1}(2M)
\ll\frac1M\cdot\frac{2M}{\log 2M}\frac{(\frac12\log\log M+O(1))^{k-2}}{(k-2)!}
\ll\frac1{\log M}\frac{(\frac12L+O(1))^{k-2}}{(k-2)!}=\frac1j\frac{(\frac12L+O(1))^{k-2}}{(k-2)!}$, for $M=2^j$.

Hence substituting into~\eqref{eq: harmonic bound} gives
$\displaystyle\sum_{{\substack{n\le x\\ \omega^*(n)=k\\P(n)>x^{1/L}}}} \binom{r_1^*(n)}{\ell}
\ll_\ell \frac{2^{\ell k}xL^{O_\ell(1)}}{(\log x)^{\ell+1}}
\cdot\frac{(\frac12L+O(1))^{k-2}}{(k-2)!}\sum_{j\le J}\frac1j$. Now, since $k\ll\log\log x$, we have $(k-1)k\ll L^{O(1)}$; so
\[\sum_{{\substack{n\le x\\ \omega^*(n)=k\\P(n)>x^{1/L}}}} \binom{r_1^*(n)}{\ell}
\ll_\ell \frac{2^{\ell k}xL^{O_\ell(1)}}{(\log x)^{\ell+1}}
\cdot\frac{(\frac12L+O(1))^{k}}{k!}\sum_{j\le J}\frac1j=
\frac{xL^{O_\ell(1)}}{(\log x)^{\ell+1}}
\cdot\frac{(2^{\ell-1}L)^k}{k!}\sum_{j\le J}\frac1j.\]
But by Lemma~\ref{lemma: sum of 1n}, $\sum_{j\le J}\frac1j\ll\log J\ll\log(1-\frac1L)+\log\log x\ll \frac1L+L\ll L$. So $\displaystyle\sum_{{\substack{n\le x\\ \omega^*(n)=k\\P(n)>x^{1/L}}}} \binom{r_1^*(n)}{\ell}
\ll_\ell \frac{xL^{O_\ell(1)}}{(\log x)^{\ell+1}}
\cdot\frac{(2^{\ell-1}L)^k}{k!}$, as the $L$ is absorbed into $L^{O_\ell(1)}$.

In conclusion, 
\[\sum_{{\substack{n\le x\\ \omega^*(n)=k}}} \binom{r_1(n)}{\ell}
\ll_\ell \frac{xL^{O_\ell(1)}}{(\log x)^{\ell+1}}
\cdot\frac{(2^{\ell-1}L)^k}{k!}+x^{\frac12+o(1)}\ll_\ell \frac{xL^{O_\ell(1)}}{(\log x)^{\ell+1}}
\cdot\frac{(2^{\ell-1}L)^k}{k!}.\qedhere\]
\end{proof}
\section{The second moment of $r_1$}
Stephan Daniel's paper~\cite{daniel_2001} gives a good estimate for the second moment of $r_1$, showing the main term precisely and calculating an error term. He starts by proving the following Lemma:
\begin{lemma}\label{r1c2}
%
	$\displaystyle\sum_{n\le x}\binom{r_1(n)}{2}=\frac98\frac x{\log x}+O\left(\frac{x(L(x))^2}{\log^2x}\right)$.
\end{lemma}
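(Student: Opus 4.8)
The plan is to follow Daniel's direct counting argument rather than the sieve of Chapter~\ref{chapter: sieves} (which for $\ell=2$ gives only the right order of magnitude). First I would unfold the binomial coefficient: $\binom{r_1(n)}{2}$ counts the unordered pairs of distinct representations $n=a^2+p^2=b^2+q^2$ with $p,q$ prime, and since two distinct representations of the same $n$ have distinct first coordinates, each such pair can be ordered so that $a>b\ge 0$, which then forces $p<q$. Hence
\[\sum_{n\le x}\binom{r_1(n)}{2}=\#\{(a,b,p,q):0\le b<a,\ p<q\text{ both prime},\ a^2+p^2=b^2+q^2\le x\}.\]
Before anything else I would discard the small boundary contributions ($p=2$ or $q=2$, or $b=0$, each contributing $O(\sqrt x)$ quadruples or fewer) and replace the conditions by the coprime ones $\gcd(a,p)=\gcd(b,q)=1$; by Lemma~\ref{lemma1} this costs only $O(\sqrt x\log\log x)$, far smaller than the claimed error $O(x(\log\log x)^2/\log^2 x)$. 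From now on $a,b\ge 1$, $p,q$ are odd, and the two pairs are coprime.

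Next comes the diophantine core. From $a^2+p^2=b^2+q^2$ we get $a^2-b^2=q^2-p^2$; since $p,q$ are odd, $a\pm b$ and $q\pm p$ are all even, so writing $a-b=2A$, $a+b=2B$, $q-p=2C$, $q+p=2D$ gives $AB=CD$ with $1\le A<B$ and $1\le C<D$. I would then use the standard parametrisation of $AB=CD$: putting $g=\gcd(A,C)$ one finds coprime $i,j$ and an integer $h$ with $A=gi$, $C=gj$, $B=hj$, $D=hi$, so that
\[a=gi+hj,\qquad b=hj-gi,\qquad q=gj+hi,\qquad p=hi-gj.\]
The count thus becomes a sum over quadruples $(g,i,j,h)$ with $\gcd(i,j)=1$, appropriate positivity and parity constraints, and the size condition $a^2+p^2\le x$ (which in particular forces $gi,gj\le\sqrt x$ and $h\ll\sqrt x/\max(i,j)$), in which the two quantities $p=hi-gj$ and $q=hi+gj$ must both be prime.

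For fixed $g,i,j$ this is a twin-prime-type count for the pair of linear forms $h\mapsto hi-gj$ and $h\mapsto hi+gj$ over an interval of $h$, and I would estimate it by a quantitative sieve/Prime-Number-Theorem input, obtaining a main term $\mathfrak{S}(g,i,j)\cdot(\text{interval length})/\log^2 x$ together with an error. Summing the main term over $g$ contributes a factor $\asymp\sqrt x/\max(i,j)$, and then the sum over coprime $i,j$ involves the harmonic-type sum $\sum_{i,j}\max(i,j)^{-2}\asymp\log x$, which supplies exactly the one extra logarithm that turns $x/\log^2 x$ into $x/\log x$; pushing the singular series $\mathfrak{S}(g,i,j)$ and the shape of the constraint region through this computation produces the coefficient $\tfrac98$.

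The main obstacle is the uniformity required in the sieve step: the error in the count of $h$ must survive summation over the full ranges of $g,i,j$, including the edge regions where $g$ or $\max(i,j)$ is as large as a power of $\sqrt x$ and no asymptotic is available, so there one must fall back on a Brun-type upper bound. It is precisely the accumulation of these error terms --- controlled by nested harmonic sums --- that yields the factor $(\log\log x)^2$ in the final error, and the delicate part of the whole argument is evaluating $\sum_{g,i,j}\mathfrak{S}(g,i,j)\max(i,j)^{-2}$ against the constraint region accurately enough to pin down the constant $\tfrac98$.
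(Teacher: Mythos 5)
Your reduction of $\sum_{n\le x}\binom{r_1(n)}{2}$ to counting quadruples $0\le b<a$, $p<q$ prime with $a^2+p^2=b^2+q^2\le x$, the removal of the boundary and non-coprime cases via Lemma~\ref{lemma1}, and the parametrisation $a=gi+hj$, $b=hj-gi$, $q=gj+hi$, $p=hi-gj$ of $AB=CD$ are all sound. The fatal step is the next one: for fixed $(g,i,j)$ you propose to count the $h$ for which the two linear forms $hi-gj$ and $hi+gj$ are \emph{simultaneously} prime, ``obtaining a main term $\mathfrak{S}(g,i,j)\cdot(\text{interval length})/\log^2 x$''. That is precisely a Hardy--Littlewood prime-pair asymptotic (a twin-prime-type statement for two linear forms in the same variable), which is not a theorem: Selberg/Brun sieves give only an upper bound of that order with an unspecified absolute constant, and no nontrivial lower bound is known unconditionally. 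Consequently the entire main-term bookkeeping that follows --- summing the singular series over $g,i,j$ against $\max(i,j)^{-2}$ to gain one logarithm and extract the constant $\tfrac98$ --- cannot be carried out; at best your route reproves an upper bound $\ll x/\log x$, and the asymptotic would be conditional on exactly the kind of conjecture discussed around Theorem~\ref{sabuncur1} and Sabuncu's admissible-representation conjecture.

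The unconditional argument (Daniel's, which this dissertation quotes rather than reproves) runs the count in the opposite direction: never ask for two numbers to be prime at once. Fix the primes $p<q\le\sqrt{x}$ and count representations $a^2-b^2=q^2-p^2$ by counting divisors $d=a-b$ of $q^2-p^2$ subject to the parity and size constraints; then interchange, fixing the divisor $d$ and counting, for each $p$, primes $q$ lying in the residue classes $q\equiv\pm p\pmod{d}$ in the relevant range. This reduces the main term to counts of a \emph{single} prime in arithmetic progressions, averaged over moduli $d$, where the Prime Number Theorem in progressions (cf.\ Theorem~\ref{theorem: mertens APs} and its quantitative relatives, or a Bombieri--Vinogradov-type mean-value input) is available; the constant $\tfrac98$ emerges from the divisor-density computation over the constrained region, and sieve upper bounds are needed only to control the edge ranges of $d$, which is where the $(\log\log x)^2$ loss in the error term actually originates. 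Your parametrisation is compatible with this, but you must reorganise the summation so that primality is imposed on one variable at a time; as written, the proposal has a genuine gap at its central analytic step.
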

%
%
%
%
%
Since we know the asymptotics of the first moment, we can easily obtain an expression for the second moment of $r_1$ by recalling the definition of binomial coefficients:
\begin{lemma}\label{lemma: daniel second moment r1} 
	$\displaystyle\smash{\sum_{n\le x}r_1(n)^2=\Bigl(\frac\pi2+\frac94\Bigr)\frac x{\log x}+O\left(\frac{x(L(x))^2}{\log^2x}\right)}$.
\end{lemma}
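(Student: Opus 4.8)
The plan is to reduce the second moment directly to the first moment together with the binomial sum already estimated in Lemma~\ref{r1c2}. The key algebraic identity is $m^2 = 2\binom{m}{2} + m$ for any non-negative integer $m$, which follows from $\binom{m}{2} = \tfrac{m(m-1)}{2}$. Applying this with $m = r_1(n)$ and summing over $n \le x$ gives
\[\sum_{n\le x} r_1(n)^2 = 2\sum_{n\le x}\binom{r_1(n)}{2} + \sum_{n\le x} r_1(n).\]

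First I would invoke Lemma~\ref{r1c2}, which supplies $\sum_{n\le x}\binom{r_1(n)}{2} = \tfrac98\tfrac{x}{\log x} + O\bigl(\tfrac{x(L(x))^2}{\log^2 x}\bigr)$, whence $2\sum_{n\le x}\binom{r_1(n)}{2} = \tfrac94\tfrac{x}{\log x} + O\bigl(\tfrac{x(L(x))^2}{\log^2 x}\bigr)$. Next I would substitute the first-moment asymptotic from Lemma~\ref{lemma: first moment of r1}, namely $\sum_{n\le x} r_1(n) = \tfrac{\pi}{2}\tfrac{x}{\log x}\bigl(1 + O(\tfrac1{\log x})\bigr) = \tfrac{\pi}{2}\tfrac{x}{\log x} + O\bigl(\tfrac{x}{\log^2 x}\bigr)$. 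Adding the two contributions produces the claimed main term $\bigl(\tfrac{\pi}{2} + \tfrac94\bigr)\tfrac{x}{\log x}$.

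The only point requiring a word of care is the bookkeeping of error terms: the $O(x/\log^2 x)$ arising from the first moment must be absorbed into $O(x(L(x))^2/\log^2 x)$, which is legitimate since $L(x) = \log\log x \ge 1$ for all sufficiently large $x$, so that $x/\log^2 x \ll x(L(x))^2/\log^2 x$. There is no genuine obstacle at this stage — the lemma is a formal consequence of the two cited results — so the substantive analytic work lies entirely in the proof of Lemma~\ref{r1c2}, Daniel's estimate for $\sum_{n\le x}\binom{r_1(n)}{2}$, rather than in this deduction.
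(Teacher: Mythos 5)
Your deduction is correct and is exactly the route the paper takes: the identity $r_1(n)^2=2\binom{r_1(n)}{2}+r_1(n)$ combined with Lemma~\ref{r1c2} and Lemma~\ref{lemma: first moment of r1}, with the $O(x/\log^2 x)$ error absorbed into $O\bigl(x(L(x))^2/\log^2 x\bigr)$. Nothing further is needed.
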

Granville, Sabuncu and Sedunova obtain a weaker result in their paper~\cite{granville2023} based on Theorem~\ref{thm:r1cl upper bound} and an additional calculation of lower bounds on moments of $r_1$, which we will not discuss in this dissertation.
\begin{theorem}[Granville, Sabuncu, Sedunova, 2023]\label{thm:r1c2 gss}
	For any integer $k\asymp L$,
	\[\smash{\sum_{{\substack{n\le x\\\omega(n)=k}}}\binom{r_1(n)}{2}\asymp\frac{xL^{O(1)}}{(\log x)^3}\frac{(2L)^k}{k!}.}\]
\end{theorem}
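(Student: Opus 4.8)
The estimate is two-sided, and the two directions are of quite different character.

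\textbf{Upper bound.} This follows at once from Theorem~\ref{thm:r1cl upper bound}. Since $\omega(n)=\omega^*(n)+\mathbbm{1}_{2\mid n}$, the condition $\omega(n)=k$ splits into ``$2\nmid n,\ \omega^*(n)=k$'' and ``$2\mid n,\ \omega^*(n)=k-1$''. Applying Theorem~\ref{thm:r1cl upper bound} with $\ell=2$ to each piece gives
\[\sum_{\substack{n\le x\\\omega(n)=k}}\binom{r_1(n)}{2}\ll\frac{xL^{O(1)}}{(\log x)^{3}}\left(\frac{(2L)^k}{k!}+\frac{(2L)^{k-1}}{(k-1)!}\right),\]
and as $k\asymp L$ we have $\frac{(2L)^{k-1}}{(k-1)!}=\frac{k}{2L}\cdot\frac{(2L)^k}{k!}\asymp\frac{(2L)^k}{k!}$, which gives the upper half of the claim.

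\textbf{Lower bound: the plan.} This is the ``additional calculation'' referred to above, and is the substantive part; the plan is to mirror the proof of Theorem~\ref{thm:r1cl upper bound}, run in reverse and restricted to a clean subfamily. Drop all $n$ except those of the shape $n=mp$ with $p=P(n)$ prime, $x^{1/L}<p\le x/m$, and $m$ squarefree with $\omega^*(m)=k-1$ and every prime factor $\equiv 1\pmod 4$ (so, by Lemmas~\ref{r0*} and~\ref{r0*mult}, $r_0^*(m)=2^{k-1}$ and $|\mathcal{L}_m|=2^{k}$), and moreover $m\le x^{\varepsilon}$; discarding the remaining $n$ only decreases the sum. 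For such $n$, writing $p=a^2+b^2$ with $a<b$ and using $r_1(n)\ge r_1^*(n)$, the identity behind~\eqref{3sum} yields $\binom{r_1(n)}{2}\ge\binom{r_1^*(mp)}{2}=\sum_{\{\phi_1,\phi_2\}\subseteq\mathcal{L}_m}\mathbbm{1}\bigl(\phi_1(a,b),\phi_2(a,b)\text{ both prime}\bigr)$. The crux is then to produce, for each such $m$ and for a positive proportion of the $\binom{|\mathcal{L}_m|}{2}\asymp 4^{k}$ pairs $\{\phi_1,\phi_2\}$, a lower bound of the expected order
\[\#\bigl\{x^{1/L}<p\le x/m:\ p=a^2+b^2\text{ prime},\ \phi_1(a,b),\phi_2(a,b)\text{ prime}\bigr\}\gg\mathfrak{S}_{m,\phi_1,\phi_2}\cdot\frac{x/m}{(\log x)^{3}},\]
with $\mathfrak{S}_{m,\phi_1,\phi_2}\gg 1$ --- one keeps only those pairs of forms that remain pairwise independent to a large enough modulus, so that the local densities of Claim~\ref{claim: sieve theory bound on summand} do not degenerate. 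Summing over the $\asymp 4^{k}$ admissible pairs and then over $m$ via a two-sided version of Lemma~\ref{lemma: numbers with k factors} --- the dyadic harmonic sum satisfies $\sum_{m\le x^{\varepsilon},\ \omega^*(m)=k-1}1/m\asymp L^{O(1)}\,(L/2)^{k-1}/(k-1)!$, cf.~\cite[\S13.4]{granvillebook} --- reassembles $\frac{xL^{O(1)}}{(\log x)^{3}}\cdot 4^{k}\cdot\frac{(L/2)^{k}}{k!}=\frac{xL^{O(1)}}{(\log x)^{3}}\frac{(2L)^k}{k!}$.

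\textbf{The main obstacle.} Everything above is bookkeeping except the displayed lower bound for primes $p=a^2+b^2$ at which two prescribed linear forms are also prime: the value $(a^2+b^2)\phi_1(a,b)\phi_2(a,b)$ must be a product of three primes, and a crude sieve meets the parity barrier. Since the statement only demands the correct powers of $\log x$ and of $L$ (the $L^{O(1)}$ absorbs all constants), it should be enough to (i) restrict to $m\le x^{\varepsilon}$, so that the forms $\phi_i(a,b)$ have size $\ll x^{(1+\varepsilon)/2}$ and can plausibly be sieved down to primality; (ii) fix the quadratic factor by summing over primes $p\equiv 1\pmod 4$ and invoking the equidistribution of the associated Gaussian primes in congruence classes (a Bombieri--Vinogradov-type level of distribution); (iii) run a Rosser--Iwaniec beta-sieve on $\phi_1(a,b)\phi_2(a,b)$ to capture the simultaneous primality, controlling $\mathfrak{S}_{m,\phi_1,\phi_2}$ uniformly over the retained pairs. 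Executing step (iii) with the level of distribution actually available is the delicate point; it is carried out in~\cite{granville2023} and is not reproduced in this dissertation.
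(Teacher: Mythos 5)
Your upper-bound half is correct and coincides with what the dissertation actually does: the paper does not prove this theorem at all, but quotes it from~\cite{granville2023}, remarking only that it is based on Theorem~\ref{thm:r1cl upper bound} together with ``an additional calculation of lower bounds on moments of $r_1$, which we will not discuss in this dissertation.'' Your reduction from the condition $\omega(n)=k$ to $\omega^*(n)=k$ (for odd $n$) and $\omega^*(n)=k-1$ (for even $n$), followed by the observation that $\frac{(2L)^{k-1}}{(k-1)!}=\frac{k}{2L}\frac{(2L)^k}{k!}\asymp\frac{(2L)^k}{k!}$ when $k\asymp L$, is a correct and in fact slightly more careful version of that remark, since Theorem~\ref{thm:r1cl upper bound} is stated with $\omega^*$ rather than $\omega$.

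The genuine gap is the lower bound, and your plan does not close it. The decisive step you label (iii) is not a ``delicate point'' that a Rosser--Iwaniec $\beta$-sieve with a good level of distribution can execute: you are asking that $a^2+b^2$, $\phi_1(a,b)$ and $\phi_2(a,b)$ be simultaneously prime, a threefold prime-producing problem, and no lower-bound sieve can detect primality in even one of these factors, let alone all of them --- this is exactly the parity obstruction you name and then set aside. A lower bound of the claimed strength has to be organised so that the primes are chosen freely rather than captured by a sieve (for instance, by prescribing the primes $q_1,q_2$ first and parametrising the solutions of $a_1^2+q_1^2=a_2^2+q_2^2$ through factorisations of $q_2^2-q_1^2$, with the condition $\omega(n)=k$ imposed on a structured subfamily), and that is precisely the additional calculation carried out in~\cite{granville2023} which the dissertation deliberately omits. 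So your attempt matches the paper both in what it proves (the upper bound, via Theorem~\ref{thm:r1cl upper bound}) and in what it defers to the source (the lower bound); but, presented as a proof, it is incomplete, and the specific sieve-theoretic route you sketch for the lower bound would fail rather than merely being technical.
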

We give a brief explanation for why this result is consistent with Lemma~\ref{r1c2}.
\begin{lemma}
	$\displaystyle\frac{xL^{O(1)}}{(\log x)^{\ell+1}}\frac{(2L)^k}{k!}$ is maximised when $k$ is approximately $2L$.
\end{lemma}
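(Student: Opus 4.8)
The plan is to note that the prefactor $\frac{xL^{O(1)}}{(\log x)^{\ell+1}}$ is independent of $k$, so maximising the full expression over $k$ amounts to maximising the sequence $a_k := \frac{(2L)^k}{k!}$ for fixed $L = L(x)$ (the same argument works verbatim with $2L$ replaced by $2^{\ell-1}L$ to match the shape in Theorem~\ref{thm:r1cl upper bound}). I would do this by the elementary ratio test on consecutive terms, which locates the maximiser exactly, and then invoke Stirling's formula only if a quantitative estimate of the size of the largest term is also wanted.

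First I would compute $\frac{a_{k+1}}{a_k} = \frac{2L}{k+1}$, which is $>1$ precisely when $k < 2L-1$ and $<1$ precisely when $k > 2L-1$. Hence $(a_k)_{k\ge 0}$ is strictly increasing while $k+1 < 2L$ and strictly decreasing once $k+1 > 2L$, so the maximum is attained at $k^\ast = \lceil 2L\rceil - 1$ (and also at $2L-1$ if that is an integer); in particular $|k^\ast - 2L| \le 1$, i.e. $k^\ast \sim 2L$, which is the assertion. For the more quantitative version I would instead use $k! = (1+o(1))\sqrt{2\pi k}\,(k/e)^k$, giving $a_k = (1+o(1))\frac{1}{\sqrt{2\pi k}}\bigl(\tfrac{2Le}{k}\bigr)^k$; taking logarithms and differentiating the resulting smooth function of $k$ confirms the critical point at $k = 2L(1+o(1))$ and shows the maximal value is $e^{2L + O(\log L)} = (\log x)^{2+o(1)}$. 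Combined with the $(\log x)^{-(\ell+1)}$ prefactor and $\ell = 2$ this yields a bound of shape $x(\log x)^{2^{\ell-1}-\ell-1}$, matching the main term $\frac{9}{8}\frac{x}{\log x}$ of Lemma~\ref{r1c2} up to the power of $L$ hidden in $L^{O(1)}$, which is why the statement says these results are "consistent".

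There is essentially no serious obstacle here: the only point requiring care is the word "approximately''. One should not claim $k^\ast = 2L$ on the nose, since $2L = 2\log\log x$ need not be an integer; the ratio argument correctly pins $k^\ast$ to an interval of length at most $1$ about $2L$, which is all that is needed and all that is claimed. If a cleaner phrasing is preferred, I would state the conclusion as $k^\ast \sim 2L$ as $x\to\infty$, which follows immediately from $|k^\ast - 2L|\le 1$ together with $2L\to\infty$.
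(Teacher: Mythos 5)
Your argument is correct, but it proceeds by a genuinely different route from the paper. The paper first invokes Stirling's formula to replace $k!$ by $\sqrt{2\pi k}\,(k/e)^k$, then studies the continuous interpolation $f(y)=(2Le)^y y^{-y-\frac12}$, computes $f'(y)=f(y)\bigl(\log 2L-\log y-\frac1{2y}\bigr)$, and locates the unique turning point from the equation $\log y+\frac1{2y}=\log 2L$, concluding that the maximiser is close to $2L$. You instead work directly with the discrete sequence $a_k=(2L)^k/k!$ and observe that $a_{k+1}/a_k=2L/(k+1)$, so the sequence increases while $k+1<2L$ and decreases once $k+1>2L$; this is more elementary (no Stirling, no calculus) and actually sharper, since it pins every maximiser into an interval of length at most $1$ about $2L$ rather than giving only an asymptotic location. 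What the paper's route buys in exchange is that the Stirling computation is reused immediately afterwards to evaluate the \emph{size} of the maximum (substituting $k=2L$ to get $\exp(2L)/\sqrt{4\pi L}\asymp(\log x)^2 L^{-O(1)}$), which you also supply in your quantitative addendum, so the two treatments end in the same place. One small slip in your parenthetical: when $2L$ is an integer the ratio equals $1$ at $k=2L-1$, so the maximum is attained at both $k=2L-1$ and $k=2L$ (not ``also at $2L-1$''); this does not affect the claim $|k^\ast-2L|\le 1$ or the conclusion $k^\ast\sim 2L$.
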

\begin{proof}
	Stirling's approximation gives us that $k!\sim\sqrt{2\pi k}(\frac ke)^k$. Thus, we seek to maximise $\dfrac{(2L)^k}{\sqrt{k}(\frac ke)^k}=\dfrac{(2Le)^k}{k^{k+1/2}}\asymp (2Le)^k k^{-k-\frac12}$.
	
Let $f(y)=(2Le)^y y^{-y-\frac12}$. We calculate $f'(y)$ via the product rule. Note that $\frac{\mathrm{d}}{\mathrm{d}y}((2Le)^y)
=(2Le)^y(\log (2Le))=(2Le)^y(\log 2L +1)$, and $\frac{\mathrm{d}}{\mathrm{d}y}(y^{-y-\frac12})=\frac{\mathrm{d}}{\mathrm{d}y}(e^{\log y(-y-\frac12)})=e^{\log y(-y-\frac12)}\bigl(-\log y-\frac{y+\frac12}{y}\bigr)$. Hence
\begin{closealign}f'(y)=f(y)\Bigl(\log 2L+1-\log y-1-\frac1{2y}\Bigr)=f(y)\Bigl(\log 2L-\log y-\frac1{2y}\Bigr).\end{closealign}

$f(y)>0$ for $y>0$ and $\log 2L-\log y-\frac1{2y}$ is a decreasing function on $y>0$ so in this range, $f(y)$ has a global maximum at its unique turning point satisfying the equation $\log y+\frac1{2y}=\log 2L$. The left hand side is approximately $\log y$, whence we get that $\displaystyle\frac{xL^{O(1)}}{(\log x)^3}\frac{(2L)^k}{k!}$ is maximised for $k$ close to $2L$.
\end{proof}
In Theorem~\ref{thm:r1c2 gss}, substituting $k=2L$ gives \[\frac{xL^{O(1)}}{(\log x)^3}\frac{(2L)^k}{k!}\asymp\frac{xL^{O(1)}}{(\log x)^3}\frac{(2L)^{2L}}{\sqrt{4\pi L}(\frac {2L}e)^{2L}}=\frac{xL^{O(1)}}{(\log x)^3}\frac{\exp(2L)}{\sqrt{4\pi L}}\asymp\frac{xL^{O(1)}}{(\log x)^3}\frac{(\log x)^2}{L^{O(1)}}=\frac{xL^{O(1)}}{\log x}.\] Note that to obtain the second moment of $r_1$, we sum over all possible $k$, to count all possible $n\le x$ based on their number of prime factors. However, we expect the contribution of other $k$ to be small compared to $k=2L$, hence the result is consistent with a main term of order $\frac{x}{\log x}$. Similarly, this is consistent with Theorem~\ref{sabuncur1}. 
\chapter{New problem: replace primes with sums of two squares}\vspace{-2ex}
We have previously considered the representation numbers $r_0$ and $r_1$. Both count the number of ways to write $n$ as the sum of an integer square and the square of \textit{something}. In the case of $r_0$, we allow the square of any (positive) integer, and in the case of $r_1$, only squares of primes are allowed. As one would expect, since $r_1$ counts a more restricted set, the moments of $r_1$ are smaller than those of $r_0$. In fact, the ratio of main terms in the first moments is $\frac1{\log x}$ and the ratio of main terms in the second moments is $\frac1{\log^2 x}$, up to a constant. One might guess this relationship from the Prime Number Theorem, which tells us that $\mathbb{P}(p\le x\text{ is prime})=\frac1{\log x}$.

In this chapter we consider new representation numbers of the form \[r_S(n):=\#\{(a,b)\in\mathbb{Z}_{\ge 0}\times S: a^2+b^2=n\}.\] We introduce the following notation, where $r_S^*$ is a generalisation of $r_0^*$ and $r_1^*$:

\begin{notation} ($\mathcal{R}$, $\mathcal{R}'$, $r_S^*$)
	
\begin{itemize}
	\item $\mathcal{R}:=\{k\in\mathbb{N}:r_0(k)\ge 1\}$ is the set of numbers which can be written as a sum of two squares.
	\item $\mathcal{R}':=\{k\in\mathcal{R}:r_0^*(k)\ge 1\}$ is the set of numbers which can be written as a sum of two coprime squares.
	\item $r_S^*(n):=\#\{(a,b)\in\mathbb{Z}_{\ge 0}\times S: a^2+b^2=n, \ \gcd(a,b)=1\}$.
\end{itemize}
\end{notation} We can classify the elements of $\mathcal{R}$ and $\mathcal{R}'$ entirely: in the prime factorisation of elements of $\mathcal{R}$, any prime that is $3$ modulo $4$ can only appear in an even power. Elements of $\mathcal{R}'$ are not divisible by primes that are $3$ modulo $4$, nor by $4$.

\section{First moments of $r_S$}
We start by finding the first moments of $r_{\mathcal{R}}$ and $r_{\mathcal{R}'}$. Similar to our discussion at the start of the chapter, this will allow us to make a hypothesis about the behaviour of the second moments of these functions. The proof method is similar to that of Lemma~\ref{lemma: first moment of r1}, but we use a slightly simpler approach which gives a weaker error term as our aim currently is to obtain the main term of these moments.

\begin{lemma}[First moment of $r_{\mathcal{R}}$]\label{lemma: first moment of rR}
	$\displaystyle\sum_{n\le x}r_{\mathcal{R}}(n)= c\frac{x}{\sqrt{\log x}}\left(1+O\Bigl(\frac{1}{\log x}\Bigr)\right)$, where $c$ is the constant $\ \displaystyle\frac{\pi}4\prod_{p\equiv 3\mod 4}\bigl(1-\tfrac1{p^2}\bigr)^{-\frac12}$.
\end{lemma}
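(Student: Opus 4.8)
# Proof Proposal

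The plan is to mimic the structure of the proof of Lemma~\ref{lemma: first moment of r1}, but with $\pi$ (the prime counting function) replaced by $M_0$ (the zeroth moment of $r_0$), for which we have the precise asymptotic from Lemma~\ref{lemma: zeroth moment of r0}. First I would write
\[\sum_{n\le x}r_{\mathcal{R}}(n)=\#\{(a,b)\in\mathbb{Z}_{\ge 0}\times\mathcal{R}:a^2+b^2\le x\}=\sum_{a^2+b^2\le x,\ b\in\mathcal{R}}1,\]
and split the region of summation symmetrically, exactly as in~\eqref{eq: rewrite r1}: sum over $a\le\sqrt{x/2}$ with $b\le\sqrt{x-a^2}$, plus sum over $b\le\sqrt{x/2}$ (with $b\in\mathcal{R}$) and $a\le\sqrt{x-b^2}$, minus the overlap square. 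The inner sums become $M_0(\sqrt{x-a^2})$ in the first piece (counting $b\le\sqrt{x-a^2}$ with $b\in\mathcal{R}$, which is what $M_0$ counts) and $\lfloor\sqrt{x-b^2}\rfloor$ in the second.

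Second, using Lemma~\ref{lemma: zeroth moment of r0}, write $M_0(t)=c_0\frac{t}{\sqrt{\log t}}(1+O(1/\log t))$ where $c_0=\bigl(2\prod_{p\equiv3(4)}(1-p^{-2})\bigr)^{-1/2}$. For $a\le\sqrt{x/2}$ we have $\log(x-a^2)=\log x+O(1)$, so $\frac1{\sqrt{\log(x-a^2)}}=\frac1{\sqrt{\log x}}(1+O(1/\log x))$, and
\[\sum_{a\le\sqrt{x/2}}M_0(\sqrt{x-a^2})=\frac{c_0}{\sqrt{\log x}}\sum_{a\le\sqrt{x/2}}\sqrt{x-a^2}\,\Bigl(1+O\Bigl(\tfrac1{\log x}\Bigr)\Bigr).\]
The sum $\sum_{a\le\sqrt{x/2}}\sqrt{x-a^2}$ is approximated by $\int_0^{\sqrt{x/2}}\sqrt{x-a^2}\,\mathrm da+O(\sqrt x)=\frac{x}{2}\bigl(\frac\pi4+\frac12\bigr)+O(\sqrt x)=\frac{\pi+2}{8}x+O(\sqrt x)$, just as in the $r_1$ proof.

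Third, for the second piece, partial summation with $a_n=\mathbbm1_{n\in\mathcal R}$ and $f(y)=\sqrt{x-y^2}$ turns $\sum_{b\le\sqrt{x/2},\,b\in\mathcal R}\lfloor\sqrt{x-b^2}\rfloor$ into a boundary term $M_0(\sqrt{x/2})\sqrt{x/2}$ (which cancels the overlap square $\sqrt{x/2}\cdot M_0(\sqrt{x/2})$) plus $\int_2^{\sqrt{x/2}}M_0(t)\,t(x-t^2)^{-1/2}\mathrm dt$, up to $O(\sqrt x)$. Substituting $M_0(t)=c_0 t(\log t)^{-1/2}(1+\dots)$ and then $t=u\sqrt x$, the integral becomes $\frac{c_0\sqrt x}{\sqrt{\log x}}\cdot\sqrt x\int_{2/\sqrt x}^{\sqrt{1/2}}\frac{u^2}{\sqrt{1-u^2}}\frac{\mathrm du}{\sqrt{\log(u\sqrt x)/\log x}}$; since on the bulk of the range $\log(u\sqrt x)=\tfrac12\log x\,(1+o(1))$, the weight $\sqrt{\log x/\log(u\sqrt x)}\to\sqrt2$, and splitting the $u$-range at $x^{-1/4}$ exactly as in~\eqref{eq: r1 estimate small int} controls the tail near $u=0$. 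Using $\int\frac{u^2}{\sqrt{1-u^2}}\mathrm du=\tfrac12(\arcsin u-u\sqrt{1-u^2})$ evaluated at $\sqrt{1/2}$ gives $\tfrac12(\tfrac\pi4-\tfrac12)=\frac{\pi-2}{8}$, so this piece contributes $\frac{c_0\sqrt2}{\sqrt{\log x}}\cdot\frac{\pi-2}{8}x\,(1+O(1/\log x))$.

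Adding the two pieces: the first gives $\frac{c_0}{\sqrt{\log x}}\cdot\frac{\pi+2}{8}x$ and the second (after the $\sqrt2$) gives $\frac{c_0\sqrt2}{\sqrt{\log x}}\cdot\frac{\pi-2}{8}x$. I would then check these combine to $c\frac{x}{\sqrt{\log x}}$ with $c=\frac\pi4\prod_{p\equiv3(4)}(1-p^{-2})^{-1/2}$. Note $c_0=\frac1{\sqrt2}\prod_{p\equiv3(4)}(1-p^{-2})^{-1/2}$, so $c_0\cdot\frac{\pi+2}{8}+c_0\sqrt2\cdot\frac{\pi-2}{8}=\frac{c_0}{8}\bigl((\pi+2)+\sqrt2(\pi-2)\bigr)$; the claimed answer requires this to equal $\frac\pi4\cdot\sqrt2\,c_0=\frac{\sqrt2\pi}{4}c_0=\frac{2\sqrt2\pi}{8}c_0$. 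The main obstacle — and the thing I would recheck most carefully — is precisely this constant bookkeeping: whether the asymmetry between the two regions (one weighted by $1/\sqrt{\log x}$, the other effectively by $\sqrt2/\sqrt{\log x}$ because $t$ ranges only up to $\sqrt{x/2}$ so $\log t\approx\tfrac12\log x$) really does produce the stated $c$, or whether the correct main-term handling requires being more careful about $\log(u\sqrt x)$ varying over the $u$-range rather than freezing it at its endpoint value. If the naive split does not reproduce $c$ exactly, the fix is to keep $\log(u\sqrt x)=\tfrac12\log x+\log u$ and expand $(\tfrac12\log x+\log u)^{-1/2}=(\tfrac12\log x)^{-1/2}(1+O(\log u/\log x))$ uniformly for $u\ge x^{-1/4}$, absorbing the correction into the $O(1/\log x)$ error and noting the $\log u$ contribution integrates to something $O(1/\log x)$ relative to the main term. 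Everything else (the arcsin integrals, the $O(\sqrt x)$ Euler–Maclaurin errors, the cancellation of the overlap term) is routine and parallels Lemma~\ref{lemma: first moment of r1} verbatim.
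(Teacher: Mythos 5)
Your route is exactly the paper's (same three-region decomposition, same partial summation with $a_n=\mathbbm{1}_{n\in\mathcal{R}}$, same $t=u\sqrt x$ substitution), but the proposal as written does not establish the lemma, and the gap is precisely the constant mismatch you flagged at the end --- you have just located its source in the wrong place. The error is in your first region. Since the argument of $M_0$ is $\sqrt{x-a^2}$, Lemma~\ref{lemma: zeroth moment of r0} gives $M_0(\sqrt{x-a^2})=c_0\,\sqrt{x-a^2}\,\bigl(\log\sqrt{x-a^2}\bigr)^{-1/2}\bigl(1+O(1/\log x)\bigr)$, and $\log\sqrt{x-a^2}=\tfrac12\log(x-a^2)=\tfrac12\log x+O(1)$, so a factor $\sqrt2$ appears: $M_0(\sqrt{x-a^2})=c_0\sqrt2\,\frac{\sqrt{x-a^2}}{\sqrt{\log x}}\bigl(1+O(1/\log x)\bigr)$. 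You kept this $\sqrt2$ in the second region (where you correctly note $\log(u\sqrt x)\approx\tfrac12\log x$), but dropped it in the first, writing $\frac{c_0}{\sqrt{\log x}}\sum\sqrt{x-a^2}$ where it should be $\frac{c_0\sqrt2}{\sqrt{\log x}}\sum\sqrt{x-a^2}$; in effect you replaced $\log\sqrt{x-a^2}$ by $\log(x-a^2)$. With the $\sqrt2$ restored, the two pieces combine to $\frac{c_0\sqrt2}{8}\bigl((\pi+2)+(\pi-2)\bigr)\frac{x}{\sqrt{\log x}}=\frac{\sqrt2\,\pi}{4}\,c_0\,\frac{x}{\sqrt{\log x}}$, and since $c_0=\tfrac1{\sqrt2}\prod_{p\equiv 3\pmod 4}(1-p^{-2})^{-1/2}$ this is exactly $c\,x/\sqrt{\log x}$, which is how the paper's proof closes.

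Your suspicion that the trouble lies in freezing $\log(u\sqrt x)$ at its endpoint value in the $t$-integral is misplaced: that piece of your argument is essentially sound. The paper makes it rigorous by integrating by parts (differentiating $(\log(u\sqrt x))^{-1/2}$, so the boundary term at $u=\sqrt{1/2}$ carries the main term with denominator $\sqrt{\log\sqrt{x/2}}\sim\sqrt{\tfrac12\log x}$) and splitting the remaining integral at $u=x^{-1/4}$; your alternative of expanding $(\tfrac12\log x+\log u)^{-1/2}=(\tfrac12\log x)^{-1/2}\bigl(1+O(\log u/\log x)\bigr)$ uniformly for $u\ge x^{-1/4}$ also works, since $\int u^2|\log u|(1-u^2)^{-1/2}\,\mathrm du$ converges and the correction contributes only $O\bigl(x/(\log x)^{3/2}\bigr)$. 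So the only genuine defect is the missing $\sqrt2$ in the first region; once it is corrected, your argument coincides with the paper's and the claimed constant $c=\frac\pi4\prod_{p\equiv 3\pmod 4}(1-p^{-2})^{-1/2}$ comes out exactly.
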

\begin{proof}
	We start by writing
\begin{align}
\sum_{n\le x}r_{\mathcal{R}}(n)&=\sum_{a\le\sqrt{x}}\sum_{n\le\sqrt{x-a^2}}\mathbbm{1}_{\{n\in\mathcal{R}\}}=
	\sum_{a\le\sqrt{\frac x2}}\sum_{\substack{n\le\sqrt{x-a^2}\\n\in\mathcal{R}}}1+\sum_{\substack{n\le\sqrt{\frac x2}\\n\in\mathcal{R}}}\sum_{a\le\sqrt{x-n^2}}1-\sum_{a\le\sqrt{\frac x2}}\sum_{\substack{n\le\sqrt{\frac x2}\\n\in\mathcal{R}}}1 \notag\\
	&=\sum_{a\le\sqrt{\frac x2}}M_0(\sqrt{x-a^2})+\sum_{\substack{n\le\sqrt{\frac x2}\\n\in\mathcal{R}}}\lfloor\sqrt{x-n^2}\rfloor-\sqrt{x/2}\cdot M_0(\sqrt{x/2}),\label{eq: rewrite rR}
\end{align} where $M_0$ is the zeroth moment of $r_0$. We recall Lemma~\ref{lemma: zeroth moment of r0}, which states that $M_0(x)=c'\frac x{\sqrt{\log x}}\bigl(1+O((\log x)^{-1})\bigr)$, where $c':=\bigl(2\prod_{p\equiv 3\mod 4}(1-\tfrac1{p^2})\bigr)^{-\frac12}.$

We deal with the components in~\eqref{eq: rewrite rR}, proceeding as in Lemma~\ref{lemma: first moment of r1}. For the first component, as $a\le\sqrt{x/2}$ we have
$\log x\ge\log(x-a^2) \ge\log(x/2)=\log x-\log 2$, so  \[\sqrt{\frac{2(x-a^2)}{\log x}}\le\sqrt{\frac{2(x-a^2)}{\log (x-a^2)}}\le\sqrt{\frac{2(x-a^2)}{\log x-\log 2}}.\] Since
$(\log x-\log 2)^{-1}=\log^{-1} (x)(1+O(\frac{1}{\log x}))$ we get \[M_0(\sqrt{x-a^2})=c'\sqrt{\frac{2(x-a^2)}{\log x}}\left(1+O\Bigl(\frac{1}{\log x}\Bigr)\right),\] and thus $\sum_{a\le\sqrt{\frac x2}}M_0(\sqrt{x-a^2})=c'\sqrt{\frac2{\log x}}\sum_{a\le\sqrt{\frac x2}}\sqrt{x-a^2}\left(1+O(\frac{1}{\log x})\right)$.

As in Lemma~\ref{lemma: first moment of r1} we have $\sum_{a\le\sqrt{x/2}}\sqrt{x-a^2}=\int_0^{\sqrt{x/2}}\sqrt{x-a^2}\mathrm{d}a + O(\sqrt{x})=\frac{\pi+2}8x\left(1+O(\frac{1}{\log x})\right)$, so \[\sum_{a\le\sqrt{\frac x2}}M_0(\sqrt{x-a^2})=c'\frac{\sqrt{2}(\pi+2)}8\frac{x}{\sqrt{\log x}}\left(1+O\Bigl(\frac{1}{\log x}\Bigr)\right).\]
For the second component of~\eqref{eq: rewrite rR}, apply partial summation with $a_n=\mathbbm{1}_{\{n\in\mathcal{R}\}}$ and $f(y)=\sqrt{x-y^2}$ to get
\[\sum_{\substack{n\le\sqrt{\frac x2}\\n\in\mathcal{R}}}(\sqrt{x-n^2}+O(1))=O(\sqrt{x})+M_0(\sqrt{x/2})\sqrt{x/2}+\smash[t]{\int_1^{\sqrt{x/2}}M_0(t)t(x-t^2)^{-1/2}\mathrm{d}t}.\] As previously the $M_0(\sqrt{x/2})\sqrt{x/2}$ term is the third component of~\eqref{eq: rewrite rR}, and $O(\sqrt{x})$ can be absorbed into the error term of the first component as $O(\sqrt{x})=O(\frac{x}{{\log x}^{3/2}})$. It remains to estimate the integral, noting that we can change the lower bound to $2$ as $M_0(t)=0$ for $t< 2$. We substitute the main term of $M_0$ into the integral.

We substitute $t=u\sqrt{x}$ to get
\[\int_2^{\sqrt{x/2}}c'\frac{t}{\sqrt{\log t}}\frac{t}{\sqrt{x-t^2}}\mathrm{d}t=c'\sqrt{x}\int_{2/\sqrt{x}}^{\sqrt{1/2}}\frac{xu^2}{\sqrt{x}\sqrt{1-u^2}}\frac{\mathrm{d}u}{\sqrt{\log(u\sqrt{x})}}.\]
Integration by parts, using the formula for $\int\frac{u^2}{\sqrt{1-u^2}}\mathrm{d}u$, gives
	\begin{align}
		&c'\frac{x}2\left[\frac{\arcsin(u)-u\sqrt{1-u^2}}{\sqrt{\log(u\sqrt{x})}}\right]_{2/\sqrt{x}}^{\sqrt{1/2}}+c'\frac{x}4\int_{2/\sqrt{x}}^{\sqrt{1/2}}\frac{(\arcsin(u)-u\sqrt{1-u^2})\mathrm{d}u}{u(\log(u\sqrt{x}))^{3/2}} \notag\\
		=&\frac{c'x(\frac{\pi}4-\frac12)}{2\sqrt{\log(\sqrt{\frac x2})}}+O(\sqrt{x})+\frac{c'x}{4}\int_{2/\sqrt{x}}^{\sqrt{1/2}}\Bigl(\frac{\arcsin(u)}u-\sqrt{1-u^2}\Bigr)\frac{\mathrm{d}u}{(\log(u\sqrt{x}))^{3/2}}\label{eq: first estimate rR int}
		.\end{align} As previously, we split this last integral into ranges $[\frac2{\sqrt{x}},\frac1{\sqrt[4]{x}}]$ and $[\frac1{\sqrt[4]{x}},\sqrt{1/2}]$. In the first component, we have $\frac{\arcsin(u)}u-\sqrt{1-u^2}=\frac23u^2+O(u^4)=O(\frac1{\sqrt{x}})$, and we bound $(\log(u\sqrt{x}))^{3/2}$ from below by $(\log2)^{3/2}$. In the second, we bound $\frac{\arcsin(u)}u-\sqrt{1-u^2}$ by a constant, and bound $(\log(u\sqrt{x}))^{3/2}$ from below by $(\log(\sqrt[4]{x}))^{3/2}$. Hence
	\begin{align}
		\int_{2/\sqrt{x}}^{\sqrt{1/2}}\Bigl(\frac{\arcsin(u)}u-\sqrt{1-u^2}\Bigr)\frac{\mathrm{d}u}{\log^2(u\sqrt{x})}
		&=O(x^{-1/4})+O((\log x)^{-3/2}).\label{eq: rR estimate small int}
	\end{align}
	Substituting~\eqref{eq: rR estimate small int} into~\eqref{eq: first estimate rR int} gives
	\[\smash{\int_2^{\sqrt{x/2}}M_0(t)t(x-t^2)^{-1/2}\mathrm{d}t=\frac{c'x}{\sqrt{2\log x}}\Bigl(\frac{\pi}4-\frac12\Bigr)+O\Bigl(\frac{x}{(\log x)^{3/2}}\Bigr).}\]
	Substituting this into~\eqref{eq: rewrite rR} gives
\begin{align}
	\sum_{n\le x}r_{\mathcal{R}}(n)&=c'\frac{\pi+2}{4\sqrt{2}}\frac{x}{\sqrt{\log x}}\left(1+O\Bigl(\frac{1}{\log x}\Bigr)\right)+\frac{c'x}{\sqrt{2\log x}}\cdot\frac{\pi-2}4+O\Bigl(\frac{x}{(\log x)^{3/2}}\Bigr) \notag \\
	&=c'\sqrt{2}\cdot\frac{\pi}4\frac{x}{\sqrt{\log x}}\left(1+O\Bigl(\frac{1}{\log x}\Bigr)\right).
\end{align}	
By definition, $c=	c'\sqrt{2}\cdot\frac{\pi}4$, so we are done.
\end{proof}
 
The first moment of $r_{\mathcal{R}'}$ behaves very similarly:
\begin{lemma}[First moment of $r_{\mathcal{R}'}$]
	$\displaystyle\sum_{n\le x}r_{\mathcal{R}'}(n)= c\frac{x}{\sqrt{\log x}}\left(1+O\Bigl(\frac{1}{\log x}\Bigr)\right)$, where \[c=\frac{3\pi}8\sqrt{\prod_{p\equiv 3\pmod 4}(1-p^{-2})}.\]
\end{lemma}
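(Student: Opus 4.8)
The plan is to run the proof of Lemma~\ref{lemma: first moment of rR} essentially verbatim, with the zeroth moment $M_0$ of $r_0$ replaced by the zeroth moment $M_0^*(t):=\#\{n\le t:r_0^*(n)\ge 1\}=\#\{n\le t:n\in\mathcal{R}'\}$ of $r_0^*$. The one genuinely new ingredient is an asymptotic $M_0^*(t)=c_0^*\,\frac{t}{\sqrt{\log t}}\bigl(1+O(1/\log t)\bigr)$ --- the analogue of Lemma~\ref{lemma: zeroth moment of r0}, established in this chapter by the Landau/Tauberian method from the Euler product $\sum_{n\in\mathcal{R}'}n^{-s}=(1+2^{-s})\prod_{p\equiv 1\pmod 4}(1-p^{-s})^{-1}$; near $s=1$ this is $(s-1)^{-1/2}$ times a function holomorphic and non-vanishing at $s=1$, and $c_0^*=\Gamma(1/2)^{-1}\lim_{s\to 1}(s-1)^{1/2}\sum_{n\in\mathcal{R}'}n^{-s}$.

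Granting this, I first split the pairs $(a,b)$ with $a\ge 0$, $b\in\mathcal{R}'$, $a^2+b^2\le x$ according to whether $a\le\sqrt{x/2}$ or $b\le\sqrt{x/2}$ (at least one holds) and apply inclusion--exclusion, exactly as for~\eqref{eq: rewrite rR}, obtaining
\[\sum_{n\le x}r_{\mathcal{R}'}(n)=\sum_{a\le\sqrt{x/2}}M_0^*(\sqrt{x-a^2})+\sum_{\substack{b\le\sqrt{x/2}\\ b\in\mathcal{R}'}}\lfloor\sqrt{x-b^2}\rfloor-\sqrt{x/2}\,M_0^*(\sqrt{x/2})+O\!\Bigl(\tfrac{x}{(\log x)^{3/2}}\Bigr),\]
the error absorbing the floors and the $a=0$ contribution.

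Then I handle the three pieces just as in Lemma~\ref{lemma: first moment of rR}. For $a\le\sqrt{x/2}$ we have $\log(x-a^2)=\log x\,(1+O(1/\log x))$, so $M_0^*(\sqrt{x-a^2})=c_0^*\sqrt{\tfrac{2(x-a^2)}{\log x}}\bigl(1+O(1/\log x)\bigr)$, and with $\sum_{a\le\sqrt{x/2}}\sqrt{x-a^2}=\tfrac{\pi+2}{8}x+O(\sqrt x)$ (from Lemma~\ref{lemma: first moment of r1}) the first piece is $c_0^*\,\tfrac{\sqrt2(\pi+2)}{8}\,\tfrac{x}{\sqrt{\log x}}\bigl(1+O(1/\log x)\bigr)$. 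For the second piece, partial summation with $a_n=\mathbbm{1}_{\{n\in\mathcal{R}'\}}$ and $f(y)=\sqrt{x-y^2}$ cancels $\sqrt{x/2}\,M_0^*(\sqrt{x/2})$ against the third piece and leaves $\int_2^{\sqrt{x/2}}M_0^*(t)\,t\,(x-t^2)^{-1/2}\,\mathrm{d}t$; I substitute the main term of $M_0^*$, then $t=u\sqrt x$, then integrate by parts using $\int\tfrac{u^2}{\sqrt{1-u^2}}\,\mathrm{d}u=\tfrac12(\arcsin u-u\sqrt{1-u^2})$, getting a boundary term $c_0^*\,\tfrac{\sqrt2(\pi-2)}{8}\,\tfrac{x}{\sqrt{\log x}}\bigl(1+O(1/\log x)\bigr)$ and a leftover integral that is $O(x/(\log x)^{3/2})$ after splitting the $u$-range at $u=x^{-1/4}$ and using $\tfrac{\arcsin u}{u}-\sqrt{1-u^2}=\tfrac23u^2+O(u^4)$ near $0$, the same bookkeeping as for~\eqref{eq: rR estimate small int}. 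Adding the pieces, the $(\pi+2)$ and $(\pi-2)$ terms combine to $2\pi$, so $\sum_{n\le x}r_{\mathcal{R}'}(n)=c_0^*\,\tfrac{\pi\sqrt2}{4}\,\tfrac{x}{\sqrt{\log x}}\bigl(1+O(1/\log x)\bigr)$, and inserting the value of $c_0^*$ yields the stated constant $c$ (exactly as $c=c'\tfrac{\pi\sqrt2}{4}$ in Lemma~\ref{lemma: first moment of rR}); a useful sanity check is that the outcome cannot exceed the first moment of $r_{\mathcal{R}}$, since $\mathcal{R}'\subseteq\mathcal{R}$.

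The step I expect to be the real obstacle is producing the asymptotic for $M_0^*$ with the sharp error term $O(1/\log x)$: this needs the Tauberian input (of the type mentioned in the introduction) applied to $\sum_{n\in\mathcal{R}'}n^{-s}$, via the identity $\prod_{p\equiv 1\pmod 4}(1-p^{-s})^{-1}=\bigl[\zeta(s)L(s,\chi_4)(1-2^{-s})\prod_{p\equiv 3\pmod 4}(1-p^{-2s})\bigr]^{1/2}$, the continuation of the right side past $\Re s=1$, and the non-vanishing $L(1,\chi_4)=\pi/4$. Everything downstream of that is a mechanical transcription of the $r_{\mathcal{R}}$ computation.
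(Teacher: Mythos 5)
Your route is essentially the paper's: the only new ingredient is the asymptotic for $M_0^*$, obtained from a Tauberian theorem applied to a Dirichlet series, and everything downstream is a transcription of the $r_{\mathcal{R}}$ computation. The only (cosmetic) difference is that you apply the Tauberian theorem directly to $\sum_{n\in\mathcal{R}'}n^{-s}=(1+2^{-s})F(s)$ with $F(s)=\prod_{p\equiv 1\pmod 4}(1-p^{-s})^{-1}$, whereas the paper applies it to $F$ alone and then writes $M_0^*(x)=f(x)+f(x/2)$; these are equivalent.

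The genuine gap is the one number you never evaluate, namely $c_0^*$. From the identity $F(s)^2=\zeta(s)L(s,\chi)(1-2^{-s})\prod_{p\equiv 3\pmod 4}(1-p^{-2s})$ (with $\chi$ the non-principal character mod $4$) one gets $\lim_{s\to 1}(s-1)F(s)^2=\tfrac{\pi}{4}\cdot\tfrac12\cdot\prod_{p\equiv 3\pmod 4}(1-p^{-2})=\tfrac{\pi}{8}\prod_{p\equiv 3\pmod 4}(1-p^{-2})$, hence $\lim_{s\to 1}(s-1)^{1/2}(1+2^{-s})F(s)=\tfrac32\bigl(\tfrac{\pi}{8}\prod(1-p^{-2})\bigr)^{1/2}$ and $c_0^*=\tfrac34\bigl(\tfrac12\prod(1-p^{-2})\bigr)^{1/2}$. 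Feeding this into your (correct) bookkeeping gives $c=c_0^*\cdot\tfrac{\pi\sqrt2}{4}=\tfrac{3\pi}{16}\sqrt{\prod_{p\equiv 3\pmod 4}(1-p^{-2})}$, which is half the stated constant, so the claim that ``inserting the value of $c_0^*$ yields the stated constant'' does not hold. Indeed your own sanity check already rules the stated value out: numerically $\tfrac{3\pi}{8}\sqrt{\prod(1-p^{-2})}\approx 1.09$ exceeds the constant $\tfrac{\pi}{4}\prod(1-p^{-2})^{-1/2}\approx 0.85$ of Lemma~\ref{lemma: first moment of rR}, which is impossible since $r_{\mathcal{R}'}(n)\le r_{\mathcal{R}}(n)$ for every $n$. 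The discrepancy originates in the paper's own proof, which takes the residue of $F(s)^2$ at $s=1$ to be $\tfrac{\pi}{4}(1-\tfrac12)^{-1}\prod(1-p^{-2})$ rather than $\tfrac{\pi}{4}(1-\tfrac12)\prod(1-p^{-2})$, inflating the Tauberian constant by a factor of $2$; carried out carefully, your method proves the lemma with $c=\tfrac{3\pi}{16}\sqrt{\prod_{p\equiv 3\pmod 4}(1-p^{-2})}$ instead. A secondary caveat, which you flag yourself and which the paper glosses in the same way: the Wiener--Ikehara/Kato theorem only yields $M_0^*(t)\sim c_0^*\,t/\sqrt{\log t}$, so the claimed relative error $O(1/\log x)$ requires a Landau or Selberg--Delange type strengthening of that asymptotic.
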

\begin{proof}
The proof will be as above, except we must replace $M_0$ with the zeroth moment of $r_0^*$, which we will denote $M_0^*$. We have \[M_0^*(x)=\#\{n\le x:n\in\mathcal{R}'\}=\#\{n\le x:4\nmid n,p\nmid n\text{ for all }p\equiv 3\pmod 4\}.\] Our task now is to evaluate this quantity.
	
We first calculate the number of $n\le x$ whose prime factorisation only contains primes that are $1$ modulo $4$. This can be done through considering the Dirichlet series $F(s)=\sum_{n\ge 1}\frac{a_n}{n^s}$ with $a_n=\mathbbm{1}(p\mid n\Rightarrow p\equiv 1\pmod 4)$. Since this indicator function is multiplicative, we have an Euler product: $F(s)=\prod_{p\equiv 1\pmod 4}(1-p^{-s})^{-1}$.

Recall the Riemann zeta function $\zeta(s)=\prod_{p}(1-p^{-s})^{-1}$ and the Dirichlet L-function $L(s,\chi)=\sum_{n\ge 1}\frac{\chi(n)}{n^s}$ introduced in Theorem~\ref{theorem: mertens APs}, where $\chi$ is the non-principal Dirichlet character modulo $4$. Since $\chi$ is totally multiplicative we can write the Euler product $L(s,\chi)=\prod_p(1-\frac{\chi(p)}{p^s})^{-1}$. Thus
\begin{equation}\label{F(s)^2 behaviour}
	\smash{\frac{\zeta(s)L(s,\chi)}{F(s)^2}=(1-2^{-s})^{-1}\prod_{p\equiv 3\pmod 4}((1-p^{-s})(1+p^{-s}))^{-1},}\end{equation} by comparing the Euler products of the series.

The Riemann zeta function has a simple pole of residue $1$ at $s=1$, and is meromorphic for $\Re(s)\ge 1$, with no other poles. Since $\chi$ is a non-principal Dirichlet character, we have $L(s,\chi)$ is holomorphic for $\Re(s)\ge 1$, and $L(1,\chi)\ne 0$. In fact, $L(1,\chi)=1-\frac13+\frac15-\dots=\frac{\pi}4$ by using the arctan Taylor series. Thus $\zeta(s)L(s,\chi)$ has a simple pole of residue $\frac{\pi}4$ at $s=1$ and is otherwise holomorphic on $\Re(s)\ge 1$. However, the right hand side in~\eqref{F(s)^2 behaviour} is holomorphic for $\Re(s)>\frac12$, as the Dirichlet series corresponding to $\prod_{p\equiv 3\pmod 4}(1-p^{-2s})^{-1}$ converges uniformly in this half-plane. Hence $F(s)^2$ must have a simple pole at $s=1$ as well, of residue $\frac{\pi}4(1-\frac12)^{-1}\prod_{p\equiv 3\pmod 4}(1-p^{-2})=\frac{\pi}2\prod_{p\equiv 3\pmod 4}(1-p^{-2})$.

We now apply a variant of the Wiener-Ikehara Theorem. The result used here was proved by Kato~\cite[Theorem 3.1]{wienerikeharatauberiantheorem}. We state it below:
\begin{theorem}[Wiener, Ikehara, Delange, Kato]
Let $\{a_n\}$ be a sequence of non-negative real numbers, $d\in\mathbb{R}_{>0}$ and $m$ a positive integer. Suppose that the Dirichlet series $L(s)=\sum_{n=1}^{\infty}\frac{a_n}{n^s}$ converges absolutely for $\Re(s)>d$. Also suppose that $L^m$ has a meromorphic continuation to an open set containing the closed half-plane $\Re(s)\ge d$, holomorphic except for a pole of order $l$ at $s=d$ with $\lim_{s\to d}L(s)^m(s-d)^l=A^m$, where $A>0$. Then \[\sum_{n\le x}a_n\sim\frac{AX^d}{d\Gamma(\frac lm)(\log x)^{1-l/m}}.\]
\end{theorem}

We substitute $L=F$, $m=2$, $d=1$, $l=1$, $A^2=\frac{\pi}2\prod_{p\equiv 3\pmod 4}(1-p^{-2})$. $\Gamma$ is the Gamma function as previously seen, and it is well-known that $\Gamma(\frac 12)=\sqrt{\pi}$. Hence we get \[f(x):=\sum_{n\le x}a_n\sim\sqrt{\frac12\prod_{p\equiv 3\pmod 4}(1-p^{-2})}\frac{x}{\sqrt{\log x}}.\]
Since $M_0^*(x)=f(x)+f(\frac x2)$ we obtain \[M_0^*(x)\sim\frac32\sqrt{\frac12\prod_{p\equiv 3\pmod 4}(1-p^{-2})}\frac{x}{\sqrt{\log x}}.\] Substituting this constant instead of the coefficient of $\frac x{\sqrt{\log x}}$ in $M_0(x)$ into the proof of~\ref{lemma: first moment of rR} gives the required result.
\end{proof}
\begin{remark*}
	The proof method here is similar to Landau's, but we use the more modern technique of the Wiener Ikehara theorem instead of Landau's computations and integrals. Lemma~\ref{lemma: zeroth moment of r0} could also be proved in a similar way (without error term) if we were able to analyse the behaviour of the corresponding Dirichlet series.
\end{remark*}
\section{Moments of $r_{\mathcal{R}'}$}
We will apply the methods of Theorem~\ref{thm:r1cl upper bound} to obtain an upper bound on higher moments of $r_{\mathcal{R}'}$.
\begin{theorem}\label{moments of rR'}
		For any fixed integer $\ell\ge 1$ and integer $k\ll L$ we have
	\[\smash{\sum_{{\substack{n\le x\\\omega^*(n)=k}}}\binom{r_{\mathcal{R}'}^*(n)}{\ell}\ll_\ell\frac{xL^{O_\ell(1)}}{(\sqrt{\log x})^{\ell+1}}\frac{(2^{\ell-1}L)^k}{k!}.}\]
\end{theorem}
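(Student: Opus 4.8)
The plan is to rerun the proof of Theorem~\ref{thm:r1cl upper bound}, changing only the arithmetic condition the linear forms must satisfy: ``$\phi(a,b)$ prime'' becomes ``$\phi(a,b)\in\mathcal{R}'$''. Since $r_{\mathcal{R}'}^*(n)\le r_0^*(n)$, Lemma~\ref{r0*bound} (with the exponent $1000$ replaced by $\ell+2$) bounds $\sum_{n\le x}\binom{r_{\mathcal{R}'}^*(n)}{\ell}$ over the range $P(n)\le z:=x^{1/L}$ or $P(n)^2\mid n$ by $O_\ell(x/(\log x)^{\ell+2})$, exactly as in Claim~\ref{claim: bound for some P(n)}. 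So I may assume $p:=P(n)>x^{1/L}$, $p^2\nmid n$, and, since $r_0^*(n)>0$, $p\equiv1\pmod 4$; writing $n=mp$ we get $m\le x^{1-1/L}$, $m\in\mathcal{Q}$ and $\omega^*(m)=k-1$. Taking $p=a^2+b^2$ with $\gcd(a,b)=1$ and $a<b$, multiplicativity of $r_0^*$ (as in Lemma~\ref{r0*mult}) gives $r_{\mathcal{R}'}^*(mp)=\#\{\phi\in\mathcal{L}_m:|\phi(a,b)|\in\mathcal{R}'\}$ with $\mathcal{L}_m=\{u_ix+v_iy,\ v_ix-u_iy:1\le i\le r_0^*(m)\}$, so $\binom{r_{\mathcal{R}'}^*(mp)}{\ell}=\sum_{I\subseteq\mathcal{L}_m,|I|=\ell}\mathbbm{1}(|\phi(a,b)|\in\mathcal{R}'\ \forall\phi\in I)$. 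Interchanging the sums over $n$ and over $I$, it suffices, for each fixed $I$ with $|I|=\ell$, to bound the number of admissible primes $p=a^2+b^2$ for which $|\phi(a,b)|\in\mathcal{R}'$ for every $\phi\in I$.

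Fix such an $I$, set $y:=x^{1/10L}$, and split according to whether some $\phi(a,b)\le y$. The small case is handled as in Claim~\ref{claim: phi small prime}: the number of $(a,b)$ realising a fixed value of $\phi$ is $\ll\frac xm x^{-1/2L}$, and the fixed value now ranges over $\mathcal{R}'\cap[1,y]$, a set of size at most $y$, so this contribution is $\ll\frac xm x^{-2/5L}\ll\frac xm L^{O_\ell(1)}(\sqrt{\log x})^{-(\ell+1)}$, since $x^{2/5L}$ beats any power of $\log x$. In the remaining case ``$|\phi(a,b)|\in\mathcal{R}'$'' forces $q\nmid\phi(a,b)$ for every prime $q\equiv3\pmod 4$ with $q\le y$, while ``$a^2+b^2$ prime, $>y$'' forces $q\nmid a^2+b^2$ for every $q\le y$; dropping all other conditions I dominate the count by the sifting function $S(\mathcal{A},\mathfrak{p},y)$ for $\mathcal{A}=\{1\le a,b\le\sqrt{x/m}\}$, where I take $\mathcal{A}_q=\{(a,b):q\mid a^2+b^2\}$ for $q\equiv1\pmod 4$ and $\mathcal{A}_q=\{(a,b):q\mid(a^2+b^2)\prod_{\phi\in I}\phi(a,b)\}$ for $q\equiv3\pmod 4$ (and for $q=2$). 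This is no longer a pure polynomial sieve, but it still carries a multiplicative weight suitable for Theorem~\ref{lemma: fundamental lemma of sieves}: for $q>\ell+2$, $q\nmid T$ (with $T=m\prod_{i<j}(u_iv_j-v_iu_j)(u_iu_j+v_iv_j)$) one has $g(q)=(2q-1)/q^2$ if $q\equiv1\pmod 4$ and $g(q)=(1+(q-1)\ell_q)/q^2$ if $q\equiv3\pmod 4$, where $\ell_q\le\ell$ counts the forms of $I$ pairwise independent mod $q$; the cases $q\le\ell+2$, $q\mid T$, $q\mid m$ are treated separately as in the proof of Claim~\ref{claim: sieve theory bound on summand}. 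One checks $0\le g(q)<1$ and that the sifting dimension is $\kappa=\tfrac12(\ell+2)=O_\ell(1)$, whence $e^{\gamma\kappa}\Gamma(\kappa+1)=O_\ell(1)$, and the error $\sum_{d\le y^2}3^{\omega(d)}|R_d|\ll\sqrt{x/m}\,x^{6/10L}$ is negligible exactly as in Claim~\ref{claim: bound sieve errors}.

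It remains to estimate the main term $X\prod_{q<y}(1-g(q))^{-1}$ with $X=x/m$. Using $1-\tfrac cq\asymp(1-\tfrac1q)^c$ one gets $1-g(q)\asymp(1-\tfrac1q)^2$ for $q\equiv1\pmod 4$ and $1-g(q)\asymp(1-\tfrac1q)^{\ell_q}$ for $q\equiv3\pmod 4$, while the primes dividing $T$ contribute $\ll\prod_{q\mid T}(1-\tfrac1q)^{O(\ell)}\ll L^{O(1)}$ by the same computation as in Claim~\ref{bound primes dividing T} ($r_0^*(m)=2^{\omega^*(m)}=2^{k-1}\ll\log x$ and $T\ll m^{(\log x)^2}$, so $\log\log T\ll L$). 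By Mertens' theorem for primes in arithmetic progressions, Theorem~\ref{theorem: mertens APs}, $\prod_{q\equiv a\,(4),\,q\le y}(1-\tfrac1q)\asymp(\log y)^{-1/2}$; hence $\prod_{q<y}(1-g(q))\asymp(\log y)^{-1}(\log y)^{-\ell/2}=(\log y)^{-(\ell+2)/2}$, and since $\log y=\tfrac1{10L}\log x$ this equals $L^{O_\ell(1)}(\sqrt{\log x})^{-(\ell+2)}\ll L^{O_\ell(1)}(\sqrt{\log x})^{-(\ell+1)}$. This is precisely where ``one power of $\log x$ per prime linear form'' in the $r_1$ argument is replaced by ``one power of $\sqrt{\log x}$'', because a linear form lands in $\mathcal{R}'$ with density governed only by the primes $\equiv3\pmod 4$; incidentally this yields the slightly stronger shape $(\sqrt{\log x})^{-(\ell+2)}$, which a fortiori gives the stated estimate.

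The assembly is then word for word that of the proof of Theorem~\ref{thm:r1cl upper bound}: summing over the at most $2^{\ell k}$ subsets $I\subseteq\mathcal{L}_m$ (using $\#\mathcal{L}_m=2r_0^*(m)=2^k$) and over $m$ in dyadic blocks $(M,2M]$ with $\omega^*(m)=k-1$, whose number is bounded via Lemma~\ref{lemma: numbers with k factors} by $\rho_{k-1,\mathcal{N}}(2M)$, gives $\sum_m\tfrac1m\ll\tfrac{(\frac12L+O(1))^{k-2}}{(k-2)!}\log J\ll L\,\tfrac{(\frac12L)^{k-2}}{(k-2)!}$, and $2^{\ell k}L\,\tfrac{(\frac12L)^{k-2}}{(k-2)!}\ll L^{O(1)}\tfrac{(2^{\ell-1}L)^k}{k!}$ since $k\ll L$. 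Adding back the negligible $P(n)\le z$ contribution finishes the proof. The one genuinely new point requiring care is in the second paragraph: checking that the mixed sieve — residue classes modulo $q$ depending on $q\bmod 4$ — still satisfies the hypotheses of the fundamental lemma with sifting dimension $O_\ell(1)$ and weight $g(q)<1$; everything else is a transcription with $\log x$ downgraded to $\sqrt{\log x}$ in the contributions of the linear forms.
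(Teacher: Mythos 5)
Your argument is correct (at the same level of rigour as the paper's own) and follows the paper's skeleton: reduce to $r^*$ on the range $P(n)>x^{1/L}$, $P(n)^2\nmid n$ via Lemma~\ref{r0*bound}, write $n=mp$, expand $\binom{r_{\mathcal{R}'}^*(mp)}{\ell}$ over $\ell$-subsets $I\subseteq\mathcal{L}_m$, bound each inner count by a Selberg sieve, and reassemble with Lemma~\ref{lemma: numbers with k factors} and dyadic blocks exactly as in Theorem~\ref{thm:r1cl upper bound}. Where you genuinely diverge is the sieve itself, and the divergence works in your favour. The paper sifts only by primes $q\equiv 3\pmod 4$ up to $y$, applied to $(a^2+b^2)\prod_{\phi\in I}\phi(a,b)$, so its local densities are $\approx\ell/q$ on half the primes and the resulting main term is really of size $X(\sqrt{\log y})^{-\ell}$; the paper's assertion that $\prod_{\ell+2<q\le y,\,q\equiv 3\pmod 4}(1-\ell/q)\ll(\sqrt{\log y})^{-(\ell+1)}$ overstates the exponent by one. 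You instead keep the primality of $a^2+b^2$ inside the sieve (sifting it by \emph{all} primes up to $y$, density $\approx 2/q$ at $q\equiv 1\pmod 4$) while sifting the forms only at $q\equiv 3\pmod 4$, a mixed sieve of dimension $\kappa=(\ell+2)/2$, and this yields the per-$(m,I)$ bound $\frac{x}{m}L^{O_\ell(1)}(\sqrt{\log x})^{-(\ell+2)}$, which a fortiori gives the stated $(\sqrt{\log x})^{-(\ell+1)}$; so your route actually supplies the extra power of $\sqrt{\log x}$ that the paper's bookkeeping leaves unjustified. Two minor points: the split according to whether some $\phi(a,b)\le y$ is unnecessary here, since $|\phi(a,b)|\in\mathcal{R}'$ already excludes prime factors $\equiv 3\pmod 4$ of any size (your treatment of that case is nevertheless valid); and the phrase ``main term $X\prod_{q<y}(1-g(q))^{-1}$'' should read $X\prod_{q<y}(1-g(q))$ up to the $e^{\gamma\kappa}\Gamma(\kappa+1)$ factor, which is what you in fact estimate. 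Your own caveat, checking that the mixed weight satisfies the Halberstam--Richert dimension hypothesis, is the right one to flag; it holds since $\sum_{w\le q<z}g(q)\log q\le(1+\tfrac{\ell}{2}+o(1))\log(z/w)$.
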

We will split the proof into very similar steps to before.
\begin{claim}\label{claim: for rS*}
	\[\sum_{{\substack{n\le x\\\omega^*(n)=k \\ P(n)\le z\text{ or }P(n)^2\mid n}}}\binom{r_{S}^*(n)}{\ell}\ll_\ell \frac{xL^{O_\ell(1)}}{(\log x)^{\ell+1}}\frac{(2^{\ell-1}L)^k}{k!}.\]
\end{claim}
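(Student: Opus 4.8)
**Proof proposal for Claim~\ref{claim: for rS*}.**

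The plan is to reduce this statement to Lemma~\ref{r0*bound}, exactly as was done in the proof of Claim~\ref{claim: bound for some P(n)} in Chapter~\ref{chapter: moments of r1}. The key observation is that for any set $S$ we have $r_S^*(n) \le r_0^*(n)$ for all $n$, since every representation $n = a^2+b^2$ with $b\in S$ and $\gcd(a,b)=1$ is in particular a representation of $n$ as a sum of two coprime squares. Hence $\binom{r_S^*(n)}{\ell}\le\binom{r_0^*(n)}{\ell}$ pointwise, and it suffices to bound $\sum \binom{r_0^*(n)}{\ell}$ over the relevant range of $n$.

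First I would recall that Lemma~\ref{r0*bound}, applied with $z = x^{1/L}$, gives $\sum_{n\le x,\ P(n)\le z\text{ or }P(n)^2\mid n} r_0^*(n)^m \ll_m x/(\log x)^{1000}$ for every integer $m\ge 1$; as noted in the proof of Claim~\ref{claim: bound for some P(n)}, the exponent $1000$ is arbitrary and can be replaced by any constant, in particular $\ell+2$. Since a binomial coefficient $\binom{t}{\ell}$ is a polynomial of degree $\ell$ in $t$, linearity lets us bound $\sum_{n}\binom{r_0^*(n)}{\ell}$ over this range by a finite $\mathbb{R}$-linear combination of sums $\sum_n r_0^*(n)^m$ with $m\le\ell$, each of which is $\ll_\ell x/(\log x)^{\ell+2}$. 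Therefore
\[
\sum_{{\substack{n\le x\\ P(n)\le z\text{ or }P(n)^2\mid n}}}\binom{r_S^*(n)}{\ell}
\;\le\;
\sum_{{\substack{n\le x\\ P(n)\le z\text{ or }P(n)^2\mid n}}}\binom{r_0^*(n)}{\ell}
\;\ll_\ell\;\frac{x}{(\log x)^{\ell+2}}.
\]

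It then remains only to check that this bound is dominated by the claimed right-hand side, i.e. that
\[
\frac{x}{(\log x)^{\ell+2}}\ll_\ell\frac{xL^{O_\ell(1)}}{(\log x)^{\ell+1}}\frac{(2^{\ell-1}L)^k}{k!}.
\]
This is immediate once we insert the extra factor $\frac1{\log x}$ we have to spare: since $k\ll L$, the quantity $\frac{(2^{\ell-1}L)^k}{k!}$ is $\ge 1$ (it has a term equal to $1$ when $k=0$, and more generally is bounded below by a positive constant depending on $\ell$ over the range $k\ll L$), and the power $L^{O_\ell(1)}$ can absorb any loss, so $\frac1{\log x}\ll_\ell \frac{L^{O_\ell(1)}}{\log x}\cdot\frac{(2^{\ell-1}L)^k}{k!}$ with room to spare; restricting the sum further to $\omega^*(n)=k$ only decreases the left-hand side. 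I do not anticipate a genuine obstacle here — the content is entirely in Lemma~\ref{r0*bound}, which is already established, and the rest is the same bookkeeping as in Claim~\ref{claim: bound for some P(n)}. The only point requiring a word of care is that Claim~\ref{claim: for rS*} is stated for a general $S$; but the argument above uses nothing about $S$ beyond $r_S^*\le r_0^*$, so it applies uniformly, and in particular to $S=\mathcal{R}'$ as needed for Theorem~\ref{moments of rR'}.
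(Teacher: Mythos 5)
Your proposal is correct and is essentially the paper's own argument: the paper proves Claim~\ref{claim: for rS*} in one line by noting that every $r_S^*$ is bounded by $r_0^*$ and invoking the proof of Claim~\ref{claim: bound for some P(n)}, which is exactly the reduction you carry out (Lemma~\ref{r0*bound} with the exponent $1000$ replaced by $\ell+2$, the binomial coefficient expanded as a polynomial in $r_0^*$, and the resulting $x/(\log x)^{\ell+2}$ absorbed into the claimed right-hand side). Your final domination step is the same one the paper performs, so there is nothing new to reconcile.
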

\begin{proof}
	The proof from Claim~\ref{claim: bound for some P(n)} works here as all $r_S^*$ are bounded by $r_0^*$.
\end{proof}In particular, Claim~\ref{claim: for rS*} applies for $S=\mathcal{R}'$.

For all $n$ that we consider (WLOG $r_{\mathcal{R}'}^*(n)>0$), we may now assume $P(n)>x^{1/L}$ and $P(n)^2\nmid n$. Let $p=P(n)$ and write $n=mp$, with $p>x^{1/L}$ and $P(m)<p$, $m\le x^{1-1/L}$. Moreover, $r_0^*(n)>0$ so $p\equiv 1\pmod 4$. Write $p=a^2+b^2$ for unique (up to order) coprime $a,b$, and let all pairs of two positive coprime squares whose sum is $m$ be denoted $u_i,v_i$, where $1\le i\le r_0^*(m)$ by definition of $r_0^*$. 

As before, we have the set of linear forms $\mathcal{L}_m=\{u_ix+v_iy,v_ix-u_iy\colon 1\le i\le r_0^*(m)\}$.

Let  $\mathcal{L}_{m,a,b,S}=\{\phi\in\mathcal{L}_m\colon\phi(a,b)\in S\}$. Then we have $\displaystyle\binom{r_{\mathcal{R}'}^*(mp)}{\ell}=\sum_{{\substack{I\subseteq\mathcal{L}_m\\|I|=\ell}}}\mathbbm{1}(\forall\phi\in I\colon\phi(a,b)\in\mathcal{R}')=\sum_{{\substack{I\subseteq\mathcal{L}_{m,a,b,\mathcal{R}'}\\|I|=\ell}}}1$. Following the reasoning in the previous Theorem, we can write the analogue of~\eqref{3sum} (using the notation in this chapter):
\begin{align}
\sum_{{\substack{n\le x\\ \omega^*(n)=k\\P(n)>x^{1/L}}}} &\binom{r_{\mathcal{R}'}^*(n)}{\ell}=	\sum_{{\substack{m\le x^{1-1/L}\\ \omega^*(m)=k-1\\m\in\mathcal{R}'}}}
	\sum_{{\substack{x^{1/L}<p\le x/m\\p>P(m)\\p\text{ prime, }p\in\mathcal{R}'}}} \binom{r_{\mathcal{R}'}^*(mp)}{\ell}
	= \sum_{{\substack{m\le x^{1-1/L}\notag\\ \omega^*(m)=k-1\\m\in\mathcal{R}'}}}\sum_{{\substack{x^{1/L}<p\le x/m\\p>P(m)\text{ prime}\\p\in\mathcal{R}',p=a^2+b^2,a<b}}} \sum_{{\substack{I\subseteq\mathcal{L}_{m,a,b,\mathcal{R}'}\\|I|=\ell}}}1\\
	&= \sum_{{\substack{m\le x^{1-1/L}\\ \omega^*(m)=k-1\\m\in\mathcal{R}'}}}\sum_{{\substack{I\subseteq\mathcal{L}_{m}\\|I|=\ell}}}\!\!\!\sum_{{\substack{x^{1/L}<p\le x/m\\p>P(m)\text{ prime}\\p\in\mathcal{R}',\ p=a^2+b^2,\ a<b}}}\!\!\!\!\!\! \mathbbm{1}(\phi(a,b)\in{\mathcal{R}'}\text{ for all }\phi\in I).
\end{align}
We now proceed by bounding \[\sum_{{\substack{x^{1/L}<p\le x/m\\p>P(m)\text{ prime}\\p\in\mathcal{R}',p=a^2+b^2,a<b}}}\!\!\!\!\! \mathbbm{1}(\phi(a,b)\in{\mathcal{R}'}\text{ for all }\phi\in I).\] For this, we first bound this quantity by the number of pairs $(a,b)$ such that $x^{1/L}<a^2+b^2\le\dfrac {x}{m}$ and $\displaystyle(a^2+b^2)\prod_{\phi\in I}\phi(a,b)\in\mathcal{R}'$. We may now use the Selberg sieve to estimate the number of such pairs by sieving out with respect to all primes that are $3\pmod 4$ up to $y=x^{1/10L}$.
\begin{claim}
	\[\sum_{{\substack{a^2+b^2\le x/m\\\gcd(a^2+b^2,\prod_{q\in \mathfrak{p}_y}q)=1\\\prod_{\phi\in I}\gcd(\phi(a,b),\prod_{q\in \mathfrak{p}_y}q)=1}}}\!\! 1
	\ll_\ell
	\frac x m\prod_{\substack{\ell+2<p\le y\\p\equiv 3\pmod 4}}\left(1-\frac{\ell}{p}\right)\prod_{p\mid T}\Bigl(1-\frac1p\Bigr)^{O(\ell)}+O\Bigl(\frac xm\frac{L^{O_\ell(1)}}{(\log x)^{\ell+1}}\Bigr),\]
	where $T=m\prod_{i<j}(u_iv_j-v_iu_j)(u_iu_j+v_iv_j)$ and $\mathfrak{p}_y$ is the set of primes congruent to $3$ modulo $4$ up to $y$.
\end{claim}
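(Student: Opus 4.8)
The plan is to mirror the proof of Claim~\ref{claim: sieve theory bound on summand} from Chapter~\ref{chapter: moments of r1}, adapting the sieve to sift with respect to primes $q\equiv 3\pmod 4$ rather than all small primes. First I would set up the bivariate polynomial sieve with $\mathcal{A}=\{(a,b):1\le a,b\le\sqrt{x/m}\}$, $X=x/m$, and $F(a,b)=(a^2+b^2)\prod_{\phi\in I}\phi(a,b)$, but now take $\mathfrak{p}=\mathfrak{p}_y$, the set of primes $\equiv 3\pmod 4$ up to $y=x^{1/10L}$. The point is that $n\in\mathcal{R}'$ if and only if $4\nmid n$ and no prime $\equiv 3\pmod 4$ divides $n$; sieving out the small such primes from each of $a^2+b^2$ and the $\phi(a,b)$ catches most of the constraint, and the large prime factors $\equiv 3\pmod 4$ are handled by the fact that there are few pairs with a large such factor (exactly as the $\phi(a,b)\le y$ versus $\phi(a,b)>y$ split is done in Claim~\ref{claim: phi small prime}, which will have to be re-run here, though that is routine).

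Next I would compute the weight function $g(p)$. For primes $p\equiv 3\pmod 4$ the arithmetic is actually simpler than before: $\#\{(a^*,b^*)\in\mathbb{Z}_p^2: p\mid (a^*)^2+(b^*)^2\}=1$ since $-1$ is a non-residue, and $\#\{(a^*,b^*):p\mid\phi_0(a^*,b^*)\}=p$ for each linear form. By the same determinant/linear-independence analysis as in the $r_1$ proof — noting $p\mid\det M\implies p\mid T$ with $T=m\prod_{i<j}(u_iv_j-v_iu_j)(u_iu_j+v_iv_j)$ — and using that the event $p\mid a^2+b^2$ forces $p\mid a,b$ (so it is contained in each $\{p\mid\phi(a,b)\}$ only when $p\mid m$, which cannot happen since $m\in\mathcal{R}'$), I get $p^2 g(p)=1+(p-1)\ell_p$ where $\ell_p\le\ell$ is the number of forms in $I$ pairwise independent mod $p$; for $p\nmid T$ this is $1+(p-1)\ell$, so $g(p)\asymp \ell/p$ and the sifting dimension is $\kappa\le\ell$. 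The small primes $p\le\ell+2$ and the case $p=2$ (we also want $4\nmid a^2+b^2$, but a prime $p=2$ only enters as $g(2)=O(1)$ and can be absorbed) are dealt with as $O_\ell(1)$ factors.

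Then I would invoke Theorem~\ref{lemma: fundamental lemma of sieves}: the error term $\sum_{d\le y^2}3^{\omega(d)}|R_d|$ is bounded exactly as in Claim~\ref{claim: bound sieve errors} — the estimate $|R_d|\le 2d\sqrt{x/m}$ is independent of which primes we sieve, so the total is $\ll\sqrt{x/m}\,x^{6/10L}\ll\frac xm\frac{L^{O_\ell(1)}}{(\log x)^{\ell+1}}$. The main term is $X\cdot G(y,y)$ with $G(y,y)\asymp\frac1{e^{\gamma\kappa}\Gamma(\kappa+1)}\prod_{p<y,\,p\in\mathfrak{p}_y}(1-g(p))^{-1}(1+O(\tfrac1{\log y}))$, and the product over $p\equiv 3\pmod 4$, after splitting off the $O_\ell(1)$ contribution from $p\mid T$ (controlled by the same $\asymp$-comparison $1-\tfrac cp\asymp(1-\tfrac1p)^c$ and by writing $1+(p-1)\ell = $ the relevant numerator), becomes $\asymp_\ell \prod_{\ell+2<p\le y,\,p\equiv 3(4)}(1-\ell/p)^{-1}\prod_{p\mid T}(1-\tfrac1p)^{O(\ell)}$, giving the claimed bound with $X=x/m$ substituted.

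The main obstacle I anticipate is not the sieve itself but verifying that sieving only by primes $\equiv 3\pmod 4$ up to $y$ genuinely captures membership in $\mathcal{R}'$ up to an acceptable error — one must separately argue that pairs $(a,b)$ for which some $\phi(a,b)$ (or $a^2+b^2$ itself, but that is prime and $>y$ by assumption) has a \emph{large} prime factor $\equiv 3\pmod 4$, yet $\phi(a,b)$ still fails to be in $\mathcal{R}'$, are negligible; this requires re-running the counting argument of Claim~\ref{claim: phi small prime} in the current setting, and checking the sifting dimension $\kappa\le\ell$ (rather than $\ell+2$) propagates correctly through Theorem~\ref{lemma: fundamental lemma of sieves}. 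Everything downstream — bounding $\prod_{p\mid T}(1-\tfrac1p)^{O(\ell)}\ll_\ell L^{O(1)}$ via Theorem~\ref{estimate on omega} and Mertens, and bounding $\prod_{p\le y,\,p\equiv 3(4)}(1-\ell/p)^{-1}\ll L^{(\ell+1)/2}/(\log x)^{-(\ell+1)/2}$ using the $\tfrac1{\phi(4)}=\tfrac12$ density in Theorem~\ref{theorem: mertens APs}, which is precisely where the $\sqrt{\log x}$ replaces $\log x$ — then runs as in the $r_1$ case, and the outer summation over $m$ and $I$ is identical to the end of the proof of Theorem~\ref{thm:r1cl upper bound}.
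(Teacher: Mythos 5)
Your proposal matches the paper's proof essentially step for step: the same sieve setup with $\mathfrak{p}=\mathfrak{p}_y$ and $F(a,b)=(a^2+b^2)\prod_{\phi\in I}\phi(a,b)$, the same weight formula $p^2g(p)=1+\ell_p(p-1)$ for $p\equiv 3\pmod 4$ (with $\ell_p=\ell$ when $p\nmid T$), the same error estimate $|R_d|\le 2d\sqrt{x/m}$ fed into Theorem~\ref{lemma: fundamental lemma of sieves}, and the same comparison of $\prod_p(1-g(p))$ to $\prod_{\ell+2<p\le y,\,p\equiv 3\pmod 4}(1-\ell/p)\prod_{p\mid T}(1-\frac1p)^{O(\ell)}$. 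The one obstacle you anticipate is not actually needed for this claim: the statement is purely about the sifted count on the left-hand side, and since membership of $\phi(a,b)$ in $\mathcal{R}'$ already forces $\phi(a,b)$ to have no prime factor $\equiv 3\pmod 4$ of any size, the sifted count is automatically an upper bound for the quantity of interest, so no analogue of Claim~\ref{claim: phi small prime} has to be re-run here.
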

\begin{proof}
The setup is very similar to before. Define $(\mathcal{A},\mathcal{F},\mathbb{P})$ to be the discrete uniform probability space with $\displaystyle\mathcal{A}:=\Bigl\{(a,b): 1\le a,b\le\sqrt{\tfrac xm}\Bigr\}$, and $|\mathcal{A}|$ approximated by $X:=\dfrac xm$. We are again using the multivariate polynomial sieve with $F(a,b):=(a^2+b^2)\prod_{\phi\in I}\phi(a,b)$, and set $S=\{(a^2+b^2)\prod_{\phi\in I}\phi(a,b): 1\le a,b\le\sqrt{\tfrac xm}\}$, as previously. We now only sift out elements of $S$ which are divisible by primes congruent to $3$ modulo $4$ smaller than $y$, so $\mathfrak{p}$ is the set of primes that are $3$ mod $4$ and $z=y=x^{1/10L}$.

\begin{remark*}
As before, $p\le\ell+2$ are absorbed into the constant depending on $\ell$ that $\ll_\ell$ allows for. This includes $p=2$.
\end{remark*}

We must again estimate the multiplicative function $g(d)$ for the sieve.
In this case, the events $\mathcal{A}_p$ are now $\emptyset$ for $p\equiv 1\pmod 4$, and for $p\equiv 3\pmod 4$ we have $\mathcal{A}_p=\{(a,b)\in\mathcal{A}:p\mid (a^2+b^2)\prod_{\phi\in I}\phi(a,b)\}$. We get the same estimate for $g(p)$ as before in the case $p\equiv 3\pmod 4$. Thus:
\begin{lemma*}[Formula for $p^2g(p)$]\
	\begin{itemize}
		\item If $p\equiv 1\pmod 4$ then $g(p)=0$. Else:
		\item If $p\nmid T$ (in particular $p\nmid m$) then $p^2g(p)=1+\ell(p-1)$.
		\item If $p\mid T$ and $p\nmid m$ then $p^2g(p)=1+\ell_p(p-1)$, where $\ell_p$ is the number of linear forms $\phi\in I$ pairwise independent modulo $p$ ($\ell_p\le\ell$).
		\item If $p\mid m$ then $p\equiv 1\pmod 4$ so this case is no longer relevant.
	\end{itemize}
\end{lemma*}
The error terms $R_d=|\mathcal{A}_d|-g(d)X$ remain the same as before for squarefree products of primes that are $3$ modulo $4$, and as before, we get $R_d=\epsilon (X-D^2)$ for some $\epsilon\in[-1,1]$, so $|R_d|\le 2d\sqrt{\frac xm}$. We use Theorem~\ref{lemma: fundamental lemma of sieves}, noting that $\sum_{d\le y^2}3^{\omega(d)}|R_d|\ll\frac xm\frac{L^{O_\ell(1)}}{(\log x)^{\ell+1}}$:

\[\sum_{{\substack{a^2+b^2\le x/m\\\gcd(a^2+b^2,\prod_{q\le y}q)=1\\\prod_{\phi\in I}\gcd(\phi(a,b),\prod_{q\le y}q)=1}}} 1\le XG(y,y)+O\Bigl(\frac xm\frac{L^{O_\ell(1)}}{(\log x)^{\ell+1}}\Bigr).\]
We can use the same arguments as in Claim~\ref{claim: sieve theory bound on summand}, noting that \[\prod_{p\equiv 3\pmod 4}\left(1-\frac1{p^s}\right)\le\sum_{n\in\mathbb{N}}\frac1{n^s}=O(1)\] for $s>1$, and 
\[\prod_{p\equiv 3\pmod 4}\left(1-\frac{\ell+(\frac{-1}p)}{p^2}\right)\ll \ O_\ell(1)\!\prod_{\substack{p\ge(\ell+2)^2\\p\equiv 3\pmod 4}}\Bigl(1-\frac{1}{p^{3/2}}\Bigr)\sum_{n\in\mathbb{N}}\frac1{n^{3/2}}=O_\ell(1)\ll_\ell1.\] Hence we refine this to
\[\sum_{{\substack{a^2+b^2\le x/m\\\gcd(a^2+b^2,\prod_{q\in \mathfrak{p}_y}q)=1\\\prod_{\phi\in I}\gcd(\phi(a,b),\prod_{q\in \mathfrak{p}_y}q)=1}}}\!\!\!\!\!\!\!\!\!\!\!\! 1
\ll_\ell
\frac x m\prod_{\substack{\ell+2<p\le y\\p\equiv 3\!\!\!\!\pmod 4}}\left(1-\frac{\ell}{p}\right)\prod_{p\mid T}\Bigl(1-\frac1p\Bigr)^{O(\ell)}+O\Bigl(\frac xm\frac{L^{O_\ell(1)}}{(\log x)^{\ell+1}}\Bigr).\qedhere\]
\end{proof}
Claim~\ref{bound primes dividing T} already shows that 	$\displaystyle\prod_{p\mid T}\Bigl(1-\frac1p\Bigr)^{O(\ell)}\ll_\ell L^{O(1)}$, so we are left to bound \[\prod_{\substack{\ell+2<p\le y\\p\equiv 3\pmod 4}}\left(1-\frac{\ell}{p}\right)\asymp\prod_{\substack{\ell+2<p\le y\\p\equiv 3\pmod 4}}(1-\frac1p)^{\ell}\asymp\prod_{\substack{p\le y\\p\equiv 3\pmod 4}}\left(1-\frac{1}{p}\right).\]
Where previously we used Mertens' second theorem, now we use Mertens' theorem for Primes in APs~\ref{theorem: mertens APs}, and we again only require the special case discussed previously of modulo $4$, which gives us that \[\prod_{\substack{p\le y\\p\equiv 3\!\!\pmod 4}}\!\!\!\!\Bigl(1-\frac{1}{p}\Bigr)\le\exp\Bigl(-\!\!\!\!\!\!\!\!\sum_{\substack{p\le y\\p\equiv 3\!\!\pmod 4}}\!\!\!\!\!\!\frac1p\Bigr)=\exp\left(-\frac12\log\log y-M+O\Bigl(\frac1{\log y}\Bigr)\right)\ll\frac1{\sqrt{\log y}}.\]
Since $\log y=\log(x^{1/10L})=\frac1{10L}\log x$, we get

$\displaystyle\prod_{\substack{\ell+2<p\le y\\p\equiv 3\pmod 4}}\left(1-\frac{\ell}{p}\right)\ll\frac1{(\sqrt{\log y})^{\ell+1}}\asymp\frac{L^{O_\ell(1)}}{(\sqrt{\log x})^{\ell+1}}$.
This gives us

\[\sum_{{\substack{a^2+b^2\le x/m\\\gcd(a^2+b^2,\prod_{q\in \mathfrak{p}_y}q)=1\\\prod_{\phi\in I}\gcd(\phi(a,b),\prod_{q\in \mathfrak{p}_y}q)=1}}}\!\!\!\!\!\!\!\!\!\!\!\! 1
\ll_\ell
\frac x m\frac{L^{O_\ell(1)}}{(\sqrt{\log x})^{\ell+1}}.\] 
\begin{proof}[Proof of Theorem~\ref{moments of rR'}]
The proof is finished in the same way as Theorem~\ref{thm:r1cl upper bound}. We have 
\[\sum_{{\substack{n\le x\\ \omega^*(n)=k\\P(n)>x^{1/L}}}} \binom{r_{\mathcal{R}'}^*(n)}{\ell}\ll_\ell\frac{xL^{O_\ell(1)}}{(\sqrt{\log x})^{\ell+1}}\sum_{{\substack{m\le x^{1-1/L}\\ \omega^*(m)=k-1\\m\in\mathcal{R}'}}}\frac1m\cdot\#\{I\subseteq\mathcal{L}_{m}\colon |I|=\ell\},\]
 where $\mathcal{L}_m$ is the same set as before; hence we obtain the same result but with the $(\log x)^{\ell+1}$ in the denominator replaced by $(\sqrt{\log x})^{\ell+1}$.
\end{proof}
Note that when considering the second moment ($\ell=2$), this is again maximised when $k$ is approximately $2L$, in which case $\frac{xL^{O_\ell(1)}}{(\sqrt{\log x})^{\ell+1}}\frac{(2^{\ell-1}L)^k}{k!}$ is approximately $x\sqrt{\log x}L^{O(1)}$.
\section{Moments of $r_{\mathcal{R}}$}
As we have seen, many steps of the proof of Theorem~\ref{thm:r1cl upper bound} can be generalised to $r_S$, and the main change is the estimate we obtain on \[\sum_{{\substack{x^{1/L}<p\le x/m\\p>P(m)\text{ prime}\\p\in\mathcal{R}',\ p=a^2+b^2,\ a<b}}}\!\!\!\!\! \mathbbm{1}(\forall\phi\in I:\phi(a,b)\in{S})\] by sieve methods. In the cases $S=\mathbb{N}$ and $S=\mathcal{R}'$, we were able to use a regular Selberg sieve to sieve out all small primes in Theorem~\ref{thm:r1cl upper bound}, or only those congruent to $3$ modulo $4$ in Theorem~\ref{moments of rR'}. However, in the case $S=\mathcal{R}$, we have to use a slightly modified Selberg sieve, which does not sieve based on residue classes modulo primes but rather modulo prime powers.

We start with the discrete uniform probability space $(\mathcal{A},\mathcal{F},\mathbb{P})$ with $\mathcal{A}:=\bigl\{(a,b): 1\le a,b\le\sqrt{\tfrac xm}\bigr\}$, of size approximately $X:=\frac xm$. We still wish our sieve problem to sift elements of $S=\{(a^2+b^2)\prod_{\phi\in I}\phi(a,b): 1\le a,b\le\sqrt{\tfrac xm}\}$, but the events $\mathcal{A}_p$ are now $\emptyset$ for $p\equiv 1\pmod 4$, and for $p\equiv 3\pmod 4$ we have $\mathcal{A}_p=\{(a,b)\in\mathcal{A}:p^e\mathrel\Vert (a^2+b^2)\prod_{\phi\in I}\phi(a,b),\text{ some odd }e\}$.

\begin{remark*}
In fact, the events $\mathcal{A}_p$ above do not represent all possible ways of failing the conditions of $a^2+b^2\in{\mathcal{R}},\forall\phi\in I:\phi(a,b)\in{\mathcal{R}}$. For example, given $p\equiv 3\pmod 4$ we could have $p$ divides $\phi_1(a,b)$ and $\phi_2(a,b)$ to an odd power each, some $\phi_1,\phi_2\in I$, but $p$ divides $(a^2+b^2)\prod_{\phi\in I}\phi(a,b)$ to an even power. The sieve gives us an upper bound on our sum, which could possibly be further refined by considering these additional cases.
\end{remark*}
We will consider even smaller events $\mathcal{A}_p=\{p\mathrel\Vert (a^2+b^2)\prod_{\phi\in I}\phi(a,b)\}$, which give an even cruder upper bound.
We want to find a formula for $g(p),\ p\equiv 3\pmod 4$ so that $|\mathcal{A}_p|-Xg(p)$ is small, as previously. However, note that in the case $p\mid T$, there exist two linear forms in $I$ that are linearly dependent modulo $p$, so $p\mid(a^2+b^2)\prod_{\phi\in I}\phi(a,b)\iff p^2\mid(a^2+b^2)\prod_{\phi\in I}\phi(a,b)$. Hence we set $g(p)=0$ in this case.

If $p\nmid T$, we will count the number of residue classes $(a,b)\in\mathbb{Z}_{p^2}\times\mathbb{Z}_{p^2}$ satisfying $p\mathrel\Vert (a^2+b^2)\prod_{\phi\in I}\phi(a,b)$. Firstly, any pair $(pk_1,pk_2)$ can be excluded as this implies $p^2\mid(a^2+b^2)$. Note that all the linear forms are linearly independent modulo $p$ for $p\nmid T$, as discussed in Claim~\ref{claim: sieve theory bound on summand}. So if $p\nmid a,p\nmid b$, then $p\mathrel\Vert (a^2+b^2)\prod_{\phi\in I}\phi(a,b)\implies\exists\phi\in I:p\mathrel\Vert\phi(a,b)$. 

Thus, we now calculate the cardinality of $\{(a^*,b^*)\in\mathbb{Z}_{p^2}\times\mathbb{Z}_{p^2}: p\mathrel\Vert\phi_0(a^*,b^*)\}$ for a fixed $\phi_0=rx+sy\in I$. Suppose $s$ is invertible modulo $p$ -- we can always assume one of $r$, $s$ is not divisible by $p$ as they are coprime. For every $a'\pmod p$ there is a unique $b'\equiv -rs^{-1}a'\pmod p$ such that $p\mid\phi(a',b')$. Hence, for each $a^*\pmod{p^2}$ there are $p$ possible $b^*$ with $p\mid\phi(a^*,b^*)$, and out of those, a unique $b^*\equiv -rs^{-1}a^*\pmod{p^2}$ such that $p^2\mid\phi(a^*,b^*)$. Hence $\#\{(a^*,b^*)\in\mathbb{Z}_{p^2}\times\mathbb{Z}_{p^2}: p\mathrel\Vert\phi_0(a^*,b^*)\}=p^2(p-1)$.

However, these $p^2(p-1)$ residues include some of the form $(pk_1,pk_2)$. We calculate separately how many such residues have been included. Indeed, $p\mathrel\Vert\phi_0(pk_1,pk_2)\iff\phi_0(k_1,k_2)\not\equiv 0\pmod p$. We previously calculated $\#\{(k_1,k_2)\in\mathbb{Z}_p\times\mathbb{Z}_p: \phi_0(k_1,k_2)\equiv 0\pmod p\}=p$. Hence, for $p^2-p$ residues $(a^*,b^*)\in\mathbb{Z}_{p^2}\times\mathbb{Z}_{p^2}$ of the form $(pk_1,pk_2)$, we have $p\mathrel\Vert\phi_0(pk_1,pk_2)$. Thus $\#\{(a^*,b^*)\in\mathbb{Z}_{p^2}\times\mathbb{Z}_{p^2}\setminus p\mathbb{Z}_p\times p\mathbb{Z}_p: p\mathrel\Vert\phi_0(a^*,b^*)\}=p^2(p-1)-p(p-1)=p(p-1)^2$.
\begin{lemma*}[Formula for $g(p)$]\
	\begin{itemize}
		\item If $p\equiv 1\pmod 4$ or $p\mid T$ then $g(p)=0$.
		\item If $p\nmid T$ then $p^4g(p)=\ell\cdot p(p-1)^2$.	
	\end{itemize}
\end{lemma*}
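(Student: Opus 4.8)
The plan is to obtain the formula for $g(p)$ by directly counting residue classes, exactly as for the polynomial Selberg sieve in Chapter~\ref{chapter: sieves}, but working modulo $p^2$: the event $\mathcal{A}_p=\{(a,b):p\mathrel\Vert(a^2+b^2)\prod_{\phi\in I}\phi(a,b)\}$ depends only on the class of $(a,b)$ in $(\mathbb{Z}/p^2\mathbb{Z})^2$, which is precisely why the normalising factor here is $p^4$ rather than $p^2$. I would first recall that $g$ is required to be multiplicative in order for Theorem~\ref{lemma: fundamental lemma of sieves} to apply, so by the Chinese Remainder Theorem it suffices to pin down $g(p)$ for each prime $p$; and after absorbing the primes $p\le\ell+2$ into the implied constants, as in the remark above, I may assume $p>\ell+2$. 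The case $p\equiv1\pmod4$ is immediate, since such primes do not lie in the sieving set $\mathfrak{p}$, so $\mathcal{A}_p=\emptyset$ and $g(p)=0$.

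For $p\equiv3\pmod4$ with $p\mid T$ I would invoke the $\det(M)$ case analysis from Claim~\ref{claim: sieve theory bound on summand}: $p\mid T$ makes two of the forms feeding $F(a,b)=(a^2+b^2)\prod_{\phi\in I}\phi(a,b)$ proportional modulo $p$, and since $p\equiv3\pmod4$ also forces $p\mid a^2+b^2\Rightarrow p^2\mid a^2+b^2$, the condition $p\mathrel\Vert F(a,b)$ can never hold, so $g(p)=0$. (Even if one is reluctant to assert this exactly, simply declaring $g(p)=0$ at these primes is legitimate: it only makes the sieve cruder, and the loss is swallowed by the factor $\prod_{p\mid T}(1-1/p)^{O(\ell)}$ already estimated in Claim~\ref{bound primes dividing T}.)

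The one case that needs real work is $p\equiv3\pmod4$ with $p\nmid T$, and here the two component counts have already been carried out in the paragraphs just before the lemma; the plan is to combine them. For a single form $\phi_0=rx+sy\in I$ (with $p\nmid s$, say), lifting the zero line of $\phi_0$ modulo $p$ to residues modulo $p^2$ and discarding the sub-line where $p^2\mid\phi_0$ yields $p^2(p-1)$ residues with $p\mathrel\Vert\phi_0$; of these, exactly $p(p-1)$ have both coordinates divisible by $p$ and must be thrown away, because then $p^2\mid a^2+b^2$ (this is where $p\equiv3\pmod4$ enters). That leaves $p(p-1)^2$ admissible residues attached to $\phi_0$. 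I would then argue that the $\ell$ such sets, one for each $\phi\in I$, are pairwise disjoint: since $p\nmid T$ every pair of forms in $I$ is linearly independent modulo $p$, so a residue $(a,b)$ with $p\nmid\gcd(a,b)$ can lie on the zero line modulo $p$ of at most one $\phi\in I$, and for such $(a,b)$ one then has $v_p(F(a,b))=v_p(\phi(a,b))$ (again using $p\nmid a^2+b^2$). Hence $\mathcal{A}_p$, restricted to the residues not of the form $(pk_1,pk_2)$, is the disjoint union of these $\ell$ sets, while residues of the form $(pk_1,pk_2)$ contribute nothing to $\mathcal{A}_p$ at all, so the total count in $(\mathbb{Z}/p^2\mathbb{Z})^2$ is $\ell\cdot p(p-1)^2$, i.e.\ $p^4g(p)=\ell\cdot p(p-1)^2$. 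The hard part is exactly this last disjointness (``at most one form vanishes'') step, which is where the hypotheses $p\nmid T$ and $p\equiv3\pmod4$ are both genuinely used; everything else is bookkeeping assembled from computations already recorded above.
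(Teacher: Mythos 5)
Your derivation is correct and follows essentially the same route as the paper's: the same count of residue classes modulo $p^2$ giving $p^2(p-1)-p(p-1)=p(p-1)^2$ admissible classes per form $\phi_0\in I$, the same use of $p\nmid T$ (pairwise independence modulo $p$) and of $p\equiv 3\pmod 4$ (so that $p\mid a^2+b^2$ forces $p\mid a$ and $p\mid b$) to discard the $(pk_1,pk_2)$ classes and to see that exactly one form carries the factor $p$, and the same conventions $g(p)=0$ for $p\equiv 1\pmod 4$ or $p\mid T$ -- if anything, you spell out the disjointness of the $\ell$ per-form sets more explicitly than the text does. The one caveat is your parenthetical fallback in the $p\mid T$ case: declaring $g(p)=0$ at a prime where $|\mathcal{A}_p|$ were genuinely of density $\asymp\ell/p$ would not be absorbed by $\prod_{p\mid T}\bigl(1-\tfrac1p\bigr)^{O(\ell)}$ (that product controls the main term), but would instead inflate the remainder $R_p=|\mathcal{A}_p|-g(p)X$ beyond the bound $|R_d|\le 2d^2\sqrt{x/m}$ used afterwards, so the justification must rest on the primary argument that dependent forms modulo $p$ force $p\mid F(a,b)\Rightarrow p^2\mid F(a,b)$, not on the claim that a zero choice of $g(p)$ is harmlessly ``cruder''.
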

We begin by estimating the error terms $R_d=|\mathcal{A}_d|-g(d)X$ for squarefree products of primes that are $3$ modulo $4$. We follow the same method as in Claim~\ref{claim: bound sieve errors}, but the calculation is slightly different as this time we worked modulo $p^2$.

Indeed, we define $\displaystyle D:=d^2\left\lfloor\Bigl\lfloor\sqrt{\tfrac xm}\Bigr\rfloor/d^2\right\rfloor$ to be the largest integer multiple of $d^2$ that is at most $\sqrt{\frac xm}$, and $\mathcal{A}':=\Bigl\{(a,b): 1\le a,b\le D\Bigr\}$ satisfies $|\mathcal{A}'_d|=g(d)D^2$. Thus $R_d=|\mathcal{A}_d|-|\mathcal{A}'_d|+g(d)D^2-g(d)X=|\mathcal{A}_d|-|\mathcal{A}'_d|+g(d)(D^2-X)$.

As before, $R_d=\epsilon (X-D^2)$ for some $\epsilon\in[-1,1]$. Since $0\le \sqrt{\frac xm}-D\le d^2$ we have $0\le X-D^2\le 2d^2\sqrt{\frac xm}$. Hence $|R_d|\le 2d^2\sqrt{\frac xm}$.

We now estimate $\sum_{d\le y^2}3^{\omega(d)}|R_d|$, where $\omega(d)\le \log_3(d)+1$.  Using our estimate of $|R_d|$ above gives a total error of $\sum_{d\le y^2}3d\cdot 2d^2\sqrt{\frac xm}=6\sqrt{\frac xm}\sum_{d\le y^2}d^3\ll \sqrt{\frac xm}(y^2)^4=\sqrt{\frac xm}x^{8/10L}\ll\frac xm\frac{L^{O_\ell(1)}}{(\log x)^{\ell+1}}$. This holds as $L$ is eventually large, and so $x^{8/10L}$ is of smaller order than $\sqrt{x}\frac{L^{O_\ell(1)}}{(\log x)^{\ell+1}}$. Hence we again get
\[\smash[t]{\sum_{{\substack{a^2+b^2\le x/m\\\gcd(a^2+b^2,\prod_{q\le y}q)=1\\\prod_{\phi\in I}\gcd(\phi(a,b),\prod_{q\le y}q)=1}}} 1\le XG(y,y)+O\Bigl(\frac xm\frac{L^{O_\ell(1)}}{(\log x)^{\ell+1}}\Bigr)},\]where 
$G(z,z)=O_\ell(1)\prod_{p<z}(1-g(p))^{-1}(1+O(\tfrac1{\log z}))$, and \[\prod_{p<y}(1-g(p))^{-1}=O_\ell(1)\prod_{\substack{\ell+2<p<y\\p\equiv 3\!\!\!\!\pmod 4\\ p\, \nmid\, T}}
\left(1-\frac{\ell\cdot(p-1)^2}{p^3}\right)^{-1}
.\]
Since $1-\dfrac{\ell\cdot(\frac{p-1}p)^2}{p}\ll\Bigl(1-\dfrac1p\Bigr)^{O(\ell)}$, we can rewrite the RHS above as  \[O_\ell(1)\prod_{\substack{\ell+2<p<y\\p\equiv 3\!\!\!\!\pmod 4}}
\left(1-\frac{\ell\cdot(p-1)^2}{p^3}\right)^{-1}\prod_{p\mid T}\Bigl(1-\frac1p\Bigr)^{O(\ell)}.\]
We have from Claim~\ref{bound primes dividing T} that	$\displaystyle\prod_{p\mid T}\Bigl(1-\frac1p\Bigr)^{O(\ell)}\ll_\ell L^{O(1)}$, and it remains to bound $\displaystyle\prod_{\substack{\ell+2<p<y\\p\equiv 3\!\!\pmod 4}}
\left(1-\frac{\ell\cdot(p-1)^2}{p^3}\right)^{-1}$. Now, $\displaystyle\Bigl(1-\frac{\ell\cdot(p-1)^2}{p^3}\Bigr)-\Bigl(1-\frac{\ell}{p}\Bigr)=\ell\frac{2p-1}{p^3}$ and $\displaystyle\prod_{p\equiv 3\pmod 4}\left(1-\ell\frac{2p-1}{p^3}\right)\ll O_\ell(1)\prod_{p\ge(\ell+2)^2}\Bigl(1-\frac{1}{p^{3/2}}\Bigr)\sum_{n\in\mathbb{N}}\frac1{n^{3/2}}=O_\ell(1)\ll_\ell1$, so
\[\prod_{p<y}(1-g(p))^{-1}\ll_\ell\prod_{\substack{\ell+2<p<y\\p\equiv 3\!\!\!\!\pmod 4}}
\left(1-\frac{\ell}{p}\right)^{-1}\prod_{p\mid T}\Bigl(1-\frac1p\Bigr)^{O(\ell)}.\] We have now obtained the same bound as in Theorem~\ref{moments of rR'}, hence we get
\begin{theorem}\label{moments of rR}
	For any fixed integer $\ell\ge 1$ and integer $k\ll L$ we have
	\[\smash{\sum_{{\substack{n\le x\\\omega^*(n)=k}}}\binom{r_{\mathcal{R}}^*(n)}{\ell}\ll_\ell\frac{xL^{O_\ell(1)}}{(\sqrt{\log x})^{\ell+1}}\frac{(2^{\ell-1}L)^k}{k!}.}\]
\end{theorem}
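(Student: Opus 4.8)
The plan is to run the argument for Theorem~\ref{thm:r1cl upper bound} essentially verbatim, in the form already adapted for Theorem~\ref{moments of rR'}, changing only the sieve: since membership in $\mathcal{R}$ is obstructed by a prime $\equiv 3\pmod 4$ dividing to an \emph{odd} power, the Selberg sieve must now remove residue classes modulo $p^2$ rather than modulo $p$.

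First I would reduce exactly as before. By Claim~\ref{claim: for rS*} the terms with $P(n)\le x^{1/L}$ or $P(n)^2\mid n$ already contribute an acceptable amount, so assume $P(n)>x^{1/L}$ and $P(n)^2\nmid n$, write $n=mp$ with $p=P(n)$, $m\le x^{1-1/L}$, and expand $\binom{r_{\mathcal{R}}^*(mp)}{\ell}$ over the $\ell$-subsets $I$ of the linear forms $\mathcal{L}_m$ as in~\eqref{3sum}. One then bounds the inner sum over primes $p=a^2+b^2$ by the number of pairs $(a,b)$ with $x^{1/L}<a^2+b^2\le x/m$ such that $(a^2+b^2)\prod_{\phi\in I}\phi(a,b)\in\mathcal{R}$; this is a legitimate upper bound because $\mathcal{R}$ is closed under multiplication, so if every $\phi(a,b)\in\mathcal{R}$ and $a^2+b^2\in\mathcal{R}$ then the product lies in $\mathcal{R}$.

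The heart of the matter is the sieve. On the uniform space $\mathcal{A}=\{(a,b):1\le a,b\le\sqrt{x/m}\}$ with $X=x/m$ I take $\mathfrak{p}$ to be the primes $\equiv 3\pmod 4$ and $z=y=x^{1/10L}$, and — after relaxing to the crudest possible events — set $\mathcal{A}_p=\{(a,b):p\,\|\,(a^2+b^2)\prod_{\phi\in I}\phi(a,b)\}$, a condition modulo $p^2$. Counting residues modulo $p^2$ gives $g(p)=0$ when $p\equiv 1\pmod 4$ or $p\mid T$ (with $T=m\prod_{i<j}(u_iv_j-v_iu_j)(u_iu_j+v_iv_j)$), and $p^4g(p)=\ell\,p(p-1)^2$ otherwise; $g$ is multiplicative by CRT and $0\le g(p)<1$, so the machinery of Chapter~\ref{chapter: sieves} applies with dimension $\kappa\le\ell+2$. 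The remainder estimate is the mild variant of Claim~\ref{claim: bound sieve errors}: working modulo $p^2$ yields $|R_d|\le 2d^2\sqrt{x/m}$, hence $\sum_{d\le y^2}3^{\omega(d)}|R_d|\ll\sqrt{x/m}\,(y^2)^4=\sqrt{x/m}\,x^{8/10L}$, still absorbed into $O\bigl(\tfrac{x}{m}\tfrac{L^{O_\ell(1)}}{(\log x)^{\ell+1}}\bigr)$ since $L\to\infty$.

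Then I would apply Theorem~\ref{lemma: fundamental lemma of sieves} and estimate $\prod_{p<y}(1-g(p))^{-1}$. Using $1-\tfrac{\ell(p-1)^2/p^2}{p}\asymp(1-\tfrac1p)^{O(\ell)}$ and noting that $\bigl(1-\tfrac{\ell(p-1)^2}{p^3}\bigr)-\bigl(1-\tfrac\ell p\bigr)=\ell\tfrac{2p-1}{p^3}$ gives a convergent product over $p$, so the product collapses to $\prod_{\ell+2<p<y,\,p\equiv 3(4)}(1-\tfrac\ell p)^{-1}\prod_{p\mid T}(1-\tfrac1p)^{O(\ell)}$ — the very same product handled in Theorem~\ref{moments of rR'}. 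From here the endgame is identical: Claim~\ref{bound primes dividing T} controls $\prod_{p\mid T}$, the $q=4$ case of Theorem~\ref{theorem: mertens APs} gives $\prod_{p\le y,\,p\equiv 3(4)}(1-\tfrac1p)\ll(\log y)^{-1/2}\asymp\sqrt L/\sqrt{\log x}$, and summing over $m$ via Lemma~\ref{lemma: numbers with k factors} yields the claimed bound with $(\sqrt{\log x})^{\ell+1}$ in the denominator. The main obstacle is making the modified sieve rigorous: checking that passing to the crude events $\mathcal{A}_p$ genuinely produces an \emph{upper} bound (the phenomenon that $p$ might divide several $\phi_i(a,b)$ to odd powers while dividing the whole product to an even power only decreases the true count, so it is harmless), and verifying that moving from modulus $p$ to modulus $p^2$ damages neither the dimension bound nor the remainder sum — both of which, as sketched, survive with only cosmetic changes.
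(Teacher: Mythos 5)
Your proposal is correct and follows essentially the same route as the paper: the identical reduction via Claim~\ref{claim: for rS*} and the expansion over $\ell$-subsets $I\subseteq\mathcal{L}_m$, the same crude events $\mathcal{A}_p=\{p\,\Vert\,(a^2+b^2)\prod_{\phi\in I}\phi(a,b)\}$ for $p\equiv 3\pmod 4$ with the same formula $p^4g(p)=\ell\,p(p-1)^2$ (and $g(p)=0$ for $p\equiv1\pmod 4$ or $p\mid T$), the same modulus-$p^2$ remainder bound $|R_d|\le 2d^2\sqrt{x/m}$ giving $\sqrt{x/m}\,x^{8/10L}$, and the same collapse of $\prod_{p<y}(1-g(p))^{-1}$ to the product already handled in Theorem~\ref{moments of rR'} before the Mertens-in-APs and Lemma~\ref{lemma: numbers with k factors} endgame. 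Your explicit justification that the relaxed events still yield an upper bound (multiplicativity of membership in $\mathcal{R}$, so the relaxation only enlarges the counted set) is exactly the content of the paper's remark on this point.
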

Although we used quite a crude sieve, the bound is of same order as for $r_{\mathcal{R}'}^*$, which is smaller than $r_{\mathcal{R}}^*$ everywhere. This suggests that no immediate improvement can be made on the result simply by defining the sieving criteria more carefully. In fact, this bound is possibly sharp in the case $\ell=2$.

Note that the theorems we have proved in this chapter are actually about moments of $r_S^*$ for $S=\mathcal{R}$, $S=\mathcal{R}'$, and not $r_S$ itself. This is because the proof technique of Claim~\ref{claim: approx r1 by r1*}, which relies on Lemma~\ref{lemma1}, cannot be reproduced for the functions $r_\mathcal{R}$ and $r_\mathcal{R}'.$
\chapter*{Conclusion}
In this thesis, we have seen results about the asymptotic behaviour of moments of representation numbers. We focused on $r_1(n)$, which counts the number of ways to represent $n$ as the sum of a square and the square of a prime, and derived our own results about the behaviour of $r_{\mathcal{R}}(n)$, which counts the number of ways to represent $n$ as the sum of a square and the square of a sum of two squares. In particular, the results about $r_1$ proved by Granville, Sabuncu and Sedunova, which we investigated in detail and used as a foundation for other estimates, are consistent with Stephan Daniel's theorem about second moments of $r_1$. This remarkable result has a very refined error bound, but it is difficult to generalise Daniel's highly technical proof to higher moments. It would be interesting to develop a proof similar to Daniel's about the second moment of $r_{\mathcal{R}}$ and compare it to the results proved in this thesis to see whether the upper bounds obtained are reasonable.

Given more time, it may be possible to further refine some of the estimates in the results we showed, and continue to investigate moments of $r_{S}$ for other interesting sets $S$. Granville, Sabuncu and Sedunova also worked on lower bounds, which I have not touched on in this paper. Although the lower bounds for moments of $r_1$ are conjecturally of the same order of magnitude, the current lower bounds which can be proved without further assumptions are quite far from the upper bounds. Hopefully more progress will be made in the future to bring the two closer.

\newpage
\printbibliography[heading=bibintoc] 
%
%
\end{document}